\newcommand{\Z}{\mathbb{Z}}
\newcommand{\R}{\mathbb{R}}
\newcommand{\eps}{\varepsilon}
\newcommand{\inter}{\operatorname{int}}
\newcommand{\del}{\nabla}
\newcommand{\lap}{\Delta}
\newcommand{\bd}{\partial}
\newcommand{\eval}{\bigg\vert}
\newcommand{\la}{\langle}
\newcommand{\ra}{\rangle}
\newcommand{\diam}{\operatorname{diam}}
\renewcommand{\div}{\operatorname{div}}
\newcommand{\grad}{\del}
\newcommand{\ric}{\operatorname{Ric}}
\theoremstyle{plain}
\newtheorem{theorem}{Theorem}
\newtheorem{corollary}[theorem]{Corollary}
\newtheorem{prop}[theorem]{Proposition}
\newtheorem{lem}[theorem]{Lemma}
\newtheorem{conj}[theorem]{Conjecture}
\theoremstyle{definition}
\newtheorem{defn}[theorem]{Definition}
\newtheorem{rem}[theorem]{Remark}
\begin{document}
\title{On the topology of manifolds with positive intermediate curvature} 
\author{Liam Mazurowski}
\address{Department of Mathematics, Lehigh University, Bethlehem, Pennsylvania, 18015, United States}
\email{lim624@lehigh.edu}
\author{Tongrui Wang}
\address{School of Mathematical Sciences, Shanghai Jiao Tong University, Minhang District, Shanghai 200240, China}
\email{wangtongrui@sjtu.edu.cn}
\author{Xuan Yao}
\address{Department of Mathematics, Cornell University, Ithaca, New York, 14853, United States}
\email{xy346@cornell.edu}

\begin{abstract}
    We formulate a  conjecture relating the topology of a manifold's universal cover with the existence of metrics with positive $m$-intermediate curvature. We prove the result for manifolds of dimension $n\in\{3,4,5\}$  
      and for most choices of $m$ when $n=6$.  As a corollary, we show that a closed, aspherical 6-manifold cannot admit a metric with positive $4$-intermediate curvature.   
\end{abstract}

\maketitle

\section{Introduction}

A manifold $M$ is called aspherical if the universal cover of $M$ is contractible. Equivalently, $M$ is aspherical if $\pi_k(M) = 0$ for $k\ge 2$.  Thus an aspherical manifold $M$ is a $K(\pi,1)$ space for $\pi = \pi_1(M)$.  Schoen and Yau \cite{schoen1987structure} conjectured that a closed aspherical manifold cannot carry a metric of positive scalar curvature; also see Gromov \cite{gromov2006large}.  This $K(\pi,1)$ conjecture is known to be true when $n=3$ by work of Schoen-Yau \cite{schoen1979existence} and Gromov-Lawson \cite{gromov1983positive},  when $n=4$ by Chodosh-Li \cite{chodosh2024generalized}, and when $n=5$ by Chodosh-Li \cite{chodosh2024generalized} and independently by Gromov \cite{gromov2020no}. 

\begin{theorem}
\label{Theorem:Aspherical}
Assume that $M^n$ is a closed $K(\pi,1)$ manifold of dimension $n\in \{3,4,5\}$. Then $M$ does not admit a metric with positive scalar curvature. 
\end{theorem}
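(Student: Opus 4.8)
The plan is to argue by contradiction and run a dimension descent via stable minimal hypersurfaces and warped $\mu$-bubbles, following Schoen--Yau, Gromov--Lawson and Chodosh--Li. Suppose $(M^n,g)$ is a closed $K(\pi,1)$ manifold with $R_g>0$. Passing to the orientation double cover, which is again closed and aspherical (a connected double cover shares the same universal cover) and carries the pulled-back positive scalar curvature metric, I may assume $M$ orientable. A closed aspherical manifold of positive dimension has infinite fundamental group: a nontrivial finite group admits no finite-dimensional classifying space, while a closed simply connected aspherical manifold would be a point. Hence $\widetilde M$ is a noncompact contractible $n$-manifold, and since the identity realizes $M$ as its own $K(\pi,1)$ we have $[M]\neq 0$ in $H_n(M;\Q)=H_n(\pi;\Q)$. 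It is this rational nontriviality, tested against an aspherical target, that I will carry down through the descent.

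The heart of the proof is the inductive statement: for $3\le k\le 5$ there is no closed orientable Riemannian $k$-manifold $(Y,h)$ with $R_h>0$ admitting a map $f\colon Y\to X$ to an aspherical complex with $f_*[Y]\neq 0$ in $H_k(X;\Q)$; applying this with $k=n$, $X=M$ and $f=\mathrm{id}$ contradicts $[M]\neq 0$. The base case $k=3$ is the classical result recalled above: by Schoen--Yau and Gromov--Lawson the fundamental group of a closed orientable $3$-manifold with positive scalar curvature is a free product of finite and infinite cyclic groups, so $H_3(\pi_1 Y;\Q)=0$, and since any map to an aspherical complex factors through the classifying map, $f_*[Y]=0$. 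For the inductive step, assume the statement in dimension $k-1$ and let $(Y^k,h,f)$ be a putative counterexample with $k\ge 4$. One passes to a covering space $\bar Y\to Y$ adapted to $f$ in which the class is ``spread out,'' and — using that contractibility of $\widetilde Y$ forces large regions of $\bar Y$ to be essential, so that barrier hypersurfaces exist far apart — one minimizes a warped $\mu$-bubble functional $\mathcal A(\Omega)=\area(\partial^*\Omega)-\int_\Omega u$ there. Since $\dim\Sigma=k-1\le 4$ lies well below the range where singularities occur, the minimizer $\Sigma^{k-1}$ is a smooth closed two-sided stable prescribed-mean-curvature hypersurface; the stability inequality, the Gauss equation, and a conformal change equip $\Sigma$ with a metric of positive scalar curvature. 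One then checks, by a Mayer--Vietoris argument across the $\mu$-bubble together with the covering-space structure, that $f|_\Sigma$ composed with a suitable map to a codimension-one aspherical complex sends $[\Sigma]$ to a nonzero rational class; thus $\Sigma$ is a counterexample in dimension $k-1$, completing the induction. (For $n=5$ there is also Gromov's independent argument along similar $\mu$-bubble lines.)

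I expect the inductive step to be the main obstacle, in two respects. Geometrically, one must make the $\mu$-bubble problem coercive in the aspherical setting: unlike the torus, a general aspherical manifold may have $H^1(M;\Z)=0$, so there is no circle factor or degree-one cohomology class to slice against, and the ``room'' for the barriers must be extracted from the contractibility of $\widetilde M$ alone; moreover the warping function $u$ must be chosen so that the barrier conditions and the conversion of stability into positive scalar curvature hold at the same time, and it is the interplay of $u$ with the conformal change that ultimately restricts $\dim\Sigma\le 4$, i.e.\ $n\le 5$. Topologically, transporting the nonvanishing of $f_*[Y]\in H_k(X;\Q)$ to the slice — identifying the correct aspherical $(k-1)$-dimensional target and verifying $[\Sigma]$ still maps nontrivially — is the delicate point, and is essentially the content of the Chodosh--Li descent. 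Granting the induction, the cases $n\in\{3,4,5\}$ follow at once, with $n=3$ being the base case itself.
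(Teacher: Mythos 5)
Your proposal does not match how this theorem is actually established (the paper itself does not reprove it: Theorem \ref{Theorem:Aspherical} is quoted from Schoen--Yau, Gromov--Lawson, Chodosh--Li and Gromov, and the introduction outlines the Chodosh--Li argument), and, more importantly, it hinges on an inductive statement that is a genuine gap. You propose to induct on: ``for $3\le k\le 5$ there is no closed orientable PSC $k$-manifold $Y$ with a map $f\colon Y\to X$ to an \emph{arbitrary} aspherical complex such that $f_*[Y]\neq 0$ in $H_k(X;\Q)$.'' For $k=3$ this follows from the PSC classification, as you say, but for $k=4,5$ this is a rational-essentialness statement that is far stronger than the $K(\pi,1)$ theorem and is not provided by any of the works you cite; Chodosh--Li prove the aspherical case (and, with Liokumovich, the nonzero-degree mapping version to an aspherical \emph{manifold of the same dimension}), not nonvanishing of a rational class in an aspherical complex of arbitrary dimension. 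In fact there is concrete reason to doubt your inductive statement: nonzero rational group-homology classes that are ``small'' (e.g.\ representable compatibly with largeness/PSC-type properties) are known to exist in higher degrees (Brunnbauer--Hanke), so an argument that only transports $f_*[Y]\neq0\in H_k(X;\Q)$ down the descent cannot be expected to close. Your sketch of the inductive step also contains an internal inconsistency: you invoke ``contractibility of $\widetilde Y$'' to produce barriers, but in your induction $Y$ is not aspherical --- only the target $X$ is --- so that input is simply not available; and the claim that a Mayer--Vietoris argument shows $[\Sigma]$ still maps nontrivially to some codimension-one aspherical target is precisely the step no one knows how to carry out, not ``essentially the content of the Chodosh--Li descent.''

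The actual Chodosh--Li proof (which this paper adapts for its main theorem) is structured quite differently and uses asphericity of $M$ itself, not just rational essentialness: one works in the universal cover $\overline M$, constructs a geodesic line $\sigma$ together with a null-homologous $\Lambda^{n-2}$ that is linked with $\sigma$ but lies at distance $\ge L$ from it (Proposition \ref{linking}), takes an area-minimizing filling $\Sigma_1$ of $\Lambda$, and then a warped $\mu$-bubble $\Sigma_2\subset\Sigma_1$ homologous to $\Lambda$ and far from $\sigma$. For $n=4$ a second-variation argument bounds $\diam\Sigma_2$; for $n=5$ one needs the slice-and-dice procedure (the $3$-dimensional $\mu$-bubble has no diameter bound). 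The contradiction then comes from the coarse filling property of the universal cover (the analogue of Proposition \ref{Proposition:uniformly-acyclic}): the small pieces can be filled in bounded neighborhoods, producing a filling of $\Lambda$ disjoint from $\sigma$, contradicting the linking number. None of this transports a homology class to a lower-dimensional aspherical target, and the dimension restriction $n\le5$ enters through the slice-and-dice/diameter step, not through an interplay of the warping function with a conformal change. If you want a correct write-up, either cite the theorem as the paper does, or reproduce the linking-plus-filling contradiction; the rational-essentialness induction as stated cannot be repaired from the cited literature.
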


Let $T^n$ be the $n$-dimensional torus.  The Geroch conjecture states that $T^n$ does not admit a metric of positive scalar curvature. This can be seen as a special case of the $K(\pi,1)$ conjecture.  The Geroch conjecture was resolved by Schoen-Yau \cite{schoen1979existence} for $n\le 7$ and by Gromov-Lawson \cite{gromov1980spin} in all dimensions.  In a different direction, Brendle-Hirsch-Johne \cite{brendle2024generalization} have generalized the Geroch conjecture to show that a closed manifold $M^n = N^{n-m} \times T^{m}$ does not admit a metric with positive $m$-intermediate curvature when $n\le 7$. Here the $m$-intermediate curvature (see Definition \ref{def: intermediate curvature}) interpolates between the Ricci curvature ($m = 1$) and the scalar curvature ($m = n-1$). 

\begin{theorem}[Brendle-Hirsch-Johne \cite{brendle2024generalization}] 
\label{Theorem:Generalized-Geroch} 
A closed manifold $M^n = N^{n-m} \times T^{m}$ does not admit a metric with positive $m$-intermediate curvature when $n\le 7$. 
\end{theorem}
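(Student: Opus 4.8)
The plan is to run the iterated, weighted minimal-hypersurface slicing argument of Schoen--Yau, in the form developed by Brendle--Hirsch--Johne: starting from a hypothetical metric of positive $m$-intermediate curvature on $N^{n-m}\times T^m$, one produces a descending tower of stable weighted minimal hypersurfaces along which positivity of the $m$-intermediate curvature degrades to positivity of the $(m-1)$-intermediate curvature in one lower dimension at each step, so that after $m-1$ steps one arrives at a closed manifold carrying a positive (weighted) Ricci curvature together with a nontrivial first cohomology class --- which is impossible.

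Concretely, suppose for contradiction that $g$ is a metric on $M^n = N^{n-m}\times T^m$ with $\mathfrak{C}_m>0$, where $n\le 7$ and $1\le m\le n-1$ (the case $m=1$ being immediate, so assume $m\ge 2$). Let $\beta_1,\dots,\beta_m\in H^1(M;\R)$ be the pullbacks of a basis of $H^1(T^m;\R)$ under the projection $M\to T^m$, so that $\beta_1\smile\cdots\smile\beta_m$ is Poincar\'e dual to $[N\times\{\mathrm{pt}\}]\ne 0$; in particular every subproduct of the $\beta_i$ is nonzero, and this persists after restricting to any submanifold dual to such a subproduct (a one-line cap-product computation). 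Set $\Sigma_0=M$ and $f_0\equiv 1$, and for $k=1,\dots,m-1$ define $\Sigma_k\subset\Sigma_{k-1}$ to be a smooth embedded hypersurface minimizing the weighted area $\int_\Sigma f_{k-1}\,d\mathcal H^{n-k}$ in the (nonzero, hence nonempty) homology class Poincar\'e dual to the restriction of $\beta_k$; equivalently $\Sigma_k$ is area-minimizing for the conformal metric $f_{k-1}^{2/(n-k)}g$. Existence follows from geometric measure theory and smoothness from $\dim\Sigma_{k-1}\le n\le 7$. Passing to a connected component, we may assume $\Sigma_k$ is connected and still carries a nonzero product of the restricted classes $\beta_{k+1},\dots,\beta_m$.

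The first variation makes each $\Sigma_k$ stationary for the weighted area, and the second variation yields the corresponding stability inequality. Feeding in the first positive eigenfunction $w_k$ of the stability operator and setting $f_k=f_{k-1}\,w_k^{\alpha_k}$ for the Brendle--Hirsch--Johne exponent $\alpha_k$, one rewrites the stability integrand using the Gauss equation: the ambient condition $\mathfrak{C}_{m-k+1}>0$ on $\Sigma_{k-1}$, evaluated on frames whose last vector is the unit normal $\nu_k$, splits into a tangential part which, via Gauss, is the intrinsic $(m-k)$-intermediate curvature of $\Sigma_k$ plus terms in the second fundamental form $A_k$, and a normal part of the shape $\sum_p R(e_p,\nu_k,e_p,\nu_k)$, which is precisely what the stability inequality controls; the exponent $\alpha_k$ is chosen so that, after a Cauchy--Schwarz step, the remaining terms assemble into a nonnegative expression in $|A_k|^2$, $|\del\log w_k|^2$, and $\mathfrak{C}_{m-k+1}$. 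The outcome is that $(\Sigma_k,f_k)$ has positive weighted $(m-k)$-intermediate curvature. Iterating $m-1$ times produces a closed manifold $\Sigma_{m-1}^{\,n-m+1}$ with a positive weight $f_{m-1}$ whose weighted $1$-intermediate curvature --- a Bakry--\'Emery Ricci tensor built from $f_{m-1}$ --- is positive.

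Finally, the restriction of $\beta_m$ to $\Sigma_{m-1}$ is nonzero in $H^1(\Sigma_{m-1};\R)$, since $\beta_m\smile(\beta_1\smile\cdots\smile\beta_{m-1})\ne 0$ and $\beta_1\smile\cdots\smile\beta_{m-1}$ is dual to $[\Sigma_{m-1}]$; hence $b_1(\Sigma_{m-1})\ge 1$ and $\pi_1(\Sigma_{m-1})$ is infinite. But a closed manifold equipped with a positive Bakry--\'Emery Ricci curvature has finite fundamental group --- by the weighted Bonnet--Myers theorem when $\dim\Sigma_{m-1}\ge 3$, and, in the borderline case $\dim\Sigma_{m-1}=2$ (that is, $m=n-1$), by the Gauss--Bonnet theorem after conformally absorbing the weight. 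This contradiction proves the theorem. I expect the crux to be the second-variation computation of the preceding paragraph: verifying that the exponents $\alpha_k$ can be chosen so that the leftover terms in the stability inequality are manifestly nonnegative. This is exactly where the algebraic structure of $\mathfrak{C}_m$ is used, and where any constraint on the pair $(m,n)$ would enter; the hypothesis $n\le 7$ otherwise intervenes only through the smoothness of the minimizing hypersurfaces in the tower.
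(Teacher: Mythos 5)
This theorem is not proved in the paper at all: it is quoted from Brendle--Hirsch--Johne, and the paper only summarizes their strategy (weighted minimal slicings, each new weight being the old weight times the first eigenfunction of the weighted stability operator). So your proposal has to be measured against that argument, and as written it has a genuine gap exactly at the step you yourself flag as the crux. Your plan asserts a \emph{pointwise} inductive descent: that each slice $(\Sigma_k, f_k)$ carries positive ``weighted $(m-k)$-intermediate curvature,'' ending after $m-1$ steps with a positive Bakry--\'Emery Ricci tensor to which a weighted Bonnet--Myers theorem applies against $b_1\ge 1$. That is not what the second variation gives you, and it is not how Brendle--Hirsch--Johne argue. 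The stability inequality for the $f_{k-1}$-weighted area controls only a trace-type quantity in the weight (a term of the form $f^{-1}\lap f$ plus curvature and second fundamental form sums), so what can be propagated is a condition of Shen--Ye ``conformal Ricci'' type, $\ric - f^{-1}\lap f + c\,\vert\grad \ln f\vert^2 \ge \kappa$, not a Hessian-type Bakry--\'Emery tensor; Myers-type conclusions for such trace conditions are precisely the dimension-dependent theorems of Shen--Ye, and the present paper can verify the required hypotheses on the slices only for $n\le 6$ (explicitly failing to do so at $n=7$). So the endgame ``weighted Bonnet--Myers on $\Sigma_{m-1}$'' is not available off the shelf, and the unspecified exponents $\alpha_k$ making ``the leftover terms manifestly nonnegative'' are exactly the content that is missing. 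Brendle--Hirsch--Johne avoid any intermediate pointwise positivity statement: they slice all the way down to the closed $(n-m)$-dimensional bottom slice $\Sigma_m$, keep all $m$ stability inequalities in integral form with the nested weights $\rho_k=\rho_{k-1}u_k$, and obtain the contradiction by a single integration over $\Sigma_m$, where algebraic lemmas on the second-fundamental-form terms make the error terms nonnegative.

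A second, related problem is your closing remark that $n\le 7$ ``otherwise intervenes only through the smoothness of the minimizing hypersurfaces.'' The restriction is essential in the curvature algebra, not merely in regularity: Xu's counterexamples (cited in this paper) show the statement itself fails in dimensions $n>7$, and in Brendle--Hirsch--Johne the bound $n\le 7$ is used exactly in the algebraic nonnegativity of the quadratic expressions in the second fundamental forms --- the step your proposal leaves unverified. So while your homological setup (torus classes, nontriviality of each slice) and the general weighted-slicing framework match the cited proof, the central computation is absent and the proposed intermediate structure (pointwise weighted descent plus Bakry--\'Emery Myers) would need substantial and, in this generality, unavailable input to be made rigorous.
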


In this paper, we propose the following conjecture which relates the topology of a manifold's  universal cover with the existence of metrics with positive $m$-intermediate curvature. We note that both Theorem \ref{Theorem:Aspherical} and Theorem \ref{Theorem:Generalized-Geroch} occur as special cases of this conjecture.

\begin{conj}\label{generalized-Kpi1} 
Let $M^n$ be a closed manifold of dimension $3\leq n\leq 7$. Assume that the universal cover $\overline M$ of $M$ satisfies 
\[
H_n(\overline M,\Z) = H_{n-1}(\overline M,\Z) = \hdots = H_{n-m+1}(\overline M,\Z) = 0. 
\]
Then $M$ does not admit a metric with positive $m$-intermediate curvature. 
\end{conj}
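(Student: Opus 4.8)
The plan is to argue by contradiction and by a descending induction on $m$. At each step one cuts $M$ down to a closed hypersurface of one lower dimension carrying positive $(m-1)$-intermediate curvature for a conformally modified metric, and whose universal cover still satisfies the required homology vanishing; the induction terminates at $m=1$, where positive $1$-intermediate curvature is positive Ricci curvature and Bonnet--Myers gives a contradiction. Concretely, suppose $M^n$ is closed, $3\le n\le 7$, $H_n(\cl M,\Z)=\cdots=H_{n-m+1}(\cl M,\Z)=0$, and $g$ has positive $m$-intermediate curvature. Since the universal cover of any manifold is orientable, if $\pi_1(M)$ were finite then $\cl M$ would be a closed orientable $n$-manifold with $H_n(\cl M,\Z)=\Z\ne 0$; so $\pi_1(M)$ is infinite and $\cl M$ is noncompact. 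The termination step (which includes the base case $m=1$) is uniform in the dimension: a closed $d$-manifold with $d\ge 2$ and positive Ricci curvature has finite fundamental group by Bonnet--Myers, hence has a closed orientable universal cover with nonzero top homology $H_d=\Z$, contradicting the homology vanishing that survives the descent.

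For the inductive step ($m\ge 2$) one must produce a closed, two-sided, embedded hypersurface $\Sigma^{n-1}\subset M$ and a positive function $u\in C^\infty(\Sigma)$ so that the conformally modified metric $\widehat g=u^{\alpha}\,g|_\Sigma$ — with the exponent $\alpha$ forced by the computation below — has positive $(m-1)$-intermediate curvature, while the universal cover $\widetilde\Sigma$ of $\Sigma$ satisfies $H_k(\widetilde\Sigma,\Z)=0$ for $n-m+1\le k\le n-1$. Then the same argument applies to $(\Sigma^{n-1},\widehat g)$ with $(n,m)$ replaced by $(n-1,m-1)$, closing the induction. The curvature half of this step is the weighted minimal slicing of Brendle--Hirsch--Johne \cite{brendle2024generalization}: one realizes $\Sigma$ as a stable critical point of a weighted-area or $\mu$-bubble functional, feeds the first eigenfunction $u>0$ of the associated stability operator into the second-variation inequality, and invokes the key algebraic inequality of \cite{brendle2024generalization} — among the coefficients that appear when the $m$-intermediate curvature of $M$ is traced over the directions tangent to $\Sigma$ together with the ambient normal — to conclude positivity of the quantity controlling the $(m-1)$-intermediate curvature of $(\Sigma,\widehat g)$.

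The topological half — arranging that the vanishing of $H_k(\cl M,\Z)$ descends one degree — is where the hypothesis on the universal cover is consumed, and it follows the strategy developed for the $K(\pi,1)$ conjecture by Gromov \cite{gromov2020no} and Chodosh--Li \cite{chodosh2024generalized} (when $\pi_1(M)$ admits a surjection onto $\Z$ on some cover, one may instead take $\Sigma$ Poincar\'e dual to that class in a cyclic cover and argue as in Schoen--Yau \cite{schoen1979existence}). The vanishing of $H_{n-m+1}(\cl M,\Z)$ is used to fill appropriate cycles in $\cl M$; this both selects the slice $\Sigma$ and, after lifting to $\widetilde\Sigma$, transports the vanishing of $H_{\ge n-m+1}(\cl M,\Z)$ up to degree $n-1$. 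One iterates $m-1$ times. The genuine difficulty is that $\Sigma$ must play both roles simultaneously — realizing the topological reduction while admitting the conformally modified positive-$(m-1)$-intermediate-curvature metric — which constrains the functional one minimizes; propagating both properties through the entire descent is the technical core.

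The main obstacle is the regularity of the geometric slices together with this double bookkeeping. Area-minimizing hypersurfaces and $\mu$-bubbles are smooth only when the ambient dimension is at most $7$, which is the source of the restriction $n\le 7$; more delicately, the $K(\pi,1)$-type slicing requires controlling the singular sets of the intermediate minimizers, and — as in \cite{chodosh2024generalized} — this succeeds when $n\le 5$ but only for some $m$ when $n=6$. The residual restriction on $m$ at $n=6$ comes from the coefficient inequality of \cite{brendle2024generalization}, which dictates whether the conformal substitution preserves positivity of the $(m-1)$-intermediate curvature at each stage; the values of $m$ for which one can route around both issues — by ordering the slicing suitably or by refining the stability computation — are precisely those covered, whereas for $n\in\{3,4,5\}$ neither obstruction is active and the full statement follows.
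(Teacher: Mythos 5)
You have not proved the stated result, and the route you sketch has two unsupported pivots. First, note that the statement is a conjecture which the paper itself only establishes partially (Theorem \ref{Theorem:Main-Theorem}: $n\in\{3,4,5\}$, $n=6$ with $m\le 4$, and $m=1$ in general); your closing paragraph concedes that your argument also only covers ``the values of $m$ for which one can route around both issues,'' so even on its own terms the proposal does not deliver the conjecture for all $3\le n\le 7$ (e.g.\ $n=7$, $m\in\{2,3,4\}$ and $n=6$, $m=5$ remain untouched, and the paper explicitly remarks that the Shen--Ye hypothesis could not be verified for $n=7$, $m\in\{2,3,4\}$).

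The deeper gaps are in the inductive step itself. (i) The curvature half: there is no result asserting that a stable weighted slice $\Sigma^{n-1}$ carries a conformal metric $u^{\alpha}g|_{\Sigma}$ of positive $(m-1)$-intermediate curvature. The Schoen--Yau conformal descent is special to scalar curvature; Brendle--Hirsch--Johne \cite{brendle2024generalization} do not descend curvature pointwise at all --- they build a weighted slicing whose existence is contradicted by an integral inequality, with the eigenfunctions entering multiplicatively as weights, not as conformal factors. Accordingly, the paper never produces a positively curved metric on the slice; it instead proves a \emph{diameter bound} for the slice via Shen--Ye's generalized Bonnet--Myers theorem (Theorem \ref{Theorem:generalized-Bonnet-Myers}) applied with weight a product of eigenfunctions, and for $n=6$, $m=4$ it must supplement this with a slice-and-dice argument and a Frankel-type theorem. (ii) The topological half: you assert that the universal cover of $\Sigma$ inherits $H_k=0$ for $n-m+1\le k\le n-1$, but a minimizing slice of an $m$-acyclic (or even aspherical) manifold need not have acyclic universal cover --- this failure is exactly why Chodosh--Li \cite{chodosh2024generalized}, and this paper, do \emph{not} induct on the manifold. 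The actual argument stays inside $\overline M$: one uses the linking construction (Proposition \ref{linking}) to find a geodesic line $\sigma$ and a far-away linked cycle $\Lambda$, takes the area minimizer $\Sigma_1$ with $\partial\Sigma_1=\Lambda$ and a $\mu$-bubble $\Sigma_2\subset\Sigma_1$, proves the uniform diameter bound for $\Sigma_2$, and then fills $\Sigma_2$ within a bounded neighborhood using the uniform filling property of $\overline M$ itself (Proposition \ref{Proposition:uniformly-acyclic}), contradicting the linking with $\sigma$; no homology-vanishing hypothesis is ever transferred to the slice. Without a proof of either (i) or (ii), the induction you propose does not get off the ground.
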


As our main result, we show that Conjecture \ref{generalized-Kpi1} is true for $n \in \{3,4,5\}$ and for most choices of $m$ when $n=6$.

\begin{theorem}[Main Theorem]
\label{Theorem:Main-Theorem}
    Suppose $M^n$ is a closed  manifold and that the universal cover $\overline M$ of $M$ satisfies 
\[
H_n(\overline M,\Z) = H_{n-1}(\overline M,\Z) = \hdots = H_{n-m+1}(\overline M,\Z) = 0. 
\]
Then $M$ does not admit a metric of positive $m$-intermediate curvature in any of the following cases: $n\in\{3,4,5\}$ and $m\in \{1,2,\dots,n-1\}$;  $n = 6$ and $m\in\{1,2,3,4\}$; $n \ge 7$ 
and $m = 1$.
\end{theorem}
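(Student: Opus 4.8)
The plan is to argue by contradiction, pass to the universal cover, and run a dimension reduction in the spirit of Schoen--Yau, Gromov--Lawson, Chodosh--Li, and Brendle--Hirsch--Johne. Suppose $M^n$ admits a metric $g$ of positive $m$-intermediate curvature and pull $g$ back to the universal cover $\overline M$; the result is a complete manifold of positive $m$-intermediate curvature whose integral homology vanishes in degrees $n, n-1, \dots, n-m+1$, and in particular $H_n(\overline M,\Z)=0$ forces $\overline M$ to be non-compact, so $\pi_1(M)$ is infinite. The case $m=1$ is then immediate: positive $1$-intermediate curvature is positive Ricci curvature, so Bonnet--Myers makes $\pi_1(M)$ finite, a contradiction; this settles $m=1$ in every dimension, including $n\ge 7$. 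For $m\ge 2$ --- which, among the asserted cases, forces $n\le 6$ --- one must descend to lower dimensions in order to reach a contradiction.

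The descent is run with $\mu$-bubbles (Gromov's generalized soap bubbles), not with area minimizers in a fixed homology class: the latter are unavailable here precisely because the hypotheses force the relevant homology classes of $\overline M$ to be trivial. Choosing a large compact ``band'' inside $\overline M$ together with a prescribing function that blows up at its two ends, I would minimize the associated prescribed--mean-curvature functional to produce a smooth closed hypersurface $\Sigma_1^{n-1}\subset\overline M$ (regularity is automatic since every ambient dimension that occurs is $\le 7$). Feeding the first eigenfunction of the stability operator into the Brendle--Hirsch--Johne--type stability inequality for positive $m$-intermediate curvature then endows $\Sigma_1$ with a metric satisfying a curvature condition one notch weaker --- an $(m-1)$-intermediate-type condition, up to lower-order error terms coming from the second fundamental form and the warping. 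Iterating this on $\Sigma_1$, then on $\Sigma_2$, and so on, one reaches after finitely many steps a closed manifold $Y$ of small dimension carrying positive scalar curvature, or positive Ricci curvature, or a closed surface; in parallel one must propagate the topology, using that each $\Sigma_k$ is null-homologous in the previous cover (so bounds a compact region), together with Poincar\'e--Lefschetz duality, the long exact sequences of that region, and --- when $\pi_1(\Sigma_k)$ fails to inject into the fundamental group of the ambient cover --- a Gromov--Lawson-style lifting to an intermediate cover, to conclude inductively that the universal cover of $\Sigma_k$ again has vanishing homology in the appropriate shifted range. When the descent terminates at an essential closed manifold $Y$ of dimension in $\{3,4,5\}$ carrying positive scalar curvature, Theorem~\ref{Theorem:Aspherical} gives a contradiction; termination at a positive-Ricci manifold of any dimension contradicts Bonnet--Myers, and termination at a surface contradicts Gauss--Bonnet.

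The hard part is making these two processes --- the topological bookkeeping and the curvature degradation --- run together, and it is their tension that pins down the admissible pairs $(n,m)$. The topological side is delicate because the slices are $\mu$-bubbles rather than minimizers of a prescribed class, so their essentialness must be established a posteriori, and because the universal cover of a slice is not a subspace of $\overline M$, so transferring the homology vanishing requires genuine work with the exact sequences of the bounding region and with intermediate covers. The curvature side is delicate because the Brendle--Hirsch--Johne inequality produces, at each step, error terms that in the model case $N^{n-m}\times T^m$ vanish thanks to the flat torus directions but here must be absorbed by the warping; this absorption is possible only when $n$ and $m$ are suitably constrained, and combined with the need for the final slice to have dimension at most $5$ (so that Theorem~\ref{Theorem:Aspherical} applies) and for all ambient dimensions to stay at most $7$ (so that the hypersurfaces are smooth), it yields exactly the cases $n\in\{3,4,5\}$ with all $m$, and $n=6$ with $m\le 4$. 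The remaining cases are genuinely beyond the method: when $n=6$ and $m=n-1=5$ the statement specializes to the assertion that a closed aspherical $6$-manifold has no metric of positive scalar curvature, i.e.\ the still-open $K(\pi,1)$ conjecture in dimension $6$; and when $n\ge 7$ with $m\ge 2$ the descent cannot be controlled through enough steps to bottom out in a dimension where the positive-scalar-curvature obstruction is available, and for $n\ge 8$ the ambient minimal-hypersurface regularity also fails.
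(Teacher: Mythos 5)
Your $m=1$ case (Bonnet--Myers on the non-compact universal cover) is correct, but for $m\ge 2$ your central mechanism --- degrade the curvature one notch per slice, transfer the vanishing-homology hypothesis to the universal covers of the slices, and terminate in a low-dimensional closed manifold where Theorem \ref{Theorem:Aspherical}, Bonnet--Myers, or Gauss--Bonnet gives the contradiction --- contains a genuine gap and is not how the argument can be made to close. There is no reason a $\mu$-bubble slice is aspherical (or even essential), and no mechanism is given, nor known, for propagating $H_*(\overline M,\Z)=0$ in the stated range to the universal cover of a slice; Theorem \ref{Theorem:Aspherical} applies to $K(\pi,1)$ manifolds, so invoking it for whatever manifold the descent terminates at is unjustified. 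The paper uses the homological hypotheses on $\overline M$ in an entirely different, quantitative way: first, via the linking construction (Proposition \ref{linking}) one gets a geodesic line $\sigma$ and a null-homologous $\Lambda^{n-2}$ linked with $\sigma$ at distance $\ge L$, and the first slice is the area-minimizing \emph{filling} $\Sigma_1$ with $\partial\Sigma_1=\Lambda$ --- a Plateau problem, available precisely because $\Lambda$ is null-homologous, contrary to your claim that minimizers are unavailable; second, via the uniform filling property (Proposition \ref{Proposition:uniformly-acyclic}), any cycle of bounded diameter in degrees $\ge n-m+1$ bounds inside a neighborhood of bounded size. The contradiction is then geometric, not topological-inductive: the final slice (or each piece of it) has diameter bounded independently of $L$, hence can be filled far from $\sigma$, contradicting the linking. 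Without the line, the linked cycle, and the uniform filling estimate, your outline has no identified source of contradiction.

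The curvature side is likewise not a formal ``one notch weaker per step'' descent bottoming out in positive scalar curvature. The diameter bounds come from feeding the \emph{product} of the first eigenfunctions of the successive weighted stability operators into Shen--Ye's generalized Bonnet--Myers theorems (Theorems \ref{Theorem:generalized-Bonnet-Myers} and \ref{Shen-Ye-Higher-Dimension}); verifying their dimension-dependent hypotheses is exactly where the restriction to $n\le 6$ (and the failure at $n=7$, $m\in\{2,3,4\}$) comes from --- ambient regularity holds through $n\le 7$ and is not the obstruction you suggest. Moreover, for $n=6$, $m=4$ the second slice is $4$-dimensional and admits no diameter bound at all; one must run a slice-and-dice argument in which the dicing pieces are $3$-dimensional free-boundary $\mu$-bubbles, with the Gauss--Bonnet step of Chodosh--Li replaced by the Frankel-type theorem for positive conformal Ricci curvature (Theorem \ref{Theorem:ConformalRicciFrankel}) to show the dicing pieces have connected boundary, and the $n=6$, $m=3$ case requires a further weighted functional with weight $u^a$ and a delicate choice of parameters. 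None of these ingredients appears in your outline, and the inductive topological bookkeeping you propose in their place is the step that fails.
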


In dimension $n = 6$, the $5$-intermediate curvature is equivalent to the scalar curvature, and positive $4$-intermediate curvature is a slight strengthening of positive scalar curvature.  As a corollary of the $n= 6$ and $m=4$ case of our main theorem, we see that a closed, aspherical $6$-manifold cannot admit a metric with positive $4$-intermediate curvature.  This provides some evidence for the $K(\pi,1)$ conjecture in dimension 6. 

\begin{corollary}
    A closed, aspherical $6$-manifold does not admit a metric with positive 4-intermediate curvature. 
\end{corollary}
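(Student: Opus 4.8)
The corollary is an immediate consequence of the $n = 6$, $m = 4$ case of Theorem~\ref{Theorem:Main-Theorem}, and my plan is simply to reduce to that result. If $M^6$ is closed and aspherical, then its universal cover $\overline M$ is contractible, so $H_k(\overline M,\Z) = 0$ for every $k \ge 1$; in particular
\[
H_6(\overline M,\Z) = H_5(\overline M,\Z) = H_4(\overline M,\Z) = H_3(\overline M,\Z) = 0,
\]
which is exactly the vanishing hypothesis of Theorem~\ref{Theorem:Main-Theorem} with $n = 6$ and $m = 4$ (so that $n - m + 1 = 3$). That theorem then applies and rules out positive $4$-intermediate curvature on $M$. (Note that a closed aspherical manifold of positive dimension must have infinite fundamental group, since a closed contractible manifold cannot exist; hence $\overline M$ is noncompact and $H_6(\overline M,\Z) = 0$ holds automatically, and the real content of the hypothesis is the vanishing of $H_3$, $H_4$, and $H_5$.)

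All the difficulty is thus in the $n = 6$, $m = 4$ case of Theorem~\ref{Theorem:Main-Theorem}, which I would prove in the Schoen--Yau minimal-hypersurface tradition, reorganized through Gromov's $\mu$-bubbles and the Chodosh--Li \cite{chodosh2024generalized} treatment of aspherical manifolds, combined with the weighted slicing of Brendle--Hirsch--Johne \cite{brendle2024generalization}. Arguing by contradiction, suppose $g$ is a metric on $M^6$ with positive $4$-intermediate curvature, and lift it to the complete noncompact universal cover $\overline M$, on which the curvature bound becomes uniform. The plan is to build an iterated family of codimension-one slices
\[
\overline M = \Sigma_0 \supset \Sigma_1 \supset \Sigma_2 \supset \Sigma_3, \qquad \dim \Sigma_3 = 3,
\]
in which each $\Sigma_{k+1}$ is produced inside $\Sigma_k$ as a $\mu$-bubble (or, where a nontrivial homology class is available, as an area-minimizing hypersurface), minimizing an area functional weighted by a product of first eigenfunctions of the accumulated Jacobi-type operators exactly as in Brendle--Hirsch--Johne, so that positivity of the $4$-intermediate curvature descends to the successive slices as a weighted positivity. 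The homological hypothesis on $\overline M$ enters at each stage: it is what guarantees that a suitable slice exists, that it separates, and that the obstruction is inherited. The chain terminates on a closed $3$-manifold $\Sigma_3$ carrying a weighted positive-scalar-curvature structure together with topological data pulled back from $\overline M$, and there one closes the argument using a dimension-three obstruction in the tradition of Schoen--Yau \cite{schoen1979existence} and Gromov--Lawson \cite{gromov1983positive}.

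The step I expect to be the main obstacle is the propagation of the topological hypothesis through the slicing. In the product situation $N^{n-m} \times T^m$ of Brendle--Hirsch--Johne the torus factors hand one canonical classes to slice against and the analysis is essentially local, whereas here one has only homological vanishing in the universal cover, so each new slice must be manufactured inside a noncompact cover and its low-degree homology tracked carefully (through the relevant exact sequences) so that the next slicing step remains available and the obstruction has not been lost. Insisting that every slice be smooth is also what pins the dimension down: area-minimizing hypersurfaces are regular only through ambient dimension $7$, and this, together with the bookkeeping of the descent, is what confines the theorem to $n \le 6$ and, for $n = 6$, to $m \le 4$ --- the excluded case $m = 5$ in dimension six would force the chain down to a surface and is precisely the $K(\pi,1)$ conjecture for scalar curvature in dimension six, which is open. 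The base case $m = 1$, finally, is elementary in every dimension: positive $1$-intermediate curvature is positive Ricci curvature, so Bonnet--Myers forces $\overline M$ to be a closed simply connected $n$-manifold, contradicting $H_n(\overline M,\Z) = 0$.
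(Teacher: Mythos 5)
Your reduction of the corollary to the $n=6$, $m=4$ case of Theorem~\ref{Theorem:Main-Theorem} via contractibility of the universal cover (hence vanishing of $H_3,H_4,H_5,H_6$ of $\overline M$) is exactly the paper's argument and is correct. (The appended sketch of how you would prove that case of the Main Theorem itself drifts from the paper's actual method --- the paper concludes via Shen--Ye diameter bounds, a Frankel-type connected-boundary theorem for the dicing pieces, and a filling/linking contradiction in $\overline M$, not a three-dimensional scalar-curvature obstruction at the bottom of the slicing --- but since the Main Theorem is quotable here, this does not affect the corollary.)
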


Finally, we note that Chodosh-Li-Liokumovich \cite{chodosh2023classifying} have proven a mapping version of the $K(\pi,1)$ conjecture and a reformulation of the $K(\pi,1)$ conjecture as a classification theorem. We show that corresponding results also hold in our setting. 

\begin{theorem}
Suppose $M^n$ is a closed  manifold and that the universal cover $\overline M$ of $M$ satisfies 
\[
H_n(\overline M,\Z) = H_{n-1}(\overline M,\Z) = \hdots = H_{n-m+1}(\overline M,\Z) = 0. 
\]
Assume further that $N^n$ is a closed manifold with a non-zero degree map to $M^n$.
Then $N$ does not admit a metric of positive $m$-intermediate curvature in any of the following cases: $n\in\{3,4,5\}$ and $m\in \{1,2,\dots,n-1\}$;  $n = 6$ and $m\in\{1,2,3,4\}$; $n\ge 7$ and $m = 1$.
\end{theorem}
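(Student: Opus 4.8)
The plan is to establish this by re-running the proof of the Main Theorem with $N$ in place of $M$, after transporting its topological input along $f$; the mechanism is a covering-space reduction followed by a transfer argument, in the spirit of Chodosh--Li--Liokumovich's mapping version of the $K(\pi,1)$ conjecture \cite{chodosh2023classifying}. We may assume $M$ and $N$ are connected and oriented (otherwise pass to orientation double covers; all hypotheses are unaffected). First I would reduce to the case where $f_\ast\colon\pi_1(N)\to\pi_1(M)$ is surjective. Let $p\colon M'\to M$ be the connected cover with $p_\ast\pi_1(M')=\ima(f_\ast)$; then $f$ lifts to $f'\colon N\to M'$, and from $p_\ast f'_\ast[N]=f_\ast[N]=\deg(f)\,[M]\ne 0$ one gets $f'_\ast[N]\ne 0$ in $H_n(M';\Z)$, which is impossible unless $M'$ is closed. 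Hence $\ima(f_\ast)$ has finite index, $M'$ is a finite cover of $M$ with the same universal cover $\overline M$ (so the homological hypothesis persists verbatim), and $f'\colon N\to M'$ has nonzero degree with $(f')_\ast$ surjective on $\pi_1$. Replacing $(M,f)$ by $(M',f')$, from now on $\phi:=f_\ast\colon\pi_1(N)\twoheadrightarrow\pi:=\pi_1(M)$.

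Next I would build the equivariant comparison. Let $\overline N$ be the universal cover of $N$ and set $\hat N:=\overline N/\ker(\phi)$, a normal cover of $N$ with deck group $\pi$. Since $\pi_1(\hat N)=\ker(\phi)$ maps trivially to $\pi_1(M)$, the map $f$ lifts to a $\pi$-equivariant proper map $\hat f\colon\hat N\to\overline M$ covering $f$; as $\hat f$ is generically $\deg(f)$-to-one with matching orientations, it is a proper map of degree $\deg(f)$ between non-compact oriented $n$-manifolds (both are non-compact because $H_n(\overline M;\Z)=0$ forces $\pi$ infinite). Consequently $\hat f^\ast$ is injective on compactly supported rational cohomology (equivalently the umkehr map on locally finite rational homology satisfies $\hat f_\ast\hat f_!=\deg(f)\cdot\mathrm{id}$ and is injective), exactly as $f^\ast$ is injective on $H^\ast(M;\Q)$ since $f_\ast f^\ast=\deg(f)\cdot\mathrm{id}$.

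The core of the argument is to carry the Main Theorem's proof from $M$ to $N$. That proof is, in effect, an $m$-fold slicing driven by the topology of $\overline M$: from the vanishing $H_j(\overline M;\Z)=0$ for $j\ge n-m+1$ one extracts — through Poincaré--Lefschetz duality on the open manifold $\overline M$ and the Cartan--Leray spectral sequence of $\overline M\to M$ — a chain of cohomological nondegeneracies in $M$ and in its (possibly infinite) covers, which are used to select a tower of hypersurfaces (minimal hypersurfaces, $\mu$-bubbles, and weighted slices) terminating in a contradiction with positive $m$-intermediate curvature. Because $f_\ast$ is $\pi_1$-surjective, each cover of $M$ used in that argument is pulled back by $f$ to the corresponding cover of $N$ (with $\hat N\to\overline M$ covering $f$ $\pi$-equivariantly), and because $\deg(f)\ne 0$ the injectivity of $f^\ast$ (respectively of $\hat f^\ast$ on compactly supported cohomology) transports each of these nondegeneracies from $M$ to $N$. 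The identical construction therefore applies to $N$ and excludes positive $m$-intermediate curvature in precisely the ranges $n\in\{3,4,5\}$; $n=6$, $m\le 4$; and $m=1$, since those are exactly the ranges in which the Main Theorem's slicing succeeds and the restrictions are purely dimensional. (For $m=1$ this is elementary and needs none of the above: $H_n(\overline M;\Z)=0$ forces $\pi_1(M)$ infinite, and a nonzero-degree map from a closed manifold with finite fundamental group to one with infinite fundamental group has degree zero, so $\pi_1(N)$ is infinite and $N$ admits no metric of positive Ricci curvature by Bonnet--Myers.)

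The main obstacle is this last step: one must verify that the entire inductive construction of the Main Theorem can be pulled back along $f$ — that the transported cohomology classes are still represented by properly embedded submanifolds of the correct codimension, that the compactness invoked at the various stages is preserved, and that ``nonzero degree'' descends through the whole tower of slices so that the terminal positive-scalar-curvature obstruction still fires. In practice I would avoid the bookkeeping by stating and proving the engine of the paper directly in the relative form ``a closed $n$-manifold admitting a nonzero-degree map to a manifold whose universal cover satisfies $H_n=\dots=H_{n-m+1}=0$ admits no metric of positive $m$-intermediate curvature'', so that the Main Theorem is recovered by taking the identity map and the present statement is immediate.
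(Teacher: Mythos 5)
Your reduction steps are sound and coincide with the paper's route: passing to the cover of $M$ corresponding to $\ima f_*$ and then lifting to a proper map $\hat f\colon \hat N\to \overline M$ with $\deg\hat f=\deg f$ is exactly the content of the Chodosh--Li--Liokumovich lifting lemma (Lemma \ref{lem: lifting}), which the paper simply invokes; your elementary treatment of $m=1$ via infinite fundamental group and Bonnet--Myers is also fine. The genuine gap is the core step, which you yourself flag as ``the main obstacle'' and then defer: you never show how the Main Theorem's argument transfers along $\hat f$, and the mechanism you sketch is not one that can work. You propose to transport ``cohomological nondegeneracies'' via injectivity of $f^*$ (or of $\hat f^*$ on compactly supported cohomology) so that ``the identical construction applies to $N$''. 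But the construction cannot be run in $N$ or $\hat N$ alone: the homology-vanishing hypotheses concern $\overline M$, and $\hat N=\overline N/\ker\phi$ satisfies no such vanishing, so neither the linking configuration of Proposition \ref{linking} nor the uniform filling property of Proposition \ref{Proposition:uniformly-acyclic} is available in $\hat N$. Your description of the Main Theorem's proof (Cartan--Leray spectral sequence, duality statements ``in $M$ and its covers'') also misrepresents it; the argument works directly with cycles in $\overline M$, not with cohomological nondegeneracy statements that could be pulled back by $f^*$.

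What is actually needed is a hybrid argument mediated by $\hat f$: the geodesic line $\sigma$ and the linked null-homologous cycle $\Lambda$ are constructed in $\overline M$; the area minimizer, the weight functions $h$ (built from a smoothing of $d_{\overline M}(\cdot,\sigma)$ composed with $\hat f$), the $\mu$-bubbles, and all diameter estimates live in $\hat N$, where the lifted metric has positive $m$-intermediate curvature; and the final contradiction is reached by pushing the bounded-diameter slices forward by $\hat f$, filling their images in $\overline M$ within a uniform radius via Proposition \ref{Proposition:uniformly-acyclic}, and contradicting the linking with $\sigma$ using $\deg\hat f\neq 0$. For the distance separation between the slices and $\hat f^{-1}(\sigma)$, and for the bounded diameter of the pushed-forward cycles, one needs $\hat f$ to be Lipschitz --- the point of the Remark following Lemma \ref{lem: lifting} --- a property your write-up never mentions or uses. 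Your closing suggestion to ``prove the engine in relative form'' is precisely the statement to be proved, not a proof of it; as written, the proposal establishes only the covering reduction, not the theorem.
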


\begin{theorem}
    Suppose $M^n$ is a closed manifold which admits positive $m$-intermediate curvature. Suppose further that  
    \begin{enumerate}
        \item $n=5$, $m=2$, and $\pi_2(M^5)=0$; or
        \item $n=6$, $m=4$, and $\pi_2(M^6)=\pi_3(M^6)=\pi_4(M^6)=0$.
    \end{enumerate}
    Then a finite covering of $N$ is homeomorphic to $S^n$ or connected sums of $S^{n-1}\times S^1$.
 \end{theorem}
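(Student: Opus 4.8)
### Proof Strategy

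The plan is to follow the scheme of Chodosh--Li--Liokumovich \cite{chodosh2023classifying}, combining the obstruction result from our Main Theorem (Theorem \ref{Theorem:Main-Theorem}) with a surgery-theoretic argument to recover the topology of the universal cover. In both cases the hypothesis on the homotopy groups $\pi_2,\dots,\pi_m$ will be used to kill the low-degree homology of the universal cover, and the positivity of the $m$-intermediate curvature will then force the cover to be a homotopy sphere in the relevant range.

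First I would pass to the universal cover $\overline{M}$ and argue that $\overline{M}$ must be non-compact with the homology vanishing in a suitable range. Concretely, in case (1) with $n=5$, $m=2$: the condition $\pi_2(M^5)=0$ together with the Hurewicz theorem (applied to $\overline{M}$, which is simply connected by construction) gives $H_2(\overline{M},\Z)=0$, and one further needs $H_2(\overline M)$ vanishing—after which, if $\overline M$ were closed, it would satisfy $H_5(\overline M,\Z)=H_4(\overline M,\Z)=0$ by Poincar\'e duality and the $\pi_2$ hypothesis, contradicting our Main Theorem. Thus $\overline M$ is non-compact, so $H_5(\overline M,\Z)=0$ automatically, and $H_4(\overline M,\Z)=0$ by Poincar\'e--Lefschetz duality together with the end structure. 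A parallel argument in case (2) with $n=6$, $m=4$ uses $\pi_2=\pi_3=\pi_4=0$ and the Hurewicz theorem to kill $H_2,H_3,H_4$ of $\overline M$, whence Poincar\'e duality for a hypothetical closed $\overline M$ would give the vanishing $H_6=H_5=H_4=H_3=0$ needed to invoke the Main Theorem for $(n,m)=(6,4)$; so again $\overline M$ is non-compact.

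Next, with $\overline M$ non-compact, simply connected, and $H_k(\overline M,\Z)=0$ for $k$ in the critical range, I would invoke the structure theory for open manifolds of positive scalar curvature (positive $m$-intermediate curvature is at least as strong as positive scalar curvature in these dimensions for the values of $m$ in question—indeed $5$-intermediate curvature is scalar curvature when $n=6$, and $2$-intermediate curvature dominates scalar curvature when $n=5$) to conclude that $\overline M$ is homeomorphic to $\R^n$, or more precisely that $M$ itself is finitely covered by $S^n$ or by a connected sum $\#_k(S^{n-1}\times S^1)$. This last step mirrors the classification statement in \cite{chodosh2023classifying}: one shows $\pi_1(M)$ is virtually free, applies the prime decomposition, and uses the resolution of the Poincar\'e conjecture in dimensions $5$ and $6$ (via the $h$-cobordism/$s$-cobordism theorems, valid since the fundamental groups in play are good) to identify the simply connected prime summands with $S^n$.

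The main obstacle I anticipate is the passage from ``$\overline M$ has trivial homology in the relevant range'' to ``$M$ is virtually $S^n$ or $\#_k(S^{n-1}\times S^1)$,'' i.e., controlling the full topology rather than just the homology. In dimension $5$ this requires the topological $s$-cobordism theorem for fundamental groups that are virtually free (hence good in the sense of Freedman--Quinn), and in dimension $6$ it is smooth surgery theory; in both cases one must rule out the appearance of fake summands and exotic phenomena, and carefully track the identification of the universal cover's homotopy type under the connected-sum decomposition. A secondary technical point is verifying that positive $m$-intermediate curvature genuinely implies positive scalar curvature (or the relevant stable minimal hypersurface slicing condition) for the specific pairs $(5,2)$ and $(6,4)$, which follows from the definition of $m$-intermediate curvature (Definition \ref{def: intermediate curvature}) by a trace computation but should be stated explicitly.
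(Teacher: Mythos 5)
There is a genuine gap: the heart of this theorem is proving that $\pi_1$ is \emph{virtually free}, and your proposal never supplies a mechanism for this. The paper gets virtual freeness in case (1) from Ma's theorem on closed $5$-manifolds with positive bi-Ricci ($2$-intermediate) curvature, and in case (2) from a quantitative argument: the filling estimates extracted from the $(n,m)=(6,4)$ slice-and-dice construction (Theorem \ref{thm: fillingestimate} together with Proposition \ref{Proposition:uniformly-acyclic}) show that every closed embedded $4$-submanifold of the universal cover bounds in a uniformly bounded neighborhood, which via Chodosh--Li--Liokumovich (their Proposition 8 and Corollary 14) bounds the diameter of components of distance level sets and forces $\pi_1$ to be virtually free; only then does the Gadgil--Seshadri classification (free $\pi_1$ plus the vanishing homotopy groups) identify a finite cover with $S^n$ or $\#(S^{n-1}\times S^1)$. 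Your plan instead reduces positive $m$-intermediate curvature to positive scalar curvature and appeals to a ``structure theory for open manifolds of positive scalar curvature'' on the universal cover; no such theory yields the conclusion, and the reduction discards exactly the curvature strength that matters. Concretely, $T^2\times S^3$ is a closed $5$-manifold with positive scalar curvature and $\pi_2=0$ whose fundamental group $\Z^2$ is not virtually free, so in case (1) PSC plus the stated homotopy hypothesis cannot imply the conclusion; the full bi-Ricci condition is essential. In dimension $6$ the situation is worse, since even the CLL-type PSC classification only exists through dimension $5$, so case (2) genuinely needs the new filling estimates proved in this paper, which your outline does not reproduce or replace.

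A secondary error: your non-compactness argument asserts that a closed, simply connected $\overline M$ would satisfy $H_5(\overline M,\Z)=0$ (resp.\ $H_6=0$) ``by Poincar\'e duality,'' contradicting the Main Theorem. For a closed orientable $n$-manifold $H_n(\overline M,\Z)\cong\Z\neq 0$, so the Main Theorem's hypotheses are never met by a compact cover and no contradiction arises this way; the Main Theorem is an obstruction in the wrong direction for the use you intend. (Non-compactness, when needed, is not the issue; if $\overline M$ were compact then $\pi_1$ would be finite and the argument is easy.) Finally, your surgery/$s$-cobordism route to identify the summands is a legitimate alternative to the paper's citation of Gadgil--Seshadri, but as written it is only a sketch and does not repair the missing virtually-free step, which is where the actual geometric content lies.
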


\subsection{Further Discussion and Proof Ideas}

Schoen and Yau's proof \cite{schoen1979existence} of the Geroch conjecture uses minimal hypersurfaces and an inductive descent argument. A key step in the argument is to show that if $(M,g)$ has positive scalar curvature and $\Sigma$ is an area minimizing hypersurface in $M$ then $\Sigma$ admits a conformal metric with positive scalar curvature. In fact, the necessary conformal factor is a suitable power of the first eigenfunction for the second variation of area. 

Brendle, Hirsch, and Johne \cite{brendle2024generalization} proved a generalization of the Geroch conjecture to closed manifolds of the form $M^n = N^{n-m} \times T^m$.  To do so, they introduced a new curvature condition called $m$-intermediate curvature.

\begin{defn}\label{def: intermediate curvature}
Suppose $(M^n,g)$ is Riemannian manifold.  Given a collection of orthonormal vectors $\{e_1,\cdots,e_m\}$ at a point $p\in M$, let $\{e_1,\cdots e_m,e_{m+1},\hdots, e_n\}$ be an extension to an orthonormal basis of $T_pM$. The {\em $m$-intermediate curvature} $C_m$ of the orthonormal vectors $\{e_1,\cdots,e_m\}$ is defined by 
\[
C_m(e_1,\cdots,e_m):=\sum_{p=1}^m\sum_{q=m+1}^n R_M(e_p,e_q,e_p,e_q).
\]
We say that $(M,g)$ has {\em positive $m$-intermediate curvature} if for any choice of orthonormal vectors $\{e_1,\cdots,e_m\}$ at any point $p\in M$, we have $C_m(e_1,\cdots,e_m)>0$. 
\end{defn}

We note that the 1-intermediate curvature is precisely the Ricci curvature and that the $(n-1)$-intermediate curvature is equal to the scalar curvature up to a constant factor.  The 2-intermediate curvature had previously been introduced and studied by Shen and Ye \cite{shen1996stable} as {\it bi-Ricci} curvature. 

Brendle, Hirsch, and Johne's main result says that $M^n = N^{n-m}\times T^m$ cannot admit a metric with positive $m$-intermediate curvature when $n\le 7$. Their proof is based on the fact that certain weighted minimal slicings do not exist in manifolds with positive $m$-intermediate curvature. In the construction of their weighted slicings, each successive weight $\rho_{k+1}$ is obtained from the previous weight $\rho_k$ by multiplying by the first eigenfunction for the second variation of $\rho_k$-weighted area. This is reminiscent of the Schoen-Yau descent argument. In dimension $n>7$, Xu \cite{xu2024dimension} constructed interesting counter-examples to Brendle-Hirsch-Johne's generalization of the Geroch Conjecture, which is the reason why we only expect Conjecture \ref{generalized-Kpi1} to be true for $3\leq n\leq 7$.

Recall that the Geroch conjecture is a particular case of the $K(\pi,1)$ conjecture. 
In their 1987 paper, Schoen and Yau \cite{schoen1987structure} put forth an outline for how to prove the $K(\pi,1)$ conjecture in dimension $n=4$. Chodosh and Li's proof \cite{chodosh2024generalized} is inspired by this outline.  Let $M^n$ be a closed, aspherical manifold.  Assume for contradiction that $M$ has a metric with positive scalar curvature. Working in the universal cover $\overline M$ of $M$, Chodosh and Li construct a geodesic line $\sigma$ and a closed null-homologous submanifold $\Lambda^{n-2}$ which is far away from $\sigma$ but linked with $\sigma$.   Let $\Sigma_1$ be the area minimizing filling of $\Lambda$.  Using a clever choice of prescribed mean curvature functional, Chodosh and Li construct a $\mu$-bubble $\Sigma_2\subset \Sigma_1$ such that $\Lambda$ is homologous to $\Sigma_2$ in $\Sigma_1$ and $\Sigma_2$ lies far away from $\sigma$. When $n= 4$, the $\mu$-bubble $\Sigma_2$ is 2-dimensional and it is possible to show with a second variation argument that each component of $\Sigma_2$ has uniformly bounded diameter.  Thus, by a general property of universal covers, it is possible to fill each component of $\Sigma_2$ within a fixed sized neighborhood in $\overline M$. In this manner, one obtains a new filling of $\Lambda$ which does not intersect $\sigma$ and this is a contradiction.

When $n=5$, new difficulties arise because the $\mu$-bubble $\Sigma_2$ is 3-dimensional and may not have a diameter bound. To overcome this, Chodosh and Li developed the so-called slice-and-dice argument.  They first slice $\Sigma_2$ open along suitable $\mu$-bubbles $\Sigma_{3,1},\hdots,\Sigma_{3,k}$ to obtain a manifold $\hat \Sigma_2$ with simple 2 dimensional homology.  Then they dice $\hat \Sigma_2$ into pieces with small diameter using free boundary $\mu$-bubbles.  These free boundary $\mu$-bubbles are two dimensional, so it is possible to use a second variation argument together with the Gauss-Bonnet theorem to show that each dicing surface is a disk. Crucially, the boundary of a disk is connected, and this gives a simple combinatorial structure to the output of the slice and dice procedure. This can then be used to construct a filling of $\Sigma_2$ within a fixed sized neighborhood in $\overline M$ as before. 

Our proof of Theorem \ref{Theorem:Main-Theorem} is based on the argument of Chodosh and Li. The main observation is that positive $m$-intermediate curvature should imply a diameter bound on the slice $\Sigma_{m-1}$ in low dimensions.   A key tool in this regard is the generalized Bonnet-Myers theorem discovered by Shen and Ye \cite{shen1997geometry}. We will apply the generalized Bonnet-Myers theorem with a weight equal to a product of eigenfunctions for the stability operators of certain weighted area and $\mu$-bubble functionals. We remark that the case $n=6$ and $m=3$ is very delicate and requires the full strength of the Shen-Ye generalized Bonnet-Myers theorem as well as the use of a carefully chosen weighted $\mu$-bubble functional.

To handle the case $n=6$ and $m=4$, we adapt the slice and dice procedure of Chodosh and Li.  Since the dicing submanifolds are now 3-dimensional, we can no longer use the Gauss-Bonnet theorem to determine their topology.  Nevertheless, we can prove the following Frankel type theorem which implies that the dicing submanifolds have connected boundary: 

\begin{theorem}
    Let $\Sigma$ be a $3$-dimensional compact Riemannian manifold with boundary. Assume $\Sigma$ admits a positive function $f$ which satisfies 
    \[
    \ric(v,v) - f^{-1}\lap f + \frac{1}{2} \vert \grad \ln f\vert^2 \ge 0
    \]
    for all unit vectors $v$. Moreover, suppose that $\bd_\eta f = -f H_{\bd \Sigma}$ where $\eta$ is the unit outward normal to $\bd \Sigma$. Then $\bd \Sigma$ is connected. 
\end{theorem}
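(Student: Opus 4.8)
The plan is to run a Frankel-type argument: assume $\bd\Sigma$ is disconnected and derive a contradiction from a short geodesic joining two of its components. Among all pairs of distinct components of $\bd\Sigma$, choose $\Gamma_0,\Gamma_1$ minimizing $\dist(\Gamma_i,\Gamma_j)$, and let $\gamma\colon[0,\ell]\to\Sigma$ be a unit-speed length-minimizing geodesic from $\Gamma_0$ to $\Gamma_1$, which exists by compactness of $\Sigma$. First I would record the standard structural facts: $\gamma$ meets $\Gamma_0$ and $\Gamma_1$ orthogonally, so that $\gamma'(0)=-\eta$ at $\gamma(0)$ and $\gamma'(\ell)=\eta$ at $\gamma(\ell)$; and $\gamma((0,\ell))$ lies in the interior of $\Sigma$, since a boundary touching at an interior time would either produce a strictly shorter path between $\Gamma_0$ and $\Gamma_1$ (if the touched component is $\Gamma_0$ or $\Gamma_1$) or contradict the minimality of the chosen pair (if it is a third component).

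Next I would use that $\gamma$ is length-minimizing among curves joining $\Gamma_0$ to $\Gamma_1$, so that its index form is nonnegative on every normal field $V$ along $\gamma$ with $V(0)\in T_{\gamma(0)}\Gamma_0$ and $V(\ell)\in T_{\gamma(\ell)}\Gamma_1$. Since $\gamma$ meets $\bd\Sigma$ orthogonally, these tangent spaces are exactly the normal plane $\gamma'^{\perp}$, so one may test with $V=\phi E$ for an arbitrary $\phi\colon[0,\ell]\to\R$ and a parallel unit normal field $E$. Choosing a parallel orthonormal frame $E_1,E_2$ of the normal bundle of $\gamma$ — this is exactly where $\dim\Sigma=3$ enters, since summing over this two-element frame turns the sectional curvature terms into $\ric(\gamma',\gamma')$ and the endpoint second-fundamental-form terms into the scalar mean curvatures $H_{\bd\Sigma}$ of $\Gamma_0,\Gamma_1$ — and summing the index form over $V_i=\phi E_i$ gives a nonnegativity statement whose boundary contributions I would rewrite using the hypothesis $\bd_\eta f=-fH_{\bd\Sigma}$. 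Writing $\psi=\ln(f\circ\gamma)$, this yields
\[
0 \;\le\; \int_0^\ell\Big(2(\phi')^2-\phi^2\,\ric(\gamma',\gamma')\Big)\,dt \;+\;\phi(\ell)^2\,\psi'(\ell)\;-\;\phi(0)^2\,\psi'(0)
\]
for every positive $\phi$.

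The crux is to produce a positive $\phi$ for which the right-hand side is negative. A first attempt is $\phi=(f\circ\gamma)^{-1/2}$: applying the curvature hypothesis along $\gamma$ and integrating by parts, the boundary terms cancel exactly against the part of $f^{-1}\lap f$ coming from the second derivative of $f$ \emph{along} $\gamma$, and one is reduced to controlling the contribution of $\lap f$ in the directions transverse to $\gamma$. That transverse part has no a priori sign, and handling it is the main obstacle. This is precisely what the Shen--Ye generalized Bonnet--Myers theorem \cite{shen1997geometry} is designed for: instead of splitting off the transverse Laplacian, one keeps $\ric(\gamma',\gamma')$ intact and takes $\phi=(f\circ\gamma)^{-1/2}\zeta$, where $\zeta$ solves the auxiliary second-order ODE furnished by the Shen--Ye comparison; the weighted curvature hypothesis along $\gamma$ then becomes a Sturm-type comparison forcing the index form strictly negative once $\ell>0$, contradicting the displayed inequality. (The borderline case, in which the weighted curvature quantity degenerates along every such minimizing geodesic, would need to be ruled out separately, using the rigidity it forces together with the additional structure present where the theorem is applied.) This contradiction shows $\bd\Sigma$ is connected.
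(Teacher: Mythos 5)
There is a genuine gap at the crux of your argument. The paper's proof (following Shen--Ye's method) does not work with an ordinary length-minimizing geodesic: it minimizes the $f$-weighted length $L(c)=\int_0^\ell f\,\vert\dot c\vert\,dt$ between the two boundary components. The minimizer satisfies the weighted geodesic equation $\nabla_{\dot c}\dot c=f^{-1}(\grad f)^{\perp}$, and it is the second variation of this \emph{weighted} functional, tested against $V_j=f^{-1/2}e_j$ for a frame $e_2,e_3$ parallel in the normal bundle, that produces Hessian-of-$f$ terms; summing over the two normal directions is exactly what generates $f^{-1}\lap f$ (minus the Hessian along $\dot c$, which recombines with the weighted geodesic equation and yields the coefficient $\tfrac{n-1}{4}=\tfrac12$ in front of $\la\dot c,\grad\ln f\ra^2$, usable only because $\dim\Sigma=3$). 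In your setup the functional being second-varied is the ordinary length, so $f$ enters your index-form inequality only through the endpoint terms; there is no $\lap f$ in it and no way to produce one afterwards. The hypothesis only bounds $\ric(\gamma',\gamma')$ below by $f^{-1}\lap f-\tfrac12\vert\grad\ln f\vert^2$, and the transverse part of $\lap f$ along $\gamma$ is completely uncontrolled (it may be very negative), so $\ric(\gamma',\gamma')$ can be very negative along the ordinary minimizing geodesic and no choice of $\phi$, nor a Sturm comparison function $\zeta$ solving an auxiliary ODE, can force your index form negative. Citing the Shen--Ye generalized Bonnet--Myers theorem does not repair this: its mechanism is itself the weighted length functional, and it is a closed/complete-manifold diameter bound with no free-boundary condition, so it cannot be grafted as a black box onto the ordinary index form.

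The fix is to run your Frankel setup on the weighted functional from the start: let $c$ minimize $\int f\,ds$ from $\Gamma_0$ to $\Gamma_1$ (the first-variation boundary term is $f\la V,\dot c\ra$, so $c$ still meets both components orthogonally and satisfies $\nabla_{\dot c}\dot c=f^{-1}(\grad f)^\perp$), compute the second variation, test with $f^{-1/2}e_2$, $f^{-1/2}e_3$, and sum. After the cancellations coming from the weighted geodesic equation, the interior integrand is controlled by $f^{-1}\lap f-\tfrac12\vert\grad\ln f\vert^2-\ric(\dot c,\dot c)$ plus the exact derivatives $-\tfrac{d^2}{dt^2}\ln f$ and total $t$-derivatives, and the resulting endpoint terms are precisely $-\bd_\eta(\ln f)-H_{\bd\Sigma}$ at each end, which vanish by the hypothesis $\bd_\eta f=-fH_{\bd\Sigma}$ together with orthogonality. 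That is the paper's proof. Your parenthetical worry about the borderline (nonstrict) case is a legitimate but secondary point: in the paper's application the conformal Ricci curvature is bounded below by a positive constant, which is what the strict inequality in the chain of estimates actually uses.
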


The fact that the dicing submanifolds have connected boundary is then enough to carry through the remainder of the argument. 

\begin{rem} The hypothesis of Shen and Ye's theorem is dimension dependent, and we were not able to verify the hypothesis on the slice $\Sigma_{m-1}$ when $n=7$ and $m\in\{2,3,4\}$. 
\end{rem}

\subsection{Organization}
In Section \ref{sec: pre}, we first introduce some convenient terminology and then we recall basic facts about $\mu$-bubbles, including general existence and stability results. We prove a Frankel type theorem for positive conformal Ricci curvature in Section \ref{sec: Frankel}. Combining the weighted slicing techniques, Shen-Ye's diameter estimate, and the slice-and-dice procedure of Chodosh-Li, we prove the Main Theorem in Section \ref{sec: proof of main theorem}. In Section \ref{sec: mapping and classification}, we generalize our results to a mapping version and prove a refinement of the Main Theorem into a positive result.

\subsection{Acknowledgments}
We would like to thank Xin Zhou for his helpful discussions and guidance on this project. L.M. acknowledges the support of an AMS-Simons travel grant. 

\section{Preliminaries}\label{sec: pre}

In this section, we discuss some preliminary results that will be needed in the proof of our main theorem. First, we introduce some convenient terminology for manifolds where the homology of the universal cover vanishes in certain degrees.

\begin{defn}
    We say a closed manifold $M^n$ is {\em $m$-acyclic}, $m\in\{1,2\cdots,n-1\}$, if the universal cover $\overline M$ of $M$ satisfies 
    \[
    H_{n-m+1}(\overline{M};\mathbb Z)=\cdots=H_n(\overline{M};\mathbb Z)=0.
    \]
\end{defn}

\subsection{Topological Preliminaries}

In this subsection, we discuss some topological results about the universal cover of a closed manifold. 

\begin{lem}
    Suppose $(M^n,g)$ is an $m$-acyclic closed manifold for some $m\in\{1,2,\cdots,n-1\}$. Then there exists a geodesic line on $(\overline{M},\bar{g})$, where $\bar{g}$ is the lifted Riemannian metric.
\end{lem}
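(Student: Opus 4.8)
The plan is to first observe that $\overline{M}$ must be non-compact, and then to construct a geodesic line in $\overline{M}$ by the classical ``recentering'' argument that works for any complete manifold carrying a cocompact isometric action — here the action of the deck group $\Gamma=\pi_1(M)$ on $(\overline{M},\bar g)$.

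First I would rule out the compact case. Since $\overline{M}$ is simply connected it is orientable (a non-orientable manifold has a subgroup of index two in $\pi_1$, hence non-trivial fundamental group), so if $\overline{M}$ were compact it would be a closed orientable $n$-manifold and $H_n(\overline{M};\Z)\cong\Z$. This contradicts $m$-acyclicity, because the vanishing range $H_{n-m+1}(\overline M;\Z),\dots,H_n(\overline M;\Z)$ always includes $H_n$. Hence $\overline{M}$ is non-compact; it is also complete, being a Riemannian covering of the compact (hence complete) manifold $(M,g)$.

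Next I would produce arbitrarily long minimizing geodesic segments whose midpoints all lie in a single fixed compact set. Fix a compact set $K\subseteq\overline{M}$ with $\pi(K)=M$, where $\pi\colon\overline{M}\to M$ is the covering projection; such a $K$ exists since $M$ is compact (lift a finite cover of $M$ by closed evenly-covered sets to single sheets), and it satisfies $\Gamma\cdot K=\overline{M}$. By non-compactness and completeness, for each $i\in\N$ there exist $x_i,y_i\in\overline{M}$ with $\dist(x_i,y_i)\ge 2i$; choose a unit-speed minimizing geodesic joining them, let $p_i$ be its midpoint, and restrict to the length-$2i$ subarc centered at $p_i$, reparametrized as a unit-speed minimizing geodesic $\sigma_i\colon[-i,i]\to\overline{M}$ with $\sigma_i(0)=p_i$. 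Pick $g_i\in\Gamma$ with $g_i\cdot p_i\in K$ and set $\widetilde{\sigma}_i:=g_i\circ\sigma_i$; this is again a unit-speed minimizing geodesic on $[-i,i]$, now with $\widetilde{\sigma}_i(0)\in K$.

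Finally I would extract the line. The maps $\widetilde{\sigma}_i$ are $1$-Lipschitz with $\widetilde{\sigma}_i(0)$ in the compact set $K$, so Arzel\`a--Ascoli together with a diagonal argument over an exhaustion of $\R$ by intervals $[-i,i]$ yields a subsequence converging, uniformly on compact subsets of $\R$, to a unit-speed geodesic $\sigma\colon\R\to\overline{M}$. For fixed $s<t$, taking $i$ large enough that $[s,t]\subseteq[-i,i]$ and passing to the limit in $\dist(\widetilde{\sigma}_i(s),\widetilde{\sigma}_i(t))=t-s$ gives $\dist(\sigma(s),\sigma(t))=t-s$, so $\sigma$ is globally minimizing, i.e.\ a geodesic line. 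The only genuinely non-trivial point is the recentering step: a complete non-compact manifold need not contain any line (for instance a paraboloid), and it is precisely the compactness of $M$ — equivalently, cocompactness of the $\Gamma$-action — that lets the midpoints of the long segments be pushed back into a fixed compact set before one takes a limit. Everything else (orientability of universal covers, Arzel\`a--Ascoli, and stability of the minimizing property under uniform-on-compacts limits) is routine.
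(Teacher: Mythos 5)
Your proof is correct and is essentially the argument the paper invokes: the paper notes that $H_n(\overline M;\Z)=0$ forces $\overline M$ to be non-compact and then defers to \cite[Lemma 6]{chodosh2024generalized}, which is precisely the recentering-plus-Arzel\`a--Ascoli construction you carry out in full (including the needed observation that the simply connected cover is orientable, so compactness would give $H_n\cong\Z$).
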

\begin{proof}
    Since $H_n(\overline{M};\mathbb Z)=0$, $\overline{M}$ is non-compact. The proof follows easily as in \cite[Lemma 6]{chodosh2024generalized}.
\end{proof}

More generally, we recall the following construction of Chodosh and Li \cite{chodosh2024generalized}. 

\begin{prop}
\label{linking} 
Assume that $\overline M$ is the universal cover of a closed Riemannian manifold $M^n$.  Further suppose that $H_{n}(\overline M,\Z) = H_{n-1}(\overline M,\Z) = 0$.  Then for any $L > 0$, there exists a geodesic line $\sigma$ in $\overline M$ together with a closed, null-homologous manifold $\Lambda^{n-2}$ embedded in $\overline M$ such that 
\begin{itemize}
\item[(i)] $\Lambda$ is linked with $\sigma$, i.e., if $\Lambda = \bd \Sigma$ then $\Sigma$ has non-zero algebraic intersection number with $\sigma$,
\item[(ii)] $d(\Lambda,\sigma) \ge L$. 
\end{itemize}
\end{prop}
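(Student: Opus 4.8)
The plan is to reproduce the construction of Chodosh--Li \cite{chodosh2024generalized}: I will build $\Lambda$ as the boundary of a small ``cap'' cut off from a large geodesic sphere centred on the line $\sigma$, arranging that the cap crosses $\sigma$ exactly once while its boundary is pushed out past the $L$-tube around $\sigma$.

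First, since $H_n(\overline M;\Z)=0$ the cover $\overline M$ is non-compact, so the preceding lemma supplies a geodesic line $\sigma\colon\R\to\overline M$; set $p=\sigma(0)$. Being a line, $\sigma$ is globally length-minimizing, so $d(p,\sigma(s))=|s|$ for every $s$ and each half-ray of $\sigma$ is minimizing. The single nontrivial geometric input I would isolate is this: for $R$ large, the metric sphere $\partial B_R(p)$ stays at distance $\ge L$ from $\sigma$ outside small balls about the two exit points $\sigma(\pm R)$. Indeed, if $x\in\partial B_R(p)$ and $d(x,\sigma(s))<L$ for some $s$, then $\big||s|-R\big|=\big|d(\sigma(s),p)-d(x,p)\big|<L$, so $\sigma(s)$ lies within $L$ of $\sigma(R)$ or of $\sigma(-R)$, whence $x\in B_{2L}(\sigma(R))\cup B_{2L}(\sigma(-R))$.

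Next I would make the sphere smooth and extract the cap. Mollify $r:=d(\cdot,p)$ to a smooth $\tilde r$ that $C^0$-approximates $r$ with error $<L$ and has gradient close to $\nabla r$ away from the cut locus, and choose $R\gg L$ to be a regular value; then $N:=\tilde r^{-1}(R)$ is a smooth closed embedded hypersurface bounding a precompact region $\Omega\ni p$ and lying in a thin annulus around $\partial B_R(p)$. Since $r\circ\sigma(s)=|s|$ and $\nabla r$ points along the minimizing ray in the interior of each half-ray, $\tilde r\circ\sigma$ is strictly monotone off a fixed compact interval and stays below $R$ on it, so $\sigma$ crosses $N$ transversally in exactly two points $q_\pm$ with $d(q_\pm,\sigma(\pm R))<L$. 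Taking $h:=d(\cdot,q_+)\colon N\to\R$, I would pick by Sard (after a further small perturbation) a value $t\in(5L,6L)$ for which $\Sigma:=\{x\in N:h(x)\le t\}$ is a smooth compact hypersurface-with-boundary in $N$ with $\Lambda:=\partial\Sigma=h^{-1}(t)$ embedded and transverse to $\sigma$; the pair $(\sigma,\Lambda)$ is the candidate.

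Finally I would check the two conclusions. The cycle $\Lambda$ is closed, embedded, and null-homologous, since it bounds the compact chain $\Sigma\subset N\subset\overline M$. Every point of $\Lambda$ is at distance $t>5L$ from $q_+$, hence $>3L$ from $\sigma(R)$, and (using $d(\sigma(R),\sigma(-R))=2R$) at distance $>2R-7L>3L$ from $\sigma(-R)$ once $R$ is large; applying the geometric input on the thin annulus containing $N$ then forces $d(\Lambda,\sigma)\ge L$, which is (ii). For (i), $\Sigma\subset N$ can meet $\sigma$ only among $\{q_+,q_-\}$, and $q_-\notin\Sigma$ because $d(q_-,q_+)\approx 2R\gg t$ whereas $q_+\in\Sigma$; thus $\Sigma$ meets $\sigma$ transversally in the single point $q_+$ and $\Sigma\cdot\sigma=\pm1\neq0$. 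Since $\overline M$ is orientable (being simply connected) and $H_{n-1}(\overline M;\Z)=0$, any two fillings of $\Lambda$ differ by a null-homologous $(n-1)$-cycle, hence by a boundary, which has zero intersection number with the properly embedded line $\sigma$; so the intersection number is independent of the filling and $\Lambda$ is linked with $\sigma$ in the stated sense. The only genuine obstacle I anticipate is the bookkeeping in the smoothing/transversality step---ensuring $N$ meets $\sigma$ in exactly two transverse points and that $h^{-1}(t)$ is a clean embedded submanifold transverse to $\sigma$---which is routine and can in any case be bypassed by working with integral currents and the coarea formula, or by citing \cite{chodosh2024generalized} directly.
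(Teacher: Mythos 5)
Your construction is correct and is essentially the argument the paper invokes: the paper's proof of Proposition \ref{linking} is just a citation to Section 2 of Chodosh--Li, whose cap-of-a-large-distance-sphere construction you have reproduced (cutting the cap using the distance to the crossing point $q_+$ rather than the distance to $\sigma$ is an inessential variant, and your use of $H_{n-1}(\overline M;\Z)=0$ plus properness of $\sigma$ to make the linking number filling-independent is exactly the intended point). The one loose step you flag---that the smoothed sphere meets $\sigma$ in exactly two transverse points---is indeed fine, since the segments of $\sigma$ near $\sigma(\pm R)$ avoid the cut locus of $p$, so $r$ is smooth there and the smoothing can be taken $C^1$-close along them (or one can simply work with the signed count of crossings clustered near each exit point, which also shows $\Lambda\neq\emptyset$).
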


\begin{proof}
This follows from the argument in \cite[Section 2]{chodosh2024generalized}. While Chodosh and Li consider the universal cover of a closed aspherical manifold, it is easy to see that their arguments apply to the universal cover $\overline M$ of an arbitrary closed manifold $M^n$ provided that $H_{n}(\overline M,\Z) = H_{n-1}(\overline M,\Z) = 0$. 
\end{proof}

\begin{prop}
\label{Proposition:uniformly-acyclic}
    Suppose $M^n$ is $m$-acyclic. 
    For any $r>0$, there exists $R(r)$ with the following property. For any $k\geq n-m+1$ and any $k$-cycle $\alpha$  with $\alpha\subset B_r(p)$, we have $\alpha=\partial\beta$ with $\beta\subset B_R(p)$.
\end{prop}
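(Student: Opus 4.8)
The plan is to prove Proposition \ref{Proposition:uniformly-acyclic} as a consequence of a compactness argument together with the vanishing of homology of $\overline M$ in the relevant degrees, following the standard template for "uniform acyclicity" of universal covers. First I would fix $r > 0$ and consider the collection of all metric balls $B_r(p)$ for $p \in \overline M$. Since $M$ is closed, the deck transformation group acts cocompactly on $\overline M$, so there is a compact fundamental domain $K$; every ball $B_r(p)$ is isometric (via a deck transformation) to $B_r(p')$ for some $p'$ in a fixed compact set $K' = \{x : d(x,K) \le r\}$. Thus it suffices to produce, for each $p' \in K'$, a radius $R_{p'}$ such that every $k$-cycle in $B_r(p')$ bounds in $B_{R_{p'}}(p')$, and then take $R(r) = \sup_{p' \in K'} R_{p'}$, checking the sup is finite by a continuity/compactness argument.

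The key step is the pointwise statement: for a single $p$, every $k$-cycle $\alpha \subset B_r(p)$ with $k \ge n-m+1$ bounds some $\beta \subset B_R(p)$ for $R$ large enough. Here I would use that $H_k(\overline M;\Z) = 0$ for all $k \ge n-m+1$ (this is exactly the $m$-acyclic hypothesis), so $\alpha = \partial \beta$ for \emph{some} chain $\beta$ in $\overline M$; the only issue is that a priori $\beta$ need not be contained in a bounded region. But $\beta$, being a singular (or Lipschitz/integral) chain, has compact support, so $\beta \subset B_{R}(p)$ for some finite $R$ depending on $\alpha$. To upgrade this to an $R$ depending only on $r$ (and not on $\alpha$), I would invoke a compactness argument on the space of cycles: the space of $k$-cycles supported in $\cl{B_r(p)}$, equipped with the flat topology (working with integral currents, as in the setup of $\mu$-bubbles and minimal slicings used elsewhere in the paper), is compact; the function assigning to each such cycle the infimal filling radius is upper semicontinuous and finite everywhere, hence bounded. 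Alternatively, one can phrase it via the fact that $\overline M$ is uniformly acyclic in the sense of coarse geometry — a $K(\pi,1)$-type or bounded-geometry argument — since $\overline M$ has bounded geometry and its homology vanishes in the top $m$ degrees. This is essentially the content of \cite[Lemma~8]{chodosh2024generalized} or the analogous statement there, adapted from the aspherical case (where all higher homology vanishes) to the $m$-acyclic case (where only the top $m$ degrees vanish, which is all we need since $k \ge n-m+1$).

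I would then assemble the two pieces: for each $p' \in K'$ we get a finite $R_{p'}$; the dependence $p' \mapsto R_{p'}$ can be taken upper semicontinuous (filling radius is upper semicontinuous in the basepoint as well, using continuity of the metric), so over the compact set $K'$ it is bounded by some $R(r) < \infty$. Translating back by deck transformations — which are isometries and hence preserve both ball radii and the filling property — gives the uniform statement for every $p \in \overline M$.

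The main obstacle I expect is the compactness/uniformity step: passing from "each individual cycle bounds somewhere bounded" to "a single radius works for all cycles in all $r$-balls simultaneously." The clean way to handle this is to work in the space of integral currents with the flat norm topology, where the relevant space of cycles in $\cl{B_r(p)}$ is compact by the Federer–Fleming compactness theorem, and to observe that the filling radius (the infimum of $\sup\{ d(x,p) : x \in \supp \beta\}$ over all fillings $\beta$) is an upper semicontinuous, everywhere-finite function on this compact space, hence bounded — combined with the cocompactness of the deck action to reduce to finitely much data. One must be slightly careful that the homology-vanishing hypothesis is stated for singular homology while the slicing arguments use integral currents, but the two theories agree on a closed manifold (and its covers, which have bounded geometry), so this is a routine bridging step rather than a genuine difficulty.
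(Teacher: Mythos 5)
Your overall skeleton (reduce to basepoints in a compact set via the cocompact deck action, then prove a pointwise uniform filling estimate) is reasonable, and the reduction step is fine. But the central uniformity step has a genuine gap: the space of integral $k$-cycles supported in $\cl{B_r(p)}$ with the flat topology is \emph{not} compact. The Federer--Fleming compactness theorem requires an a priori mass bound on the cycles (and their boundaries), and the Proposition imposes no mass bound on $\alpha$. Concretely, if $\sigma$ is a fixed $k$-cycle in $B_r(p)$ that is nontrivial in $H_k(B_r(p))$ but bounds in $\overline M$, the cycles $N\sigma$, $N\to\infty$, all fall under the statement, have mass tending to infinity, and admit no flat-convergent subsequence; your compactness argument simply does not see this family. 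In addition, even on a mass-bounded (hence compact) class, the asserted upper semicontinuity of the filling radius is not justified: if $\alpha_i\to\alpha$ in the flat norm, a filling of $\alpha$ gives a filling of $\alpha_i$ only after filling $\alpha_i-\alpha$, and small flat norm controls the mass of such a filling, not the location of its support --- so you would be implicitly invoking a local version of the very uniform-filling statement you are trying to prove.

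The argument the paper points to (Chodosh--Li, see also He--Zhu) avoids compactness of cycle spaces altogether and handles the unbounded-coefficient issue by linearity. Fix a triangulation of the closed manifold $M$ and lift it to $\overline M$. Any $k$-cycle in $B_r(p)$ is homologous, within a slightly larger ball $B_{r+c}(p)$ (deformation/simplicial approximation, with $c$ depending only on the triangulation), to a simplicial cycle supported on the finitely many lifted simplices meeting $B_{r+c}(p)$. The simplicial $k$-cycles supported on that finite complex form a finitely generated abelian group; since $H_k(\overline M;\Z)=0$ for $k\ge n-m+1$, each of the finitely many generators bounds some chain, and a filling of an arbitrary $\Z$-linear combination is the corresponding linear combination of these fillings, whose support lies in a fixed bounded set \emph{independent of the coefficients}. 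Cocompactness of the deck action (finitely many orbits of such ball patterns) then yields a single $R(r)$. If you want to keep your current-theoretic framing, you must either build in this finite-generation/linearity step or first reduce to cycles of uniformly bounded mass; as written, the compactness claim is the point where the proof fails.
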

\begin{proof}
    The proof is exactly the same as in Chodosh-Li \cite{chodosh2024generalized}; see also He-Zhu \cite{he2024noterationalhomologyvanishing}. 
\end{proof}

\subsection{Preliminaries on $\mu$-bubbles}
Next we recall some basic facts about $\mu$-bubbles, including general existence and stability results.

For $n\leq 7$, consider a Riemannian manifold $(M^n,g)$ with boundary $\partial M=\partial_-M\sqcup \partial_+ M$, where neither of $\partial_{\pm}M$ is empty. 
Suppose $u$ is a smooth positive function on $M$, and $h$ is a smooth function defined on the interior of $M$ so that $h\to \pm\infty$ as $x\to \partial_{\pm}M$ respectively. 
Given a Caccioppoli set $\Omega_0\subset M$ with smooth boundary and containing  $\partial_{+}M$, consider the $\mu$-bubble functional:
\begin{align}
    \mathcal A(\Omega):=\int_{\partial^*\Omega}uda-\int_{M}(\chi_{\Omega}-\chi_{\Omega_0})hudv,
\end{align}
for all Caccioppoli set $\Omega\subset M$ with  $\Omega\Delta\Omega_0\subset\subset \inter(M)$, where $\bd^*\Omega\subset \inter (M)$ is the reduced boundary of $\Omega$ in $\inter (M)$. We call the $\mathcal A$-minimizer $\Omega$ in this class a {\em $\mu$-bubble}.

The existence and regularity of a minimizer of $\mathcal A$ among all Caccioppoli sets was claimed by Gromov \cite{gromov2019four} and proven rigorously by Zhu \cite{zhu2021width}. 

\begin{prop}[Gromov\cite{gromov2019four}, Zhu \cite{zhu2021width}]\label{prop: mu-bubble regularity}
There exists a smooth minimizer $\Omega$ for $\mathcal A$ such that $\Omega\Delta\Omega_0\subset\subset\inter(M^n)$, for $3\leq n\leq 7$. 
\end{prop}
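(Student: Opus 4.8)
The plan is to produce $\Omega$ by the direct method, minimizing $\mathcal A$ over the admissible class $\mathcal C = \{\Omega \text{ Caccioppoli} : \Omega \Delta \Omega_0 \subset\subset \inter(M)\}$, and then to upgrade the minimizer to a smooth hypersurface via the regularity theory for almost-minimizing boundaries. First I would fix a minimizing sequence $\Omega_j \in \mathcal C$. Since $u$ is smooth and positive, on any fixed compact subset of $\inter(M)$ the weighted perimeter $\int_{\bd^* \Omega} u\, da$ is comparable to the $g$-perimeter, so the only delicate term is the bulk integral $-\int_M (\chi_\Omega - \chi_{\Omega_0}) h u \, dv$; this is precisely where the hypothesis $h \to \pm\infty$ at $\bd_\pm M$ is used.

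The crucial step is a barrier argument showing that $\mathcal A$ is bounded below on $\mathcal C$ and that near-minimizers are trapped in a fixed region away from $\bd M$. Write $\Phi_\pm$ for the distance function to $\bd_\pm M$. Given $\Omega \in \mathcal C$ with $\mathcal A(\Omega) \le \mathcal A(\Omega_0)$, I would compare $\Omega$ with $\Omega \cup \{\Phi_+ < t\}$ for small $t$: the change in the weighted perimeter is controlled by the weighted area of a level set $\{\Phi_+ = s\}$ with $s$ close to $0$, which is uniformly bounded by compactness of a collar neighborhood of $\bd_+ M$, whereas the change in the bulk term equals $-\int_{\{\Phi_+ < t\}\setminus\Omega} h u \, dv$, which is strongly negative whenever $\Omega$ omits positive measure deep in the collar where $h$ is large. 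It follows that there is a uniform $t_0 > 0$ with $\{\Phi_+ < t_0\} \subset \Omega$ for all such $\Omega$, and symmetrically, comparing with $\Omega \setminus \{\Phi_- < t\}$ and using $h \to -\infty$ at $\bd_- M$, that $\Omega \cap \{\Phi_- < t_0\} = \emptyset$. In particular every minimizing sequence coincides with $\Omega_0$ on a fixed collar of $\bd M$, and $\mathcal A$ is bounded below.

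Granting this trapping, existence is routine: the $\chi_{\Omega_j}$ are bounded in $BV_{loc}(\inter(M))$, so a subsequence converges in $L^1_{loc}$ to some $\chi_\Omega$; since all $\Omega_j$ agree with $\Omega_0$ outside a fixed compact set $K \subset \inter(M)$, so does $\Omega$, hence $\Omega \in \mathcal C$ with $\Omega \Delta \Omega_0 \subset\subset \inter(M)$, and lower semicontinuity of the weighted perimeter together with continuity of $\int_M (\chi_{\Omega_j} - \chi_{\Omega_0}) h u \, dv$ (whose integrand is supported in $K$, where $h u$ is bounded) yields $\mathcal A(\Omega) \le \liminf_j \mathcal A(\Omega_j)$, so $\Omega$ minimizes. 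For interior regularity, I would observe that on $\inter(M)$ the functional $\mathcal A$ is a weighted perimeter functional with bounded forcing term, so $\Omega$ is an almost-minimizer of perimeter (with respect to $g$) in every ball compactly contained in $\inter(M)$; by De Giorgi's $\eps$-regularity theorem and Federer dimension reduction, in the form adapted to prescribed-mean-curvature problems, $\bd^* \Omega$ is a smooth embedded hypersurface outside a closed singular set of Hausdorff dimension at most $n - 8$, which is empty when $n \le 7$. Thus $\bd\Omega$ is smooth with $\Omega \Delta \Omega_0 \subset\subset \inter(M^n)$, as claimed. The main obstacle is the barrier step: one must quantify the competition between the bounded weighted area of the distance level sets near $\bd_\pm M$ and the blow-up rate of $h$ in order to extract the uniform collar $t_0$; everything else is standard geometric measure theory.
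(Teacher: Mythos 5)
The paper does not prove this proposition at all; it is quoted from Gromov and Zhu, and the text explicitly defers to Zhu's paper for the rigorous argument. Your outline is essentially a reconstruction of Zhu's proof: direct method for the weighted perimeter functional, a barrier/trapping step exploiting $h\to\pm\infty$ at $\bd_\pm M$ to confine competitors to a fixed compact subset of $\inter(M)$, BV compactness and lower semicontinuity, and then interior regularity for almost-minimizers of perimeter with the singular set of dimension at most $n-8$ being empty for $n\le 7$. That is the right strategy and matches the cited source.

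One step is stated too strongly. From $\mathcal A(\Omega)\le\mathcal A(\Omega_0)$ it does \emph{not} follow that $\{\Phi_+<t_0\}\subset\Omega$ for a uniform $t_0$: a competitor may omit a sliver of small positive measure deep in the collar at bounded bulk cost (the penalty $\int_S hu$ vanishes as $|S|\to 0$ for fixed depth), compensating elsewhere. The correct form of the barrier step — and what Zhu actually proves — is a cut-and-paste lemma: any admissible $\Omega$ can be \emph{replaced} by $\bigl(\Omega\cup\{\Phi_+<t\}\bigr)\setminus\{\Phi_-<t'\}$ for levels $t,t'$ chosen via the coarea formula so that the added weighted perimeter is controlled by the nearby perimeter of $\Omega$ itself, and the divergence of $h$ makes the bulk gain dominate; the modified competitor has no larger energy and is uniformly trapped. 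One then minimizes over the trapped class. With that correction the rest of your argument (lower bound for $\mathcal A$, compactness, and the $\Lambda$-minimizer regularity, which applies verbatim to the weighted perimeter since $u$ is smooth and positive) goes through.
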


In Chodosh-Li's proof \cite{chodosh2024generalized} of the $K(\pi,1)$ conjecture in dimension $4$ and $5$, they generalized the $\mu$-bubble techniques to a free boundary version. In their setting, they assume that $(M^n,g)$ is a Riemannian manifold with co-dimension $2$ corners in the sense that any boundary point has a neighborhood diffeomorphic to one of the following: $\{x\in\mathbb R^n: x_n\geq 0\}$ or $\{x\in\mathbb R^n: x_{n-1},x_n\geq 0\}$. Furthermore, $\partial M=\partial_{\pm}M\cup \partial_0M$, $\partial_{\pm}M$ meets $\partial_0M$ orthogonally, and $\partial_{\pm}M\cap\partial_0M$ consists of smooth co-dimension $2$ closed submanifolds. In the free boundary setting, we assume that 
\[
H_{\partial_0 M}+u^{-1}\langle\nabla_Mu,\nu_{\partial_0M}\rangle=0.
\]
For the functional $\mathcal A$ defined as before, Chodosh-Li used similar arguments as in Zhu's work \cite{zhu2021width} and proved the following results.

\begin{prop}[Chodosh-Li \cite{chodosh2024generalized}]
\label{proposition:variational-formulas}
    Suppose $n\leq 7$, there exists $\Omega$ with $\partial\Omega\subset \inter(M)\cup\partial_0M$ minimizing $\mathcal A$ among such regions. The boundary $\partial\Omega$ is smooth and meets $\partial_0M$ orthogonally. We have
    \[
    H=-u^{-1}\langle\nabla_Mu,\nu_{\partial\Omega}\rangle+h
    \]
    along $\partial \Omega$, where $H$ is the mean curvature of $\bd\Omega$ with respect to the unit outer normal $\nu_{\bd\Omega}$. Finally, if $\Sigma$ is a component of $\partial\Omega$, then for any $\psi\in C^1(\Sigma)$, we have
    \begin{align*}
        0 &\leq \left.\frac{d^2}{dt^2}\right|_{t=0}\mathcal{A}(\Omega^t)
        \\ &=\int_{\Sigma} -u\psi\Delta_{\Sigma}\psi-\left(\|A_{\Sigma}\|^2+\ric_{M}(\nu,\nu)\right) u \psi^2-\psi\langle\nabla_\Sigma u,\nabla_{\Sigma}\psi\rangle 
        \\ &\qquad\qquad+ \psi^2\nabla^2_M u(\nu,\nu) -\psi^2u^{-1}\langle\nabla_{M}u,\nu\rangle^2-u\psi^2\langle\nabla_Mh,\nu\rangle ~d\mathcal{H}^{n-1}\\
        &\qquad\qquad +\int_{\partial\Sigma} u\psi \frac{\bd\psi}{\bd\eta} -A_{\partial_0M}(\nu_{\partial\Sigma},\nu_{\partial\Sigma}) u \psi^2~d\mathcal{H}^{n-2},
    \end{align*}
    where $\nu=\nu_{\bd\Omega}$ is the unit outer normal of $\Omega$, $\eta$ is the unit co-normal of $\Sigma$ along $\bd\Sigma$, and $\{\Omega^t\}$ is a smooth family of regions with $\Omega^0=\Omega$ and normal speed $\psi$ at $t=0$.
\end{prop}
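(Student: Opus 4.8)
The plan is to prove the proposition in three stages — existence and regularity of an $\mathcal{A}$-minimizer, the first variation, and the second variation — with the first and third stages carrying the real content. For existence I would run the direct method. The functional $\mathcal{A}$ is lower semicontinuous under $L^1_{\mathrm{loc}}$ convergence of Caccioppoli sets: the weighted perimeter $\Omega\mapsto\int_{\bd^*\Omega}u\,da$ is lower semicontinuous because $u$ is smooth and positive, and the bulk term is continuous under such convergence. The only obstruction to extracting a limit is that a minimizing sequence might push its boundary into $\bd_{\pm}M$, and this is precisely where the hypothesis $h\to\pm\infty$ near $\bd_{\pm}M$ is used. Comparing a competitor against the foliation of a collar of $\bd_{\pm}M$ by distance level sets, one sees that any set whose boundary enters such a collar pays an arbitrarily large amount in the $-\int(\chi_\Omega-\chi_{\Omega_0})hu$ term, with signs arranged so that shrinking near $\bd_+M$ and growing near $\bd_-M$ are both penalized, while the weighted perimeter changes only by a bounded amount; hence the minimizer's boundary stays in a fixed compact subset of $\inter(M)\cup\bd_0M$. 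This is Zhu's argument \cite{zhu2021width}. Interior regularity is then the standard theory for almost area-minimizing boundaries with bounded forcing (here the forcing is $hu$, bounded on the relevant region), which gives smoothness away from a closed set of Hausdorff codimension at least $7$, hence everywhere for $n\le 7$; regularity up to $\bd_0M$ and the orthogonal intersection with $\bd_0M$ follow from Gr\"uter--Jost type free-boundary regularity for almost-minimizers meeting a fixed smooth barrier, as in Chodosh--Li \cite{chodosh2024generalized}.

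For the first variation, I would take an admissible family $\{\Omega^t\}$ generated by a vector field $X$ that is tangent to $\bd_0M$ along $\bd\Omega\cap\bd_0M$, with normal speed $\psi=\langle X,\nu\rangle$ on $\bd\Omega$. The first variation of the weighted perimeter is $\int_{\bd\Omega}\big(uH+\langle\nabla_M u,\nu\rangle\big)\psi\,d\mathcal{H}^{n-1}+\int_{\bd\Omega\cap\bd_0M}u\langle X,\eta\rangle\,d\mathcal{H}^{n-2}$, and the bulk term contributes $-\int_{\bd\Omega}hu\,\psi\,d\mathcal{H}^{n-1}$. Vanishing of the interior integral for all $\psi$ gives the stated equation $H=-u^{-1}\langle\nabla_M u,\nu\rangle+h$, and vanishing of the codimension-two boundary integral for all admissible $X$ forces the conormal $\eta$ of $\bd\Omega$ along $\bd\Sigma$ to be normal to $\bd_0M$, i.e.\ orthogonal intersection. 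The prescribed relation $H_{\bd_0M}+u^{-1}\langle\nabla_M u,\nu_{\bd_0M}\rangle=0$ says $\bd_0M$ has vanishing $u$-weighted mean curvature; I would use this to guarantee that the free-boundary problem is consistent, so that $\bd_0M$ does not act as an obstacle preventing orthogonality and the minimizer does not develop a piece coinciding with $\bd_0M$.

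For the second variation, I would differentiate the first variation once more at the critical configuration, restricting to a single component $\Sigma$ of $\bd\Omega$ and to a variation of speed $\psi\in C^1(\Sigma)$ extended near $\bd\Sigma$ so as to remain tangent to $\bd_0M$. The second variation of the weighted perimeter produces the weighted Jacobi expression $\int_\Sigma u|\nabla_\Sigma\psi|^2-\big(\|A_\Sigma\|^2+\ric_M(\nu,\nu)\big)u\psi^2+\psi^2\nabla^2_M u(\nu,\nu)-\psi\langle\nabla_\Sigma u,\nabla_\Sigma\psi\rangle$, which after integrating the first term by parts becomes the $-u\psi\lap_\Sigma\psi$ form in the statement, together with the boundary integral $\int_{\bd\Sigma}u\psi\,\bd_\eta\psi-A_{\bd_0M}(\nu,\nu)u\psi^2$ — the $A_{\bd_0M}$ term appears precisely because $\nu$ is tangent to $\bd_0M$ along $\bd\Sigma$ and the boundary curve bends along $\bd_0M$. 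Differentiating the bulk term gives $-u\psi^2\langle\nabla_M h,\nu\rangle$, and the interaction between the conormal derivative $\langle\nabla_M u,\nu\rangle$ and the motion of the hypersurface produces the term $-\psi^2 u^{-1}\langle\nabla_M u,\nu\rangle^2$; collecting everything yields the displayed identity, and the inequality $0\le\frac{d^2}{dt^2}\big|_{t=0}\mathcal{A}(\Omega^t)$ is just minimality of $\Omega$.

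The step I expect to be the main obstacle is the bookkeeping in the second variation in the free-boundary setting: one must cleanly separate genuinely interior contributions from codimension-two boundary contributions, use the orthogonal intersection to reduce the corner terms to the single clean boundary integral over $\bd\Sigma$, and insert the weighted-minimality of $\bd_0M$ at the right place. The free-boundary regularity in the first stage is the other nontrivial ingredient, but by now it is standard; and the confinement argument, while essential, is a routine adaptation of Zhu's.
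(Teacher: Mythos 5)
Your proposal is correct and follows essentially the same route as the source this paper relies on: the paper does not reprove this proposition but cites Chodosh--Li, whose argument is precisely the Zhu-style confinement/direct method you describe (using $h\to\pm\infty$ near $\partial_\pm M$), almost-minimizer interior regularity for $n\le 7$ together with Gr\"uter--Jost type free-boundary regularity at $\partial_0 M$, and the standard first and second variation computations with the $A_{\partial_0 M}(\nu,\nu)$ corner term coming from keeping the variation tangent to $\partial_0 M$. One cosmetic slip: the divergence-form second variation is $\int_\Sigma u|\nabla_\Sigma\psi|^2-(\|A_\Sigma\|^2+\ric_M(\nu,\nu))u\psi^2+\psi^2\nabla^2_M u(\nu,\nu)-\psi^2u^{-1}\langle\nabla_M u,\nu\rangle^2-u\psi^2\langle\nabla_M h,\nu\rangle$ \emph{without} the extra $-\psi\langle\nabla_\Sigma u,\nabla_\Sigma\psi\rangle$ term, which only appears after integrating $u|\nabla_\Sigma\psi|^2$ by parts to reach the stated form, so your collected identity is still the right one.
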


\subsection{Diameter Estimates} 

\label{Section:diameter-bound}

The classical Bonnet-Myers theorem gives a diameter bound for manifolds with positive Ricci curvature. Shen and Ye discovered the following diameter bound for stable minimal hypersurfaces in manifolds with positive 2-intermediate curvature. 

\begin{theorem}[Shen-Ye \cite{shen1996stable}]
\label{Theorem:Shen-Ye-BiRicci}
Let $M^n$ be a complete Riemannian manifold of dimension $n\in \{3,4,5\}$. Assume that the $2$-intermediate curvature of $M$ is at least $\kappa > 0$. Let $\Sigma$ be a stable minimal hypersurface in $M$. Then for every $p\in \Sigma$, one has  
\[
d(p,\bd \Sigma) \le \sqrt{c(n)}\frac{\pi}{\sqrt \kappa},
\]
where $c(n)$ is a dimensional constant. 
\end{theorem}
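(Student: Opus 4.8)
The plan is to argue by contradiction. Suppose $\ell := d(p,\bd\Sigma)$ is large (reading $d$ as the intrinsic distance in $\Sigma$, which only strengthens the conclusion), and let $\gamma\colon[0,\ell]\to\Sigma$ be a unit-speed minimizing geodesic of $\Sigma$ with $\gamma(0)=p$ and $\gamma(\ell)\in\bd\Sigma$ --- or a ray if $\bd\Sigma=\emptyset$. Everything reduces to producing a one-dimensional Poincar\'e inequality along $\gamma$,
\[
c(n)\int_0^\ell\phi'(s)^2\,ds\ \ge\ \kappa\int_0^\ell\phi(s)^2\,ds\qquad\text{for all }\phi\in H^1_0(0,\ell),
\]
since testing this with $\phi(s)=\sin(\pi s/\ell)$ gives $c(n)\,\pi^2/\ell^2\ge\kappa$, i.e. $\ell\le\sqrt{c(n)}\,\pi/\sqrt\kappa$, the asserted bound.

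The three ingredients are: (i) stability of the minimal hypersurface $\Sigma$, $\int_\Sigma\|\nabla\varphi\|^2\ge\int_\Sigma(\|A\|^2+\ric_M(\nu,\nu))\varphi^2$ for $\varphi\in C_c^1(\inter\Sigma)$; equivalently, on a large ball $B\subset\Sigma$ there is a first Dirichlet eigenfunction $u>0$ of the stability operator with eigenvalue $\ge 0$, so $u^{-1}\Delta_\Sigma u\le-(\|A\|^2+\ric_M(\nu,\nu))$; (ii) the fact that $\gamma$ minimizes length in $\Sigma$, so summing the index form over a parallel orthonormal frame $e_2,\dots,e_{n-1}$ of the normal bundle of $\gamma$ in $\Sigma$ yields $(n-2)\int_0^\ell\phi'^2\ge\int_0^\ell\phi^2\,\ric_\Sigma(\gamma',\gamma')$; and (iii) the Gauss equation, which (using $H=0$) gives $\ric_\Sigma(v,v)=\ric_M(v,v)-R_M(v,\nu,v,\nu)-\|A(v,\cdot)\|^2$ for unit $v$ tangent to $\Sigma$. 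Ingredient (ii) alone is useless since $\ric_\Sigma$ has no sign; the point is to feed $u$ back in as a weight. One takes $f=u^{\alpha}$ for a suitable power $\alpha$ and aims to verify a weighted curvature lower bound
\[
\ric_\Sigma(v,v)-f^{-1}\Delta_\Sigma f+\tfrac12\|\nabla\ln f\|^2\ \ge\ \kappa/c(n)\qquad\text{for all unit }v .
\]
Expanding $\ric_\Sigma$ by (iii), expanding $f^{-1}\Delta_\Sigma f$ via the eigenfunction equation for $u$, and invoking the $2$-intermediate curvature bound $\ric_M(v,v)+\ric_M(\nu,\nu)-2R_M(v,\nu,v,\nu)\ge\kappa$ to absorb the ambient curvature, this collapses to a purely algebraic inequality among $\kappa$, the second fundamental form of $\Sigma$, and $\nabla\ln u$. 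Once it holds, the Shen--Ye generalized Bonnet--Myers theorem \cite{shen1997geometry} --- whose hypothesis is precisely such a weighted curvature lower bound --- upgrades it to the metric diameter bound $\ell\le\sqrt{c(n)}\,\pi/\sqrt\kappa$; equivalently, restricting the weighted bound to $\gamma$ and running the classical Bonnet--Myers computation produces the one-dimensional Poincar\'e inequality above.

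The main obstacle is exactly the algebraic step. After summing the Gauss equation over the $n-2$ directions tangent to $\Sigma$ and combining with the weighted stability term and the $2$-intermediate curvature inequality, the residual terms are quadratic in $A$ and in $\nabla\ln u$, with coefficients depending on $n$ (through the number of directions summed and the shape of the Gauss sum) and on the weight exponent $\alpha$; arranging for these leftovers to carry a definite sign while still retaining a positive multiple of $\kappa$ is a tight balancing act, and it closes only when $n-1\le 4$. This is the intrinsic bottleneck of the method: it is the same reason the authors can verify the analogous hypothesis on the higher slices $\Sigma_{m-1}$ only for $n\le 6$ (needing the full strength of \cite{shen1997geometry}, and a carefully chosen weighted $\mu$-bubble functional in the borderline case $n=6$, $m=3$), and the same reason the Remark in the introduction singles out $n=7$ as out of reach. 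Once the dimensional restriction $n\in\{3,4,5\}$ is in force, the algebra goes through and the diameter bound follows.
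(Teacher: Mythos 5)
This theorem is not proved in the paper at all: it is quoted verbatim from Shen--Ye \cite{shen1996stable}, so there is no in-paper argument to match. Judged on its own terms, your proposal correctly identifies the known strategy (weight the second variation of length by a power of the first eigenfunction $u$ of the stability operator, expand via the Gauss equation and the eigenfunction equation, absorb the ambient curvature with the bound $C_2\ge\kappa$, and feed the resulting conformal-Ricci-type lower bound into the generalized Bonnet--Myers machinery). But it is a plan, not a proof. The entire content of the theorem --- in particular the restriction $n\in\{3,4,5\}$ --- lives in the one step you explicitly defer: choosing the exponent $\alpha$, writing out the residual quadratic forms in $A_\Sigma$ and $\nabla\ln u$ (including the cross terms $\phi\phi' u'$ that arise when the test function $\phi$ is forced to vanish at the endpoints, which is exactly what distinguishes the diameter bound from the dimension-free Frankel-type statement), and verifying that they can be arranged to be nonnegative while retaining a positive multiple of $\kappa$ precisely when $\dim\Sigma\le 4$. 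Asserting that ``the algebra goes through'' once $n\le 5$ is assuming the conclusion of the only nontrivial step.

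Two further points. First, your opening reduction to the unweighted Poincar\'e inequality $c(n)\int\phi'^2\ge\kappa\int\phi^2$ tested against $\sin(\pi s/\ell)$ does not come out of the argument as written: the second variation of the $u$-weighted length produces a \emph{weighted} inequality $c(n)\int u\,\phi'^2\ge\kappa\int u\,\phi^2$ plus terms in $u'$, and with a nonconstant weight the sine test function gives nothing directly; one must either run the ODE comparison with the weight (which is what the generalized Bonnet--Myers theorem does internally, along a \emph{weighted} geodesic satisfying $\nabla_{\dot c}\dot c=f^{-1}(\grad f)^\perp$, not the length-minimizing geodesic you start with) or absorb the $u'$ terms by Cauchy--Schwarz, which is again where the dimensional constants must balance. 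Second, the versions of the generalized Bonnet--Myers theorem you invoke (Theorems \ref{Theorem:generalized-Bonnet-Myers} and \ref{Shen-Ye-Higher-Dimension}) apply to manifolds of dimension $3$ and $\ge 4$ respectively, so for $n=3$ the hypersurface $\Sigma^2$ is not covered by either and that case needs a separate (elementary) two-dimensional argument.
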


Shen and Ye also proved the following generalized Bonnet-Myers theorem which will be very useful for proving diameter bounds for certain weighted minimal slicings. We state the 3-dimensional version first since it is simpler and more powerful. 

\begin{theorem}[Shen-Ye \cite{shen1997geometry}]
\label{Theorem:generalized-Bonnet-Myers}
    Let $N^3$ be a complete Riemannian manifold. Assume there is a function $f > 0$ on $N$ such that 
    \begin{equation}
    \label{conf-ricci}
    \ric_N(e,e) - f^{-1}\lap_N f + \frac 1 2  \vert \grad_N \ln f\vert^2 \ge \kappa > 0
    \end{equation}
    for all unit tangent vectors $e$. Then 
    \[
    \diam(N) \le \sqrt 2\frac{\pi }{\sqrt \kappa}. 
    \]
    In particular, $N$ is compact. 
\end{theorem}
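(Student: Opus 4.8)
The plan is to argue by contradiction. Suppose $\diam(N) > \sqrt 2\,\pi/\sqrt\kappa$; since $N$ is complete, Hopf--Rinow produces a unit-speed minimizing geodesic $\gamma\colon[0,\ell]\to N$ with $\ell > \sqrt 2\,\pi/\sqrt\kappa$, and minimality makes the index form
\[
I(V,V)=\int_0^\ell\big(|\grad_{\gamma'}V|^2-\langle\riem(V,\gamma')\gamma',V\rangle\big)\,dt
\]
nonnegative for every normal field $V$ along $\gamma$ vanishing at the endpoints. First I would recast the hypothesis: writing $f=u^2$ with $u>0$, a one-line computation gives $-f^{-1}\lap f+\tfrac12|\grad\ln f|^2=-2u^{-1}\lap u$, so \eqref{conf-ricci} becomes the clean inequality $\ric_N(e,e)\ge\kappa+2u^{-1}\lap u$ for all unit $e$. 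Since $\dim N=3$, the normal bundle of $\gamma$ has rank $2$; picking a parallel orthonormal frame $\{E_1,E_2\}$ of it one has $\langle\riem(E_1,\gamma')\gamma',E_1\rangle+\langle\riem(E_2,\gamma')\gamma',E_2\rangle=\ric(\gamma',\gamma')$, and feeding $V=\varphi E_1$, $V=\varphi E_2$ into the index inequality and adding yields
\[
2\int_0^\ell(\varphi')^2\,dt\ \ge\ \int_0^\ell\varphi^2\,\ric(\gamma',\gamma')\,dt\ \ge\ \int_0^\ell\varphi^2\big(\kappa+2u^{-1}\lap u\big)\,dt
\]
for every $\varphi$ with $\varphi(0)=\varphi(\ell)=0$. (The factor $2$ here is the source of the $\sqrt 2$ in the final bound.)

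The obstruction is that the right-hand side contains the \emph{full} Laplacian $\lap u$ of $u$ on $N$, whereas integrating by parts along $\gamma$ only ever produces $\mathrm{Hess}\,u(\gamma',\gamma')=(u\circ\gamma)''$; the missing transverse Hessians $\mathrm{Hess}\,u(E_i,E_i)$ carry no sign, and in the regime $\lap u\ll 0$ — the relevant one when $u$ is built from eigenfunctions of stability operators — the displayed inequality is too weak to be useful on its own. To repair this I would not work on $\gamma$ alone but thicken it: deform $\gamma$ through the nearby geodesics $\gamma_y$, $y\in D_\rho$, which sweep out a thin tube $T_\rho(\gamma)\cong[0,\ell]\times D_\rho$ in Fermi coordinates (each $\gamma_y$ still has no conjugate points, so its index form is nonnegative). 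Running the two-normal-direction computation along each $\gamma_y$ and integrating over the tube against the Riemannian volume gives a three-dimensional inequality in which the term $\iint_{T_\rho}\varphi^2\,u^{-1}\lap u$ appears and can genuinely be integrated by parts in all directions. Choosing the cutoff $\varphi=\varphi(t)\,\chi(|y|)$ so that $\varphi$ vanishes on $\partial T_\rho$, the boundary terms drop and $\iint\varphi^2u^{-1}\lap u=\iint|\varphi u^{-1}\grad u-\grad\varphi|^2-\iint|\grad\varphi|^2$; with a logarithmic transverse cutoff the error from $\grad\chi$ tends to $0$ as $\rho\to 0$, and in the limit one is left with an inequality for functions on $[0,\ell]$ in which the $u$-dependence has been absorbed into a manifestly nonnegative square. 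I expect the delicate point to be exactly this bookkeeping: isolating the part of $\grad u$ along $\gamma$ and combining the resulting cross terms so that the final constant is the sharp $2$ and not a lossy multiple of it.

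Granting that, the argument terminates with the one-dimensional inequality $2\int_0^\ell(\varphi')^2\,dt\ge\kappa\int_0^\ell\varphi^2\,dt$ for all $\varphi$ vanishing at $0$ and $\ell$. Taking $\varphi(t)=\sin(\pi t/\ell)$ gives $2(\pi/\ell)^2\ge\kappa$, i.e. $\ell\le\sqrt 2\,\pi/\sqrt\kappa$, contradicting the choice of $\gamma$. Hence $\diam(N)\le\sqrt 2\,\pi/\sqrt\kappa$, and together with completeness this forces $N$ to be compact. The heart of the matter — and the step I expect to absorb most of the work — is the passage from the geodesic index inequality to one that truly sees $\lap u$ with the correct sharp constant; everything else is setup plus a Wirtinger-type estimate.
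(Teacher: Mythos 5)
Your reduction $f=u^2$, which turns the hypothesis into $\ric_N(e,e)\ge \kappa+2u^{-1}\lap_N u$, is correct, and you have correctly identified the obstruction to the naive argument: the index form of an ordinary minimizing geodesic only sees $\ric(\gamma',\gamma')$ along $\gamma$, while $\lap_N u$ contains the transverse Hessians $\operatorname{Hess}u(E_i,E_i)$, which carry no sign and cannot be integrated by parts along a one-dimensional curve. The problem is that your proposed repair does not close this gap. First, the curves $t\mapsto\exp_{\gamma(t)}(y)$ that sweep out a Fermi tube are not geodesics, so they are not even critical points of length and have no nonnegative index form; and there is in general no foliation of a tube around $\gamma$ by minimizing geodesics to which the second variation inequality applies leaf by leaf. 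Second, and more fatally, the transverse integration by parts cannot be made cheap: to kill the lateral boundary terms you must use a cutoff $\chi$ vanishing on $\bd D_\rho$, and the Poincar\'e inequality on the transverse disk forces $\int_{D_\rho}|\grad\chi|^2\ge \lambda_1(D_\rho)\int_{D_\rho}\chi^2$ with $\lambda_1(D_\rho)\sim \rho^{-2}$. So the error term $\iint\varphi^2|\grad\chi|^2$ is not negligible as $\rho\to 0$; it blows up relative to the quantities you want to compare. The logarithmic cutoff trick you invoke lowers the cost of \emph{excising} a small set (a capacity statement); it does not help when you need the test function \emph{supported} in a thin tube. Consequently the claimed limit ``one is left with an inequality for functions on $[0,\ell]$'' is not attainable, and even in the most optimistic bookkeeping the transverse gradient contributes at least as much as the tangential one, which would yield $\ell\le 2\pi/\sqrt\kappa$ rather than the asserted $\sqrt2\,\pi/\sqrt\kappa$.

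The missing idea is to change the functional, not to thicken the curve: one must minimize the $f$-weighted length $L(c)=\int f|\dot c|\,dt$ between the two far-apart points. Its first variation forces the weighted geodesic equation $\del_{\dot c}\dot c=f^{-1}(\grad f)^{\perp}$, which ties the curve to $f$, and its second variation, tested against the rescaled parallel normal fields $f^{-1/2}E_j$ and summed over $j$, produces the full Laplacian of $f$ (the transverse Hessians appear as part of $\lap f$, and the along-curve Hessian is absorbed using the weighted geodesic equation and the $\frac12|\grad\ln f|^2$ term), with exactly the constant that yields $\diam(N)\le\sqrt2\,\pi/\sqrt\kappa$. This is the route of Shen--Ye, whom the paper cites for this statement without reproving it; the same computation, in the free-boundary setting, is carried out in the paper's Frankel-type theorem for conformal Ricci curvature (Theorem \ref{Theorem:ConformalRicciFrankel}), which you can use as a template. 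As written, your proposal has a genuine gap at its central analytic step.
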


\begin{defn}
Following Shen-Ye, we will say a manifold satisfying (\ref{conf-ricci}) admits positive {\it conformal Ricci curvature}.
\end{defn}

In higher dimensions, the theorem must be modified as follows. 

\begin{theorem}[Shen-Ye \cite{shen1997geometry}]
\label{Shen-Ye-Higher-Dimension} Let $N^k$ be a complete Riemannian manifold of dimension $k\ge 4$. Assume there is a function $f > 0$ on $N$ and $\tau,\eps > 0$ so that 
\[
\ric_N(e,e) - \tau f^{-1}\lap_N f + \left[\tau - \left(\frac{k-1}{4} +\eps\right) \tau^2 \right] \vert \grad_N \ln f\vert^2 \ge \kappa > 0
\]
for all unit tangent vectors $e$. 
Then the diameter of $N$ is bounded above in terms of $k$, $\kappa$, and $\eps$. 
\end{theorem}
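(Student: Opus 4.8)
The plan is to run the classical Bonnet--Myers second-variation argument, with the weight $f$ entering through an algebraic reformulation of the hypothesis. First, rewrite the curvature condition in a cleaner form: since $\lap_N(f^a) = a f^{a-1}\lap_N f + a(a-1)f^{a-2}|\grad_N f|^2$, the choice $a := (\tfrac{k-1}{4}+\eps)\tau$ and $v := f^{a} > 0$ gives, after dividing by $f^a$, that the stated hypothesis is \emph{exactly equivalent} to
\[
\ric_N(e,e)\ \ge\ \kappa + \beta\, v^{-1}\lap_N v \qquad \text{for every unit vector } e,
\]
where $\beta := (\tfrac{k-1}{4}+\eps)^{-1}$ satisfies $0 < \beta < \tfrac{4}{k-1}$; the strict inequality $\beta < \tfrac{4}{k-1}$ is precisely the assumption $\eps > 0$, and the borderline value $\beta = \tfrac{4}{k-1}$ is attainable only when $k = 3$ (there $v = \sqrt f$, recovering the three-dimensional statement). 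So it suffices to bound $\diam(N)$ in terms of $k,\kappa,\eps$ under this reformulated hypothesis.

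Now fix $p,q \in N$ with $\ell := d(p,q)$, take a unit-speed minimizing geodesic $\gamma\colon[0,\ell]\to N$ and a parallel orthonormal frame $\gamma' = e_1, e_2,\dots,e_k$ along it. For $\varphi$ with $\varphi(0)=\varphi(\ell)=0$, summing the second variation of arclength over the fields $\varphi e_i$, $i=2,\dots,k$, and using that $\gamma$ is minimizing gives $\int_0^\ell (k-1)(\varphi')^2 - \varphi^2\ric_N(\gamma',\gamma')\,dt \ge 0$, and inserting the reformulated hypothesis along $\gamma$ yields
\[
\beta\int_0^\ell \varphi^2\, v^{-1}\lap_N v\, dt \ \le\ \int_0^\ell (k-1)(\varphi')^2 - \kappa\varphi^2\, dt.
\]
With $\varphi = \sin(\pi t/\ell)$ the right-hand side equals $\tfrac{(k-1)\pi^2}{2\ell} - \tfrac{\kappa\ell}{2}$, which is strongly negative for large $\ell$; so everything reduces to a \emph{lower} bound on the left-hand side that is not too negative (sublinear in $\ell$, with the correct dimensional constant). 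Since $\lap_N v$ restricted to $\gamma$ is \emph{not} the second $t$-derivative of $v\circ\gamma$, the clean move is to recast the whole argument as a weighted Bishop--Gromov/Laplacian-comparison computation: integrate the Riccati inequality $\partial_r(\lap_N r) + \tfrac{(\lap_N r)^2}{k-1} + \ric_N(\partial_r,\partial_r)\le 0$ against a radial cutoff $\chi(r)$ over a geodesic ball $B_R(p)$, substitute the hypothesis in the form above, and integrate by parts in $r$.

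The main obstacle is controlling the \emph{full} Laplacian $\lap_N v$. The plan is: (i) write $v^{-1}\lap_N v = \lap_N\log v + |\grad_N\log v|^2$ and observe that, after multiplying by $v$ (so $v\cdot v^{-1}\lap_N v = \lap_N v = \div(\grad_N v)$), the gradient term comes with a favorable sign; (ii) integrate $\int_{B_R}\chi\,\lap_N v$ by parts so that the distance-sphere boundary term vanishes ($\chi(R)=0$) and only first-order \emph{radial} derivatives of $\log v$ survive, which are then absorbed, via Young's inequality, into the gradient term $\beta\int_{B_R}\chi|\grad_N\log v|^2$ and into the $m^2$-term ($m := \lap_N r$) produced by the Riccati inequality; (iii) use Cauchy--Schwarz $|\mathrm{II}|^2 \ge m^2/(k-1)$ for the shape operators of the distance spheres — this is where the factor $k-1$ is created — and complete the square. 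Carrying out (ii)--(iii) and tracking the cut-locus contributions (handled, as usual, because the comparison holds distributionally with the correct sign), one arrives at a closed integral inequality for the weighted volume of $B_R(p)$ that is contradictory for $R$ large precisely when $\beta < \tfrac{4}{k-1}$, i.e. when $\eps > 0$; this gives $\diam(N) \le C(k,\kappa,\eps)$, and then completeness plus a diameter bound forces $N$ to be compact by Hopf--Rinow. The delicate point is verifying that the dimensional threshold is \emph{exactly} $\tfrac{4}{k-1}$, so that the estimate degenerates as $\eps\to 0$ when $k\ge 4$ (matching the paper's remark that $\eps>0$ is genuinely necessary) while surviving at $k=3$ to yield the sharp bound $\sqrt 2\,\pi/\sqrt{\kappa}$.
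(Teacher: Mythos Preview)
First, note that the paper does not prove this theorem: it is quoted from Shen--Ye \cite{shen1997geometry} and used as a black box, so there is no in-paper proof to compare against. The only question is whether your argument stands on its own.

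Your algebraic rewrite is correct: with $v=f^{a}$, $a=(\tfrac{k-1}{4}+\eps)\tau$, $\beta=(\tfrac{k-1}{4}+\eps)^{-1}$, the hypothesis becomes $\ric_N(e,e)\ge\kappa+\beta\,v^{-1}\lap_N v$, and $\eps>0$ is exactly $\beta<\tfrac{4}{k-1}$. The Bonnet--Myers step is also fine up to the inequality $\beta\int_0^\ell\varphi^2 v^{-1}\lap_N v\,dt\le\int_0^\ell(k-1)(\varphi')^2-\kappa\varphi^2\,dt$. The gap is everything after that. You correctly flag the obstacle (the transverse Hessian in $\lap_N v$ along $\gamma$ is uncontrolled), but the proposed cure is only a plan: you abandon the geodesic integral you actually need and pivot to a Riccati/ball scheme, then assert without derivation that ``one arrives at a closed integral inequality for the weighted volume of $B_R(p)$ that is contradictory for $R$ large precisely when $\beta<\tfrac{4}{k-1}$''. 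No such inequality is produced; step (ii) conflates $\int_{B_R}\chi\,\lap_N v$ (which integrates by parts cleanly) with $\int_{B_R}\chi\,v^{-1}\lap_N v$ (which does not), and in a sphere-integrated Riccati inequality the term $\lap_{S_r}\log v$ only disappears after integrating over $S_r$, which entangles it with the evolving area element. Most importantly, you never show where the sharp threshold $\tfrac{4}{k-1}$ would emerge from this framework.

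For orientation, the actual Shen--Ye route stays on a single geodesic. The point is to make $\lap_N f$ appear \emph{intrinsically} by summing second variations over a full transverse frame for a suitably $f$-weighted length functional; compare the paper's proof of Theorem~\ref{Theorem:ConformalRicciFrankel}, which does exactly this in dimension $3$ with weight $f$ and test fields $f^{-1/2}e_j$. Each of the $k-1$ test fields contributes $\operatorname{Hess}_f(e_j,e_j)$, their sum is $\lap_N f-\operatorname{Hess}_f(\dot\gamma,\dot\gamma)$, and the remaining tangential piece is handled by one-dimensional integration by parts. The coefficient $\tfrac{k-1}{4}$ then drops out of the $(k-1)$-fold sum after optimizing the power of $f$ in the test fields. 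That is the missing idea.
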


\section{Frankel Type Theorems} \label{sec: Frankel}

Recall that Frankel's theorem  says that if $M$ has positive Ricci curvature and minimal boundary then $\bd M$ is connected.  In this section, we are going to prove a Frankel type theorem for manifolds with positive conformal Ricci curvature.  
As motivation, we first prove the following version of Frankel's theorem for bi-Ricci curvature. This theorem fits into the general theme, first observed by Shen-Ye \cite{shen1996stable}, that stable minimal hypersurfaces in manifolds with positive bi-Ricci curvature behave as if they had positive Ricci curvature. 
Curiously, in contrast to the diameter bounds, the following theorem does not require any dimension restriction.

\begin{theorem}
    Let $M^{n+1}$ be a compact manifold with boundary. Assume that $M$ has positive bi-Ricci curvature and minimal boundary. Assume that $\Sigma^n$ is a two-sided, stable, free boundary minimal hypersurface in $M$. Then $\bd \Sigma$ is connected. 
\end{theorem}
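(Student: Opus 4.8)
The plan is to mimic the classical Frankel argument: assume $\bd\Sigma$ is disconnected, pick two boundary components $\Gamma_0,\Gamma_1$, join them by a shortest geodesic $\gamma$ in $\Sigma$ meeting $\bd\Sigma$ orthogonally at both ends, and derive a contradiction from the second variation of length together with the stability inequality for $\Sigma$. The key point is that positive bi-Ricci curvature of $M$, combined with minimality of $\Sigma$ and of $\bd M$, should play exactly the role that positive Ricci curvature of $\Sigma$ plays in Frankel's theorem. Concretely, stability of $\Sigma$ gives a positive first eigenfunction $\varphi$ on $\Sigma$ (with a Neumann-type free boundary condition coming from the second fundamental form of $\bd M$), and the Shen--Ye philosophy says that $\Sigma$ equipped with the weight $\varphi$ behaves like a manifold of positive Ricci curvature; I would first record the relevant ``conformal Ricci'' inequality on $\Sigma$ obtained by absorbing the ambient bi-Ricci term and the stability term.

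The main steps, in order: (1) Suppose for contradiction that $\bd\Sigma$ has at least two components; since $\Sigma$ is compact with boundary and $\gamma$ minimizes length in its homotopy class of paths from $\Gamma_0$ to $\Gamma_1$, $\gamma$ is a geodesic hitting $\bd\Sigma$ orthogonally, and $\Sigma$ is free boundary minimal so $\gamma$ also meets $\bd M$ orthogonally in the appropriate sense. (2) Write down the second variation of length of $\gamma$ under normal variations $V$ valued in $T\Sigma$; the boundary terms involve $A_{\bd\Sigma}$, and since $\Sigma$ is free boundary and $\bd M$ is minimal these can be controlled. (3) Use a clever test variation: rather than a single normal field, use an $(n-1)$-dimensional family of normal fields spanning the orthogonal complement of $\gamma'$ inside $T_{\gamma}\Sigma$ plus the normal $\nu$ of $\Sigma$ in $M$, weighted by the stability eigenfunction $\varphi$ along $\gamma$. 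Summing the resulting index-form inequalities produces, on the left, a sum of sectional-type curvatures that reassembles into the bi-Ricci curvature $\mathrm{BiRic}(\gamma',\nu) = \mathrm{Ric}_M(\gamma',\gamma') + \mathrm{Ric}_M(\nu,\nu) - R_M(\gamma',\nu,\gamma',\nu)$ along $\gamma$, plus the stability/$\varphi$ terms; positivity of bi-Ricci then forces the index form to be negative for a suitable choice of cutoff in the $\varphi$-direction, contradicting minimality of $\gamma$. (4) Conclude $\bd\Sigma$ is connected.

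The hard part will be step (3): arranging the bookkeeping so that the ambient curvature contributions from the $(n-1)$ transverse directions inside $\Sigma$ plus the one direction normal to $\Sigma$ combine into precisely a bi-Ricci quantity (this is where the ``$m=2$'' structure of bi-Ricci must be matched to the codimension of $\gamma$ in $M$), and simultaneously handling the weight $\varphi$ so that the terms $-\varphi^{-1}\Delta_\Sigma\varphi + \tfrac12|\nabla_\Sigma\ln\varphi|^2$ appear with the right signs — essentially reproving a Bonnet--Myers/Frankel statement for the conformal Ricci curvature of $\Sigma$ with boundary. I expect the free boundary terms to cancel cleanly using $H_{\bd M}=0$ and the orthogonality of $\Sigma$ and $\gamma$ to $\bd M$, so they should not be the real obstacle; the delicate algebra is entirely in the curvature recombination. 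If the direct index-form computation is awkward, an alternative is to first pass to the weighted manifold $(\Sigma,\varphi)$, verify it has positive conformal Ricci curvature in the sense of \eqref{conf-ricci} (with no dimension restriction needed, matching the remark that this Frankel statement is dimension-free), and then invoke a weighted Frankel theorem whose proof is the classical one applied to the drift Laplacian.
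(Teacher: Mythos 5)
Your high-level plan (Frankel via a connecting curve, with the stability eigenfunction as a weight and bi-Ricci emerging from the curvature bookkeeping) is the right one and matches the paper's strategy, but step (3) as you describe it has a genuine flaw: you cannot include the ambient normal $\nu$ of $\Sigma$ among the variation directions of the curve $\gamma$. The curve is only a minimizer among curves \emph{in} $\Sigma$, so its index form is nonnegative only for variations tangent to $\Sigma$; a $\nu$-direction variation leaves $\Sigma$ and carries no sign information. In the actual argument the terms $\ric_M(\nu,\nu)$ and $R_M(\dot c,\nu,\dot c,\nu)$ enter through two different channels: $\ric_M(\nu,\nu)+\vert A_\Sigma\vert^2$ comes from the eigenfunction equation $J_\Sigma f+\lambda f=0$ (i.e.\ from stability of $\Sigma$, after the trace of $\operatorname{Hess} f$ produces $\lap_\Sigma f$), and $-R_M(\dot c,\nu,\dot c,\nu)$ comes from the Gauss equation converting $\ric_\Sigma(\dot c,\dot c)$ into ambient curvature plus second fundamental form terms, which are nonnegative because $\Sigma$ is minimal. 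Relatedly, you take $\gamma$ to be an \emph{unweighted} shortest geodesic but then weight the test fields by $\varphi$; the paper instead minimizes the $f$-weighted length $\int f\vert\dot c\vert$, and the resulting weighted geodesic equation $\del_{\dot c}\dot c=f^{-1}(\grad f)^\perp$ together with orthogonal incidence on $\bd\Sigma$ is used repeatedly in the second-variation computation and is exactly what makes the boundary terms reduce to $\bd_\eta f+fH_{\bd\Sigma}$, which vanish since $\bd_\eta f=A_{\bd M}(\nu,\nu)f$ and $H_{\bd M}=0$. With an unweighted geodesic the cross terms in $\grad f$ do not organize into total derivatives and the cancellation at the boundary is lost.

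Your fallback route (first prove a pointwise weighted inequality on $\Sigma$, then quote a weighted Frankel theorem) is essentially what the paper does, but as stated it is not quite available: the paper's conformal-Ricci Frankel theorem (Theorem \ref{Theorem:ConformalRicciFrankel}, condition (\ref{conf-ricci}) with the coefficient $\tfrac12$) is only proved for $3$-dimensional $\Sigma$, and the condition $\ric-f^{-1}\lap f+\tfrac12\vert\grad\ln f\vert^2>0$ is not the Bakry--\'Emery condition, so "the classical Frankel applied to the drift Laplacian" does not apply; the Laplacian-type weight forces the Shen--Ye style trace-over-a-parallel-normal-frame computation along the weighted geodesic. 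What saves the dimension-free statement is that stability plus Gauss plus minimality give the \emph{stronger} inequality $\ric_\Sigma(v,v)-f^{-1}\lap_\Sigma f\ge \min_M\operatorname{BiRic}>0$ with no gradient term needed at all, and the weighted-geodesic argument with unweighted parallel variations $V_j=e_j$ then closes in every dimension because the $\vert(\grad f)^\perp\vert^2$ terms appear with a favorable sign and can simply be discarded. So the reduction can be repaired, but the statement you would need to invoke is this gradient-free weighted Frankel lemma (whose proof is precisely the computation the paper carries out directly), not the $3$-dimensional conformal-Ricci version nor a Bakry--\'Emery Frankel.
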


\begin{proof}
    
    Let $\nu$ be the unit normal vector to $\Sigma$ in $M$ and let $\eta$ be the unit outward co-normal along $\bd \Sigma$. The second variation formula says that for any smooth function $\psi$ on $\Sigma$ we have  
    \begin{align*}
    0 &\le \int_\Sigma \vert \grad_\Sigma \psi^2\vert - (\vert A_\Sigma\vert^2 + \ric(\nu,\nu))\psi^2 \, dv - \int_{\bd \Sigma} A_{\bd M}(\nu,\nu)\psi^2\, da\\
    &= -\int_\Sigma \psi J_\Sigma \psi \, dv + \int_{\bd \Sigma} \psi \frac{\bd \psi}{\bd \eta} - A_{\bd M}(\nu,\nu)\psi^2\, da,
    \end{align*}
    where $J_\Sigma = \lap_\Sigma + \vert A_\Sigma\vert^2 + \ric(\nu,\nu)$ is the Jacobi operator. Let $f > 0$ be a first eigenfunction so that 
    \[
    \begin{cases}
        J_\Sigma f + \lambda f = 0,\\
        \bd_\eta f = A_{\bd M}(\nu,\nu) f,
    \end{cases}
    \]
    with $\lambda \ge 0$. 
    We are going to consider a weighted length functional with weight $f$ as in Shen-Ye \cite{shen1996stable}. 

    More precisely, given a curve $c$ in $\Sigma$, define 
    \[
    L(c) = \int_0^\ell f \vert \dot c\vert\, dt. 
    \]
    We now restrict the calculations in $\Sigma$ and follow \cite{shen1996stable} to compute the first and second variation of $L$. Assume that $c(t)$ is a unit speed curve in $\Sigma$. Let $c(s,t)$ be a variation with $c(0,t) = c(t)$. We compute 
    \begin{align*}
        \frac{\bd L}{\bd s} = \int_0^\ell \frac{\bd}{\bd s} (f \langle \dot c,\dot c\rangle^{1/2})\, dt = \int_0^\ell \langle \grad f, \frac{\bd c}{\bd s}\rangle \vert \dot c\vert + f \frac{\langle \nabla_{\bd/\bd s}\dot c, \dot c\rangle}{\vert \dot c\vert}\, dt. 
    \end{align*}
    Therefore setting $V(t) = \frac{\bd c}{\bd s}(0,t)$ we get the first variation formula 
    \[
    \frac{\bd L}{\bd s}\eval_{s=0} = \int_0^\ell \la \grad f,V\ra + f\la \del_{\dot c} V, \dot c\ra \, dt. 
    \]
    
    Assuming $c$ is a critical point among curves with endpoints constrained to lie in $\bd \Sigma$, we can test this against $V$ with $V(0)\in T_{c(0)}\bd \Sigma$ and $V(\ell)\in T_{c(\ell)}\bd \Sigma$ to get 
    \begin{align*}
        0 &= \int_0^\ell \la \grad f,V\ra + f \frac{\bd}{\bd t}\la V,\dot c\ra - f\la V,\del_{\dot c} \dot c\ra\, dt\\
        &= \int_0^\ell \la \grad f,V\ra + \frac{d}{dt}\left[f \la V,\dot c\ra \right] - \la \grad f,\dot c\ra \la V,\dot c\ra - f\la V,\del_{\dot c}\dot c\ra \,dt \\
        &= f\la V,\dot c\ra \eval_{0}^\ell + \int_0^\ell \la \grad f, V\ra - \la \grad f,\dot c\ra \la V,\dot c\ra - f\la V,\del_{\dot c}\dot c\ra \,dt\\
        &= f\la V,\dot c\ra \eval_0^\ell + \int_0^\ell \la (\grad f)^\perp, V\ra - f\la V,\del_{\dot c}\dot c\ra \,dt. 
    \end{align*}
    Here $(\grad f)^\perp$ is the part of $\grad f$ orthogonal to $\dot c$. 
    Therefore $c$ satisfies the weighted geodesic equation 
    \[
    \del_{\dot c}{\dot c} = f^{-1} (\grad f)^\perp,
    \]
    and $c$ meets $\bd \Sigma$ orthogonally at both endpoints. 
    Next we compute the 2nd variation to get 
    \begin{align*}
        \frac{\bd^2 L}{\bd s^2}\eval_{s=0} &= \int_0^\ell \langle \nabla_{\frac{\bd c}{\bd s}} \grad f, V\rangle + \la \grad f , \del_{\frac{\bd c}{\bd s}} \frac{\bd c}{\bd s}\ra + 2\la \grad f,V\ra \la \del_{\frac{\bd c}{\bd s}} \frac{\bd c}{\bd t}, \frac{\bd c}{\bd t}\ra + f \frac{\bd}{\bd s}\la \del_{\frac{\bd c}{\bd s}} \frac{\bd c}{\bd t}, \frac{\bd c}{\bd t}\ra - f \la \del_{\frac{\bd c}{\bd s}} \frac{\bd c}{\bd t},\frac{\bd c}{\bd t}\ra^2\, dt \\
        &= \int_0^\ell \operatorname{Hess}^\Sigma_f(V,V) + \la \grad f, \del_{\frac{\bd c}{\bd s}} \frac{\bd c}{\bd s}\ra + 2 \la \grad f,V\ra \la \del_{\frac{\bd c}{\bd t}} \frac{\bd c}{\bd s},\frac{\bd c}{\bd t}\ra + f\frac{\bd}{\bd s}\la \del_{\frac{\bd c}{\bd t}} \frac{\bd c}{\bd s}, \frac{\bd c}{\bd t} \ra - f \la \del_{\frac{\bd c}{\bd t}} \frac{\bd c}{\bd s},\frac{\bd c}{\bd t} \ra^2\, dt\\ 
      &=\int_0^\ell  \operatorname{Hess}^\Sigma_f(V,V) + \la \grad f, \del_{\frac{\bd c}{\bd s}} \frac{\bd c}{\bd s}\ra + f \la \del_{\frac{\bd c}{\bd s}} \del_{\frac{\bd c}{\bd t}} \frac{\bd c}{\bd s},\frac{\bd c}{\bd t} \ra \, + f \la \del_{\frac{\bd c}{\bd t}}\frac{\bd c}{\bd s},\del_{\frac{\bd c}{\bd s}} \frac{\bd c}{\bd t}\ra\, dt \\
      &\qquad\quad +\int_0^\ell  2 \la \grad f, V\ra \la \del_{\frac{\bd c}{\bd t}}\frac{\bd c}{\bd s},\frac{\bd c}{\bd t}\ra - f\la \del_{\frac{\bd c}{\bd t}}\frac{\bd c}{\bd s},\frac{\bd c}{\bd t}\ra^2\, dt \\
        &=\int_0^\ell  \operatorname{Hess}^\Sigma_f(V,V) + \la \grad f, \del_{\frac{\bd c}{\bd s}} \frac{\bd c}{\bd s} \ra + f \la \del_{\frac{\bd c}{\bd t}}\del_{\frac{\bd c}{\bd s}}\frac{\bd c}{\bd s},\frac{\bd c}{\bd t}\ra - f R^\Sigma(\frac{\bd c}{\bd s},\frac{\bd c}{\bd t},\frac{\bd c}{\bd s},\frac{\bd c}{\bd t})\, dt \\
        &\qquad\quad +\int_0^\ell f \la \del_{\frac{\bd c}{\bd t}}\frac{\bd c}{\bd s} , \del_{\frac{\bd c}{\bd t}}\frac{\bd c}{\bd s}\ra + 2 \la \grad f, V\ra \la \del_{\frac{\bd c}{\bd t}}\frac{\bd c}{\bd s},\frac{\bd c}{\bd t}\ra  - f\la \del_{\frac{\bd c}{\bd t}}\frac{\bd c}{\bd s},\frac{\bd c}{\bd t}\ra^2 \, dt\\
        &=\int_0^\ell  \operatorname{Hess}^\Sigma_f(V,V) + \la \grad f, \del_{\frac{\bd c}{\bd s}} \frac{\bd c}{\bd s} \ra + f \frac{\bd}{\bd t} \la \del_{\frac{\bd c}{\bd s}} \frac{\bd c}{\bd s},\frac{\bd c}{\bd t}\ra - f\la \del_{\frac{\bd c}{\bd s}} \frac{\bd c}{\bd s}, \del_{\frac{\bd c}{\bd t}} \frac{\bd c}{\bd t} \ra  - fR^\Sigma(\frac{\bd c}{\bd s},\frac{\bd c}{\bd t},\frac{\bd c}{\bd s},\frac{\bd c}{\bd t}) \, dt\\
        &\qquad\quad +\int_0^\ell f \la \del_{\frac{\bd c}{\bd t}}\frac{\bd c}{\bd s} , \del_{\frac{\bd c}{\bd t}}\frac{\bd c}{\bd s} \ra + 2 \la \grad f, V\ra \la \del_{\frac{\bd c}{\bd t}}\frac{\bd c}{\bd s},\frac{\bd c}{\bd t} \ra  - f\la \del_{\frac{\bd c}{\bd t}}\frac{\bd c}{\bd s},\frac{\bd c}{\bd t} \ra^2 \, dt.
    \end{align*}
    If $c$ minimizes $L$ then this will be non-negative for all admissible  variations.  
    
    Now assume for contradiction that $\Sigma$ has two distinct boundary components $\Sigma_1$ and $\Sigma_2$. Let $c$ be a unit speed curve which minimizes $L$ over all curves connecting $\Sigma_1$ to $\Sigma_2$. We select $e_1 = \dot c$, $e_2$, $\hdots$, $e_n$  to be an orthonormal frame along $c$. Then
    \[
    \la \del_{\dot c} e_j,\dot c\ra =  -\la e_j,\del_{\dot c} \dot c \ra = -\la e_j, f^{-1}\grad f\ra 
    \]
    for $j= 2,\hdots,n$. Actually, we can further select $e_2,\hdots,e_n$ to be parallel in the normal bundle of $c$ so that $\del_{\dot c}e_j = -\la e_j,f^{-1}\grad f \ra \dot c$. 
   
     We select variations $c_j(s,t)$ with $V_j = e_j$ and $c_j(s,0) \in \Sigma_1$ and $c_j(s,\ell) \in \Sigma_2$ for $j=2,\hdots,n$. Plugging these into the second variation formula and summing over $j$ we get 
    \begin{align*} 
    0 &\le \int_0^\ell \lap_\Sigma f - \operatorname{Hess}^\Sigma_f(\dot c,\dot c) + \sum_{j=2}^n \la \grad f,\del_{\frac{\bd c_j}{\bd s}} \frac{\bd c_j}{\bd s}\ra + f\sum_{j=2}^n \frac{\bd}{\bd t} \la \del_{\frac{\bd c_j}{\bd s}} \frac{\bd c_j}{\bd s}, \dot c\ra - f\ric^\Sigma(\dot c,\dot c)\, dt \\
    &\qquad \quad  + \int_0^\ell - f\sum_{j=2}^n \la \del_{\frac{\bd c_j}{\bd s}} \frac{\bd c_j}{\bd s}, \del_{\dot c} \dot c \ra + 2\sum_{j=2}^n \la \grad f, e_j\ra \la \del_{\dot c} e_j, \dot c\ra - f\sum_{j=2}^n \la \del_{\dot c} e_j, \dot c\ra^2 + f \sum_{j=2}^n  \la \del_{\dot c} e_j,\del_{\dot c} e_j\ra\, dt\\
    &= \int_0^\ell \lap_\Sigma f - \operatorname{Hess}^\Sigma_f(\dot c,\dot c) + \sum_{j=2}^n \la \grad f,\del_{\frac{\bd c_j}{\bd s}} \frac{\bd c_j}{\bd s}\ra + \sum_{j=2}^n \frac{\bd}{\bd t}\left[f \la \del_{\frac{\bd c_j}{\bd s}} \frac{\bd c_j}{\bd s}, \dot c\ra\right] - \sum_{j=2}^n \la \grad f,\dot c\ra \la \del_{\frac{\bd c_j}{\bd s}} \frac{\bd c_j}{\bd s}, \dot c\ra\, dt\\
    &\qquad \quad + \int_0^\ell - f\sum_{j=2}^n \la \del_{\frac{\bd c_j}{\bd s}} \frac{\bd c_j}{\bd s}, f^{-1} (\grad f)^\perp \ra -2\sum_{j=2}^n \la \grad f, e_j \ra \la e_j,f^{-1}\grad f\ra - f\sum_{j=2}^n \la e_j,f^{-1}\grad f\ra^2\, dt  \\
    & \qquad \quad + \int_0^\ell - f\ric^\Sigma(\dot c,\dot c) + \sum_{j=2}^n f^{-1} \la \grad f,e_j\ra^2 \, dt\\
    &= \int_0^\ell \lap_\Sigma f - \operatorname{Hess}^\Sigma_f(\dot c,\dot c)  + \sum_{j=2}^n \frac{\bd}{\bd t}\left[f \la \del_{\frac{\bd c_j}{\bd s}} \frac{\bd c_j}{\bd s}, \dot c\ra\right]  -2 f^{-1}\vert (\grad f)^\perp\vert^2   - f\ric^\Sigma(\dot c,\dot c)\, dt.
    \end{align*}
    
    Now observe that 
    \[
    \operatorname{Hess}_f^\Sigma(\dot c,\dot c) = \la \del_{\dot c} \grad f, \dot c\ra = \frac{d^2 f}{dt^2} - \la \grad f,\del_{\dot c}\dot c\ra = \frac{d^2f}{dt^2} - f^{-1} \vert (\grad f)^\perp\vert^2.
    \]
    Hence we obtain 
    \begin{align*}
        0 &\le \int_0^\ell -f\vert  A_\Sigma\vert^2 - f \ric(\nu,\nu) -f\ric^\Sigma(\dot c,\dot c) - \frac{d^2 f}{dt^2} + \sum_{j=2}^n \frac{\bd}{\bd t} \left[f \la \del_{\frac{\bd c_j}{\bd s}} \frac{\bd c_j}{\bd s},\dot c\ra \right] - f^{-1}\vert (\grad f)^\perp\vert^2\, dt  \\
        &= -\int_0^\ell f\left[\ric(\nu,\nu) + \ric(\dot c,\dot c) - R(\nu,\dot c,\nu,\dot c)\right]\, dt - \int_0^\ell \frac{d^2 f}{dt^2}\, dt - \int_0^\ell f^{-1} \vert (\grad f)^\perp\vert^2\, dt \\
        & \qquad \quad -\int_0^\ell f \left[\vert A_\Sigma\vert^2 + \sum_{j=2}^n (A_\Sigma(\dot c,\dot c) A_\Sigma(\dot e_j,e_j) - A_\Sigma(\dot c,e_j)^2)\right]\, dt\\
        & \qquad \quad - f(c(\ell)) H_{\Sigma_2}(c(\ell)) - f(c(0))H_{\Sigma_1}(c(0)) \phantom{\int}\\
        &< -\bd_\eta f(c(\ell)) - \bd_\eta f(c(0)) - f(c(\ell)) H_{\Sigma_2}(c(\ell)) - f(c(0))H_{\Sigma_1}(c(0)).  \phantom{\int}
    \end{align*}
    Here we used the fact that $\ric(\nu,\nu) + \ric(\dot c,\dot c) - R(\nu,\dot c,\nu,\dot c) > 0$ by the assumption on the bi-Ricci curvature, and the fact that 
    \[
    \vert A_\Sigma\vert^2 + \sum_{j=2}^n (A_\Sigma(\dot c,\dot c) A_\Sigma( e_j,e_j) - A_\Sigma(\dot c,e_j)^2) \ge 0
    \]
    since $\Sigma$ is minimal; see \cite[Equation 15]{shen1996stable}.
    Finally, it remains to note that 
    \[
    \bd_\eta f = A_{\bd M}(\nu,\nu) f = -f H_{\bd \Sigma} 
    \]
    since $\bd M$ is minimal and $\bd \Sigma$ meets $\bd M$ orthogonally.  Thus the final term in the previous chain of inequalities is equal to 0 and we get our contradiction.
\end{proof}

Next, we prove a Frankel type theorem for manifolds with positive conformal Ricci curvature. 

\begin{theorem}
\label{Theorem:ConformalRicciFrankel}
    Let $\Sigma$ be a $3$-dimensional compact Riemannian manifold with boundary. Assume $\Sigma$ admits a positive function $f$ which satisfies 
    \[
    \ric(v,v) - f^{-1}\lap f + \frac{1}{2} \vert \grad \ln f\vert^2 \ge 0
    \]
    for all unit vectors $v$. Moreover, suppose that $\bd_\eta f = -f H_{\bd \Sigma}$ where $\eta$ is the unit outward normal to $\bd \Sigma$. Then $\bd \Sigma$ is connected. 
\end{theorem}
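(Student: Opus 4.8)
The plan is to mimic the bi-Ricci Frankel argument from the previous theorem: assume for contradiction that $\bd\Sigma$ has at least two components $\Sigma_1$ and $\Sigma_2$, minimize a weighted length functional $L(c)=\int_0^\ell \phi\,\vert\dot c\vert\,dt$ over curves joining $\Sigma_1$ to $\Sigma_2$, and derive a contradiction from the second variation together with the weighted boundary condition. Unlike the bi-Ricci case, the natural weight here is not $f$ itself; following Shen-Ye's generalized Bonnet-Myers argument, I would take $\phi = f^{1/2}$ (or more generally $f^\alpha$ with $\alpha$ chosen to make the relevant quadratic form in $\grad\ln f$ nonnegative — in dimension $3$ the exponent $1/2$ is exactly the borderline value). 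The point of this choice is that when one forms the second variation of $L$ using an orthonormal frame $e_1=\dot c, e_2, e_3$ parallel in the normal bundle and sums over $j=2,3$, the curvature term that appears is $\ric^\Sigma(\dot c,\dot c)$ — since $\Sigma$ is the whole manifold now (no ambient $M$, no second fundamental form), there is no $\vert A\vert^2$ or $\ric(\nu,\nu)$ — and the terms involving $\grad f$ organize, after the substitution $\phi=f^{1/2}$, precisely into the conformal-Ricci combination $\ric(\dot c,\dot c) - f^{-1}\lap f + \tfrac12\vert\grad\ln f\vert^2$ plus a manifestly nonpositive leftover.

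Concretely, I would first record the first and second variation formulas for $L$ with a general positive weight $\phi$, exactly as computed in the previous proof (the computation is metric-independent of the ambient data, so it transcribes verbatim with $f$ replaced by $\phi$), obtaining the weighted geodesic equation $\del_{\dot c}\dot c = \phi^{-1}(\grad\phi)^\perp$ and orthogonal intersection with $\bd\Sigma$. Then for the minimizer $c$ I would plug in $V_j=e_j$, sum over $j=2,3$, integrate by parts to kill the total-derivative terms, and use $\operatorname{Hess}_\phi(\dot c,\dot c) = \tfrac{d^2\phi}{dt^2} - \phi^{-1}\vert(\grad\phi)^\perp\vert^2$ to collapse the interior integrand to something of the form $-\phi\,\ric^\Sigma(\dot c,\dot c) - \tfrac{d^2\phi}{dt^2} - (\text{const})\,\phi^{-1}\vert(\grad\phi)^\perp\vert^2 + (\text{terms in }\la\grad\phi,\dot c\ra)$. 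Writing $\phi = f^{1/2}$ so that $\grad\ln\phi = \tfrac12\grad\ln f$ and converting $\tfrac{d^2\phi}{dt^2}$ and the gradient terms back into $f$ and $\Delta f$, I expect the integrand to become exactly $-f^{1/2}\big(\ric(\dot c,\dot c) - f^{-1}\lap f + \tfrac12\vert\grad\ln f\vert^2\big)$ minus a nonnegative multiple of $\vert\grad f\vert^2/f^{3/2}$; by hypothesis the bracket is $\ge 0$, so the interior integral is $\le 0$.

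The boundary terms are where the weighted Neumann condition enters. After integration by parts the second variation produces boundary contributions $\phi\,\bd_\eta\psi - A_{\bd\Sigma}(\dot c,\dot c)\,\phi\,\psi^2$ type terms at $c(0)$ and $c(\ell)$; since $c$ meets $\bd\Sigma$ orthogonally and $\dot c = \pm\eta$ there, the second-fundamental-form term of $\bd\Sigma$ assembles (after summing the two normal variations) into $-\phi(c(\cdot))H_{\bd\Sigma}(c(\cdot))$ at each endpoint, and the $\phi\,\bd_\eta\psi$ pieces turn into $-\bd_\eta\phi$ at each endpoint. So the total boundary contribution is $-\bd_\eta\phi(c(0)) - \bd_\eta\phi(c(\ell)) - \phi\,H_{\bd\Sigma}$ at both ends; since $\phi = f^{1/2}$ gives $\bd_\eta\phi = \tfrac12 f^{-1/2}\bd_\eta f = -\tfrac12 f^{1/2}H_{\bd\Sigma}$ — wait, this needs care: the hypothesis is $\bd_\eta f = -fH_{\bd\Sigma}$, so $\bd_\eta\phi = \tfrac12 f^{-1/2}\bd_\eta f = -\tfrac12 f^{1/2}H_{\bd\Sigma} = -\tfrac12\phi H_{\bd\Sigma}$, which does not exactly cancel $\phi H_{\bd\Sigma}$. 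This is the main obstacle I anticipate: reconciling the weight exponent needed for the interior estimate ($\alpha=1/2$) with the exponent that makes the boundary terms cancel against the prescribed $\bd_\eta f = -fH_{\bd\Sigma}$. The resolution is presumably that one does \emph{not} integrate by parts in the naive way but instead uses the eigenfunction structure — i.e. $f$ should itself be taken as the first eigenfunction of the appropriate weighted stability/Jacobi-type operator on $\Sigma$ (mirroring how $f$ was the first Robin eigenfunction of $J_\Sigma$ in the bi-Ricci proof), so that the Neumann condition $\bd_\eta f = -fH_{\bd\Sigma}$ is exactly the natural boundary condition and the interior PDE $\ric - f^{-1}\lap f + \tfrac12\vert\grad\ln f\vert^2\ge 0$ is what one gets after the Shen-Ye conformal rescaling. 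I would therefore set up the argument so that the weighted length minimizer is tested against the \emph{given} $f$-adapted data directly, keeping track of exponents carefully, and expect that the $\tfrac12$ in the conformal-Ricci hypothesis is precisely calibrated to make the interior $+$ boundary totals conspire to give $0 < 0$.
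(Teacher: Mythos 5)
Your overall framework is the right one --- assume two boundary components, minimize a weighted length functional between them, and play the second variation against the boundary condition --- but the specific mechanism you propose does not close, and the gap you flag at the boundary is only half of the problem. With weight $\phi=f^{1/2}$ and the unscaled normal variations $V_j=e_j$, the interior integrand (summing over $j=2,3$ and using $\operatorname{Hess}_\phi(\dot c,\dot c)=\tfrac{d^2\phi}{dt^2}-\phi^{-1}\vert(\grad\phi)^\perp\vert^2$) comes out as
\[
f^{1/2}\Bigl[\tfrac12 f^{-1}\lap f-\tfrac14\vert\grad\ln f\vert^2-\tfrac14\vert(\grad\ln f)^\perp\vert^2-\ric(\dot c,\dot c)\Bigr]
\]
plus total derivatives: only \emph{half} of $f^{-1}\lap f$ appears, so the hypothesis $\ric\ge f^{-1}\lap f-\tfrac12\vert\grad\ln f\vert^2$ cannot be inserted to make this nonpositive (take $\grad f$ tangent to $c$ and $\lap f<0$ to see the bracket can be positive even when the hypothesis holds with equality). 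Your anticipated identity ``integrand $=-f^{1/2}(\ric-f^{-1}\lap f+\tfrac12\vert\grad\ln f\vert^2)$ minus a nonnegative term'' is therefore false, and no choice of exponent $\alpha$ in a weight $f^\alpha$ with unscaled variations fixes both ends at once: the boundary cancellation against $\bd_\eta f=-fH_{\bd\Sigma}$ forces $\alpha=1$, while $\alpha=1$ with $V_j=e_j$ only yields the slack $\vert(\grad\ln f)^\perp\vert^2$, which is not controlled by $\tfrac12\vert\grad\ln f\vert^2$ when $\grad f$ is nearly tangent to $c$. Your proposed escape --- ``take $f$ to be the first eigenfunction of an appropriate operator'' --- is not available here: in this theorem $f$ is \emph{given} data (the eigenfunction structure only appears later when the theorem is applied with $f=uwv_2$), so the proof must work for an arbitrary $f$ satisfying the stated interior inequality and Robin-type boundary condition.

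The missing idea, which is what the paper does, is to keep the weight equal to $f$ (so the first variation gives orthogonal free boundaries and the boundary terms pair exactly with $\bd_\eta f=-fH_{\bd\Sigma}$) and instead rescale the \emph{variation fields}, taking $V_j=f^{-1/2}e_j$ rather than $e_j$. This rescaling produces the extra term $\tfrac{n-1}{4}\la\dot c,\grad\ln f\ra^2$ in the second variation, which in dimension $3$ equals $\tfrac12\la\dot c,\grad\ln f\ra^2\le\tfrac12\vert\grad\ln f\vert^2$; after writing $f^{-1}\tfrac{d^2f}{dt^2}=\tfrac{d^2}{dt^2}\ln f+\la\grad\ln f,\dot c\ra^2$, the interior collapses exactly to $f^{-1}\lap f-\tfrac12\vert\grad\ln f\vert^2-\ric(\dot c,\dot c)$ (nonpositive by hypothesis) plus the total derivative $-\tfrac{d^2}{dt^2}\ln f$, and the fundamental theorem of calculus then yields the boundary total $-\bd_\eta\ln f-H_{\bd\Sigma}=0$ at each endpoint, giving the contradiction. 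In short: the calibration you were looking for lives in the choice of test fields, not in the exponent of the weight, and without that your argument stalls at both the interior estimate and the boundary cancellation.
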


\begin{proof}
    The argument is similar to the previous one but with a slight improvement coming from a modified choice of the variations. Assume for contradiction that $\bd \Sigma$ has two distinct connected components $\Sigma_1$ and $\Sigma_2$.  We let $c$ be a unit speed curve which minimizes the $f$-weighted length from $\Sigma_1$ to $\Sigma_2$. Again we let $e_1 = \dot c,e_2,\hdots,e_n$ be an orthonormal frame along $c$ such that $e_2,\hdots,e_n$ are parallel in the normal bundle of $c$. However, this time we choose variations $c_j(s,t)$ with $V_j = f^{-1/2}e_j$ and $c_j(s,0) \in \Sigma_1$ and $c_j(s,\ell) \in \Sigma_2$ for $j=2,\hdots,n$. 

    Applying the second variation formula to each $c_j$ and then summing over $j$, we deduce that 
    \begin{align*}
        0 &\le \int_0^\ell f^{-1} \lap f - f^{-1}\operatorname{Hess}_f(\dot c,\dot c) + \sum_{j=2}^n \la \grad f,\del_{\frac{\bd c_j}{\bd s}} \frac{\bd c_j}{\bd s}\ra + f\sum_{j=2}^n \frac{\bd}{\bd t} \la \del_{\frac{\bd c_j}{\bd s}} \frac{\bd c_j}{\bd s}, \dot c\ra - \ric(\dot c,\dot c)\, dt \\
    &\qquad \quad  + \int_0^\ell - f\sum_{j=2}^n \la \del_{\frac{\bd c_j}{\bd s}} \frac{\bd c_j}{\bd s}, \del_{\dot c} \dot c \ra + 2\sum_{j=2}^n \la \grad f, f^{-1/2} e_j\ra \la \del_{\dot c} (f^{-1/2}e_j), \dot c\ra - f\sum_{j=2}^n \la \del_{\dot c} (f^{-1/2} e_j), \dot c\ra^2\, dt \\
    &\qquad \quad +\int_0^\ell f \sum_{j=2}^n  \la \del_{\dot c} (f^{-1/2} e_j),\del_{\dot c} (f^{-1/2}e_j)\ra\, dt. 
    \end{align*}
    This simplifies to give 
    \begin{align*}
    0 &\le \int_0^\ell f^{-1} \lap f - f^{-1}\operatorname{Hess}_f(\dot c,\dot c) + \sum_{j=2}^n \la \grad f,\del_{\frac{\bd c_j}{\bd s}} \frac{\bd c_j}{\bd s}\ra + \sum_{j=2}^n \frac{\bd}{\bd t}\left[f \la \del_{\frac{\bd c_j}{\bd s}} \frac{\bd c_j}{\bd s}, \dot c\ra\right] - \sum_{j=2}^n \la \grad f,\dot c\ra \la \del_{\frac{\bd c_j}{\bd s}} \frac{\bd c_j}{\bd s}, \dot c\ra\, dt\\
    &\qquad \quad + \int_0^\ell - f\sum_{j=2}^n \la \del_{\frac{\bd c_j}{\bd s}} \frac{\bd c_j}{\bd s}, f^{-1} (\grad f)^\perp \ra -2f^{-2}\sum_{j=2}^n \la \grad f, e_j \ra \la e_j,\grad f\ra - f^{-2}\sum_{j=2}^n \la e_j,\grad f\ra^2\, dt  \\
    & \qquad \quad + \int_0^\ell - \ric(\dot c,\dot c) + f^{-2} \sum_{j=2}^n  \la \grad f,e_j\ra^2 +\frac{n-1}{4}\la \dot c, \nabla \ln f\ra^2 \, dt\\
    &= \int_0^\ell f^{-1} \lap f - f^{-1} \operatorname{Hess}_f(\dot c,\dot c)  + \sum_{j=2}^n \frac{\bd}{\bd t}\left[f \la \del_{\frac{\bd c_j}{\bd s}} \frac{\bd c_j}{\bd s}, \dot c\ra\right]  -2 f^{-2}\vert (\grad f)^\perp\vert^2   - \ric(\dot c,\dot c) +\frac{n-1}{4}\la \dot c, \nabla \ln f\ra^2\, dt.
    \end{align*}
    As before, we have 
    \[
    \operatorname{Hess}_f(\dot c,\dot c) = \la \del_{\dot c} \grad f, \dot c\ra = \frac{d^2 f}{dt^2} - \la \grad f,\del_{\dot c}\dot c\ra = \frac{d^2f}{dt^2} - f^{-1} \vert (\grad f)^\perp\vert^2.
    \]
    Therefore, combined with $\frac{n-1}{4}\la \dot c, \nabla \ln f\ra^2 = \frac{1}{2}\la \dot c, \nabla \ln f\ra^2\leq \frac{1}{2}|\nabla \ln f|^2$, we obtain 
    \begin{align*}
        0 &\le \int_0^\ell f^{-1} \lap f - f^{-1} \frac{d^2f}{dt^2} + \sum_{j=2}^n \frac{\bd}{\bd t}\left[f \la \del_{\frac{\bd c_j}{\bd s}} \frac{\bd c_j}{\bd s}, \dot c\ra\right]  - f^{-2}\vert (\grad f)^\perp\vert^2   - \ric(\dot c,\dot c) +\frac{1}{2}|\nabla \ln f|^2\, dt.
    \end{align*}
    Next, note that 
    \[
    f^{-1} \frac{d^2f}{dt^2} = \frac{d^2}{dt^2}(\ln f) + \left[\frac{d(\ln f)}{dt}\right]^2 = \frac{d^2}{dt^2}(\ln f) + f^{-2} \la \grad f,\dot c\ra^2.  
    \]
    It follows that 
    \begin{align*}
        0 &\le \int_0^\ell f^{-1}\lap f - f^{-2}\la \grad f,\dot c\ra^2 - f^{-2} \vert (\grad f)^\perp\vert^2 +\frac{1}{2}|\nabla \ln f|^2 - \ric(\dot c,\dot c) - \frac{d^2}{dt^2}(\ln f) + \sum_{j=2}^n \frac{\bd}{\bd t}\left[f \la \del_{\frac{\bd c_j}{\bd s}} \frac{\bd c_j}{\bd s}, \dot c\ra\right]\, dt\\
        &= \int_0^\ell f^{-1}\lap f - \frac{1}{2} \vert \grad \ln f\vert^2 - \ric(\dot c,\dot c) - \frac{d^2}{dt^2}(\ln f) + \sum_{j=2}^n \frac{\bd}{\bd t}\left[f \la \del_{\frac{\bd c_j}{\bd s}} \frac{\bd c_j}{\bd s}, \dot c\ra\right]\, dt\\
        &< \int_0^\ell - \frac{d^2}{dt^2}(\ln f) + \sum_{j=2}^n \frac{\bd}{\bd t}\left[f \la \del_{\frac{\bd c_j}{\bd s}} \frac{\bd c_j}{\bd s}, \dot c\ra\right]\, dt.
    \end{align*}
    Here we used the first assumption of the theorem to get the final inequality. Now we can apply the fundamental theorem of calculus to get 
    \begin{align*}
    0 &< -\bd_\eta (\ln f)(c(\ell)) - \bd_\eta (\ln f)(c(0)) - H_{\Sigma_2}(c(\ell)) - H_{\Sigma_1}(c(0))\\
    &= -\frac{\bd_\eta f}{f}(c(\ell)) - \frac{\bd_\eta f}{f}(c(0)) - H_{\Sigma_2}(c(\ell)) - H_{\Sigma_1}(c(0)). 
    \end{align*}
    The second assumption of the theorem implies that the previous line is equal to 0, and again we've reached a contradiction. 
\end{proof}

\section{Proof of the Main Theorem}\label{sec: proof of main theorem}

In this section, we prove our main result Theorem \ref{Theorem:Main-Theorem}.  We will proceed  case by case based on the value of $m$. 

\subsection{The Ricci Curvature Case} 

Note that the $1$-intermediate curvature is just the Ricci curvature. Therefore, the  $m = 1$ case of Conjecture \ref{generalized-Kpi1} is a well-known corollary of the Bonnet-Myers theorem. In this case, we do not need any restriction on the ambient dimension $n$. 

\begin{theorem}
Let $M^n$ be a closed manifold. Assume that the universal cover $\overline M$ of $M$ satisfies $H_n(\overline M,\Z) = 0$. Then $M$ does not admit a metric of positive Ricci curvature. 
\end{theorem}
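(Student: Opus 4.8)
The plan is to reduce immediately to the classical Bonnet--Myers theorem. Suppose for contradiction that $M$ admits a metric $g$ with positive Ricci curvature. Since $M$ is closed, compactness gives a uniform bound $\ric_g \ge \kappa > 0$. Lifting $g$ to the universal cover, the covering projection is a local isometry, so the lifted metric $\bar g$ on $\overline M$ is complete and satisfies $\ric_{\bar g} \ge \kappa > 0$ as well.

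By the Bonnet--Myers theorem, a complete manifold with $\ric \ge \kappa > 0$ has diameter at most $\pi\sqrt{(n-1)/\kappa}$ and is therefore compact. Hence $\overline M$ is a closed (compact, boundaryless) connected $n$-manifold. To conclude I want $H_n(\overline M;\Z) \ne 0$, which contradicts the hypothesis. For this I need $\overline M$ to be orientable: since $\overline M$ is simply connected, $w_1(\overline M) \in H^1(\overline M;\Z/2) \cong \Hom(\pi_1(\overline M),\Z/2) = 0$, so $\overline M$ is orientable and $H_n(\overline M;\Z) = \Z \ne 0$. This is the desired contradiction.

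I do not expect any genuine obstacle; the only point requiring a moment's care is ruling out the non-orientable case in the last step, which is handled by simple connectivity of $\overline M$. Alternatively, one could argue via geodesic lines: the hypothesis $H_n(\overline M;\Z) = 0$ makes $M$ $1$-acyclic, so by the Lemma above $\overline M$ contains a geodesic line, whereas Bonnet--Myers forbids minimizing geodesics of length exceeding $\pi\sqrt{(n-1)/\kappa}$ in a manifold with $\ric_{\bar g} \ge \kappa > 0$. Either way, no restriction on the dimension $n$ is needed, consistent with the $m = 1$ case of Conjecture \ref{generalized-Kpi1}.
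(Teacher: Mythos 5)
Your argument is correct and is essentially the paper's proof: apply Bonnet--Myers to the lifted metric to conclude $\overline M$ is compact, note it is orientable (being simply connected), and deduce $H_n(\overline M;\Z)\cong\Z\neq 0$, contradicting the hypothesis. The extra detail you supply on orientability via $w_1$ is exactly the point the paper leaves implicit, and no dimension restriction is needed in either version.
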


\begin{proof}
We prove the contrapositive. Assume that $M$ admits a metric of positive Ricci curvature. Let $\overline M$ be the universal cover of $M$. By the Bonnet-Myers theorem, $\overline M$ is a closed, orientable $n$-dimensional manifold. Therefore $H_n(\overline M,\Z) \neq 0$.
\end{proof}

\subsection{The Scalar Curvature Case} We now show that the $m = n-1$ case of Conjecture \ref{generalized-Kpi1} is equivalent to the $K(\pi,1)$ conjecture. Since the $(n-1)$-intermediate curvature is equivalent to scalar curvature, it suffices to prove the following simple fact. 

\begin{theorem}
    Let $M^n$ be a closed manifold. Then the universal cover $\overline M$ of $M$ satisfies \begin{equation} \label{Equation:Homology}
    H_n(\overline M,\Z) = H_{n-1}(\overline M,\Z)  = \hdots = H_{2}(\overline M,\Z) = 0 
    \end{equation}
    if and only if $M$ is aspherical. 
\end{theorem}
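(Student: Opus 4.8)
The plan is to prove the two implications directly, the forward direction resting on the Hurewicz and Whitehead theorems together with the vanishing of homology of non-compact manifolds in top degrees. The easy direction is ($\Leftarrow$): if $M$ is aspherical then its universal cover $\overline M$ is contractible, so $\widetilde H_k(\overline M;\Z)=0$ for every $k\ge 1$, which in particular gives (\ref{Equation:Homology}).

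For the substantive direction ($\Rightarrow$), assume (\ref{Equation:Homology}). I would first record that $\overline M$, being simply connected, is orientable, and then argue that $\overline M$ must be non-compact: otherwise it is a closed orientable $n$-manifold, and Poincar\'e duality gives $H_n(\overline M;\Z)\cong\Z\ne 0$, contradicting the hypothesis. (Equivalently, $\pi_1(M)$ is infinite.) Next I would invoke the standard fact that a connected non-compact $n$-manifold $W$ satisfies $H_k(W;\Z)=0$ for all $k\ge n$; this requires no orientability or CW hypothesis (see, e.g., Hatcher's \emph{Algebraic Topology}, \S3.3). Combining this with $H_1(\overline M;\Z)=0$ (simple connectivity) and with (\ref{Equation:Homology}) in degrees $2\le k\le n-1$ yields $\widetilde H_k(\overline M;\Z)=0$ for every $k\ge 1$.

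Finally I would run the usual Hurewicz/Whitehead bootstrap: since $\overline M$ is simply connected with vanishing reduced integral homology, the Hurewicz theorem applied inductively gives $\pi_k(\overline M)=0$ for all $k\ge 2$ (and $\pi_1=0$), so $\overline M$ is weakly contractible; as a manifold it has the homotopy type of a CW complex, so Whitehead's theorem upgrades this to $\overline M$ being contractible, i.e.\ $M$ is aspherical. Since $\pi_k(M)\cong\pi_k(\overline M)$ for $k\ge 2$, this is exactly what is required.

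I do not expect a real obstacle here, as the statement is classical. The only points that deserve care are: (i) using the hypothesis $H_n(\overline M;\Z)=0$ in an essential way — it is precisely what rules out finite $\pi_1(M)$, and its necessity is visible from $M=S^n$ (for $n\ge 2$), whose universal cover satisfies all the lower-degree vanishing but is not aspherical; and (ii) citing the correct vanishing statement for $H_k(\overline M;\Z)$ in degrees $\ge n$ on the \emph{non-compact} cover, since one cannot simply quote compact Poincar\'e duality there.
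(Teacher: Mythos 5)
Your proof is correct and follows essentially the same route as the paper: vanishing of homology above degree $n$ for an $n$-manifold, then the Hurewicz and Whitehead theorems. The only difference is your detour through non-compactness to kill $H_k$ for $k\ge n$; since $H_n(\overline M;\Z)=0$ is already a hypothesis, one only needs $H_k=0$ for $k>n$, which holds for any $n$-manifold, but your extra observation is harmless and usefully explains why the $H_n$ hypothesis is essential (cf.\ $M=S^n$).
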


\begin{proof}
    First suppose that $M$ is aspherical. Then $\overline M$ is contractible and so it is immediate that (\ref{Equation:Homology}) holds. Conversely, suppose that $\overline M$ satisfies (\ref{Equation:Homology}). Since $\overline M$ is an $n$-dimensional manifold, (\ref{Equation:Homology}) implies that $H_k(\overline M,\Z) = 0$ for all $k\ge 2$. 
    As $\overline M$ is simply connected, it then follows from the Hurewicz theorem \cite[Theorem 4.32]{hatcher2002algebraic} that $\pi_k(\overline M) = 0$ for all $k\ge 1$. The Whitehead theorem \cite[Theorem 4.5]{hatcher2002algebraic} now implies that $\overline M$ is contractible and so $M$ is aspherical. 
\end{proof}

Since the $K(\pi,1)$ conjecture is known to be true for $n\in \{3,4,5\}$, we have the following immediate corollary. 

\begin{corollary}
    Conjecture \ref{generalized-Kpi1} is true for $n\in \{3,4,5\}$ and $m = n-1$. 
\end{corollary}

\subsection{The Codimension Two Case}

In this section, we prove the $m=2$ case of Conjecture \ref{generalized-Kpi1} for $n\in \{3,4,5,6\}$.  Our argument combines a construction of Chodosh-Li \cite{chodosh2024generalized} with a diameter bound of Shen-Ye \cite{shen1996stable,shen1997geometry}.

\begin{theorem}
Let $M^n$ be a closed manifold of dimension $n\in \{3,4,5,6\}$, Assume that the universal cover $\overline M$ of $M$ satisfies $H_n(\overline M,\Z) = H_{n-1}(\overline M,\Z) = 0$. Then $M$ does not admit a metric of $2$-positive intermediate curvature. 
\end{theorem}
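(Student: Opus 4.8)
Suppose toward a contradiction that $M$ carries a metric $g$ with positive $2$-intermediate curvature. As $M$ is closed there is $\kappa>0$ with $C_2\geq\kappa$ everywhere, and this bound passes to the lifted metric $\bar g$ on the universal cover $\overline M$, which is complete. The hypothesis $H_n(\overline M,\Z)=H_{n-1}(\overline M,\Z)=0$ is exactly what Proposition~\ref{linking} requires, so, fixing $L$ large (to be chosen in terms of $n$ and $\kappa$), we obtain a geodesic line $\sigma$ and a closed, null-homologous $\Lambda^{n-2}\subset\overline M$ with $\Lambda$ linked with $\sigma$ and $d(\Lambda,\sigma)\geq L$.

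\textbf{The minimizing filling and the reduction.} Let $\Sigma_1$ be an area-minimizing filling of $\Lambda$, i.e.\ a mass-minimizer among integral $(n-1)$-currents with $\bd\Sigma_1=\Lambda$ in the homology class of some fixed filling (existence and interior regularity as in \cite{chodosh2024generalized}; since $n-1\leq 5<7$, $\Sigma_1$ is a smooth, embedded, two-sided, stable minimal hypersurface). Since $\Lambda$ is linked with $\sigma$, the algebraic intersection number of $\Sigma_1$ with $\sigma$ is nonzero, so there is a point $q\in\Sigma_1\cap\sigma$. It therefore suffices to bound the in-radius $\sup_{p\in\Sigma_1}d_{\Sigma_1}(p,\bd\Sigma_1)$ by some $C=C(n,\kappa)$: this gives $d(\Lambda,\sigma)\leq d_{\Sigma_1}(q,\bd\Sigma_1)\leq C$, contradicting $d(\Lambda,\sigma)\geq L$ once $L>C$. (Unlike the cases $m\geq 3$, for $m=2$ no $\mu$-bubble far from $\sigma$ and no fill-within-a-bounded-neighborhood step is needed, because the minimizing filling already has the right dimension $n-m+1=n-1$.) When $n\in\{3,4,5\}$ we have $\dim\overline M\leq 5$, so Shen--Ye's bi-Ricci diameter estimate (Theorem~\ref{Theorem:Shen-Ye-BiRicci}) applies verbatim to $\Sigma_1$ and yields $d_{\Sigma_1}(p,\bd\Sigma_1)\leq\sqrt{c(n)}\,\pi/\sqrt\kappa$, finishing these cases.

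\textbf{The case $n=6$.} Now $\Sigma_1^5$ is beyond the range of Theorem~\ref{Theorem:Shen-Ye-BiRicci}, so I would instead equip $\Sigma_1$ with a weight and run the generalized Bonnet--Myers argument (Theorem~\ref{Shen-Ye-Higher-Dimension} with $k=5$). Take $u>0$ a first Dirichlet eigenfunction of the Jacobi operator $J_{\Sigma_1}=\lap_{\Sigma_1}+|A_{\Sigma_1}|^2+\ric_{\overline M}(\nu,\nu)$; stability gives eigenvalue $\lambda_1\geq 0$, hence $u^{-1}\lap_{\Sigma_1}u\leq-(|A_{\Sigma_1}|^2+\ric_{\overline M}(\nu,\nu))$ in the interior, with $u^{-1}\lap_{\Sigma_1}u$ bounded up to $\bd\Sigma_1$. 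Using the Gauss equation $\ric_{\Sigma_1}(e,e)=\ric_{\overline M}(e,e)-R_{\overline M}(e,\nu,e,\nu)-|A_{\Sigma_1}(e,\cdot)|^2$ for a unit tangent $e$, this inequality, and the pointwise bound $C_2(e,\nu)\geq\kappa$, one checks that $(\Sigma_1,u)$ satisfies the hypothesis of Theorem~\ref{Shen-Ye-Higher-Dimension} for some $\tau\in(0,\tfrac1{1+\eps})$ and small $\eps>0$ (so the coefficient $\tau-(\tfrac{k-1}4+\eps)\tau^2$ of $|\grad\ln u|^2$ is positive with $k=5$). Adapting Shen--Ye's proof then gives the in-radius bound ($u>0$ in the interior, while $|\grad\ln u|^2\to\infty$ at $\bd\Sigma_1$ with the favorable sign), and we conclude as above.

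\textbf{Main obstacle.} The linking construction, the minimizing filling, and the $n\leq5$ case are all standard; the entire content is the pointwise curvature inequality in the $n=6$ step. After using the Gauss equation and the eigenfunction inequality one is left with ambient terms, essentially $R_{\overline M}(e,\nu,e,\nu)+(\tau-1)\ric_{\overline M}(\nu,\nu)$, which must be controlled using only $C_2\geq\kappa$ together with $\tau|A_{\Sigma_1}|^2-|A_{\Sigma_1}(e,\cdot)|^2\geq0$ and the positive multiple of $|\grad\ln u|^2$ available when $k=5$; finding the right $\tau,\eps$ and carrying this through is the crux. It is exactly this verification that I do not expect to survive for $k=6$ (cf.\ the Remark above, where $n=7$ and $m\in\{2,3,4\}$ are left open), which is why the theorem is stated only for $n\leq 6$. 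Finally, the Frankel-type theorem of Section~\ref{sec: Frankel} is not needed for $m=2$; it enters only once $m\geq3$ forces a $\mu$-bubble dicing step.
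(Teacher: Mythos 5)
Your setup, the reduction via Proposition \ref{linking}, and the cases $n\in\{3,4,5\}$ coincide with the paper's argument, and that part is fine. The problem is the case $n=6$: you have correctly identified the strategy (weight $\Sigma_1$ by the first eigenfunction $u$ of the stability operator and invoke Theorem \ref{Shen-Ye-Higher-Dimension} with $k=5$), but the pointwise inequality needed to invoke that theorem is never established --- you write ``one checks that\dots'' and then, in your final paragraph, explicitly concede that carrying out this verification is the crux and that you have not done it. That verification is essentially the entire content of the theorem beyond dimension $5$, so as written the proof is incomplete.

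Concretely, two things are missing. First, after the Gauss equation and the eigenfunction inequality you are left with $\ric_{\overline M}(e,e)+\tau\ric_{\overline M}(\nu,\nu)-R_{\overline M}(e,\nu,e,\nu)$; the bi-Ricci bound controls this only when $\tau=1$, and the defect $(\tau-1)\ric_{\overline M}(\nu,\nu)$ is \emph{not} controlled by $C_2\ge\kappa$ as you suggest. The paper handles it by compactness of $M$: choose $\tau$ so close to $1$ that $(1-\tau)\vert\ric_M\vert<1$ everywhere, after normalizing $C_2\ge 2$. Second, you list ``$\tau\vert A_{\Sigma_1}\vert^2-\vert A_{\Sigma_1}(e,\cdot)\vert^2\ge 0$'' as an available tool, but this is false for a general symmetric matrix with $\tau<1$; it holds only because $\Sigma_1$ is minimal and $\tau$ is close to $1$, and proving it is a genuinely quantitative computation (the paper derives the condition $\tfrac{2-2\tau}{2\tau-1}<\tfrac23$ from the trace-free structure). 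Note also that the gradient term plays no positive role in the paper's argument: one simply takes $\eps$ small enough that $\tau-(1+\eps)\tau^2\ge 0$ and discards $\vert\grad\ln u\vert^2$, so hoping to ``absorb'' curvature terms into it, as you propose, is a red herring. Until the choice of $\tau$ and the second-fundamental-form inequality are actually carried out, the $n=6$ case remains unproved.
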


\begin{proof}
Let $M$ be as in the statement of the theorem. 
Assume for the sake of contradiction that $M$ admits a metric with positive 2-intermediate curvature. By scaling, we can suppose that the 2-intermediate curvature of $M$ is at least $2$. For a fixed large constant $L > 0$, we can apply Proposition \ref{linking} to find a geodesic line $\sigma$ in $\overline M$ and a closed manifold $\Lambda^{n-2}$ embedded in $\overline M$ such that $\Lambda$ is linked with $\sigma$ and $d(\sigma,\Lambda) \ge L$. Now let $\Sigma_1$ be the area minimizing minimal hypersurface in $\overline M$ with $\bd \Sigma_1 = \Lambda$. Then $\Sigma_1$ must intersect $\sigma$ and so there is a point $p\in \Sigma_1$ with $d(p,\bd \Sigma_1) \ge L$.  For $n\in \{3,4,5\}$, this contradicts the Shen-Ye \cite{shen1996stable} diameter estimate for stable minimal hypersurfaces in manifolds with positive 2-intermediate curvature (Theorem \ref{Theorem:Shen-Ye-BiRicci}) provided $L$ is chosen large enough. 

When $n=6$, we need to argue more carefully to get the diameter bound. After establishing the diameter bound, one then gets a contradiction in the same way. Choose $\tau < 1$ close enough to $1$ so that $(1-\tau) \vert \ric_M(v,v)\vert < 1$ for all unit vectors $v$. Let $\Sigma_1$ be constructed as above, and let $f > 0$ be the first eigenfunction of the stability operator so that 
\[
\lap_{\Sigma_1} f + \vert A_{\Sigma_1}\vert^2 f + \ric_{\overline M}(\nu,\nu)f \le 0.
\]
Since $\Sigma_1$ is five dimensional, Theorem \ref{Shen-Ye-Higher-Dimension} says that if 
\[
\ric_{\Sigma_1}(v,v) - \tau f^{-1} \lap_{\Sigma_1} f + \left[\tau - (1+\eps)\tau^2\right]  \vert \grad_{\Sigma_1} \ln f\vert^2 \ge \kappa > 0 
\]
for all unit vectors $v\in T_p\Sigma_1$, then the diameter of $\Sigma_1$ is bounded above in terms of $n$, $\kappa$, and $\eps$. Importantly, we can choose $\eps = \eps(\tau)$ close enough to 0 that $\tau - (1+\eps)\tau^2 \ge 0$.

Thus it is enough to show that 
\[
\ric_{\Sigma_1}(v,v) - \tau f^{-1}\lap_{\Sigma_1} f \ge \kappa > 0
\] 
for all unit vectors $v \in T_p\Sigma_1$. Let $e_1,e_2,e_3,e_4,e_5,e_6$ be an orthonormal basis for $T_p\overline{M}$ for which $e_2,e_3,e_4,e_5,e_6$ is an orthonormal basis for $T_p\Sigma_1$. We compute 
\begin{align*}
    \ric_{\Sigma_1}&(e_2,e_2) - \tau f^{-1}  \lap_{\Sigma_1} f \phantom{\int} \\
    &\ge \ric_{\Sigma_1}(e_2,e_2) + \tau \vert A_{\Sigma_1}\vert^2 + \tau \ric_{\overline M}(e_1,e_1)\\
    &= \ric_{\overline M}(e_2,e_2) + \tau \ric_{\overline M}(e_1,e_1) - R_{\overline M}(e_1,e_2,e_1,e_2)  + \tau \vert A_{\Sigma_1}\vert^2 + \sum_{j=3}^6 \left[  A^{\Sigma_1}_{22}A^{\Sigma_1}_{jj} - (A^{\Sigma_1}_{2j})^2\right]\\
    & \ge 2 + (\tau - 1) \ric_{\overline M}(e_1,e_1) +  \tau \vert A_{\Sigma_1}\vert^2 + \sum_{j=3}^6 \left[  A^{\Sigma_1}_{22}A^{\Sigma_1}_{jj} - (A^{\Sigma_1}_{2j})^2\right]\\ 
    &\ge 1 +  \tau \vert A_{\Sigma_1}\vert^2 + \sum_{j=3}^6 \left[  A^{\Sigma_1}_{22}A^{\Sigma_1}_{jj} - (A^{\Sigma_1}_{2j})^2\right].
\end{align*}
It remains to show that the second fundamental form terms give a non-negative contribution. We will drop the superscript $\Sigma_1$ in what follows. Since $\Sigma_1$ is minimal, we have 
\begin{align*}
\sum_{j=3}^6 \left[A_{22}A_{jj} - A_{2j}^2\right] &= -\sum_{i=3}^6 \sum_{j=3}^6 A_{ii}A_{jj} - \sum_{j=3}^6 A_{2j}^2  = - 2 \sum_{3\le i < j \le 6} A_{ii} A_{jj} - \sum_{j=3}^6 A_{jj}^2 - \sum_{j=3}^6 A_{2 j}^2. 
\end{align*}
Also we have 
\begin{align*}
     \vert A\vert^2 &\ge A_{22}^2 + \sum_{j=3}^6 A_{jj}^2 + 2 \sum_{j=3}^6 A_{2j}^2 \\
     &= \left(\sum_{j=3}^6 A_{jj}\right)^2 + \sum_{j=3}^6 A_{jj}^2 + 2 \sum_{j=3}^6 A_{2j}^2\\
     &= 2 \sum_{3\le i < j \le 6} A_{ii}A_{jj} + 2 \sum_{j=3}^6 A_{jj}^2 + 2\sum_{j=3}^6 A_{2j}^2.  
\end{align*}
So assuming $\tau \ge 1/2$ we get 
\begin{align}
\label{eqn1}
    \tau \vert A\vert^2 + \sum_{j=3}^6 \left[A_{22}A_{jj} - A_{2j}^2\right] & \ge (2\tau - 2) \sum_{3\le i < j \le 6} A_{ii}A_{jj} + (2\tau - 1) \sum_{j=3}^6 A_{jj}^2. 
\end{align}
Finally, note that 
\begin{align*}
    \sum_{j=3}^6 A_{jj}^2 = \frac 1 3 \sum_{3\le i < j \le 6} (A_{ii}^2 + A_{jj}^2) \ge \frac 2 3 \sum_{3\le i < j \le 6} \vert  A_{ii} A_{jj}\vert. 
\end{align*}
Combining this with (\ref{eqn1}), we obtain 
\[
\tau \vert A\vert^2 + \sum_{j=3}^6 \left[A_{22}A_{jj} - A_{2j}^2\right] \ge 0
\]
as long as 
\[
\frac{2-2\tau}{2\tau - 1} < \frac 2 3,
\]
which we can always ensure by choosing $\tau$ close enough to 1. This completes the proof.

\end{proof}

\subsection{The Case $n=5$ and $m=3$}


Next we show that Conjecture \ref{generalized-Kpi1} is true for $n = 5$ and $m = 3$. This will complete the proof of Theorem \ref{Theorem:Main-Theorem} for $n\in \{3,4,5\}$.

\begin{theorem} \label{thm: m3n5}
    Let $M^5$ be a closed manifold. Assume that the universal cover $\overline M$ of $M$ satisfies 
    \[
    H_{5}(\overline M,\Z) = H_{4}(\overline M,\Z) = H_{3}(\overline M,\Z) = 0.
    \]
    Then $M$ does not admit a metric with positive 3-intermediate curvature. 
\end{theorem}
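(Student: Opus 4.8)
The plan is to run Chodosh--Li's descent, replacing their topological/Gauss--Bonnet input on the bottom slice by Shen--Ye's three--dimensional generalized Bonnet--Myers theorem (Theorem~\ref{Theorem:generalized-Bonnet-Myers}). Suppose, for contradiction, that $M^5$ admits a metric with positive $3$-intermediate curvature; after rescaling we may assume $C_3\ge\kappa_0$ everywhere for a fixed $\kappa_0>0$. For a large constant $L$ to be chosen at the end, Proposition~\ref{linking} yields a geodesic line $\sigma\subset\overline M$ and a closed null-homologous $\Lambda^3\subset\overline M$ that is linked with $\sigma$ and satisfies $d(\Lambda,\sigma)\ge L$. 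Let $\Sigma_1^4\subset\overline M$ be an area-minimizing hypersurface with $\partial\Sigma_1=\Lambda$ (smooth, two-sided, stable, minimal); since $\Lambda$ is linked with $\sigma$, $\Sigma_1$ meets $\sigma$, so it reaches far from $\Lambda$. Following the $\mu$-bubble construction of Chodosh--Li, we remove a thin tube around $\sigma$ from $\Sigma_1$ and, using a prescribed mean curvature $h$ that tends to $\pm\infty$ near the two created ends (near $\Lambda$, near $\sigma$) but has arbitrarily small $C^1$ norm on a fixed neighborhood of the ``bulk'', we produce a closed $\mu$-bubble $\Sigma_2^3\subset\Sigma_1$ with respect to the weight $u_1$, where $u_1>0$ is the first Jacobi eigenfunction of $\Sigma_1$ (with eigenvalue $\lambda_1\ge0$ by stability). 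By Chodosh--Li this can be arranged so that $[\Sigma_2]=[\Lambda]$ in $\Sigma_1$, i.e.\ $\Sigma_2=\partial\Omega$ for a Caccioppoli set $\Omega\subset\Sigma_1$ disjoint from $\sigma$ with $\partial[\Omega]=[\Sigma_2]-[\Lambda]$, and so that $d(\Sigma_2,\sigma)\ge L/2$. Finally let $u_2>0$ be the first eigenfunction (eigenvalue $\lambda_2\ge0$) of the stability operator of this weighted $\mu$-bubble functional, as given by Proposition~\ref{proposition:variational-formulas}.

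The crucial step is to show that $\Sigma_2$, equipped with the conformal factor $f=u_1u_2$, has positive conformal Ricci curvature: there exists $\kappa=\kappa(\kappa_0)>0$, independent of $L$, such that
\[
\ric_{\Sigma_2}(v,v)-f^{-1}\lap_{\Sigma_2}f+\tfrac12|\grad_{\Sigma_2}\ln f|^2\ \ge\ \kappa
\]
for all $p\in\Sigma_2$ and all unit $v\in T_p\Sigma_2$. Granting this, Theorem~\ref{Theorem:generalized-Bonnet-Myers} bounds the diameter of each component of $\Sigma_2$ by $\sqrt2\,\pi/\sqrt\kappa$, a quantity depending only on $\kappa_0$ and in particular not on $L$.

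To establish the conformal Ricci bound, fix $p$ and $v$, and complete to an orthonormal basis $e_1,\dots,e_5$ of $T_p\overline M$ with $e_1$ normal to $\Sigma_1$, $e_2$ normal to $\Sigma_2$ within $\Sigma_1$, and $e_3=v,e_4,e_5$ spanning $T_p\Sigma_2$. Write $-f^{-1}\lap_{\Sigma_2}f+\tfrac12|\grad_{\Sigma_2}\ln f|^2=-\lap_{\Sigma_2}\ln u_1-\lap_{\Sigma_2}\ln u_2-\tfrac12|\grad_{\Sigma_2}\ln u_1+\grad_{\Sigma_2}\ln u_2|^2$. Now expand $\lap_{\Sigma_2}\ln u_1$ by passing from $\lap_{\Sigma_1}$ (this introduces $-\nabla^2_{\Sigma_1}\ln u_1(e_2,e_2)$ and $-H_{\Sigma_2}\langle\grad_{\Sigma_1}\ln u_1,e_2\rangle$) and inserting the $u_1$-eigenfunction equation (contributing $|A^{\Sigma_1}|^2+\ric_{\overline M}(e_1,e_1)+\lambda_1$); expand $\lap_{\Sigma_2}\ln u_2$ via the weighted $\mu$-bubble equation of Proposition~\ref{proposition:variational-formulas} (contributing $|A^{\Sigma_2}|^2+\ric_{\Sigma_1}(e_2,e_2)+\lambda_2$, together with $-u_1^{-1}\nabla^2_{\Sigma_1}u_1(e_2,e_2)+u_1^{-2}\langle\grad_{\Sigma_1}u_1,e_2\rangle^2+\langle\grad_{\Sigma_1}h,e_2\rangle$); substitute the prescribed mean curvature identity $H_{\Sigma_2}=-u_1^{-1}\langle\grad_{\Sigma_1}u_1,e_2\rangle+h$; and reduce $\ric_{\Sigma_2}(v,v)$ and $\ric_{\Sigma_1}(e_2,e_2)$ to ambient sectional curvatures via the Gauss equations for $\Sigma_2\subset\Sigma_1$ and $\Sigma_1\subset\overline M$. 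The two occurrences of $u_1^{-1}\nabla^2_{\Sigma_1}u_1(e_2,e_2)$ cancel, as do the cross terms $\langle\grad_{\Sigma_2}\ln u_1,\grad_{\Sigma_2}\ln u_2\rangle$, and the remaining ambient sectional curvatures reorganize precisely into
\[
\tfrac12\big(C_3(e_1,e_4,e_5)+C_3(e_2,e_4,e_5)+C_3(e_3,e_4,e_5)\big)\ \ge\ \tfrac32\kappa_0 .
\]
The leftover second-fundamental-form terms are nonnegative: the $A^{\Sigma_1}$-terms collapse to a sum of squares after using $\operatorname{tr}A^{\Sigma_1}=0$, and the $A^{\Sigma_2}$-terms amount to $(A^{\Sigma_2}_{33})^2+A^{\Sigma_2}_{33}(A^{\Sigma_2}_{44}+A^{\Sigma_2}_{55})+(A^{\Sigma_2}_{44})^2+(A^{\Sigma_2}_{55})^2$ plus squares, which is nonnegative since that quadratic form has nonpositive discriminant. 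The leftover first-order terms collapse to $\tfrac12|\grad_{\Sigma_2}\ln u_1|^2+\tfrac12|\grad_{\Sigma_2}\ln u_2|^2+2s^2-hs+\langle\grad_{\Sigma_1}h,e_2\rangle$ with $s=\langle\grad_{\Sigma_1}\ln u_1,e_2\rangle$, which is bounded below by $-\tfrac18h^2-|\grad_{\Sigma_1}h|$. Choosing $h$ so that $\tfrac18h^2+|\grad_{\Sigma_1}h|\le\tfrac14\kappa_0$ near $\Sigma_2$ then gives the bound with $\kappa=\tfrac54\kappa_0$.

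With the $L$-independent diameter bound in hand, set $R=R(\sqrt2\,\pi/\sqrt\kappa)$ as in Proposition~\ref{Proposition:uniformly-acyclic} (legitimate since $3=n-m+1$), and choose $L>2R$. Each connected component $\Sigma_2'$ of $\Sigma_2$ is a $3$-cycle contained in a ball of radius $\le\sqrt2\,\pi/\sqrt\kappa$, so it bounds a $4$-chain contained in its $R$-neighborhood; since $d(\Sigma_2',\sigma)\ge L/2>R$, this chain is disjoint from $\sigma$. Summing over components produces a $4$-chain $W$ disjoint from $\sigma$ with $\partial W=\Sigma_2$, whence $W-\Omega$ is a $4$-chain disjoint from $\sigma$ with $\partial(W-\Omega)=\Lambda$ --- contradicting the linking assertion~(i) of Proposition~\ref{linking}. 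The main obstacle is the conformal Ricci computation of the third paragraph: verifying that the sectional curvatures extracted from the two Gauss equations and the two eigenfunction equations recombine into a positive multiple of $3$-intermediate curvatures, that the residual second-fundamental-form expression is nonnegative, and that both are compatible with a sufficiently small prescribed mean curvature --- it is precisely this last requirement that dictates constructing $\Sigma_2$ as a Chodosh--Li $\mu$-bubble rather than as a plain weighted minimal slice.
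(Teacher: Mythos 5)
Your overall architecture coincides with the paper's: linking construction, area-minimizing $\Sigma_1$ with first eigenfunction $u$, a warped $\mu$-bubble $\Sigma_2\subset\Sigma_1$ with weight $u$, the conformal factor $f=uw$ built from the two eigenfunctions, Shen--Ye's three-dimensional generalized Bonnet--Myers theorem for the diameter bound, and the filling argument via Proposition~\ref{Proposition:uniformly-acyclic}. Your reorganization of the ambient curvature terms into $\tfrac12\big(C_3(e_1,e_4,e_5)+C_3(e_2,e_4,e_5)+C_3(e_3,e_4,e_5)\big)$ is a correct identity, and your nonnegativity claims for the residual second-fundamental-form expressions are fine as stated.

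The genuine gap is in your treatment of the prescribed mean curvature $h$. You reduce the first-order residue to $2s^2-hs+\la\grad_{\Sigma_1}h,e_2\ra$ (plus nonnegative terms), bound it below by $-\tfrac18h^2-|\grad h|$, and then propose to ``choose $h$ so that $\tfrac18h^2+|\grad h|\le\tfrac14\kappa_0$ near $\Sigma_2$.'' This is circular: $h$ must blow up to $\pm\infty$ at the two ends of the band $\tilde\Sigma_1$ in order for the $\mu$-bubble to exist with $\Omega\Delta\Omega_0\subset\subset\inter\tilde\Sigma_1$, and the minimizer $\Sigma_2$ is only known to lie somewhere in the open band --- it can and in general will pass through regions where $|h|$ is large, so $-\tfrac18h^2$ is not uniformly bounded below along $\Sigma_2$. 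The correct mechanism (used in the paper, following Chodosh--Li) is to \emph{not} discard the mean curvature: substitute $-hs=\tfrac12H_{\Sigma_2}^2-\tfrac12h^2-\tfrac12s^2$ using $H_{\Sigma_2}=h-s$, absorb $-\tfrac12h^2-\la\grad h,\nu\ra$ into the universal constant $-\tfrac12$ via the pointwise structural inequality $1+h^2-2\|\grad h\|\ge0$ built into the choice $h=-\tan(\pi\rho/2)$, and then dominate the surviving $-\tfrac12H_{\Sigma_2}^2$ by the second-fundamental-form terms: one checks directly that
\[
\vert A_{\Sigma_2}\vert^2-\tfrac12H_{\Sigma_2}^2+\sum_{q=4}^5\big(A^{\Sigma_2}_{33}A^{\Sigma_2}_{qq}-(A^{\Sigma_2}_{3q})^2\big)\ \ge\ 0 ,
\]
which is a strengthening of the inequality you used (yours omits the $-\tfrac12H_{\Sigma_2}^2$). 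With this absorption the only residual is the constant $-\tfrac12$, beaten by the normalization $C_3\ge1$; no smallness of $h$ along $\Sigma_2$ is ever needed. As written, your conformal Ricci estimate fails wherever $|h|$ is large, so the diameter bound --- and hence the proof --- does not go through without this modification.
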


    Let $M$ be as in the statement of the theorem. Assume for the sake of contradiction that $M$ admits a metric with positive $3$-intermediate curvature. By scaling, we can suppose the 3-intermediate curvature of $M$ is at least $1$. 
For a fixed large $L > 0$, we apply Proposition \ref{linking} to find a geodesic line $\sigma$ in $\overline M$ and a closed manifold $\Lambda^{n-2}$ embedded in $\overline M$ such that $\Lambda$ is linked with $\sigma$ and $d(\sigma,\Lambda) \ge 2 L$. Let $\Sigma_1$ be the area minimizing minimal hypersurface in $\overline M$ with $\bd \Sigma_1 = \Lambda$. Let $\eta$ be the unit normal to $\Sigma_1$. Since $\Sigma_1$ is area minimizing, there exists a function $u > 0$ on $\Sigma_1$ which satisfies $\lap_{\Sigma_1} u + (\vert A_{\Sigma_1}\vert^2 + \ric_{\overline M}(\eta,\eta))u \le 0$. 

    Next, we argue as in Chodosh-Li \cite{chodosh2024generalized}  to construct a suitable function $h$ on $\Sigma_1$.Define 
    \[
    \rho_0(x) = d_{\overline M}(x,\sigma), \quad x\in \overline M.
    \]
    Then let $\rho_1$ be a smooth approximation to $\rho_0$ which satisfies $\|\grad \rho_1(x)\| \le 2$ for all $x\in \overline M$, $\rho_1<1$ on $\sigma$, and $\rho_1>L+4\pi+1$ on $\bd\Sigma_1=\Lambda$. Let $\rho_2$ be the restriction of $\rho_1$ to $\Sigma_1$. Now choose $\eps > 0$ sufficiently small so that $L-\eps$, $L+2\pi(1+\eps) + \eps^2$, 
    and $L+4\pi + \eps$ are all regular values of $\rho_2$.  We can further ensure that $\Sigma_1 \cap B_{\overline M}(\sigma,L/4) \subset \{\rho_2 \le L-\eps\}$. More precisely, assume $L$ is chosen so that $L/4 < (L-\epsilon-1)/2$. Then for $q\in \Sigma_1 \cap B_{\overline M}(\sigma,L/4)$ we have 
    \[
    \rho_2(q)=\rho_1(q)\leq \sup_\sigma\rho_1 + (L/4)\cdot\|\nabla\rho_1\|_{L^\infty}\leq L-\epsilon,
    \]
    so $\Sigma_1 \cap B_{\overline M}(\sigma,L/4) \subset \{\rho_2 \le L-\eps\}$. Now define 
    \[
    \rho(x) = \frac{\rho_2(x) -L - 2\pi}{2\pi+\eps}, \quad x\in \Sigma_1.
    \]
    Then we have $\|\grad_{\Sigma_1} \rho(x)\| \le \frac 1 {\pi}$ for all $x\in \Sigma_1$. We define 
\begin{gather*}
        \tilde{\Sigma}_1 = \{-1 \le \rho \le 1\}, \\
        \bd_+ \tilde{\Sigma}_1 = \{\rho = -1\},\\
        \bd_- \tilde{\Sigma}_1 = \{\rho = 1\},\\
        \Omega_0 = \{-1 \le \rho <  \eps\}. 
    \end{gather*}
    By construction, $\bd_{\pm}\tilde{\Sigma}_1$ and $\bd \Omega_0$ are all smooth hypersurfaces in $\tilde{\Sigma}_1$, and $\Omega_0$ contains $\bd_+ \tilde{\Sigma}_1$, and $\bd \Sigma_1 \cap \tilde{\Sigma}_1 = \emptyset$. Finally, we define 
    \[
    h(x) = - \tan\left(\frac{\pi \rho(x)}{2}\right).
    \]
    Then $h$ is a smooth function on the interior of $\tilde{\Sigma}_1$ which satisfies $h(x) \to \pm\infty$ as $x\to \bd_{\pm}\tilde{\Sigma}_1$. Moreover, we have 
    \[
    \| \grad_{\Sigma_1} h(x)\| = \frac{\pi}{2}\sec^2\left(\frac{\pi \rho(x)}{2}\right) \| \grad_{\Sigma_1} \rho(x)\|  \le \frac {1} 2 \left(1 + \tan^2\left(\frac{\pi \rho(x)}{2}\right)\right) = \frac{1}{2}\left(1 + h^2\right) 
    \]
    for all $x\in \tilde{\Sigma}_1$. It follows that 
    \begin{equation}
    \label{eqn:h-inequality}
    1 + h^2 - 2\|\grad_{\Sigma_1} h\| \ge 0
    \end{equation}
    on $\tilde{\Sigma}_1$. Proposition \ref{prop: mu-bubble regularity} implies that there exists a minimizer $\Omega \subset \tilde{\Sigma}_1$ of the $\mu$-bubble functional 
    \[
\mathcal A(\Omega) = \int_{\bd \Omega} u\, d \mathcal H^3 - \int_{\tilde{\Sigma}_1} (\chi_\Omega - \chi_{\Omega_0}) hu\, d\mathcal H^4 
\]
such that $\Omega \operatorname{\Delta} \Omega_0$ is compactly contained in the interior of $\tilde{\Sigma}_1$. 

Let $\Sigma_2$ be a connected component of $\bd \Omega$, and  consider the slicing $\Sigma_2 \subset \Sigma_1 \subset \overline M$ together with the functions $u$ and $h$.

We are going to show that $\diam(\Sigma_2)$ is bounded from above by a universal constant that does not depend on $L$. 
Let $\nu$ be the unit normal vector to $\Sigma_2$ in $\Sigma_1$ pointing out of $\Omega$. According to Proposition \ref{proposition:variational-formulas}, the mean curvature of $\Sigma_2$ satisfies 
\[
H_{\Sigma_2} = h - u^{-1}\la \grad_{\Sigma_1} u,\nu\ra.
\]
In the next proposition, we re-arrange the second variation formula into a more convenient form.

\begin{prop}
\label{prop:simplified-second-variation}
    For any $\psi\in C^1(\Sigma_2)$ we have 
    \begin{align*}
    0 \le \int_{\Sigma_2} \vert \grad_{\Sigma_2} \psi\vert^2 u - \left(\vert A_{\Sigma_1}\vert^2 + \ric_{\overline{M}}(\eta,\eta) + \vert A_{\Sigma_2}\vert^2 + \ric_{\Sigma_1}(\nu,\nu)  - \frac 1 2 H_{\Sigma_2}^2 - \frac 1 2 + \frac{\lap_{\Sigma_2} u}{u}\right) \psi^2 u
    \end{align*}
\end{prop}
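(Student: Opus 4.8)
The plan is to start from the second variation inequality of Proposition~\ref{proposition:variational-formulas}, applied to the slicing $\Sigma_2\subset\Sigma_1$ with weight $u$ and potential $h$. Since $\Sigma_2$ is a closed component of $\bd\Omega$ contained in the interior of $\tilde\Sigma_1$, the free-boundary term $\int_{\bd\Sigma_2}(\cdots)$ is absent, so for $\psi\in C^1(\Sigma_2)$ we have
\[
0 \le \int_{\Sigma_2} -u\psi\lap_{\Sigma_2}\psi - (\|A_{\Sigma_2}\|^2 + \ric_{\Sigma_1}(\nu,\nu))u\psi^2 - \psi\la\grad_{\Sigma_2}u,\grad_{\Sigma_2}\psi\ra + \psi^2\grad^2_{\Sigma_1}u(\nu,\nu) - \psi^2 u^{-1}\la\grad_{\Sigma_1}u,\nu\ra^2 - u\psi^2\la\grad_{\Sigma_1}h,\nu\ra\, d\mathcal H^3.
\]
First I would integrate the leading term by parts, $-\int_{\Sigma_2}u\psi\lap_{\Sigma_2}\psi = \int_{\Sigma_2}u\|\grad_{\Sigma_2}\psi\|^2 + \psi\la\grad_{\Sigma_2}u,\grad_{\Sigma_2}\psi\ra$, which cancels the cross term and produces the Dirichlet energy $\int_{\Sigma_2}u\|\grad_{\Sigma_2}\psi\|^2$ appearing in the statement.

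Next I would rewrite the normal Hessian. Using the standard decomposition of the Laplacian under restriction to the hypersurface $\Sigma_2\subset\Sigma_1$, $\lap_{\Sigma_1}u = \lap_{\Sigma_2}u + \grad^2_{\Sigma_1}u(\nu,\nu) + H_{\Sigma_2}\la\grad_{\Sigma_1}u,\nu\ra$, together with the Euler--Lagrange equation $H_{\Sigma_2} = h - u^{-1}\la\grad_{\Sigma_1}u,\nu\ra$ from Proposition~\ref{proposition:variational-formulas}, one obtains
\[
\grad^2_{\Sigma_1}u(\nu,\nu) = \lap_{\Sigma_1}u - \lap_{\Sigma_2}u - h\la\grad_{\Sigma_1}u,\nu\ra + u^{-1}\la\grad_{\Sigma_1}u,\nu\ra^2 .
\]
Substituting this into the inequality, the two copies of $u^{-1}\la\grad_{\Sigma_1}u,\nu\ra^2$ cancel; then I would invoke the stability of the minimal hypersurface $\Sigma_1$, namely $\lap_{\Sigma_1}u + (\|A_{\Sigma_1}\|^2 + \ric_{\overline M}(\eta,\eta))u \le 0$, to bound $\psi^2\lap_{\Sigma_1}u$ from above by $-(\|A_{\Sigma_1}\|^2 + \ric_{\overline M}(\eta,\eta))u\psi^2$. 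After this, the inequality reads
\[
0 \le \int_{\Sigma_2} u\|\grad_{\Sigma_2}\psi\|^2 - \big(\|A_{\Sigma_1}\|^2 + \ric_{\overline M}(\eta,\eta) + \|A_{\Sigma_2}\|^2 + \ric_{\Sigma_1}(\nu,\nu)\big)u\psi^2 - \psi^2\lap_{\Sigma_2}u - h\la\grad_{\Sigma_1}u,\nu\ra\psi^2 - u\la\grad_{\Sigma_1}h,\nu\ra\psi^2 \, d\mathcal H^3 .
\]
Writing $-\psi^2\lap_{\Sigma_2}u = -\frac{\lap_{\Sigma_2}u}{u}\,\psi^2 u$ already accounts for the $\frac{\lap_{\Sigma_2}u}{u}$ term in the statement, so the only thing left is to control the two $h$-terms.

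The last step, which I expect to be the one genuinely delicate point, is to absorb $-h\la\grad_{\Sigma_1}u,\nu\ra\psi^2 - u\la\grad_{\Sigma_1}h,\nu\ra\psi^2$ into $(\frac12 H_{\Sigma_2}^2 + \frac12)u\psi^2$. Here I would use the gradient bound on $h$ arranged in the construction of $\tilde\Sigma_1$, namely $|\la\grad_{\Sigma_1}h,\nu\ra| \le \|\grad_{\Sigma_1}h\| \le \frac12(1+h^2)$ (equivalently \eqref{eqn:h-inequality}), together with the substitution $\la\grad_{\Sigma_1}u,\nu\ra = u(h - H_{\Sigma_2})$ coming again from the Euler--Lagrange equation. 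These reduce the two terms to an expression bounded above by $u(hH_{\Sigma_2} - \tfrac12 h^2 + \tfrac12)\psi^2$, and the remaining inequality $hH_{\Sigma_2} - \tfrac12 h^2 \le \tfrac12 H_{\Sigma_2}^2$ is exactly $0 \le \tfrac12(H_{\Sigma_2}-h)^2$, a completion of the square. Collecting all terms then yields precisely the stated inequality. The computation is otherwise routine sign-bookkeeping; the one place where the construction is actually used is this final gradient estimate for $h$, which is what the definition $h = -\tan(\pi\rho/2)$ and the bound $\|\grad_{\Sigma_1}\rho\| \le 1/\pi$ were designed to provide.
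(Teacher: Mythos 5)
Your proof is correct and follows essentially the same route as the paper: the same second variation formula with the boundary term absent, the first variation identity $H_{\Sigma_2}=h-u^{-1}\la\grad_{\Sigma_1}u,\nu\ra$, the stability inequality for $u$ on $\Sigma_1$, and the gradient bound \eqref{eqn:h-inequality} on $h$. Your completion of the square $\tfrac12(H_{\Sigma_2}-h)^2\ge 0$ is just a repackaging of the paper's step of discarding the nonnegative term $\tfrac12 u^{-2}\la\grad_{\Sigma_1}u,\nu\ra^2\psi^2 u$, so the two arguments coincide.
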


\begin{proof}
    The second variation formula (Proposition \ref{proposition:variational-formulas}) and $H_{\Sigma_2} = h - u^{-1}\la \grad_{\Sigma_1} u,\nu\ra$ give that 
    \[
    0 \le \int_{\Sigma_2} \vert \grad_{\Sigma_2} \psi \vert^2 u - (\vert A_{\Sigma_2}\vert^2 + \ric_{\Sigma_1}(\nu,\nu))\psi^2 u - \la \grad_{\Sigma_1} h,\nu\ra \psi^2 u - h\la \grad_{\Sigma_1} u,\nu\ra \psi^2 + (\lap_{\Sigma_1} u - \lap_{\Sigma_2} u)\psi^2.
    \]
    We now simplify this as in Chodosh-Li \cite{chodosh2024generalized}. First, observe that 
    \[
    \frac 1 2 H_{\Sigma_2}^2 \psi^2 u = \frac 1 2 u^{-1} \la \grad_{\Sigma_1} u,\nu\ra^2 \psi^2 - h\la \grad_{\Sigma_1} u,\nu\ra \psi^2 + \frac 1 2 h^2 \psi^2 u.
    \]
    Using this to eliminate the $h\la \grad_{\Sigma_1} u,\nu\ra \psi^2$ term in the previous inequality, we see that
    \begin{align*}
        0 &\le \int_{\Sigma_2} \vert \grad_{\Sigma_2} \psi\vert^2 u - (\vert A_{\Sigma_2}\vert^2 + \ric_{\Sigma_1}(\nu,\nu) - \frac 1 2 H_{\Sigma_2}^2)\psi^2 u + (\lap_{\Sigma_1} u - \lap_{\Sigma_2} u)\psi^2\\
    &\qquad\qquad  - \frac 1 2 \int_{\Sigma_2} \left(\la u^{-1} \grad_{\Sigma_1} u,\nu\ra^2 + h^2 + 2\la \grad_{\Sigma_1} h,\nu\ra\right)\psi^2 u.
    \end{align*}
    Since $\la u^{-1}\grad_{\Sigma_1} u,\nu\ra^2 \ge 0$ and $1 + h^2 + 2\la \grad_{\Sigma_1} h,\nu\ra \ge 0$ by (\ref{eqn:h-inequality}), it follows that 
    \begin{align*}
&0 \le \int_{\Sigma_2} \vert \grad_{\Sigma_2} \psi\vert^2 u - \left(\vert A_{\Sigma_2}\vert^2 + \ric_{\Sigma_1}(\nu,\nu) - \frac 1 2 H_{{\Sigma_2}}^2 - \frac 1 2\right)\psi^2 u + (\lap_{\Sigma_1} u - \lap_{\Sigma_2} u)\psi^2. 
\end{align*}
Finally, recalling that $\lap_{\Sigma_1} u + (\vert A_{\Sigma_1}\vert^2 + \ric_{\overline{M}}(\eta,\eta))u \le 0$, we get 
\[
0 \le \int_{\Sigma_2} \vert \grad_{\Sigma_2} \psi\vert^2 u - \left(\vert A_{\Sigma_1}\vert^2 + \ric_{\overline{M}}(\eta,\eta) + \vert A_{\Sigma_2}\vert^2 + \ric_{\Sigma_1}(\nu,\nu)  - \frac 1 2 H_{\Sigma_2}^2 - \frac 1 2 + \frac{\lap_{\Sigma_2} u}{u}\right) \psi^2 u,
\]
as needed.
\end{proof}

Now we can argue that the diameter of ${\Sigma_2}$ is bounded.

\begin{prop}
\label{Theorem:diameter-bound}
The diameter $\diam({\Sigma_2})$ of $\Sigma_2$ is bounded uniformly from above.     
\end{prop}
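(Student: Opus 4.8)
The plan is to apply the Shen–Ye generalized Bonnet–Myers theorem in its three–dimensional form (Theorem \ref{Theorem:generalized-Bonnet-Myers}) to $\Sigma_2$ with a suitable weight $w > 0$. Since $\Sigma_2$ is $3$-dimensional, it suffices to produce $w > 0$ on $\Sigma_2$ with
\[
\ric_{\Sigma_2}(v,v) - w^{-1}\lap_{\Sigma_2} w + \tfrac12 \vert \grad_{\Sigma_2} \ln w\vert^2 \ge \kappa > 0
\]
for all unit $v \in T_q\Sigma_2$, where $\kappa$ is a universal constant independent of $L$. The natural choice, following the philosophy of weighted slicings, is $w = u^{1/2}$, where $u$ is the restriction to $\Sigma_2$ of the eigenfunction already fixed on $\Sigma_1$. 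Indeed, $w^{-1}\lap_{\Sigma_2} w = \tfrac12 u^{-1}\lap_{\Sigma_2} u - \tfrac14 u^{-2}\vert\grad_{\Sigma_2} u\vert^2$ and $\tfrac12\vert\grad_{\Sigma_2}\ln w\vert^2 = \tfrac18 u^{-2}\vert\grad_{\Sigma_2}u\vert^2$, so the desired inequality becomes
\[
\ric_{\Sigma_2}(v,v) - \tfrac12 u^{-1}\lap_{\Sigma_2} u + \tfrac38 u^{-2}\vert \grad_{\Sigma_2} u\vert^2 \ge \kappa.
\]

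Next I would bound $\ric_{\Sigma_2}(v,v)$ from below using the Gauss equation twice — once for $\Sigma_1 \subset \overline M$ and once for $\Sigma_2 \subset \Sigma_1$ — together with the second variation inequality of Proposition \ref{prop:simplified-second-variation}. Fix a unit $v = e_3 \in T_q\Sigma_2$ and extend to orthonormal frames $e_3,e_4,e_5$ of $T_q\Sigma_2$, then $e_2 = \nu, e_3,e_4,e_5$ of $T_q\Sigma_1$, then $e_1 = \eta, e_2,\dots,e_5$ of $T_q\overline M$. The Gauss equations give
\[
\ric_{\Sigma_2}(e_3,e_3) = \ric_{\Sigma_1}(e_3,e_3) - R_{\Sigma_1}(e_2,e_3,e_2,e_3) + (\text{II}_{\Sigma_2}\text{-terms}),
\]
\[
\ric_{\Sigma_1}(e_3,e_3) = \ric_{\overline M}(e_3,e_3) - R_{\overline M}(e_1,e_3,e_1,e_3) + (\text{II}_{\Sigma_1}\text{-terms}).
\]
Assembling these, the curvature contribution is
\[
\ric_{\overline M}(e_3,e_3) - R_{\overline M}(e_1,e_3,e_1,e_3) - R_{\Sigma_1}(e_2,e_3,e_2,e_3),
\]
and after using the Gauss equation once more on the last term to pass from $R_{\Sigma_1}$ to $R_{\overline M}(e_2,e_3,e_2,e_3)$ plus second fundamental form terms, the curvature part becomes exactly
\[
\ric_{\overline M}(e_3,e_3) - R_{\overline M}(e_1,e_3,e_1,e_3) - R_{\overline M}(e_2,e_3,e_2,e_3) = \sum_{q=4}^{5} R_{\overline M}(e_3,e_q,e_3,e_q) + \sum_{q=4}^5 R_{\overline M}(e_3,e_q,e_3,e_q),
\]
wait — more precisely it equals $\sum_{p \in \{1,2,3\}}\sum_{q \in \{4,5\}} R_{\overline M}(e_p,e_q,e_p,e_q)$ only after symmetrizing appropriately; the point is to recognize the $3$-intermediate curvature $C_3(e_1,e_2,e_3) = \sum_{p=1}^3\sum_{q=4}^5 R_{\overline M}(e_p,e_q,e_p,e_q) \ge 1$. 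The $-\tfrac12 u^{-1}\lap_{\Sigma_2}u$ term is handled by feeding in Proposition \ref{prop:simplified-second-variation} with test function $\psi \equiv 1$ localized suitably, which converts $\lap_{\Sigma_2} u / u$ into $-\vert A_{\Sigma_1}\vert^2 - \ric_{\overline M}(\eta,\eta) - \vert A_{\Sigma_2}\vert^2 - \ric_{\Sigma_1}(\nu,\nu) + \tfrac12 H_{\Sigma_2}^2 + \tfrac12$ plus a gradient term; the leftover $\tfrac12$ becomes our $\kappa$, and the various Ricci terms recombine with the Gauss equation output.

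The main obstacle I anticipate is the bookkeeping of the second fundamental form terms: combining the $\text{II}_{\Sigma_1}$-contributions coming from the Gauss equations with the $\vert A_{\Sigma_1}\vert^2$ from the stability inequality, and likewise the $\text{II}_{\Sigma_2}$-contributions with $\vert A_{\Sigma_2}\vert^2$ and the $-\tfrac12 H_{\Sigma_2}^2$ term, and showing that the net contribution of all these algebraic quadratic-in-$A$ expressions is nonnegative (or at least bounded below by a term that can be absorbed). This is the analogue of the computation \cite[Equation 15]{shen1996stable} used in the Frankel-type theorem above, applied now in the iterated slicing situation with the extra $-\tfrac12 H_{\Sigma_2}^2$ coming from the $\mu$-bubble; one expects to need Young's inequality to trade the cross terms $A^{\Sigma_1}_{22}A^{\Sigma_1}_{jj}$ and $A^{\Sigma_2}_{33}A^{\Sigma_2}_{jj}$ against the diagonal squares, exactly as in the $m=2$, $n=6$ case above where the constant $\tfrac12$ threshold appeared. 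The gradient terms $\tfrac38 u^{-2}\vert\grad u\vert^2$ and the leftover piece of $-\tfrac12 u^{-1}\lap u$ involving $\la \grad_{\Sigma_1} u, \nu\ra$ should combine favorably — indeed the weight $u^{1/2}$ is chosen precisely so that the $\vert\grad\ln w\vert^2$ term in Shen–Ye's hypothesis has the right coefficient to dominate what the stability inequality leaves behind. Once the diameter bound $\diam(\Sigma_2) \le C(\kappa)$ is established, one gets the desired contradiction with $d(\sigma,\Lambda) \ge 2L$ exactly as sketched in the excerpt: $\Sigma_2$ is homologous to $\Lambda$ in $\Sigma_1$ and $\Sigma_2$ lies far from $\sigma$, yet one can fill $\Sigma_2$ in a bounded neighborhood by Proposition \ref{Proposition:uniformly-acyclic}, producing a filling of $\Lambda$ disjoint from $\sigma$, contradicting the linking.
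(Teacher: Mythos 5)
Your overall strategy (apply the $3$-dimensional Shen--Ye theorem to $\Sigma_2$ with a weight built from the slicing eigenfunctions, and recognize $C_3(e_1,e_2,e_3)$ via the Gauss equations plus an algebraic estimate on the second fundamental form terms) is the paper's strategy, but there is a genuine gap at the crucial step: how you turn the stability of the $\mu$-bubble $\Sigma_2$ into \emph{pointwise} information. Proposition \ref{prop:simplified-second-variation} is an integral inequality, valid for all test functions $\psi$; taking $\psi\equiv 1$ (or ``localized suitably'') only yields an integrated bound, and no localization argument can produce the pointwise identity you invoke, namely that $\lap_{\Sigma_2}u/u$ is bounded above at each point by $-\vert A_{\Sigma_1}\vert^2-\ric_{\overline M}(\eta,\eta)-\vert A_{\Sigma_2}\vert^2-\ric_{\Sigma_1}(\nu,\nu)+\tfrac12 H_{\Sigma_2}^2+\tfrac12$ plus gradient terms (shrinking the support of $\psi$ makes $\int \vert\grad\psi\vert^2 u$ blow up, so nothing pointwise survives). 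The only pointwise information you actually have about $u$ is the eigenvalue inequality for $\lap_{\Sigma_1}u$ on $\Sigma_1$, and passing from $\lap_{\Sigma_1}u$ to $\lap_{\Sigma_2}u$ costs a normal Hessian term $\nabla^2_{\Sigma_1}u(\nu,\nu)$ and a term $H_{\Sigma_2}\la\grad_{\Sigma_1}u,\nu\ra$, neither of which is controlled pointwise in your setup. Consequently no choice of exponent on $u$ alone (your $f=u^{1/2}$ included) can verify the Shen--Ye hypothesis.

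The paper fixes exactly this by introducing a second weight: since the quadratic form in Proposition \ref{prop:simplified-second-variation} is nonnegative, its first eigenfunction gives a positive function $w$ on $\Sigma_2$ satisfying the \emph{pointwise} differential inequality
\[
\div_{\Sigma_2}(u\,\grad_{\Sigma_2}w)\le -\Bigl(\vert A_{\Sigma_1}\vert^2+\ric_{\overline M}(\eta,\eta)+\vert A_{\Sigma_2}\vert^2+\ric_{\Sigma_1}(\nu,\nu)-\tfrac12 H_{\Sigma_2}^2-\tfrac12+\tfrac{\lap_{\Sigma_2}u}{u}\Bigr)uw,
\]
and then applies Theorem \ref{Theorem:generalized-Bonnet-Myers} with $f=uw$. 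The product structure is essential: in $\frac12\vert\grad_{\Sigma_2}\log(uw)\vert^2$ the cross term $\la\grad u/u,\grad w/w\ra$ exactly absorbs the mixed term produced when expanding $\lap_{\Sigma_2}(uw)$, leaving $\ric_{\Sigma_2}(e_3,e_3)-\div_{\Sigma_2}(u\grad w)/(uw)-\lap_{\Sigma_2}u/u$, into which the displayed pointwise inequality can be fed; the curvature terms then assemble into $C_3+\mathcal B$ by Lemma 3.8 of Brendle--Hirsch--Johne, and $\mathcal B+\vert A_{\Sigma_1}\vert^2+\vert A_{\Sigma_2}\vert^2-\tfrac12H_{\Sigma_2}^2\ge 0$ by the minimality of $\Sigma_1$ and a direct computation. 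Your algebraic bookkeeping of the second fundamental form terms and the final linking contradiction are fine in outline, but without the eigenfunction $w$ the proof does not close.
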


\begin{proof}
Note that Proposition \ref{prop:simplified-second-variation} implies that there is a function $w > 0$ on ${\Sigma_2}$ which satisfies 
\begin{equation}
\label{eqn:w-inequality}
\div_{\Sigma_2}(u \grad_{\Sigma_2} w) \le - \left(\vert A_{\Sigma_2}\vert^2 + \ric_{\Sigma_1}(\nu,\nu) + \vert A_{\Sigma_1}\vert^2 + \ric_{\overline{M}}(\eta,\eta) - \frac 1 2 H_{{\Sigma_2}}^2 - \frac 1 2 + \frac{\lap_{\Sigma_2} u}{u}\right) w u.
\end{equation} 
Our strategy is to apply Theorem \ref{Theorem:generalized-Bonnet-Myers} with $N = {\Sigma_2}$ and $f = uw$. 

    Fix a point $p\in {\Sigma_2}$ and let $\{e_1 = \eta,e_2 = \nu,e_3,e_4,e_5\}$ be an orthonormal frame at $p$ such that $\{e_2,e_3,e_4,e_5\}$ is an orthonormal basis for ${\Sigma_1}$ and $\{e_3,e_4,e_5\}$ is an orthonormal basis for ${\Sigma_2}$. We compute that 
    \begin{align*}
        &\ric_{\Sigma_2}(e_3,e_3) - \frac{\lap_{\Sigma_2}(uw)}{uw} + \frac{1}{2}\vert \grad_{\Sigma_2} \log(uw)\vert^2 \\
        &\qquad\qquad = \ric_{\Sigma_2}(e_3,e_3) - \frac{\div_{\Sigma_2}(u\grad_{\Sigma_2} w)}{uw} - \frac{\div_{\Sigma_2}(w\grad_{\Sigma_2} u)}{uw} + \frac{1}{2} \left\vert \frac{\grad_{\Sigma_2} u}{u} + \frac{\grad_{\Sigma_2} w}{w}\right\vert^2\\
        &\qquad \qquad \ge \ric_{\Sigma_2}(e_3,e_3) - \frac{\div_{\Sigma_2}(u\grad_{\Sigma_2} w)}{uw} - \frac{\lap_{\Sigma_2} u}{u}.  
    \end{align*}
    Therefore, by inequality (\ref{eqn:w-inequality}), we obtain 
    \begin{align*}
        &\ric_{\Sigma_2}(e_3,e_3) - \frac{\lap_{\Sigma_2}(uw)}{uw} + \frac{1}{2}\vert \grad_{\Sigma_2} \log(uw)\vert^2 \\
        &\qquad\qquad \ge  \ric_{\overline{M}}(e_1,e_1) + \ric_{\Sigma_1}(e_2,e_2) + \ric_{\Sigma_2}(e_3,e_3) + \vert A_{\Sigma_1}\vert^2 + \vert A_{\Sigma_2}\vert^2 - \frac{1}{2}H_{\Sigma_2}^2 - \frac 1 2. 
    \end{align*}
    It remains to get a lower bound on the right hand side. 

    According to \cite[Lemma 3.8]{brendle2024generalization}, we have
    \begin{align*}
    \ric_{\overline{M}}(e_1,e_1) + \ric_{\Sigma_1}(e_2,e_2) + \ric_{\Sigma_2}(e_3,e_3) = C_3(e_1,e_2,e_3) + \mathcal B
\end{align*}
where 
\[
\mathcal B =  \sum_{p=2}^3 \sum_{q=p+1}^5 \big(A_{\Sigma_1}(e_p,e_p)A_{\Sigma_1}(e_q,e_q) - A_{{\Sigma_1}}(e_p,e_q)^2\big) + \sum_{q=4}^5 \big(A_{\Sigma_2}(e_3,e_3)A_{\Sigma_2}(e_q,e_q) - A_{\Sigma_2}(e_3,e_q)^2\big). 
\]
Now, since ${\Sigma_1}$ is minimal, \cite[Lemma 3.11]{brendle2024generalization} implies that 
\[
\vert A_{\Sigma_1}\vert^2 + \sum_{p=2}^3 \sum_{q=p+1}^5 \big(A_{\Sigma_1}(e_p,e_p)A_{\Sigma_1}(e_q,e_q) - A_{{\Sigma_1}}(e_p,e_q)^2\big)  \ge 0. 
\]
Moreover, we have 
\begin{align*}
    &\vert A_{\Sigma_2}\vert^2 - \frac 1 2 H_{\Sigma_2}^2 + \sum_{q=4}^5 \big(A_{\Sigma_2}(e_3,e_3)A_{\Sigma_2}(e_q,e_q) - A_{\Sigma_2}(e_3,e_q)^2\big) \\
    &\qquad = \frac 1 2 (A^{\Sigma_2}_{33})^2 + \frac 1 2 (A^{\Sigma_2}_{44})^2 + \frac 1 2 (A^{\Sigma_2}_{55})^2 - A^{\Sigma_2}_{44}A^{\Sigma_2}_{55} + (A^{\Sigma_2}_{34})^2 + (A^{\Sigma_2}_{35})^2 + 2(A^{\Sigma_2}_{45})^2 \ge 0. 
\end{align*}
Hence we obtain 
\begin{align*}
&\ric_{\Sigma_2}(e_3,e_3) - \frac{\lap_{\Sigma_2}(uw)}{uw} + \frac{1}{2}\vert \grad_{\Sigma_2} \log(uw)\vert^2 \\
        &\qquad\qquad \ge C_3(e_1,e_2,e_3) + \mathcal B + \vert A_{\Sigma_1}\vert^2 + \vert A_{\Sigma_2}\vert^2 - \frac{1}{2}H_{\Sigma_2}^2 - \frac 1 2\\
        &\qquad \qquad \ge C_3(e_1,e_2,e_3) - \frac 1 2 \ge \frac 1 2. 
\end{align*}
Since $e_3$ was an arbitrary unit tangent vector to ${\Sigma_2}$ and $\Sigma_2$ is 3-dimensional, Shen-Ye's generalized Bonnet-Myers theorem (Theorem \ref{Theorem:generalized-Bonnet-Myers}) now gives the conclusion.  
\end{proof}

We can now finish the proof of Theorem \ref{thm: m3n5}. Since $\bd {\Sigma_2} = 0$ and $\diam({\Sigma_2})$ is uniformly bounded and $H_{3}(\overline M,\Z) = 0$, it follows from Proposition \ref{Proposition:uniformly-acyclic} that ${\Sigma_2}$ bounds within an $R$-neighborhood for some constant $R$ that depends only on $\overline M$. Applying this argument to each component of $\bd \Omega$, we see that $\bd \Omega$ bounds within its $R$-neighborhood in $\overline M$. Since $d(\bd \Omega, \sigma) \ge L/4$ and $\bd \Omega$ is linked with $\sigma$ by construction, this is a contradiction provided we select $L > 2R$. This completes the proof.

\subsection{The Case $n=6$ and $m=4$}
Next we prove the $n=6$ and $m=4$ case of Theorem \ref{Theorem:Main-Theorem}. Our argument combines the slice and dice procedure of Chodosh-Li \cite{chodosh2024generalized}, the diameter bound of Shen-Ye \cite{shen1996stable}, and also the Frankel type theorems we developed in Section \ref{sec: Frankel}.

\begin{theorem}\label{thm: m4n6}
    Let $M^6$ be a closed 6-dimensional manifold. Assume the universal cover $\overline{M}$ of $M$ satisfies $H_6(\overline{M},\mathbb Z)=H_5(\overline{M},\mathbb Z)=H_4(\overline{M},\mathbb Z)=H_3(\overline{M},\mathbb Z)=0$. Then $M$ does not admit a metric of positive $4$-intermediate curvature.
\end{theorem}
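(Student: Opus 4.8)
The plan is to follow the Chodosh--Li slice-and-dice scheme, using the diameter bounds and Frankel-type theorems developed above to control the geometry at the relevant dimensions. By scaling, assume the $4$-intermediate curvature of $M$ is at least $1$. Fix a large $L > 0$ and apply Proposition \ref{linking} to the universal cover $\overline M$ to produce a geodesic line $\sigma$ and a closed null-homologous $\Lambda^{4}$ with $\Lambda$ linked with $\sigma$ and $d(\sigma,\Lambda) \ge 2L$ (the hypothesis $H_6(\overline M,\Z) = H_5(\overline M,\Z)=0$ is what permits this). Let $\Sigma_1$ be an area-minimizing filling of $\Lambda$; it must meet $\sigma$, giving a point at distance $\ge 2L$ from $\bd\Sigma_1$. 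Let $u>0$ be the first eigenfunction of the stability operator on $\Sigma_1$.

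Next, mimic the $n=5$, $m=3$ argument to produce a nested sequence of weighted $\mu$-bubbles. Using a smoothed distance function $\rho$ to $\sigma$ (restricted to $\Sigma_1$) and $h = -\tan(\pi\rho/2)$, construct a $u$-weighted $\mu$-bubble $\Sigma_2 \subset \Sigma_1$ that stays at distance $\ge L/4$ from $\sigma$, is homologous to $\Lambda$ in $\Sigma_1$, and carries a positive weight $u_2$ (built from $u$ times the $\mu$-bubble stability eigenfunction, exactly as in Proposition \ref{prop:simplified-second-variation}). Since $\Sigma_2$ is now $4$-dimensional we iterate once more: inside $\Sigma_2$, using a second $\mu$-bubble functional adapted to the geometry, carve out a $3$-dimensional slice $\Sigma_3$, again homologous to $\Lambda$ and far from $\sigma$, equipped with a weight $f$. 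The point of the iteration is that, as in the main-theorem discussion, the accumulated second-variation inequalities should yield on $\Sigma_3$ the positive conformal Ricci inequality $\ric_{\Sigma_3}(v,v) - f^{-1}\lap f + \tfrac12|\grad\ln f|^2 \ge \kappa > 0$; here one combines the Brendle--Hirsch--Johne algebraic identities (\cite[Lemmas 3.8, 3.11]{brendle2024generalization}) relating $\sum \ric$ to $C_4$ plus nonnegative second-fundamental-form terms, together with the $\mu$-bubble terms $-\tfrac12 H^2 - \tfrac12 + (1+h^2+2\langle\grad h,\nu\rangle)$-type contributions. By Shen--Ye's generalized Bonnet--Myers theorem (Theorem \ref{Theorem:generalized-Bonnet-Myers}) each closed component of $\Sigma_3$ then has uniformly bounded diameter.

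To handle $\Sigma_3$ as a slicing with boundary, run the slice-and-dice procedure of Chodosh--Li: first slice $\Sigma_3$ open along finitely many codimension-one $\mu$-bubbles to kill its $2$-dimensional homology, then dice the resulting manifold into pieces of small diameter using \emph{free boundary} weighted $\mu$-bubbles $\Sigma_{4,i}$, which are $2$-dimensional. Crucially, one must show each dicing surface has connected boundary; here instead of Gauss--Bonnet one invokes Theorem \ref{Theorem:ConformalRicciFrankel} (after checking the free-boundary condition $\bd_\eta f = -fH_{\bd\Sigma}$ is forced by the $\mu$-bubble Euler--Lagrange equation and orthogonal intersection, exactly as at the end of that proof). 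Connectedness of boundaries gives the combinatorial structure needed to assemble, from the diameter-bounded pieces, a filling of $\bd\Omega$ within a fixed-size $R$-neighborhood in $\overline M$, using Proposition \ref{Proposition:uniformly-acyclic} and $H_3(\overline M,\Z)=0$. Choosing $L > 2R$ produces a filling disjoint from $\sigma$, contradicting the linking.

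\textbf{Main obstacle.} The delicate point is verifying that after two successive $\mu$-bubble descents the weight $f$ on $\Sigma_3$ really satisfies the positive conformal Ricci inequality with a \emph{uniform} $\kappa$ independent of $L$: one must choose the $\mu$-bubble functionals (the analogues of $h$, possibly with extra weights as flagged in the remark about $n=6$, $m=3$) so that all the cross terms and the $-\tfrac12 H^2$, $-\tfrac12$ losses are absorbed by $C_4 \ge 1$ and the nonnegative $A$-terms, \emph{and} so that the $|\grad\ln f|^2$ coefficient lands at exactly $\tfrac12$ rather than something larger — which is why the $3$-dimensional Shen--Ye theorem (coefficient $\tfrac12$, no $\eps$-loss) is essential and why dimension $n=7$ is out of reach. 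A secondary technical burden is checking that the free-boundary $\mu$-bubble weights built during dicing inherit the hypothesis of Theorem \ref{Theorem:ConformalRicciFrankel}, i.e. that the conformal Ricci lower bound on $\Sigma_3$ passes to the $2$-dimensional dicing slices in the form required for the Frankel argument.
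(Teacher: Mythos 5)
There is a genuine structural gap: your dimension count for the descent is off by one, and this breaks the linking contradiction. In this setting $\Lambda$ is $4$-dimensional, $\Sigma_1$ is a $5$-dimensional filling, and the $\mu$-bubble $\Sigma_2=\bd\Omega\subset\Sigma_1$ is $4$-dimensional; it is $\Sigma_2$ that is homologous to $\Lambda$ in $\Sigma_1$ and hence linked with $\sigma$, so the object that must eventually be filled within a bounded neighborhood (via Proposition \ref{Proposition:uniformly-acyclic}) is the $4$-cycle $\Sigma_2$. Your proposal inserts a further $\mu$-bubble descent to a $3$-dimensional $\Sigma_3$ ``again homologous to $\Lambda$'' --- impossible, since a $3$-cycle cannot be homologous to a $4$-manifold, and a codimension-one slice of $\Sigma_2$ carries no linking information with the geodesic line. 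Consequently your slice-and-dice is run on the wrong object: decomposing $\Sigma_3$ into bounded pieces gives no filling of $\bd\Omega$, so the final assembly step you describe is unsupported. The correct structure (and the one the paper uses) is to stop the weighted slicing at $\Sigma_2$ and run slice-and-dice on $\Sigma_2$ itself: the slicing hypersurfaces are closed $3$-manifolds that are stable critical points of the weighted area $\int uw$ (their bounded diameter and finite $H_1$, hence vanishing $H_2$, come from positive conformal Ricci curvature of the weight $uwv$ and the $3$-dimensional Shen--Ye theorem, Lemma \ref{lem: diameterslice4}), and the dicing pieces are $3$-dimensional free boundary $\mu$-bubbles of a weighted functional with weight $uw$.

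This also exposes a second inconsistency in your outline: you take the dicing surfaces to be $2$-dimensional and then invoke Theorem \ref{Theorem:ConformalRicciFrankel}, which is a statement about $3$-dimensional manifolds with boundary; if the dicing pieces really were $2$-dimensional one would simply use Gauss--Bonnet as Chodosh--Li do, and no new Frankel theorem would be needed. The whole point of the $n=6$, $m=4$ case is that the dicing pieces are $3$-dimensional, so connectedness of their boundary must come from Theorem \ref{Theorem:ConformalRicciFrankel} applied with the weight $uwv_2$ (whose free boundary condition $\bd_\eta f=-fH_{\bd\Upsilon}$ is exactly the natural boundary condition of the free boundary $\mu$-bubble, as in Proposition \ref{prop: conformal positivity}), and their diameter bound from Theorem \ref{Theorem:generalized-Bonnet-Myers}. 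Your ``main obstacle'' paragraph correctly identifies the need to land on the coefficient $\tfrac12$ in the conformal Ricci inequality, and the verification you sketch (Brendle--Hirsch--Johne identities plus the $\mu$-bubble terms, with $C_4$ bounded below) is indeed how the paper proceeds in Lemma \ref{lem: diameterslice4} and Proposition \ref{prop: conformal positivity}; but as written the argument decomposes the wrong manifold and applies the Frankel theorem in the wrong dimension, so it does not yield the contradiction.
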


For $n=6$, $m=4$, let $\Sigma_2\subset\Sigma_1\subset \overline{M}^6$ be constructed as in the previous subsection, which are now of dimension $4$ and $5$. We consider the following weighted functional for $3$-dimensional $\Sigma_3\subset {\Sigma_2}$:
\[
\mathcal{A}_2(\Sigma_3):=\int_{\Sigma_3}uw.
\]
We will use the $\mathcal{A}_2$-functional to slice ${\Sigma_2}$ into a $4$-manifold $\hat{{\Sigma}}_2$ with simple third homology. Firstly, we prove that each boundary component of $\hat{{\Sigma}}_2$ has finite diameter. Without loss of generality, we assume the $4$-intermediate curvature of $M$ is at least $\frac{3}{2}$.

\begin{lem}\label{lem: diameterslice4}
    Suppose $\Sigma_3$ is a connected (two-sided) stable critical point of $\mathcal A_2$. Then $\diam \Sigma_3\leq 2\pi$ and $H_1(\Sigma_3)$ is finite.
\end{lem}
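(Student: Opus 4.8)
The plan is to treat $\Sigma_3$ exactly as $\Sigma_2$ was treated in the previous subsection: derive a ``conformal Ricci'' lower bound for a suitable weight on $\Sigma_3$ and then invoke the three-dimensional Shen--Ye generalized Bonnet--Myers theorem (Theorem~\ref{Theorem:generalized-Bonnet-Myers}) for the diameter bound, and use the diameter bound together with a covering-space / Švarc--Milnor style argument to control $H_1(\Sigma_3)$. First I would write down the stability inequality for $\mathcal A_2$: since $\Sigma_3$ is a stable critical point of the $uw$-weighted area functional inside $\Sigma_2$, there is a positive first-eigenfunction $v>0$ on $\Sigma_3$ satisfying a differential inequality of the form
\[
\div_{\Sigma_3}(uw\,\grad_{\Sigma_3} v) \le -\Big(\vert A_{\Sigma_3}\vert^2 + \ric_{\Sigma_2}(\mu,\mu) + \tfrac{\lap_{\Sigma_3}(uw)}{uw} - \text{(lower-order terms from }uw)\Big) uw\, v,
\]
where $\mu$ is the unit normal of $\Sigma_3$ in $\Sigma_2$. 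The precise form of the terms coming from differentiating the weight $uw$ along $\Sigma_3$ needs to be assembled from Proposition~\ref{proposition:variational-formulas} (with the weight $uw$ in place of $u$ and $h\equiv 0$); this is bookkeeping analogous to Proposition~\ref{prop:simplified-second-variation}.

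Next I would set $f = uwv$ on $\Sigma_3$ and compute $\ric_{\Sigma_3}(e,e) - f^{-1}\lap_{\Sigma_3} f + \tfrac12\vert\grad_{\Sigma_3}\ln f\vert^2$ for a unit vector $e$, expanding $f^{-1}\lap_{\Sigma_3}f$ into the contributions of $u$, $w$, and $v$ and discarding the cross terms in the gradient-squared that have a favorable sign, just as in Proposition~\ref{Theorem:diameter-bound}. Choosing a frame $e_1=\eta, e_2=\nu, e_3=\mu, e_4, e_5, e_6$ adapted to the flag $\Sigma_3\subset\Sigma_2\subset\Sigma_1\subset\overline M$, the Gauss-equation identities \cite[Lemmas~3.8, 3.11]{brendle2024generalization} should repackage the sum $\ric_{\overline M}(e_1,e_1)+\ric_{\Sigma_1}(e_2,e_2)+\ric_{\Sigma_2}(e_3,e_3)+\ric_{\Sigma_3}(e_4,e_4)$ as $C_4(e_1,e_2,e_3,e_4)$ plus a sum of second-fundamental-form quadratic terms, which one then cancels against $\vert A_{\Sigma_1}\vert^2 + \vert A_{\Sigma_2}\vert^2 + \vert A_{\Sigma_3}\vert^2$ and the $-\tfrac12 H_{\Sigma_2}^2$, $-\tfrac12$ terms (the latter absorbable since $C_4\ge \tfrac32$). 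The upshot should be $\ric_{\Sigma_3}(e,e) - f^{-1}\lap_{\Sigma_3} f + \tfrac12\vert\grad_{\Sigma_3}\ln f\vert^2 \ge \tfrac12$, so Theorem~\ref{Theorem:generalized-Bonnet-Myers} gives $\diam(\Sigma_3) \le \sqrt2\,\pi/\sqrt{1/2} = 2\pi$.

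For the finiteness of $H_1(\Sigma_3)$, I would lift to the universal cover: $\diam(\Sigma_3)\le 2\pi$ means the image of $\Sigma_3$ sits inside a $2\pi$-ball in $\overline M$, and by Proposition~\ref{Proposition:uniformly-acyclic} (applied to $1$-cycles, using $n-m+1 = 3$, so $1$-cycles in this range are killed) — wait, more carefully, since $\overline M$ is the universal cover of a \emph{closed} manifold, a bounded subset of $\overline M$ can only carry finitely much $\pi_1$; concretely, a continuous map $\Sigma_3\to\overline M$ whose image has diameter $\le 2\pi$ lifts to a map into a ball, hence the composition $\pi_1(\Sigma_3)\to\pi_1(\overline M)=1$ is trivial and the image of $\Sigma_3$ being contained in a fixed metric ball forces $H_1(\Sigma_3;\Q)$ to vanish by the uniform acyclicity (this is precisely how He--Zhu and Chodosh--Li conclude). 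The cleanest route is: $H_1(\Sigma_3;\Z)$ surjects onto $H_1(\Sigma_3;\Z)/(\text{torsion})$, and a free part would produce a nontrivial $1$-cycle in $\overline M$ supported in a ball of controlled size, which bounds by Proposition~\ref{Proposition:uniformly-acyclic}; combined with the fact that a compact $3$-manifold has finitely generated $H_1$, this forces $H_1(\Sigma_3)$ to be finite (torsion).

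\textbf{Main obstacle.} The delicate point is verifying the algebraic inequality that the accumulated second-fundamental-form terms $\vert A_{\Sigma_1}\vert^2 + \vert A_{\Sigma_2}\vert^2 + \vert A_{\Sigma_3}\vert^2 + \mathcal B - \tfrac12 H_{\Sigma_2}^2 \ge 0$ after the Gauss-equation repackaging in this longer flag — one extra level of descent than in the $n=5,m=3$ case, so one must check that minimality of $\Sigma_1$ and the $\mu$-bubble/weighted-critical nature of $\Sigma_2,\Sigma_3$ supply enough positivity (in particular that the $-\tfrac12 H_{\Sigma_2}^2$ coming from the $\mu$-bubble term is still dominated). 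A secondary subtlety is carefully propagating the weight-derivative terms through two layers of weighting ($u$, then $uw$, then $uwv$) and confirming the favorable-sign cross terms really can be dropped; this is where I expect the computation to be most error-prone, and it may require, as the introduction hints for the $n=6,m=3$ case, a judicious choice in how the weighted functional $\mathcal A_2$ is set up.
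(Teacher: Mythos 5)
Your diameter argument follows the paper's route essentially verbatim: stability of $\mathcal A_2$ produces an eigenfunction $v>0$, you set $f=uwv$, repackage the four Ricci terms via \cite[Lemmas 3.8, 3.11]{brendle2024generalization} into $C_4(e_1,e_2,e_3,e_4)$ plus second-fundamental-form quadratics, and check those quadratics are nonnegative. The point you flag as the ``main obstacle'' is real but is resolved in the paper by two short explicit computations (completing the square shows $\|A_{\Sigma_2}\|^2-\tfrac12 H_{\Sigma_2}^2+\sum_{p=3}^4\sum_{q>p}(A^{\Sigma_2}_{pp}A^{\Sigma_2}_{qq}-(A^{\Sigma_2}_{pq})^2)\ge 0$, and similarly for $\Sigma_3$ with its own $-\tfrac12 H_{\Sigma_3}^2$, which arises when you bound the cross term $\la\grad_{\Sigma_2}\log u,\omega\ra\la\grad_{\Sigma_2}\log w,\omega\ra$ by $\tfrac12 H_{\Sigma_3}^2$ via the first variation identity $H_{\Sigma_3}=-\la\grad_{\Sigma_2}\log(uw),\omega\ra$). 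So that half of the proposal is fine in outline.

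The $H_1$ finiteness argument, however, has a genuine gap. Knowing that $\Sigma_3$ sits inside a metric ball of $\overline M$ and that its $1$-cycles bound \emph{in $\overline M$} tells you nothing about $H_1(\Sigma_3)$ as an abstract group: the map $H_1(\Sigma_3)\to H_1(\overline M)$ is automatically zero because $\overline M$ is simply connected, and a flat torus embedded in a tiny ball of Euclidean space still has infinite $H_1$. Proposition \ref{Proposition:uniformly-acyclic} only controls the homology class of a cycle inside $\overline M$, not the intrinsic topology of the submanifold carrying it, and in any case it only applies in degrees $k\ge n-m+1=3$, as you yourself noticed mid-argument. The correct (and much shorter) conclusion is intrinsic: the inequality $\ric_{\Sigma_3}-f^{-1}\lap_{\Sigma_3} f+\tfrac12|\grad_{\Sigma_3}\ln f|^2\ge\tfrac12$ lifts to the Riemannian universal cover $\tilde\Sigma_3$ with the pulled-back weight, so Theorem \ref{Theorem:generalized-Bonnet-Myers} forces $\tilde\Sigma_3$ to be compact; hence $\pi_1(\Sigma_3)$ is finite, and $H_1(\Sigma_3)$, being its abelianization, is finite. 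This is exactly how the paper concludes, and it is also the finiteness statement actually used later (via Poincar\'e duality and universal coefficients to get $H_2(\Sigma_3;\Z)=0$).
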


\begin{proof}
    By the first variation formula, we have
    \[
    H_{\Sigma_3}=-\langle \nabla_{{\Sigma_2}}\log(uw),\omega\rangle,
    \]
    where $H_{\Sigma_3}$ is the mean curvature of $\Sigma_3$ with respect to the unit normal $\omega$ of $\Sigma_3$ in ${\Sigma_2}$. 
    As before, let $\{e_i\}_{i=1}^6$ be an orthonormal basis of $T_p\overline{M}$ so that $e_j\perp\Sigma_j$ for $j=1,2,3$, i.e. $e_1=\eta, e_2=\nu, e_3=\omega$.
    
    By the second variation formula for $\mathcal A_2$, there exists a positive function $v\in C^{\infty}(\Sigma_3)$ such that
\[
-v^{-1}\Delta_{\Sigma_3}v\geq \|A_{\Sigma_3}\|^2+\ric_{{\Sigma_2}}(\omega,\omega)-\nabla_{{\Sigma_2}}^2\log(uw)(\omega,\omega)+\langle \nabla_{\Sigma_3}\log(uw),\nabla_{\Sigma_3}\log v\rangle.
\]
We let $f=uwv$, and our aim is to prove
\[
\ric_{\Sigma_3}(e,e)-f^{-1}\Delta_{\Sigma_3}f+\frac{1}{2}|\nabla_{\Sigma_3}\log f|^2\geq \frac{1}{2}>0,
\]
for any unit vector $e\in T\Sigma_3$.

Suppose $f_2$ is a smooth function defined on ${\Sigma_2}$. Then we have
\begin{align}\label{laponhyper}
\Delta_{{\Sigma_2}}f_2=\Delta_{\Sigma_3}f_2+\nabla_{{\Sigma_2}}^2f_2(\omega,\omega)-\langle\nabla_{{\Sigma_2}}f_2,\omega\rangle H_{\Sigma_3}.
\end{align}
By \eqref{eqn:w-inequality}, we have
\begin{align*}
    (uw)^{-1}&\Delta_{{\Sigma_2}}(uw)\\
    \leq& -\|A_{{\Sigma_1}}\|^2-\ric_{\overline{M}}(\eta,\eta)-\|A_{{\Sigma_2}}\|^2-\ric_{{\Sigma_1}}(\nu,\nu)+\frac{1}{2}H_{{\Sigma_2}}^2+\langle \nabla_{{\Sigma_2}}\log u,\nabla_{{\Sigma_2}}\log w\rangle+\frac{1}{2},\\
    =& -\|A_{{\Sigma_1}}\|^2-\ric_{\overline{M}}(\eta,\eta)-\|A_{{\Sigma_2}}\|^2-\ric_{{\Sigma_1}}(\nu,\nu)+\frac{1}{2}H_{{\Sigma_2}}^2+\frac{1}{2}\\
    &+\langle \nabla_{{\Sigma_2}}\log u,\omega\rangle\langle\nabla_{{\Sigma_2}}\log w,\omega\rangle+\langle \nabla_{\Sigma_3}\log u,\nabla_{\Sigma_3}\log w\rangle.
\end{align*}
Using \eqref{laponhyper} and the first variation formula, we can further write
\begin{align*}
    (uw)^{-1}\Delta_{\Sigma_3}(uw)=&(uw)^{-1}\Delta_{{\Sigma_2}}(uw)-\nabla_{{\Sigma_2}}^2\log (uw)(\omega,\omega)\\
    \leq &-\nabla_{{\Sigma_2}}^2\log(uw)(\omega,\omega)+\langle \nabla_{{\Sigma_2}}\log u,\omega\rangle\langle\nabla_{{\Sigma_2}}\log w,\omega\rangle+\langle\nabla_{\Sigma_3}\log u,\nabla_{\Sigma_3}\log w\rangle\\
    &-\|A_{{\Sigma_1}}\|^2-\ric_{\overline{M}}(\eta,\eta)-\|A_{{\Sigma_2}}\|^2-\ric_{{\Sigma_1}}(\nu,\nu)+\frac{1}{2}H_{{\Sigma_2}}^2+\frac{1}{2}.
\end{align*}
We now have that
\begin{align*}
    (uwv)^{-1}\Delta_{\Sigma_3}(uwv)=&(uw)^{-1}\Delta_{\Sigma_3}(uw)+v^{-1}\Delta_{\Sigma_3}v+2\langle \nabla_{\Sigma_3}\log(uw),\nabla_{\Sigma_3}\log v\rangle\\
    \leq & \langle\nabla_{{\Sigma_2}}\log u,\omega\rangle\langle\nabla_{{\Sigma_2}}\log w,\omega\rangle+\langle\nabla_{\Sigma_3}\log u,\nabla_{\Sigma_3}\log w\rangle+\langle\nabla_{{\Sigma_3  }}\log(uw),\nabla_{{\Sigma_1}}\log v\rangle\\
    &-\|A_{{\Sigma_1}}\|^2-\ric_{\overline{M}}(\eta,\eta)-\|A_{{\Sigma_2}}\|^2-\ric_{{\Sigma_1}}(\nu,\nu)-\|A_{\Sigma_3}\|^2-\ric_{{\Sigma_2}}(\omega,\omega)+\frac{1}{2}H_{{\Sigma_2}}^2+\frac{1}{2}\\
    \leq &\frac{1}{2}\left(\langle \nabla_{{\Sigma_2}}\log u,\omega\rangle+\langle \nabla_{{\Sigma_2}}\log w,\omega\rangle\right)^2+\langle\nabla_{\Sigma_3}\log u,\nabla_{\Sigma_3}\log w\rangle+\langle\nabla_{{\Sigma_3 }}\log(uw),\nabla_{{\Sigma_1}}\log v\rangle\\
    &-\|A_{{\Sigma_1}}\|^2-\ric_{\overline{M}}(\eta,\eta)-\|A_{{\Sigma_2}}\|^2-\ric_{{\Sigma_1}}(\nu,\nu)-\|A_{\Sigma_3}\|^2-\ric_{{\Sigma_2}}(\omega,\omega)+\frac{1}{2}H_{{\Sigma_2}}^2+\frac{1}{2}\\
    =&\langle\nabla_{\Sigma_3}\log u,\nabla_{\Sigma_3}\log w\rangle+\langle\nabla_{{\Sigma_3  }}\log(uw),\nabla_{{\Sigma_1}}\log v\rangle+\frac{1}{2}H_{{\Sigma_2}}^2+\frac{1}{2}H^2_{\Sigma_3}+\frac{1}{2}\\
    &-\|A_{{\Sigma_1}}\|^2-\ric_{\overline{M}}(\eta,\eta)-\|A_{{\Sigma_2}}\|^2-\ric_{{\Sigma_1}}(\nu,\nu)-\|A_{\Sigma_3}\|^2-\ric_{{\Sigma_2}}(\omega,\omega).
\end{align*}
We used the first variation in the last equality.

Summarizing the above computations, we have 
\begin{align*}
    \ric_{\Sigma_3}(e_4,e_4)&-(uwv)^{-1}\Delta_{\Sigma_3}(uwv)+\frac{1}{2}|\nabla_{\Sigma_3}\log(uwv)|^2\\
     \geq&\|A_{{\Sigma_1}}\|^2+\|A_{{\Sigma_2}}\|^2+\|A_{\Sigma_3}\|^2-\frac{1}{2}H_{{\Sigma_2}}^2-\frac{1}{2}H_{\Sigma_3}^2-\frac{1}{2}\\
     &+\ric_{\Sigma_3}(e_4,e_4)+\ric_{{\Sigma_2}}(\omega,\omega)+\ric_{{\Sigma_1}}(\nu,\nu)+\ric_{\overline{M}}(\eta,\eta)\\
     =&\|A_{{\Sigma_1}}\|^2+\|A_{{\Sigma_2}}\|^2+\|A_{\Sigma_3}\|^2-\frac{1}{2}H_{{\Sigma_2}}^2-\frac{1}{2}H_{\Sigma_3}^2-\frac{1}{2}\\
     &+C_4(e_1,e_2,e_3,e_4)+\sum_{p=2}^4\sum_{q=p+1}^6\left(A_{{\Sigma_1}}(e_p,e_p)A_{{\Sigma_1}}(e_q,e_q)-A_{{\Sigma_1}}(e_p,e_q)^2\right)\\
     &+\sum_{p=3}^4\sum_{q=p+1}^6\left(A_{{\Sigma_2}}(e_p,e_p)A_{{\Sigma_2}}(e_q,e_q)-A_{{\Sigma_2}}(e_p,e_q)^2\right)\\
     &+\sum_{q=5}^6\left(A_{\Sigma_3}(e_4,e_4)A_{\Sigma_3}(e_q,e_q)-A_{\Sigma_3}(e_4,e_q)^2\right)\\
     \geq& C_4(e_1,e_2,e_3,e_4)-\frac{1}{2}\geq 1.
\end{align*}
To get the last line above, we used the following estimates.
First, by \cite[Lemma 3.11]{brendle2024generalization}, we have
\[
\|A_{\Sigma_1}\|^2+\sum_{p=2}^4\sum_{p=q+1}^{6}\left(A_{\Sigma_1}(e_p,e_p)A_{\Sigma_1}(e_q,e_q)-A_{\Sigma_1}(e_p,e_q)^2\right)\geq 0.
\]
Moreover, via direct computations we have
\begin{align*}
    \|A_{\Sigma_2}\|^2 &-\frac{1}{2}H_{\Sigma_2}^2+\sum_{p=3}^4\sum_{q=p+1}^6\left(A_{\Sigma_2}(e_p,e_p)A_{\Sigma_2}(e_q,e_q)-A_{\Sigma_2}(e_p,e_q)^2\right)\\
    &=\frac{1}{2}\sum_{p=3}^6(A^{\Sigma_2}_{pp})^2 +\sum_{p=3}^4\sum_{q=p+1}^6(A_{pq}^{\Sigma_2})^2+2(A_{56}^{\Sigma_2})^2-A^{\Sigma_2}_{55}A_{66}^{\Sigma_2}\geq 0,
\end{align*}
and similarly
\begin{align*}
    \|A_{\Sigma_3}\|^2&-\frac{1}{2}H_{\Sigma_3}^2+\sum_{q=5}^6\left(A_{\Sigma_3}(e_4,e_4)A_{\Sigma_3}(e_q,e_q)-A_{\Sigma_3}(e_4,e_q)^2\right)\\
    &=\frac{1}{2}\sum_{p=4}^6(A_{pp}^{\Sigma_3})^2+\sum_{q=5}^6(A_{4q}^{\Sigma_3})^2+2(A_{56}^{\Sigma_3})^2-A_{55}^{\Sigma_3}A_{66}^{\Sigma_3}\geq 0.
\end{align*}
By the above computation, we know $\Sigma_3$ admits a metric with positive conformal Ricci curvature, and hence it has finite diameter; see Theorem \ref{Theorem:generalized-Bonnet-Myers}. As a direct corollary, the fundamental group of $\Sigma_3$ is finite and hence $H_1(\Sigma_3)$ is finite.
\end{proof}

We need the following slicing Lemma by Bamler-Li-Mantoulidis \cite{bamler2023decomposing}, which is a generalization of Chodosh-Li \cite[Lemma 20]{chodosh2024generalized}.

\begin{lem}\label{lem:surjective homology}
    There are $\Sigma_{3,1},\Sigma_{3,2},\dots,\Sigma_{3,k}\subset{\Sigma_2}$ pairwise disjoint two-sided stable critical points of $\mathcal A_2$ so that the manifold with boundary $\hat{\Sigma}_2:={\Sigma_2}\setminus(\cup_{i=1}^k\Sigma_{3,i})$ is connected and has $H_3(\partial\hat{{\Sigma}}_2)\to H_3(\hat{{\Sigma}}_2)$ surjective.
\end{lem}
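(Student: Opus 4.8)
The statement is topological in nature; the only geometric input it needs is that the weighted functional $\mathcal A_2$ has well-behaved minimizers within a prescribed $3$-dimensional homology class. The first thing I would observe is that, since the volume element of a $3$-submanifold scales by $\phi^{3}$ under a conformal change $g\mapsto\phi^{2}g$, the functional $\mathcal A_2(\Sigma_3)=\int_{\Sigma_3}uw\,dv$ is simply the area of $\Sigma_3$ with respect to the rescaled metric $\tilde g:=(uw)^{2/3}g$ on the closed (orientable) $4$-manifold $\Sigma_2$. Thus a two-sided stable critical point of $\mathcal A_2$ is the same as a two-sided stable minimal hypersurface of $(\Sigma_2,\tilde g)$, and---because $\dim\Sigma_2=4\le 7$---every class in $H_3$ of a compact region obtained by cutting $\Sigma_2$ along embedded hypersurfaces (taken relative to the boundary) is represented by a smooth embedded $\tilde g$-area minimizer; such a minimizer is automatically stable, and is two-sided because an integral codimension-one class in an orientable manifold is Poincar\'e-Lefschetz dual to a class in $H^1$. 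With this reduction in hand, Lemma \ref{lem:surjective homology} is exactly the $4$-dimensional slicing statement of Bamler-Li-Mantoulidis \cite{bamler2023decomposing}, which generalizes \cite[Lemma 20]{chodosh2024generalized}.

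I would then carry out the slicing by the following induction. Set $\hat\Sigma_2:=\Sigma_2$. If $H_3(\partial\hat\Sigma_2;\Z)\to H_3(\hat\Sigma_2;\Z)$ is surjective (initially, since $\Sigma_2$ is closed, this holds iff $H_3(\Sigma_2;\Z)=0$), stop. Otherwise choose a nonzero class $\alpha\in H_3(\hat\Sigma_2;\Z)$ not in the image of the boundary map. By Poincar\'e-Lefschetz duality $\alpha$ corresponds to a class in $H^1(\hat\Sigma_2,\partial\hat\Sigma_2;\Z)=[\hat\Sigma_2/\partial\hat\Sigma_2,\,S^1]$, hence is represented by the preimage of a regular value of a map to $S^1$ that collapses the boundary, i.e.\ by a closed embedded two-sided hypersurface contained in the interior of $\hat\Sigma_2$. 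Minimizing $\mathcal A_2$ in this class and passing to a connected component with nonzero homology class (one exists since $\alpha\ne 0$) yields a connected two-sided stable critical point $\Sigma_{3,i}$ of $\mathcal A_2$; being homologically essential, it is non-separating, so cutting $\hat\Sigma_2$ along it keeps the result connected and, as in \cite{chodosh2024generalized,bamler2023decomposing}, strictly decreases the first Betti number. The process therefore terminates after finitely many steps and produces pairwise disjoint $\Sigma_{3,1},\dots,\Sigma_{3,k}$ such that $H_3(\partial\hat\Sigma_2;\Z)\to H_3(\hat\Sigma_2;\Z)$ is surjective.

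The step I expect to be the main obstacle is the homological bookkeeping inside the induction: one must verify that cutting along $\Sigma_{3,i}$ genuinely lowers the relevant complexity, that it creates no new uncontrolled $3$-cycles (for instance through torsion), and that connectedness of $\hat\Sigma_2$ is preserved at every stage. Since this is precisely the content of the slicing lemma of \cite{bamler2023decomposing}, I would invoke that result rather than redo the homological algebra; the only thing needed to apply it in our setting is the geometric observation from the first paragraph, namely that $\mathcal A_2$-minimizers in $3$-homology classes of the closed $4$-manifold $\Sigma_2$ are smooth embedded two-sided stable critical points---which holds because $uw>0$ is smooth and $4\le 7$. Finally, each $\Sigma_{3,i}$ produced is a connected two-sided stable critical point of $\mathcal A_2$, so Lemma \ref{lem: diameterslice4} applies to it, as the subsequent dicing step requires.
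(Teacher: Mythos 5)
Your proposal is correct and follows essentially the same route as the paper: induct, produce a new closed connected two-sided stable critical point of $\mathcal A_2$ whenever $H_3(\partial\hat{\Sigma}_2)\to H_3(\hat{\Sigma}_2)$ fails to be surjective, and defer the homological bookkeeping and termination to Bamler--Li--Mantoulidis \cite[Lemma 2.5]{bamler2023decomposing}, exactly as the paper does. Your additional observation that $\mathcal A_2$ is the area functional of the conformal metric $(uw)^{2/3}g$ on the $4$-manifold $\Sigma_2$ (so existence, regularity, stability and two-sidedness of minimizers are the standard $\dim\le 7$ facts) is a clean way to justify the existence step that the paper leaves implicit.
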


\begin{proof}
    Suppose we have constructed $\Sigma_{3,1},\Sigma_{3,2},\cdots,\Sigma_{3,j}$ which are pairwise disjoint two-sided stable critical points of $\mathcal A_2$ and $M_j:=\Sigma_2\setminus (\cup_{i=1}^j\Sigma_{3,i})$ is connected. If the inclusion map $i: H_3(\partial M_j)\to H_3(M_j)$ is not surjective, then there exists $\Sigma_{3,j+1}$,  a closed connected stable two-sided critical point of $\mathcal{A}_2$, so the induction proceeds. To see that this process eventually terminates, we refer to Bamler-Li-Mantoulidis \cite[Lemma 2.5]{bamler2023decomposing}.
\end{proof}

Note that, by Poincaré duality and Lemma \ref{lem: diameterslice4}, we have
\[
H_2(\Sigma_3;\mathbb Z)=H^1(\Sigma_3;\mathbb Z)=\text{Hom}(H_1(\Sigma_3;\mathbb Z);\mathbb Z)=0.
\]

Given $\Omega\subset\hat{{\Sigma}}_2$, we write $\partial\Omega$ for its topological boundary, and we assume $\partial\Omega$ consists of smooth properly embedded surfaces in $\hat{{\Sigma}}_2$. 

\begin{lem}\label{lem:onecomponent}
    A connected component of $\hat{{\Sigma}}_2\setminus \Omega$ contains exactly one component of $\partial\Omega$.
\end{lem}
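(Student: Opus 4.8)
\textbf{Proof proposal for Lemma \ref{lem:onecomponent}.}

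The plan is to argue by contradiction using the Frankel-type theorem for positive conformal Ricci curvature (Theorem \ref{Theorem:ConformalRicciFrankel}) together with the weighted functional $\mathcal{A}_2$. Suppose some connected component $W$ of $\hat{\Sigma}_2 \setminus \Omega$ contains two distinct components $\Gamma_1, \Gamma_2$ of $\partial\Omega$. The key geometric input is that each component of $\partial\Omega$ is a stable critical point of $\mathcal A_2$, so the computation in Lemma \ref{lem: diameterslice4} produces, on any such component $\Sigma_3$, a positive function $f = uwv$ with $\ric_{\Sigma_3}(e,e) - f^{-1}\lap_{\Sigma_3} f + \tfrac12 |\grad_{\Sigma_3}\ln f|^2 \ge \tfrac12 > 0$. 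Thus the \emph{closed} component $W$ — more precisely, its closure, which is a compact $4$-manifold whose boundary is a union of components of $\partial\Omega$ — should itself carry a function witnessing positive conformal Ricci curvature in the ambient sense, with the correct Neumann-type boundary condition $\bd_\eta f = -f H_{\partial W}$ coming from the first variation formula $H_{\Sigma_3} = -\langle \grad_{\Sigma_2}\ln(uw), \omega\rangle$.

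The key steps, in order, are: (1) Set up the ambient function. Restrict $u$ and $w$ from $\Sigma_1$ and $\Sigma_2$ to the closure $\bar W$; these are globally defined positive functions on $\bar W$. The eigenfunction factor $v$ is only defined componentwise on $\partial\Omega$, so I would instead run the argument slightly differently: work on $\bar W$ with the function $f_0 = uw$ and verify that $\bar W$ equipped with $f_0$ has positive conformal Ricci curvature as a $4$-manifold. Actually, since the Frankel theorem in Section \ref{sec: Frankel} is stated for $3$-manifolds, the cleanest route is (2): observe that $\partial W$ consists of $3$-dimensional stable critical points of $\mathcal{A}_2$, pick a $\mathcal{A}_2$-minimizing weighted geodesic in $\bar W$ joining $\Gamma_1$ to $\Gamma_2$, and reproduce the second-variation computation of Theorem \ref{Theorem:ConformalRicciFrankel} directly inside the $3$-dimensional slice $\Sigma_2$ — using that along $\Sigma_3 \subset \Sigma_2$ we have the pointwise bound on conformal Ricci curvature established in the proof of Lemma \ref{lem: diameterslice4}, and that the boundary term is killed by $H_{\Sigma_3} = -\langle \grad_{\Sigma_2}\ln(uw),\omega\rangle$ exactly as the hypothesis $\bd_\eta f = -fH_{\partial\Sigma}$ requires. (3) Conclude that $\Gamma_1$ and $\Gamma_2$ cannot both be boundary components bounding into the same region $W$, a contradiction. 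Since $\hat\Sigma_2$ is connected and every component of $\partial\Omega$ separates (each $\Sigma_{3,i}$ is two-sided), each component of $\hat\Sigma_2\setminus\Omega$ then contains at least one component of $\partial\Omega$; combined with the no-two-components conclusion, it contains exactly one.

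The main obstacle I anticipate is a dimension/setup mismatch: the Frankel theorem as stated applies to a $3$-manifold $\Sigma$ with its own intrinsic Ricci curvature and the boundary condition $\bd_\eta f = -fH_{\partial\Sigma}$, whereas here the natural object $W$ is $4$-dimensional and the function $uw$ satisfies only an \emph{inequality} (not equation) coming from stability of $\Sigma_1$ and the $\mu$-bubble $\Sigma_2$. I expect the resolution is to not invoke Theorem \ref{Theorem:ConformalRicciFrankel} as a black box but to re-run its weighted-geodesic second-variation argument inside $\Sigma_2$, feeding in the chain of inequalities already assembled in Lemma \ref{lem: diameterslice4} (which bounds the conformal Ricci curvature of each $\mathcal{A}_2$-stable $\Sigma_3$ below by $1/2$) and the first-variation identity for $\mathcal{A}_2$ that supplies the boundary term. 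The bookkeeping of second fundamental form terms for the triple slicing $\Sigma_3\subset\Sigma_2\subset\Sigma_1\subset\overline M$ — already done once in Lemma \ref{lem: diameterslice4} — will need to be carried along the connecting geodesic, but this is routine rather than conceptually hard.
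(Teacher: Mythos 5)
Your proposal goes in a genuinely different direction from the paper, and unfortunately it has a gap that I don't think is repairable along the lines you sketch.

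First, a mismatch with what the lemma actually asserts. The lemma is stated for an \emph{arbitrary} region $\Omega\subset\hat\Sigma_2$ whose topological boundary consists of smooth properly embedded hypersurfaces; in the application (Proposition \ref{prop:dice procedure}) the regions are the dicing sets $\Omega_j$, whose boundaries are free boundary $\mu$-bubbles of $\mathcal A_3$ with a nonzero prescribing function $h$ --- they are \emph{not} stable critical points of $\mathcal A_2$. The first-variation identity $H_{\Sigma_3}=-\langle\grad_{\Sigma_2}\ln(uw),\omega\rangle$ and the conformal-Ricci positivity from Lemma \ref{lem: diameterslice4} pertain to the slicing hypersurfaces $\Sigma_{3,i}=\partial\hat\Sigma_2$, not to $\partial\Omega$. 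So the variational structure your Frankel argument needs on $\partial\Omega$ is simply not present.

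Second, and more fundamentally, even if the boundary components did carry the right variational structure, your weighted geodesic would run through the interior of the $4$-dimensional region $W\subset\Sigma_2$, and the second variation of weighted length there is controlled by a curvature quantity of $\Sigma_2$ itself. No positivity is available for that: the conformal Ricci lower bound of Lemma \ref{lem: diameterslice4} lives on the $3$-dimensional slice $\Sigma_3$ and crucially uses the stability eigenfunction $v$ \emph{of $\Sigma_3$} and the second fundamental form terms $\|A_{\Sigma_3}\|^2-\tfrac12 H_{\Sigma_3}^2+\dots$ of the codimension-one slicing; along a curve in the interior of $W$ there is no such hypersurface and no such eigenfunction. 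Indeed the entire point of the slice-and-dice procedure is that $\Sigma_2$ itself admits no diameter or curvature control --- if a Frankel/Bonnet--Myers-type argument worked directly inside $\Sigma_2$, the slicing step would be unnecessary.

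The paper's proof is instead purely topological. Assuming a component of $\hat\Sigma_2\setminus\Omega$ meets two components of $\partial\Omega$, one builds an embedded loop crossing one of them algebraically once, hence non-torsion in $H_1(\hat\Sigma_2)$. On the other hand, Lemma \ref{lem:surjective homology} gives surjectivity of $H_3(\partial\hat\Sigma_2)\to H_3(\hat\Sigma_2)$, and Lemma \ref{lem: diameterslice4} gives $H_2(\partial\hat\Sigma_2)=0$ (finite $\pi_1$, hence $H^1=0$, hence $H_2=0$ by Poincar\'e duality on each closed $3$-dimensional slice); the long exact sequence of the pair then forces $H_3(\hat\Sigma_2,\partial\hat\Sigma_2)=0$, so by Poincar\'e--Lefschetz duality $H^1(\hat\Sigma_2)=0$ and $H_1(\hat\Sigma_2)$ is torsion --- a contradiction. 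The geometry enters only through the properties of the fixed slicing hypersurfaces, which is why the lemma can hold for arbitrary $\Omega$. I would encourage you to redo the argument along these homological lines.
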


\begin{proof}
    Assuming the contrary, as in Chodosh-Li \cite{chodosh2024generalized}, there exists $\sigma$ an embedded $S^1$ such that $[\sigma]$ is not torsion in $H_1(\hat{{\Sigma}}_2)$.
    The long exact sequence in homology for $(\hat{{\Sigma}}_2,\partial\hat{{\Sigma}}_2)$ yields:
    \[
    H_3(\partial\hat{{\Sigma}}_2)\to H_3(\hat{{\Sigma}}_2)\to H_3(\hat{{\Sigma}}_2,\partial \hat{{\Sigma}}_2)\to H_2(\partial\hat{{\Sigma}}_2)=0.
    \]
    The final term vanishes since $\partial\hat{{\Sigma}}_2$ consisits of components with vanishing second homology group. Combining with Lemma \ref{lem:surjective homology},  we conclude that 
    $H_3(\hat{{\Sigma}}_2,\partial \hat{{\Sigma}}_2)=0$. Poincaré duality implies that $H^1(\hat{{\Sigma}}_2)=0$, and so the universal coefficient theorem implies that $H_1(\hat{{\Sigma}}_2)$ is torsion. This is a contradiction.
\end{proof}

Next, we proceed to the dice-procedure as in Chodosh-Li \cite{chodosh2024generalized}.
We consider the following $\mu$-bubble functional:
\[
\mathcal A_3(\Omega)=\int_{\partial^*\Omega}uw-\int_{\Omega}(\chi_{\Omega}-\chi_{\Omega_0})uwh,
\]
where $h$ satisfies 
\[
1+h^2(x)-2|\nabla h|\geq 0,
\]
and is to be specified later.

\begin{prop} \label{prop: conformal positivity}
    Suppose $\Upsilon$ is a component of a stable, free boundary $\mu$-bubble of $\mathcal A_3$. Then there exists a conformal factor such that the corresponding conformal Ricci curvature on $\Upsilon$ is positive and the boundary is minimal after the conformal change.
\end{prop}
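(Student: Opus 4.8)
The plan is to build the conformal weight as a product $f=uw\phi$, where $u,w$ are the weights already attached to the slicing $\Sigma_2\subset\Sigma_1\subset\overline M$ and $\phi>0$ is a first eigenfunction of the $\mathcal A_3$-stability operator on $\Upsilon$, and then to run the computation of Lemma \ref{lem: diameterslice4} one dimension up, with the prescribed-mean-curvature terms handled exactly as in Proposition \ref{prop:simplified-second-variation}. Since $\Upsilon$ bounds $\Omega$ it is two-sided; let $\mu$ be the unit normal along $\Upsilon$ pointing out of $\Omega$, and $\eta$ the outward co-normal of $\Upsilon$ along $\partial\Upsilon\subset\partial_0\hat\Sigma_2=\bigcup_i\Sigma_{3,i}$. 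By the first variation in Proposition \ref{proposition:variational-formulas} applied to $\mathcal A_3$ with weight $U:=uw$, one has $H_\Upsilon=h-U^{-1}\langle\nabla_{\Sigma_2}U,\mu\rangle$ in the interior and $\Upsilon$ meets each $\Sigma_{3,i}$ orthogonally; the compatibility condition $H_{\Sigma_{3,i}}+U^{-1}\langle\nabla_{\Sigma_2}U,\omega\rangle=0$ needed for the free-boundary construction (with $\omega$ the unit normal of $\Sigma_{3,i}$ in $\Sigma_2$) is precisely the first variation equation of $\mathcal A_2=\int U$. Stability then provides $\phi>0$ realizing a nonnegative first Robin eigenvalue: a pointwise differential inequality for $\Delta_\Upsilon\phi$ together with $\partial_\eta\phi=A_{\Sigma_{3,i}}(\mu,\mu)\phi$ along $\partial\Upsilon$.

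For the interior estimate, fix $p\in\Upsilon$ and an orthonormal frame $e_1=\eta_{\Sigma_1}$, $e_2=\nu_{\Sigma_2}$, $e_3=\mu$, $e_4,e_5,e_6$ with $e_4,e_5,e_6$ tangent to $\Upsilon$. As in Proposition \ref{prop:simplified-second-variation}, the identity $\tfrac12 H_\Upsilon^2U=\tfrac12 U^{-1}\langle\nabla_{\Sigma_2}U,\mu\rangle^2-h\langle\nabla_{\Sigma_2}U,\mu\rangle+\tfrac12 h^2U$ together with $1+h^2-2|\nabla h|\ge 0$ turns the $\mathcal A_3$-stability inequality into one containing $-\tfrac12 H_\Upsilon^2-\tfrac12$. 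Combining this with $\Delta_{\Sigma_1}u\le -(\|A_{\Sigma_1}\|^2+\ric_{\overline M}(\eta,\eta))u$, with \eqref{eqn:w-inequality} for $w$, and absorbing the gradient cross-terms as in Lemma \ref{lem: diameterslice4}, one bounds $\ric_\Upsilon(e_4,e_4)-f^{-1}\Delta_\Upsilon f+\tfrac12|\nabla_\Upsilon\log f|^2$ from below by
\begin{align*}
&\ric_{\overline M}(e_1,e_1)+\ric_{\Sigma_1}(e_2,e_2)+\ric_{\Sigma_2}(e_3,e_3)+\ric_\Upsilon(e_4,e_4)\\
&\qquad+\|A_{\Sigma_1}\|^2+\|A_{\Sigma_2}\|^2+\|A_\Upsilon\|^2-\tfrac12 H_{\Sigma_2}^2-\tfrac12 H_\Upsilon^2-1.
\end{align*}
Then \cite[Lemma 3.8]{brendle2024generalization} replaces the four Ricci terms by $C_4(e_1,e_2,e_3,e_4)+\mathcal B$, and \cite[Lemma 3.11]{brendle2024generalization} together with the completing-the-square computations already appearing in Lemma \ref{lem: diameterslice4} show that the three second-fundamental-form packets (that of $\Sigma_1$ with the relevant slice of $\mathcal B$; that of $\Sigma_2$ with its slice of $\mathcal B$ and $-\tfrac12 H_{\Sigma_2}^2$; that of $\Upsilon$ with its slice of $\mathcal B$ and $-\tfrac12 H_\Upsilon^2$) are each nonnegative. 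Since the $4$-intermediate curvature is at least $\tfrac32$, the lower bound is $\ge C_4-1\ge\tfrac12>0$, so $\Upsilon$ carries positive conformal Ricci curvature with weight $f=uw\phi$.

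It remains to verify the boundary condition $\partial_\eta f=-fH_{\partial\Upsilon}$, where $H_{\partial\Upsilon}$ is the mean curvature of $\partial\Upsilon$ in $\Upsilon$ with respect to $\eta$ (the condition being vacuous on closed components of $\partial\Omega$). Along $\partial\Upsilon\subset\Sigma_{3,i}$, orthogonality of $\Upsilon$ and $\Sigma_{3,i}$ makes $\mu$ tangent to $\Sigma_{3,i}$ — hence the unit normal of $\partial\Upsilon$ inside $\Sigma_{3,i}$ — and identifies $\eta$ with $\pm\omega$; consequently $H_{\Sigma_{3,i}}=A_{\Sigma_{3,i}}(\mu,\mu)+H_{\partial\Upsilon}$ along $\partial\Upsilon$ with matching sign conventions. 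Writing $\partial_\eta\log f=\langle\nabla_{\Sigma_2}\log U,\eta\rangle+\partial_\eta\log\phi$, substituting the Robin condition $\partial_\eta\log\phi=A_{\Sigma_{3,i}}(\mu,\mu)$ and the $\mathcal A_2$-criticality relation $\langle\nabla_{\Sigma_2}\log U,\omega\rangle=-H_{\Sigma_{3,i}}$, the terms $A_{\Sigma_{3,i}}(\mu,\mu)$ cancel and one is left with $\partial_\eta\log f=-H_{\partial\Upsilon}$; equivalently, after the conformal change, $\partial\Upsilon$ is minimal.

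I expect the boundary computation to be the main obstacle. The interior estimate is a routine, if lengthy, one-dimension-up analogue of Lemma \ref{lem: diameterslice4} and Proposition \ref{prop:simplified-second-variation}; the genuinely delicate point is to reconcile the signs of the three second fundamental forms in the triple configuration $\partial\Upsilon\subset\Sigma_{3,i}\subset\Sigma_2$ with the sign of the Robin eigenvalue condition and the orientation of the orthogonal intersection of $\Upsilon$ with $\partial_0\hat\Sigma_2$, so that the boundary term closes exactly rather than merely up to sign.
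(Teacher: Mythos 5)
Your proposal is correct and follows essentially the same route as the paper: the interior estimate is the same combination of the $\mathcal A_3$-stability inequality, the first-variation identity for $H_\Upsilon$, the inequality $1+h^2-2|\nabla h|\ge 0$, the inherited inequalities for $u$ and $w$, and \cite[Lemmas 3.8, 3.11]{brendle2024generalization}, with the same final bound $C_4-1\ge\tfrac12$. Your boundary computation is also equivalent to the paper's: where you impose a Robin condition on the eigenfunction $\phi$ and then combine it with the $\mathcal A_2$-criticality of $\Sigma_{3,i}$ and the decomposition $H_{\Sigma_{3,i}}=A_{\Sigma_{3,i}}(\mu,\mu)+H_{\partial\Upsilon}$, the paper performs the same cancellation inside the second variation via integration by parts, arriving at the single condition $\langle\nabla_\Upsilon(uwv_2),\omega\rangle+H_{\partial\Upsilon}uwv_2=0$ for the product weight, which is identical to your $\partial_\eta\log f=-H_{\partial\Upsilon}$.
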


\begin{proof}
    By the first variation, the mean curvature of $\Upsilon$ with respect to the (outer) unit normal $\nu_{\Upsilon}$ of $\Upsilon$ in ${\Sigma_2}$ is given by
    \[
    H_{\Upsilon}=-\langle \nabla_{{\Sigma_2}}\log(uw),\nu_{\Upsilon}\rangle+h.
    \]
    For any smooth function $\psi$ on $\Upsilon$, the second variation formula gives
    \begin{align*}
        0\leq& \int_{\Upsilon}|\nabla_{\Upsilon}\psi|^2uw-\left(\|A_{\Upsilon}\|^2+\ric_{{\Sigma_2}}(\nu_{\Upsilon},\nu_{\Upsilon})\right)\psi^2uw-\langle \nabla_{{\Sigma_2}}h,\nu_{\Upsilon}\rangle\psi^2uw-h\langle \nabla_{{\Sigma_2}}(uw),\nu_{\Upsilon}\rangle\psi^2\\
        &+\int_{\Upsilon}\left(\Delta_{{\Sigma_2}}(uw)-\Delta_{\Upsilon}(uw)\right)\psi^2-\int_{\partial\Upsilon}A_{\partial\hat{\Sigma}_2}(\omega,\omega)\psi^2uw,
    \end{align*}
    where 
    $\omega$ is the unit normal of $\partial\hat{\Sigma}_2$. Since $\partial\Upsilon$ meets $\partial\hat{\Sigma}_2$ orthogonally, $\omega$ is also the unit outer normal of $\partial\Upsilon$ in $\Upsilon$.

    Since the components of $\partial\hat{\Sigma}_2$ are $\mu$-bubbles of $\mathcal A_2$, we see from the first variation formula that
    \[
    H_{\partial\hat{\Sigma}_2}=-\langle \nabla_{{\Sigma_2}}\log(uw),\omega\rangle,
    \]
    where $H_{\partial\hat{\Sigma}_2}$ is the mean curvature of $\partial\hat{\Sigma}_2$ with respect to $\omega$. 
    Noting $\partial\Upsilon$ meets $\partial\hat{\Sigma}_2$ orthogonally, we can denote by $H_{\bd\Upsilon}$ the mean curvature of $\bd\Upsilon$ with respect to $\omega$, and write
    \[
    H_{\partial\hat{\Sigma}_2}=H_{\partial\Upsilon}+A_{\partial\hat{\Sigma}_2}(\omega,\omega).
    \]
    Since $\omega$ lies in the tangent space of $\Upsilon$, we rewrite the first variation formula of $\partial\hat{\Sigma}_2$ at the intersection points with $\partial\Upsilon$ as
    \[
    H_{\partial\hat{\Sigma}_2}=-\langle\nabla_{\Upsilon}\log(uw),\omega\rangle.
    \]
Now we have that
\begin{align*}
    \int_{\Upsilon}|\nabla_{\Upsilon}\psi|^2uw&-\int_{\partial\Upsilon}A_{\partial\hat{\Sigma}_2}(\omega,\omega)\psi^2uw\\
    =&\int_{\Upsilon}-uw\psi\Delta_{\Upsilon}\psi-\psi\langle \nabla_{\Upsilon}\psi,\nabla_{\Upsilon}(uw)\rangle\\
    &+\int_{\partial\Upsilon}\left(\langle\nabla_{\Upsilon}\psi,\omega\rangle uw+\langle\nabla_{\Upsilon}(uw),\omega\rangle\psi-\langle\nabla_{\Upsilon}(uw),\omega\rangle\psi-A_{\partial\hat{\Sigma}_2}(\omega,\omega)\psi uw\right)\psi\\
    =&\int_{\Upsilon}-uw\psi\Delta_{\Upsilon}\psi-\psi\langle\nabla_{\Upsilon}\psi,\nabla_{\Upsilon}(uw)\rangle\\
    &+\int_{\partial\Upsilon}\left(\langle\nabla_{\Upsilon}(uw\psi),\omega\rangle+\left(H_{\partial\hat{\Sigma}_2}-A_{\partial\hat{\Sigma}_2}(\omega,\omega)\right)uw\psi\right)\psi\\
    =&\int_{\Upsilon}-uw\psi\Delta_{\Upsilon}\psi-\psi\langle\nabla_{\Upsilon}\psi,\nabla_{\Upsilon}(uw)\rangle+\int_{\partial\Upsilon}\left(\langle\nabla_{\Upsilon}(uw\psi),\omega\rangle+H_{\partial\Upsilon}uw\psi\right)\psi.
\end{align*}
Summarizing the above computations, we can rewrite the second variation formula of $\Upsilon$ as
\begin{align*}
    0\leq& \int_{\Upsilon}-uw\psi\Delta_{\Upsilon}\psi-\psi\langle\nabla_{\Upsilon}\psi,\nabla_{\Upsilon}(uw)\rangle-\left(\|A_{\Upsilon}\|^2+\ric_{{\Sigma_2}}(\nu_{\Upsilon},\nu_{\Upsilon})\right)\psi^2uw\\
    &\int_{\Upsilon}-\langle\nabla_{{\Sigma_2}}h,\nu_{\Upsilon}\rangle\psi^2uw-h\langle\nabla_{{\Sigma_2}}(uw),\nu_{\Upsilon}\rangle\psi^2+\left(\Delta_{{\Sigma_2}}(uw)-\Delta_{\Upsilon}(uw)\right)\psi^2\\
    &+\int_{\partial\Upsilon}\left(\langle \nabla_{\Upsilon}(uw\psi),\omega\rangle+H_{\partial\Upsilon}uw\psi\right)\psi.
\end{align*}
The first variation formula implies the following equality
\[
\frac{1}{2}H_{\Upsilon}^2\psi^2uw=\frac{1}{2}(uw)^{-1}\langle \nabla_{{\Sigma_2}}(uw),\nu_{\Upsilon}\rangle^2\psi^2-h\langle \nabla_{{\Sigma_2}}(uw),\nu_{\Upsilon}\rangle\psi^2+\frac{1}{2}h^2\psi^2uw.
\]
As in Chodosh-Li \cite{chodosh2024generalized}, we use the above equality and the fact that
\[
1-2|\nabla_{{\Sigma_2}}h|+h^2\geq 0
\]
to further simplify the second variation formula as follows:
\begin{align*}
    0\leq& \int_{\Upsilon}-uw\psi\Delta_{\Upsilon}\psi-\psi\langle \nabla_{\Upsilon}\psi,\nabla_{\Upsilon}(uw)\rangle-\left(\|A_{\Upsilon}\|^2+\ric_{{\Sigma_2}}(\nu_{\Upsilon},\nu_{\Upsilon})-\frac{1}{2}H_{\Upsilon}^2-\frac{1}{2}\right)\psi^2uw\\
    &+\int_{\Upsilon}\left(\Delta_{{\Sigma_2}}(uw)-\Delta_{\Upsilon}(uw)\right)\psi^2-\frac{1}{2}\langle \nabla_{{\Sigma_2}}\log(uw),\nu_{\Upsilon}\rangle^2\psi^2uw+\int_{\partial\Upsilon}\left(\langle\nabla_{\Upsilon}(uw\psi),\omega\rangle+H_{\partial\Upsilon}uw\psi\right)\psi.
\end{align*}
Thus there exists a smooth positive function $v_2$ defined on $\Upsilon$ satisfying
\begin{equation} \label{eq: v2}
\begin{aligned}
-v_2^{-1}\Delta_{\Upsilon}v_2
\geq&\langle \nabla_{\Upsilon}\log v_2,\nabla_{\Upsilon}\log(uw)\rangle -(uw)^{-1}\left(\Delta_{{\Sigma_2}}(uw)-\Delta_{\Upsilon}(uw)\right)\\
&+\frac{1}{2}\langle \nabla_{{\Sigma_2}}\log(uw),\nu_{\Upsilon}\rangle^2
 +\|A_{\Upsilon}\|^2+\ric_{{\Sigma_2}}(\nu_{\Upsilon},\nu_{\Upsilon})-\frac{1}{2}H_{\Upsilon}^2-\frac{1}{2}.
\end{aligned}
\end{equation}
and it satisfies the following boundary condition on $\partial\Upsilon$
\begin{align}\label{Eq: boundary condition of FBB}
    \langle \nabla_{\Upsilon}(uwv_2),\omega\rangle+H_{\partial\Upsilon}uwv_2=0.
\end{align}
The boundary condition implies that $\partial\Upsilon$ is minimal under the conformal change $(uwv_2)^2g$.

By Proposition \ref{prop:simplified-second-variation}, we have
\begin{equation}\label{eq: uw}
\begin{aligned}
    (uw)^{-1}&\Delta_{{\Sigma_2}}(uw)\\
    \leq& \langle \nabla_{{\Sigma_2}}\log u,\nabla_{{\Sigma_2}}\log w\rangle -\|A_{\Sigma_1}\|^2-\ric_{\overline{M}}(\eta,\eta)-\|A_{{\Sigma_2}}\|^2-\ric_{\Sigma_1}(\nu,\nu)+\frac{1}{2}H_{{\Sigma_2}}^2+\frac{1}{2}.
\end{aligned}
\end{equation}
Combining \eqref{eq: v2} and \eqref{eq: uw}, we obtain
\begin{align*}
    (uwv_2)^{-1}&\Delta_{\Upsilon}(uwv_2)\\
    =&(uw)^{-1}\Delta_{\Upsilon}(uw)+v_2^{-1}\Delta_{\Upsilon}(v_2)+2\langle \nabla_{\Upsilon}\log(uw),\nabla_{\Upsilon}\log v_2\rangle\\
    \leq& (uw)^{-1}\Delta_{{\Sigma_2}}(uw)+\langle \nabla_{\Upsilon}\log(uw),\nabla_{\Upsilon}\log v_2\rangle -\|A_{\Upsilon}\|^2-\ric_{{\Sigma_2}}(\nu_3,\nu_3)+\frac{1}{2}H_{\Upsilon}^2+\frac{1}{2} -\frac{1}{2}\langle \nabla_{{\Sigma_2}}\log(uw),\nu_{\Upsilon}\rangle^2\\
    \leq& \langle \nabla_{{\Sigma_2}}\log u,\nu_{\Upsilon}\rangle \langle \nabla_{{\Sigma_2}}\log w,\nu_{\Upsilon}\rangle+\langle \nabla_{\Upsilon}\log u,\nabla_{\Upsilon}\log w\rangle -\frac{1}{2}\langle \nabla_{{\Sigma_2}}\log(uw),\nu_{\Upsilon}\rangle^2 +\langle \nabla_{\Upsilon}\log (uw),\nabla_{\Upsilon}\log v_2\rangle \\
    &-\|A_{\Sigma_1}\|^2-\|A_{{\Sigma_2}}\|^2-\|A_{\Upsilon}\|^2-\ric_{\overline{M}}(\eta,\eta)-\ric_{\Sigma_1}(\nu,\nu)-\ric_{{\Sigma_2}}(\nu_{\Upsilon},\nu_{\Upsilon})\\
    &+\frac{1}{2}H_{{\Sigma_2}}^2+\frac{1}{2}H_{\Upsilon}^2+1,\\
    \leq &\langle \nabla_{\Upsilon}\log(uw),\nabla_{\Upsilon}\log v_2\rangle +\langle \nabla_{\Upsilon}\log u,\nabla_{\Upsilon}\log w\rangle -\|A_{\Sigma_1}\|^2-\|A_{{\Sigma_2}}\|^2-\|A_{\Upsilon}\|^2\\
    & -\ric_{\overline{M}}(\eta,\eta)-\ric_{\Sigma}(\nu,\nu)-\ric_{{\Sigma_2}}(\nu_{\Upsilon},\nu_{\Upsilon})+\frac{1}{2}H_{{\Sigma_2}}^2+\frac{1}{2}H_{\Upsilon}^2+1.
\end{align*}

Supposing $e_4$ is a unit vector in $T\Upsilon$, we have
\begin{align*}
    \ric_{\Upsilon}(e_4,e_4)&-(uwv_2)^{-1}\Delta_{\Upsilon}(uwv_2)+\frac{1}{2}|\nabla_{\Upsilon}\log(uwv_2)|^2\\
    \geq& \frac{1}{2}|\nabla_{\Upsilon}\log(uwv_2)|^2-\langle \nabla_{\Upsilon}\log(uw),\nabla_{\Upsilon}\log v_2\rangle -\langle \nabla_{\Upsilon}\log u,\nabla_{\Upsilon}\log w\rangle \\
    &+\|A_{\Sigma_1}\|^2+\|A_{{\Sigma_2}}\|^2+\|A_{\Upsilon}\|^2-\frac{1}{2}H_{{\Sigma_2}}^2-\frac{1}{2}H_{\Upsilon}^2-1\\
    &+\ric_{\overline{M}}(\eta,\eta)+\ric_{\Sigma_1}(\nu,\nu)+\ric_{{\Sigma_2}}(\nu_{\Upsilon},\nu_{\Upsilon})+\ric_{\Upsilon}(e_4,e_4)\\
    \geq & \|A_{{\Sigma_1}}\|^2+\|A_{\Sigma_2}\|^2+\|A_{\Upsilon}\|^2-\frac{1}{2}H_{{\Sigma_2}}^2-\frac{1}{2}H_{\Upsilon}^2-1\\
     &+\ric_{\Upsilon}(e_4,e_4)+\ric_{{\Sigma_2}}(\omega,\omega)+\ric_{\Sigma_1}(\nu,\nu)+\ric_{\overline{M}}(\eta,\eta)\\
     =&\|A_{{\Sigma_1}}\|^2+\|A_{\Sigma_2}\|^2+\|A_{\Upsilon}\|^2-\frac{1}{2}H_{{\Sigma_2}}^2-\frac{1}{2}H_{\Upsilon}^2-1\\
     &+C_4(e_1,e_2,e_3,e_4)+\sum_{p=2}^4\sum_{q=p+1}^6\left(A_{\Sigma_1}(e_p,e_p)A_{\Sigma_1}(e_q,e_q)-A_{\Sigma_1}(e_p,e_q)^2\right)\\
     &+\sum_{p=3}^4\sum_{q=p+1}^6\left(A_{{\Sigma_2}}(e_p,e_p)A_{{\Sigma_2}}(e_q,e_q)-A_{{\Sigma_2}}(e_p,e_q)^2\right)\\
     &+\sum_{q=5}^6\left(A_{\Upsilon}(e_4,e_4)A_{\Upsilon}(e_q,e_q)-A_{\Upsilon}(e_4,e_q)^2\right)\\
     \geq& C_4(e_1,e_2,e_3,e_4)-1\geq \frac{1}{2}.
\end{align*}
We have completed the proof.
\end{proof}

\begin{corollary} \label{cor: freeboundarybubbble}
    Suppose $\Upsilon$ is a component of a  free boundary $\mu$-bubble of $\mathcal A_3$. Then it has connected boundary and bounded diameter.
\end{corollary}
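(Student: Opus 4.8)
The plan is to read off both conclusions of the corollary from Proposition \ref{prop: conformal positivity} together with the two results for positive conformal Ricci curvature proved earlier. First, note that $\Upsilon$ is a compact $3$-dimensional Riemannian manifold with boundary, being a component of a free boundary $\mu$-bubble of $\mathcal{A}_3$ inside the $4$-manifold $\hat{\Sigma}_2\subset\Sigma_2$, with $\bd\Upsilon$ lying on $\bd\hat{\Sigma}_2$ and meeting it orthogonally. The proof of Proposition \ref{prop: conformal positivity} hands us a positive function $f:=uwv_2$ on $\Upsilon$ with
\[
\ric_{\Upsilon}(e,e) - f^{-1}\lap_{\Upsilon} f + \tfrac12\,\vert\nabla_{\Upsilon}\ln f\vert^2 \ \ge\ \tfrac12 \ >\ 0
\]
for every unit $e\in T\Upsilon$, together with the boundary identity \eqref{Eq: boundary condition of FBB}, which after dividing by $uwv_2$ reads $\langle\nabla_{\Upsilon}\ln f,\omega\rangle=-H_{\bd\Upsilon}$, i.e. $\bd_{\omega}f=-f\,H_{\bd\Upsilon}$, where $\omega$ is the outward unit conormal of $\bd\Upsilon$ in $\Upsilon$.

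Given this, connectedness of $\bd\Upsilon$ is immediate: the pair $(\Upsilon,f)$ satisfies exactly the hypotheses of Theorem \ref{Theorem:ConformalRicciFrankel} — a compact $3$-manifold with boundary carrying a positive function of positive conformal Ricci curvature with $\bd_{\eta}f=-f\,H_{\bd\Upsilon}$ — so $\bd\Upsilon$ is connected. For the diameter bound I would apply the generalized Bonnet--Myers theorem (Theorem \ref{Theorem:generalized-Bonnet-Myers}) with $\kappa=\tfrac12$ to conclude $\diam(\Upsilon)\le\sqrt2\,\pi/\sqrt{1/2}=2\pi$, matching the bound of Lemma \ref{lem: diameterslice4} for the slices. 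The one point requiring care is that Theorem \ref{Theorem:generalized-Bonnet-Myers} is stated for complete boundaryless manifolds, whereas $\Upsilon$ has boundary; since $\bd_{\omega}f=-f\,H_{\bd\Upsilon}$ says precisely that $\bd\Upsilon$ is minimal for the conformal metric $\tilde g:=f^{2}g$, I would reduce to the closed case, either by doubling $\Upsilon$ across its $\tilde g$-minimal boundary and extending the data, or by running the minimizing-geodesic second variation underlying Theorem \ref{Theorem:generalized-Bonnet-Myers} directly on $\Upsilon$ and absorbing the endpoint terms using minimality of $\bd\Upsilon$, exactly as the analogous boundary contributions telescope to zero in the proof of Theorem \ref{Theorem:ConformalRicciFrankel}. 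Either way one obtains $\diam(\Upsilon)\le C$ for a universal constant $C$ independent of $L$.

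The main obstacle I anticipate is this last reduction. Doubling along a boundary that is only minimal, rather than totally geodesic, produces a metric that is merely Lipschitz, with a distributional curvature term along the gluing locus governed by the trace-free second fundamental form, whose sign is not controlled; one therefore has to verify that the strict conformal Ricci lower bound genuinely survives, or else carry the second-variation computation out intrinsically on $\Upsilon$ while keeping the free boundary term under control via $\bd_{\omega}f=-f\,H_{\bd\Upsilon}$. Once that is in place the corollary follows at once, since the production of $f$ and the connectedness of $\bd\Upsilon$ are direct applications of the already-proven Proposition \ref{prop: conformal positivity} and Theorem \ref{Theorem:ConformalRicciFrankel}.
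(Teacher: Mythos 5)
Your proposal is correct and is essentially the paper's own proof: the paper deduces the corollary in one line from Proposition \ref{prop: conformal positivity}, citing Theorem \ref{Theorem:generalized-Bonnet-Myers} for the diameter bound and Theorem \ref{Theorem:ConformalRicciFrankel} together with \eqref{Eq: boundary condition of FBB} for connectedness of $\bd\Upsilon$. The boundary-versus-complete issue you raise for applying the generalized Bonnet--Myers theorem is not addressed in the paper at all, so your extra care there goes beyond, rather than diverges from, the published argument.
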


\begin{proof}
    Given the previous proposition, the diameter estimate follows from Theorem \ref{Theorem:generalized-Bonnet-Myers}, and the fact that the boundary is connected follows from Theorem \ref{Theorem:ConformalRicciFrankel} and \eqref{Eq: boundary condition of FBB}.
\end{proof}

We can summarize the dice procedure as follows.

\begin{prop} \label{prop:dice procedure}
    Suppose $p\in \hat{\Sigma}_2$ is a fixed interior point, and assume further that $B_{\eps}(p)\subset\subset \hat{\Sigma}_2$. There exists a finite number $k$ and open connected domains $\{\Omega_i\}_{i=1}^k$, 
    \[
    B_{\eps}(p)=\Omega_1\subset \Omega_2,\subset\cdots\subset\Omega_k=\hat{\Sigma}_2,
    \]
    with the following properties:
    \begin{enumerate}
        \item $d_{\hat{\Sigma}_2}(\partial \Omega_{i+1},\partial\Omega_i)\geq \frac{2}{3}\pi$;
        \item Each component of $\Omega_{i+1}\setminus\Omega_i$ has diameter at most $10\pi$;
        \item Any component $\Upsilon\subset \partial\Omega_j$ has diameter $\diam\Upsilon\leq \pi$;
        \item Each component of $\partial\Omega_j$ is either a closed manifold with finite fundamental group or a compact manifold with connected boundary in $\partial\hat{\Sigma}_2$.
    \end{enumerate}
\end{prop}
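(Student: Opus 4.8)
The plan is to adapt the pushing-out/dicing construction of Chodosh--Li \cite{chodosh2024generalized}, replacing the unweighted $\mu$-bubble functional by $\mathcal{A}_3$ and feeding in the Frankel-type input from Section \ref{sec: Frankel}. I would build the nested domains $\Omega_1\subset\cdots\subset\Omega_k$ by induction, maintaining (3) and (4) for $\partial\Omega_i$ at every stage. For the base case, take $\Omega_1=B_\eps(p)$, whose boundary is a small round sphere, so (3) and (4) hold. For the inductive step, given $\Omega_i\subsetneq\hat\Sigma_2$, let $\rho_i$ be a smoothing of the intrinsic distance $d_{\hat\Sigma_2}(\cdot,\partial\Omega_i)$ with $\|\nabla\rho_i\|\le 1+\eps$, and on the collar $\{0<\rho_i<a\}$, with free boundary on $\partial\hat\Sigma_2$, minimize $\mathcal{A}_3$ for a prescribing function $h=-\tan\!\big(\tfrac{\pi}{a}\rho_i-\text{const}\big)$, where $a$ is a fixed multiple of $\pi$ chosen so that $1+h^2-2|\nabla h|\ge 0$. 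Proposition \ref{prop: mu-bubble regularity}, in the free boundary form, yields a minimizer $\Omega$, and the $\tan$-barrier traps it: $\Omega_i\subset\Omega\subset\{\rho_i<a\}$ with $d_{\hat\Sigma_2}(\partial\Omega,\partial\Omega_i)\ge\tfrac23\pi$. Setting $\Omega_{i+1}=\Omega$ gives (1), and the inclusion $\Omega\subset\{\rho_i<a\}$ gives the width bound $d_{\hat\Sigma_2}(\partial\Omega_{i+1},\partial\Omega_i)\le a$. If the region $\hat\Sigma_2\setminus\Omega_i$ contains no point at distance $\ge a$ from $\partial\Omega_i$, I instead set $\Omega_{i+1}=\hat\Sigma_2$ and stop; since each step advances $\min_{\hat\Sigma_2\setminus\Omega_i}d(\cdot,p)$ by at least $\tfrac23\pi$ and $\hat\Sigma_2$ is compact, the process terminates after finitely many steps.

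Next I would verify (3) and (4) for $\partial\Omega_{i+1}$. Apart from components of $\partial\Omega_{i+1}$ that coincide with components of $\partial\hat\Sigma_2$, which are the slices $\Sigma_{3,\ell}$ and are closed with finite fundamental group by Lemma \ref{lem: diameterslice4}, every component $\Upsilon$ of $\partial\Omega_{i+1}$ is a component of a free boundary $\mu$-bubble of $\mathcal{A}_3$. By Proposition \ref{prop: conformal positivity}, such a $\Upsilon$ admits a conformal factor making its conformal Ricci curvature bounded below by a positive constant, with $\partial\Upsilon$ minimal in the new metric, so Corollary \ref{cor: freeboundarybubbble} applies: if $\Upsilon$ is closed it has finite $\pi_1$ via Theorem \ref{Theorem:generalized-Bonnet-Myers}, and if $\Upsilon$ has nonempty boundary then $\partial\Upsilon\subset\partial\hat\Sigma_2$ is connected by the Frankel-type Theorem \ref{Theorem:ConformalRicciFrankel}; this is (4). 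For (3), Theorem \ref{Theorem:generalized-Bonnet-Myers} bounds $\diam\Upsilon$ by a universal constant, and by normalizing the $4$-intermediate curvature lower bound slightly larger at the outset, which merely rescales all the constants that appear, we may take this bound and the diameter of every component of $\partial\hat\Sigma_2$ to be $\pi$.

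It remains to bound $\diam P$ for a component $P$ of $\Omega_{i+1}\setminus\Omega_i$, which is (2). By the width bound, every point of $P$ lies within $\hat\Sigma_2$-distance $a$ of $\overline{P}\cap\partial\Omega_i$ and of $\overline{P}\cap\partial\Omega_{i+1}$, and the ``floor'' $\overline{P}\cap\partial\Omega_i$, the ``walls'' $\overline{P}\cap\partial\hat\Sigma_2$, and the ``ceiling'' $\overline{P}\cap\partial\Omega_{i+1}$ consist of pieces each of diameter $\le\pi$ by (3). Thus (2) reduces to the assertion that only boundedly many such pieces meet $\overline{P}$ and that they are chained in a bounded pattern. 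This is precisely where Theorem \ref{Theorem:ConformalRicciFrankel} does \emph{essential} work: because each ceiling piece $\Upsilon$ has connected boundary on $\partial\hat\Sigma_2$, the incidence graph of floor, wall, and ceiling pieces along $P$ is forced to be a bounded tree, exactly as the disk dicing surfaces force in the two-dimensional argument of Chodosh--Li \cite{chodosh2024generalized}; combined with Lemma \ref{lem:onecomponent}, which prevents $P$ from threading through several components of $\hat\Sigma_2\setminus\Omega_i$, and the width bound $\le a$, this yields $\diam P\le 10\pi$ after collecting constants. The main obstacle is this last combinatorial-geometric bookkeeping: in dimension four the dicing surfaces are $3$-manifolds rather than disks, so connectedness of their boundary, which is automatic from Gauss--Bonnet in the Chodosh--Li setting, must instead be supplied by the conformal-Ricci Frankel theorem, and one must rule out a single piece $P$ wrapping around a long thin portion of $\hat\Sigma_2$.
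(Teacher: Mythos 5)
Your construction of the nested domains is essentially the paper's: induct, smooth the distance to $\Omega_i$, prescribe a $\tan$-type function $h$ satisfying $1+h^2-2|\nabla h|\ge 0$, minimize the weighted free boundary functional $\mathcal A_3$, read off (1) from the barrier, get (3) and (4) from Proposition \ref{prop: conformal positivity}, Corollary \ref{cor: freeboundarybubbble} and Lemma \ref{lem: diameterslice4}, and terminate by compactness since each step advances by at least $\tfrac23\pi$. Up to the bookkeeping of constants, all of that matches the paper.

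The gap is in your verification of (2). You assert that (2) ``reduces to'' bounding the number of floor/wall/ceiling pieces meeting $\overline P$ and showing their incidence graph is a bounded tree, and you claim this is forced by the connectedness of $\partial\Upsilon$ from Theorem \ref{Theorem:ConformalRicciFrankel}. No such reduction is proved, the tree/boundedness claim is not justified (there is no a priori bound on how many ceiling components touch $\overline P$), and in fact none of this is needed. The argument the paper intends is much more direct: every point $x$ of a component $P$ of $\Omega_{i+1}\setminus\Omega_i$ satisfies $\rho(x)\le$ the upper barrier value, hence $d_{\hat\Sigma_2}(x,\partial\Omega_i)$ is at most the width of the band (about $4\pi$ with the paper's choice of $h$, your $a$). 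A shortest path from $x$ to $\Omega_i$ stays in the component $W$ of $\hat\Sigma_2\setminus\Omega_i$ containing $P$ until it first hits $\partial\Omega_i$, and by Lemma \ref{lem:onecomponent} the closure of $W$ contains exactly \emph{one} component $\Upsilon_0$ of $\partial\Omega_i$; so every point of $P$ lies within the width of the single set $\Upsilon_0$, which has diameter at most $\pi$ by property (3) at the previous stage. Hence $\diam P\le 2\cdot(\text{width})+\pi\le 10\pi$. In particular the Frankel-type theorem plays no role in (2): its job in this proposition is property (4) (connected boundary of the dicing pieces), and its real payoff comes later, in the proof of Theorem \ref{thm: m4n6}, where connectedness of $\partial\Upsilon_j$ is what allows the filled caps $\hat K_j^i$ to be grouped by the unique slice $\Sigma_{3,u(i,j)}$ they meet. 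As written, your treatment of (2) both leaves the key step unproved and misidentifies which ingredient carries it; replacing it with the width-plus-Lemma \ref{lem:onecomponent} argument above closes the gap.
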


\begin{proof}
    The proof follows from induction. Suppose we have constructed $\Omega_1,\Omega_2,\cdots, \Omega_j$ satisfying the above assumptions and $\Omega_j\neq \hat{\Sigma}_2$. Smooth $d(\cdot,\Omega_j)$ to a function $\rho$ such that $d(\cdot,\Omega_j)\leq \rho\leq \frac{3}{2}d(\cdot,\Omega_j)$ and $\rho|_{\Omega_j}=0$. Take $h$ to be 
    \[
    h(x)=-\tan\left(\frac{1}{3}(\rho-\pi)-\frac{\pi}{2}\right),
    \]
    and note that 
    \[
    1+h^2-2|\nabla h|\geq 0.
    \]
    Minimize $\hat{\mathcal A}_3$ to get a $\mu$-bubble  $\Omega_{j+1}$.

    By definition of $h$, $\Omega_{j+1}$ satisfies condition $(1)$. By Lemma \ref{lem:onecomponent} and Lemma \ref{lem: diameterslice4}, it satisfies condition $(2)$. By Corollary \ref{cor: freeboundarybubbble} and Lemma \ref{lem: diameterslice4}, it also satisfies condition $(3)$ and $(4)$.
    Since condition $(1)$ is satisfied by the sets we constructed, the induction process must terminate after finitely many steps.
\end{proof}

We now have all the ingredients we need to prove the last case of the Main Theorem \ref{Theorem:Main-Theorem}.

\begin{proof}[Proof of Theorem \ref{thm: m4n6}]
    Let $M$ be as in the statement of the theorem. Assume for the sake of contradiction that $M$ admits a metric with positive $4$-intermediate curvature. We can assume the $4$-intermediate curvature of $M$ is at least $\frac{3}{2}$.

    Arguing as in the proof of Theorem \ref{thm: m3n5}, we obtain $5$-dimensional $\Sigma_1$ and $4$-dimensional $\Sigma_2$. By Lemma \ref{lem: diameterslice4}, Lemma \ref{lem:surjective homology} and Proposition \ref{prop:dice procedure}, there exists a set of disjoint embedded closed $3$ manifolds $\Sigma_{3,1},\Sigma_{3,2}\cdots, \Sigma_{3,k}\subset \Sigma_2$ with $\diam \Sigma_{3,i}\leq \pi$. Moreover, there exists a set of embedded compact $3$-manifolds $\Upsilon_1,\Upsilon_2,\cdots,\Upsilon_l\subset \Sigma_2$ such that, $\diam \Upsilon_j\leq \pi$, $\partial\Upsilon_j$ is connected and contained in $\cup_{i=1}^k\Sigma_{3,i}$, and the interiors of $\Upsilon_j$ are pairwise disjoint with each other and disjoint with each $\Sigma_{3,i}$. Also, each component $K_1,K_2,\cdots, K_s$ of 
    \[
    \Sigma_2\setminus \left(\left(\cup_{i=1}^k\Sigma_{3,i}\right)\cup\left(\cup_{j=1}^s\Upsilon_j\right)\right)
    \]
    has diameter bounded by $10\pi$.

    We write the boundary components of $K_j$ as $\hat{\Upsilon}_j^1,\hat{\Upsilon}^2_j,\cdots,\hat{\Upsilon}_j^{n(j)}$.  By Proposition \ref{Proposition:uniformly-acyclic}, there exists a positive $R>0$, independent of $L$, such that we can fill-in $\hat{\Upsilon}_j^i$ by $\hat{K}_j^i$ with extrinsic diameter at most $R$.
    Then 
    \[
    K_j-\hat{K}_j^1-\cdots -\hat{K}_j^{n(j)}
    \]
    is a cycle with extrinsic diameter at most $2R+10\pi$.  By Proposition \ref{Proposition:uniformly-acyclic}, there exists a $5$-chain $\tilde{\Gamma}_j$ with extrinsic diameter bounded by $\tilde{R}$, which is independent of $L$, such that
    \[
    \partial\tilde{\Gamma}_j=K_j-\hat{K}_j^1-\cdots -\hat{K}_j^{n(j)}.
    \]
    Note that
    \[
    \Sigma_2-\sum_{j=1}^s\partial \tilde{\Gamma}_j=\sum_{j=1}^s\sum_{i=1}^{n(j)}\hat{K}_j^i.
    \]
    Since $\Upsilon_i$ has connected boundary, there exists an index $u(i,j)\in\{1,2,\cdots,k\}$ so that $\hat{\Upsilon}_j^i$ only intersects with $\Sigma_{3,u(i,j)}$ but not any of the other components of $\partial\hat{\Sigma}_2$. We group the $\hat{K}_j^i$ by $u(i,j)=a$ for $a\in\{1,2,\cdots,k\}$. Then
    \[
    \sum_{\{i,j: u(i,j)=a\}}\hat{K}_j^i
    \]
    is a cycle of diameter at most $2R+\pi$. Furthermore,
    \[
    \partial\left[\sum_{a=1}^k\sum_{\{i,j:u(i,j)=a\}}\hat{K}_j^i\right]=0.
    \]
    Therefore, by Proposition \ref{Proposition:uniformly-acyclic}, there exists $\hat{R}>0$, independent of $L$, such that there exists a $5$-chain $\Theta_a$ with extrinsic diameter at most $\hat{R}$ satisfying
    \[
    \partial\Theta_a=\sum_{\{i,j:u(i,j)=a\}}\hat{K}_j^i.
    \]
    In conclusion, we have
    \[
    \Sigma_2=\partial\left[\sum_{j=1}^s\tilde{\Gamma}_j+\sum_{a=1}^k\Theta_a\right],
    \]
    where each term in the sum has uniform bounded diameter as $L\to\infty$. A contradiction is achieved.
\end{proof}

\subsection{The Case $n=6$ and $m=3$}  Finally, we prove the case where $n=6$ and $m=3$. This will complete the proof of Theorem \ref{Theorem:Main-Theorem}.  This case is somewhat subtle and requires a delicate analysis. 

\begin{theorem}
    \label{Theorem:n6m3}Let $M^6$ be a closed manifold. Assume that the universal cover $\overline M$ of $M$ satisfies $H_6(\overline M,\Z) = H_5(\overline M,\Z) =H_4(\overline M,\Z) = 0$. Then $M$ does not admit a metric with positive 3-intermediate curvature.
\end{theorem}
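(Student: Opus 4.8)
The plan is to run the argument of Theorem \ref{thm: m3n5} with every dimension raised by one, the one genuinely new ingredient being an application of the \emph{higher-dimensional} Shen--Ye estimate (Theorem \ref{Shen-Ye-Higher-Dimension}) on a four-dimensional slice. I would argue by contradiction: assuming $M^6$ admits a metric with $C_3 > 0$, rescale so that $C_3 \ge 1$, and apply Proposition \ref{linking} (legitimate since $H_6(\overline M,\Z) = H_5(\overline M,\Z) = 0$) to obtain a geodesic line $\sigma$ and a closed null-homologous $\Lambda^4 \subset \overline M$ with $d(\Lambda,\sigma) \ge L$ for a large $L$ to be fixed later. Let $\Sigma_1^5$ be the area-minimizing filling of $\Lambda$, with unit normal $\eta$ and positive first stability eigenfunction $u$, so that $\lap_{\Sigma_1} u + (\|A_{\Sigma_1}\|^2 + \ric_{\overline M}(\eta,\eta))u \le 0$. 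As in Theorem \ref{thm: m3n5}, I would then build from $\rho_0(x) = d_{\overline M}(x,\sigma)$ a prescribed mean curvature function $h$ on a cylindrical region $\tilde \Sigma_1 \subset \Sigma_1$ with $1 + h^2 - 2\|\grad_{\Sigma_1} h\| \ge 0$, and minimize a \emph{carefully chosen} weighted $\mu$-bubble functional (with weight a suitable power $u^{q}$ of the stability eigenfunction) to produce a closed hypersurface $\Sigma_2^4 \subset \Sigma_1$ that is linked with $\sigma$, is null-homologous inside $\tilde\Sigma_1$, and stays at distance $\ge L/4$ from $\sigma$. The second variation of this functional, massaged as in Proposition \ref{prop:simplified-second-variation}, yields a positive function $w$ on $\Sigma_2$ with $\div_{\Sigma_2}(u^{q}\grad_{\Sigma_2} w) \le -(\cdots)\,u^{q}w$.

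The heart of the proof is to verify, for appropriate $\tau$ close to $1$, small $\eps > 0$ with $\tau - (\tfrac34 + \eps)\tau^2 > 0$, and a positive $f$ on $\Sigma_2$ built as a power-product of $u$ and $w$ (with exponents tuned together with $q$), the hypothesis of Theorem \ref{Shen-Ye-Higher-Dimension} with $k = 4$:
\[
\ric_{\Sigma_2}(e,e) - \tau f^{-1}\lap_{\Sigma_2} f + \left[\tau - \left(\tfrac34 + \eps\right)\tau^2\right]\,\|\grad_{\Sigma_2}\ln f\|^2 \ge \kappa > 0
\]
for every unit $e \in T\Sigma_2$. I would fix $p \in \Sigma_2$ and an adapted orthonormal frame $\{e_1 = \eta,\,e_2 = \nu,\,e_3,e_4,e_5,e_6\}$ for $\Sigma_2 \subset \Sigma_1 \subset \overline M$, expand $\ric_{\Sigma_2}(e_3,e_3)$ and $f^{-1}\lap_{\Sigma_2} f$ using the Gauss equations for the two submanifolds and the eigenfunction inequalities for $u$ and $w$, and then invoke \cite[Lemma 3.8]{brendle2024generalization} to rewrite $\ric_{\overline M}(e_1,e_1) + \ric_{\Sigma_1}(e_2,e_2) + \ric_{\Sigma_2}(e_3,e_3)$ as $C_3(e_1,e_2,e_3)$ plus a sum of second-fundamental-form terms, which together with $\|A_{\Sigma_1}\|^2 + \|A_{\Sigma_2}\|^2 - \tfrac12 H_{\Sigma_2}^2$ is non-negative by \cite[Lemma 3.11]{brendle2024generalization} and the elementary identities already used in Lemma \ref{lem: diameterslice4}. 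The $(1-\tau)$-errors from multiplying the eigenfunction inequalities by $\tau$ are absorbed by taking $\tau$ close enough to $1$ relative to $\sup_{\overline M}\|\ric_{\overline M}\|$, leaving $C_3 - \tfrac12 - (\text{errors}) \ge \kappa > 0$. The delicate point, and the step I expect to be the main obstacle, is the bookkeeping of the leftover gradient terms: the cross terms $\langle \grad_{\Sigma_1}\log u,\nu\rangle\langle \grad_{\Sigma_1}\log w,\nu\rangle$ and the normal-derivative-squared terms $\tfrac12\langle \grad_{\Sigma_1}\log(u^{q}w),\nu\rangle^2$ produced by the $\mu$-bubble must be reorganized by completing the square and Cauchy--Schwarz so that the residual ``bad'' gradient is at most $[\tau - (\tfrac34+\eps)\tau^2]\|\grad_{\Sigma_2}\ln f\|^2$. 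The coefficient budget $\tfrac14 - \eps$ available at $\tau \approx 1$ in dimension $k = 4$ is exactly tight, and this tightness (which has no analogue in the $3$-dimensional slices treated earlier, and which already degrades beyond repair once the slice has dimension $5$) is what pins the argument to $n \le 6$; choosing $q$ and the exponents of $f$ correctly is precisely how one stays inside this budget.

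Granting this, Theorem \ref{Shen-Ye-Higher-Dimension} gives $\diam(\Sigma_2) \le D$ with $D = D(\kappa,\eps)$ independent of $L$. Since $\Sigma_2$ is a closed $4$-dimensional submanifold of $\overline M$ and $M$ is $3$-acyclic (so in particular $H_4(\overline M,\Z) = 0$), Proposition \ref{Proposition:uniformly-acyclic} fills each component of $\Sigma_2$ inside a fixed $R$-neighborhood, with $R = R(D)$ independent of $L$. But $\Sigma_2$ is linked with $\sigma$ while lying at distance $\ge L/4$ from $\sigma$, so no such filling can avoid $\sigma$ once $L/4 > R$; choosing $L > 4R$ at the outset gives the contradiction. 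This finishes Theorem \ref{Theorem:n6m3}, and hence the last remaining case of Theorem \ref{Theorem:Main-Theorem}.
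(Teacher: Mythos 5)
Your high-level architecture coincides with the paper's: link $\Lambda^4$ with a line $\sigma$, take the area-minimizing filling $\Sigma_1^5$ with eigenfunction $u$, run a weighted $\mu$-bubble with weight a power of $u$ to get a $4$-dimensional $\Sigma_2$, apply the higher-dimensional Shen--Ye theorem with $k=4$ and $\tau-(\tfrac34+\eps)\tau^2>0$ to the product test function, and then fill using $H_4(\overline M,\Z)=0$. However, the step you dispatch by appeal to ``the elementary identities already used in Lemma \ref{lem: diameterslice4}'' is exactly where the argument breaks, and it is the reason the paper's treatment of $n=6$, $m=3$ looks so different from the $m=4$ case. For the $m=3$ slicing the second-fundamental-form contribution of the $4$-dimensional slice is only $\sum_{q=4}^6\bigl[A^{\Sigma_2}_{33}A^{\Sigma_2}_{qq}-(A^{\Sigma_2}_{3q})^2\bigr]$ (a single $p=3$ block, not the $p=3,4$ double sum available when $m=4$), and the inequality
\[
\vert A_{\Sigma_2}\vert^2-\tfrac12 H_{\Sigma_2}^2+\sum_{q=4}^6\bigl[A^{\Sigma_2}_{33}A^{\Sigma_2}_{qq}-(A^{\Sigma_2}_{3q})^2\bigr]\ \ge\ 0
\]
is simply false: taking $A_{\Sigma_2}=\mathrm{diag}(0,1,1,1)$ gives $3-\tfrac92+0=-\tfrac32$. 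So you cannot discard the mean-curvature term the way Proposition \ref{prop:simplified-second-variation} does (inserting $\tfrac12 H^2$ against $1+h^2+2\la\grad h,\nu\ra\ge0$) and then call the leftover curvature terms non-negative; with the prescribed mean curvature present, the quadratic form genuinely fails to be non-negative for a $4$-dimensional slice in the $m=3$ structure.

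The paper closes this gap with several interlocking devices that your sketch does not supply and that do not follow from ``tuning exponents'': the weight is $u^a$ with $a$ \emph{strictly} less than $1$ chosen in a narrow window ($\tfrac{\sqrt{390}}{10}-1<a<1$), together with parameters $\tau\in(a,1)$, $\alpha>0$, $\eps>0$ subject to two explicit $3\times3$ matrices being positive definite (they are only semidefinite at $a=1$, which is why $a<1$ is essential); the term $-\tfrac12H_{\Sigma_2}^2$ is not thrown away but kept inside a full quadratic form in $(H_{\Sigma_2},h,z)$, analyzed à la Mazet after splitting $A_{\Sigma_2}$ into trace and traceless parts, yielding the lower bound $\mathcal B_2\ge\alpha h^2$ rather than merely $\ge0$; and the prescribed-mean-curvature profile is redesigned (the arctan solution of $k'=-1-\tfrac{\alpha}{2}k^2$, giving $2+\alpha h^2-\|\grad_{\Sigma_1}h\|\ge0$ instead of your $1+h^2-2\|\grad h\|\ge0$) precisely so that the slack $\alpha h^2$ can absorb the surviving $-\tau\|\grad_{\Sigma_1}h\|$ term. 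By contrast, the gradient bookkeeping you single out as the main obstacle is comparatively benign: with exponent $a<1$ the bad term $a(a-1)\vert\grad u\vert^2/u^2$ has a good sign, and $-\tau\la X,Y\ra+\tfrac{\tau}{4}\vert X+Y\vert^2=\tfrac{\tau}{4}\vert X-Y\vert^2\ge0$ handles the cross terms within the $\tfrac{\tau}{4}$ budget. As written, your proposal asserts positivity of a curvature expression that can be negative, so the diameter bound for $\Sigma_2$ is not established and the proof is incomplete at its central step.
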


Let $M$ be as in the statement of the theorem. Assume for the sake of contradiction that $M$ admits a metric with positive $3$-intermediate curvature. By scaling, we can suppose the 3-intermediate curvature of $M$ is at least $3$. 
For a fixed large $L > 0$, we apply Proposition \ref{linking} to find a geodesic line $\sigma$ in $\overline M$ and a closed manifold $\Lambda^{n-2}$ embedded in $\overline M$ such that $\Lambda$ is linked with $\sigma$ and $d(\sigma,\Lambda) \ge 2 L$. Let $\Sigma_1$ be the area minimizing minimal hypersurface in $\overline M$ with $\bd \Sigma_1 = \Lambda$. Let $\eta$ be the unit normal to $\Sigma_1$. Since $\Sigma_1$ is area minimizing, there exists a function $u > 0$ on $\Sigma_1$ which satisfies $\lap_{\Sigma_1} u + (\vert A_{\Sigma_1}\vert^2 + \ric_{\overline M}(\eta,\eta))u \le 0$. 

We are now going to construct a $\mu$-bubble in $\Sigma_1$.  The construction depends on a choice of several parameters $a,\tau,\eps,\alpha$ which depend only on $M$ and not on $L$.  First, we select
\[
0.9748\approx\frac{\sqrt{390}}{10}-1 < a < 1
\]
close enough to 1 so that the following two conditions hold:
\begin{itemize}
\item[(i)] $(1-a^2) \vert R_M(X,Y,X,Y)\vert \le \frac 1 {100}$ for all unit tangent vectors $X,Y$ to $M$;
\item[(ii)] The matrix 
\[
\begin{pmatrix}
\frac{1}{a}-\frac{9}{16} & \frac 1 2 - \frac 1 a & \frac{\sqrt 3}{8}\\ \\
\frac 1 2 - \frac 1 a & \frac{1}{a} & 0 \\ \\
\frac{\sqrt 3}{8} & 0 & \frac 1 4
\end{pmatrix}
\]
is positive definite. 
\end{itemize}
To see that (ii) is possible, note that the determinant of this matrix is $-\frac 1 {16} + \frac{1}{16a}$ which is positive for $a < 1$. Also when $a=1$, by direct computation,  the matrix 
\[
\begin{pmatrix}
\frac{7}{16} & -\frac 1 2  & \frac{\sqrt 3}{8}\\ \\
-\frac 1 2  & 1 & 0 \\ \\
\frac{\sqrt 3}{8} & 0 & \frac 1 4
\end{pmatrix}
\]
has eigenvalues 
\[
\lambda_1 = \frac{1}{32}(27+\sqrt{217}) > 0,\quad   \lambda_2 = \frac 1{32}(27-\sqrt{217}) > 0, \quad \lambda_3 = 0.
\]
Therefore, by continuity, a suitable choice of $a$ is possible. Next, we select $a < \tau < 1$ and $\alpha > 0$ so that the following condition holds: 
\begin{itemize}
    \item[(iii)] The matrix
    \[
\begin{pmatrix}
\frac{3}{16}-\frac 3 4 \tau + \frac \tau a & \frac \tau 2 - \frac \tau a & \frac{\sqrt 3}{8}\\ \\
\frac \tau 2 - \frac \tau a & \frac{\tau}{a} - \alpha & 0 \\ \\
\frac{\sqrt 3}{8} & 0 &  \tau - \frac{3}{4}
\end{pmatrix}
\]
is positive definite. 
\end{itemize}Since this matrix reduces to the one in condition (ii) when $\tau = 1$ and $\alpha = 0$, such a choice is again possible by continuity. Finally, since $\tau < 1$, we can select $\eps > 0$ so that 
\begin{itemize}
    \item[(iv)] $\displaystyle 
\tau - \left(\frac 3 4 + \eps\right) \tau^2 \ge \frac \tau 4.$
\end{itemize}
The reason for selecting this choice of parameters will become apparent in the proof. 

Next, consider the differential equation 
\[
\begin{cases}
k'(t) = -1 - \frac{\alpha}{2}k(t)^2,\\
k(0) = 0. 
\end{cases}
\]
The solution to this ODE is 
\[
k(t) = -\frac{2}{\sqrt \alpha} \arctan\left(\sqrt{\frac{\alpha}{2}} t\right)
\]
for $t \in (-\beta,\beta)$ where $\beta = \frac{\pi}{2}\sqrt{\frac{2}{\alpha}}$. Note that $k(t) \to \infty$ as $t\to -\beta$ and that $k(t)\to -\infty$ as $t\to \beta$. 

Define 
    \[
    \rho_0(x) = d_{\overline M}(x,\sigma), \quad x\in \overline M.
    \]
    Then let $\rho_1$ be a smooth approximation to $\rho_0$ which satisfies $\|\grad \rho_1(x)\| \le 2$ for all $x\in \overline M$, $\rho_1<1$ on $\sigma$, and $\rho_1>L+\beta+1$ on $\bd\Sigma_1=\Lambda$. Let $\rho_2$ be the restriction of $\rho_1$ to $\Sigma_1$. Now choose $\delta > 0$ sufficiently small so that $L-\beta+\delta $, $L+2\delta$, 
    and $L+\beta+ \delta$ are all regular values of $\rho_2$.  We can further ensure that $\Sigma_1 \cap B_{\overline M}(\sigma,L/4) \subset \{\rho_2 \le L-\beta -1\}$.  
   Now define 
    \[
    \rho(x) = \rho_2(x)-L-\delta, \quad x\in \Sigma_1.
    \]
    Then we have $\|\grad_{\Sigma_1} \rho(x)\| \le 2$ for all $x\in \Sigma_1$. We define 
\begin{gather*}
        \tilde{\Sigma}_1 = \{-\beta \le \rho \le \beta\}, \\
        \bd_+ \tilde{\Sigma}_1 = \{\rho = -\beta\},\\
        \bd_- \tilde{\Sigma}_1 = \{\rho = \beta\},\\
        \Omega_0 = \{-\beta \le \rho <  \delta\}. 
    \end{gather*}
    By construction, $\bd_{\pm}\tilde{\Sigma}_1$ and $\bd \Omega_0$ are all smooth hypersurfaces in $\tilde{\Sigma}_1$, and $\Omega_0$ contains $\bd_+ \tilde{\Sigma}_1$, and $\bd \Sigma_1 \cap \tilde{\Sigma}_1 = \emptyset$. Finally, we define 
    \[
    h(x) = k(\rho(x)).
    \]
    Then $h$ is a smooth function on the interior of $\tilde{\Sigma}_1$ which satisfies $h(x) \to \pm\infty$ as $x\to \bd_{\pm}\tilde{\Sigma}_1$. Moreover, we have 
    \[
\|\grad_{\Sigma_1}h(x)\| = \vert k'(\rho(x))\vert \cdot \|\grad_{\Sigma_1}\rho(x)\| \le 2\left(1 + \frac{\alpha}{2}k(\rho(x))^2\right) = 2 + \alpha h^2,
    \]
    for all $x\in \tilde{\Sigma}_1$. It follows that 
    \begin{equation}
    \label{eqn:new-h-inequality}
    2 + \alpha h^2 - \|\grad_{\Sigma_1} h\| \ge 0
    \end{equation}
    on $\tilde{\Sigma}_1$. Proposition \ref{prop: mu-bubble regularity} (with $u$ replaced by $u^a$) implies that there exists a minimizer $\Omega \subset \tilde{\Sigma}_1$ of the $\mu$-bubble functional 
    \[
\mathcal A(\Omega) = \int_{\bd \Omega} u^a\, d \mathcal H^{4} - \int_{\tilde{\Sigma}_1} (\chi_\Omega - \chi_{\Omega_0}) u^a h\, d\mathcal H^{5} 
\]
such that $\Omega \operatorname{\Delta} \Omega_0$ is compactly contained in the interior of $\tilde{\Sigma}_1$. Let $\Sigma_2$ be a component of $\bd \Omega$. 

According to Proposition \ref{proposition:variational-formulas} with $u$ replaced by $u^a$ (see also Mazet \cite{mazet2024stable} Section 4.2),  the first variation satisfies
\[
H_{\Sigma_2} = h - a u^{-1} \la \grad_{\Sigma_1} u, \nu\ra,
\] 
and the second variation formula implies 
\begin{align*}
0 &\le \int_{\Sigma_2} u^a \bigg[\vert \grad_{\Sigma_2} \psi\vert^2 - (\vert A_{\Sigma_2}\vert^2 + \ric_{\Sigma_1}(\nu,\nu))\psi^2 - a u^{-2} \la \grad_{\Sigma_1} u,\nu\ra^2 \psi^2 \\
&\qquad \qquad + a u^{-1} (\lap_{\Sigma_1} u - \lap_{\Sigma_2} u - H_{\Sigma_2} \la \grad_{\Sigma_1} u,\nu\ra)\psi^2 - \la \grad_{\Sigma_1} h,\nu\ra \psi^2\bigg]
\end{align*}
for all $\psi\in C^1(\Sigma_2)$. It follows that there exists a function $w > 0$ on $\Sigma_2$ which satisfies 
\begin{align*}
\div_{\Sigma_2}(u^a \grad_{\Sigma_2} w) &\le \bigg[ -(\vert A_{\Sigma_2}\vert^2 + \ric_{\Sigma_1}(\nu,\nu)) - a(\vert A_{\Sigma_1}\vert^2 + \ric_{\overline M}(\eta,\eta))\bigg] u^a w\\
&\qquad +\bigg[ - \la \grad_{\Sigma_1} h,\nu\ra - a u^{-2} \la \grad_{\Sigma_1} u,\nu\ra^2 - a H_{\Sigma_2} u^{-1} \la \grad_{\Sigma_1}u,\nu\ra - a \frac{\lap_{\Sigma_2} u}{u}\bigg] u^a w. 
\end{align*}
Now we turn our attention to the diameter bound. 

\begin{prop}
\label{n6m3diameter}
    The diameter of $\Sigma_2$ is uniformly bounded independently of $L$. 
\end{prop}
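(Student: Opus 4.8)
The plan is to apply the higher-dimensional Shen--Ye generalized Bonnet--Myers theorem (Theorem~\ref{Shen-Ye-Higher-Dimension}) to the closed $4$-manifold $N=\Sigma_2$, with weight $f=u^{a}w$ and with the parameters $\tau$ and $\eps$ fixed above. Since $k=\dim\Sigma_2=4$ we have $\tfrac{k-1}{4}=\tfrac34$, so the hypothesis of that theorem reads
\[
\ric_{\Sigma_2}(e,e)-\tau f^{-1}\lap_{\Sigma_2}f+\Big[\tau-\Big(\tfrac34+\eps\Big)\tau^{2}\Big]\,|\grad_{\Sigma_2}\ln f|^{2}\ge\kappa>0
\]
for every unit $e\in T\Sigma_2$, and by condition~(iv) the coefficient of $|\grad_{\Sigma_2}\ln f|^{2}$ is at least $\tfrac{\tau}{4}>0$. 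Since $f=u^{a}w$, I would expand $f^{-1}\lap_{\Sigma_2}f=\lap_{\Sigma_2}\ln f+|\grad_{\Sigma_2}\ln f|^{2}$ with $\ln f=a\ln u+\ln w$, and then substitute the three structural inequalities already available on the slice: the stability inequality $\lap_{\Sigma_1}u+(|A_{\Sigma_1}|^{2}+\ric_{\overline M}(\eta,\eta))u\le0$ for the area-minimizing $\Sigma_1$, the $\mu$-bubble second-variation inequality for $\div_{\Sigma_2}(u^{a}\grad_{\Sigma_2}w)$ derived just above, and the first-variation identity $H_{\Sigma_2}=h-au^{-1}\la\grad_{\Sigma_1}u,\nu\ra$. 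A key cancellation is that the terms $au^{-1}\lap_{\Sigma_2}u$ coming from $f^{-1}\lap_{\Sigma_2}f$ and from the second-variation bound cancel, so no intrinsic Laplacian of $u$ on $\Sigma_2$ survives.

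Next I would fix $p\in\Sigma_2$ and, for a given unit $e\in T_p\Sigma_2$, choose an orthonormal frame $e_{1}=\eta$, $e_{2}=\nu$, $e_{3}=e$, $e_{4},e_{5},e_{6}$ with $\{e_{2},\dots,e_{6}\}$ spanning $T_p\Sigma_1$ and $\{e_{3},\dots,e_{6}\}$ spanning $T_p\Sigma_2$. The gradient term $\la\grad_{\Sigma_1}h,\nu\ra$ produced by the $\mu$-bubble is absorbed by completing the square against $\tfrac{\alpha}{2}H_{\Sigma_2}^{2}$: one uses $H_{\Sigma_2}=h-au^{-1}\la\grad_{\Sigma_1}u,\nu\ra$ to trade $h\,\la\grad_{\Sigma_1}u,\nu\ra$ for $H_{\Sigma_2}$ and then invokes the pointwise inequality $2+\alpha h^{2}-\|\grad_{\Sigma_1}h\|\ge0$, exactly as in the codimension-two and $n=5$ cases. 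After these manipulations the left-hand side is bounded below by
\[
\ric_{\overline M}(e_{1},e_{1})+\ric_{\Sigma_1}(e_{2},e_{2})+\ric_{\Sigma_2}(e_{3},e_{3})-(\text{error})+(\text{quadratic form in }A_{\Sigma_1},\ A_{\Sigma_2},\ \grad\ln u),
\]
where the three Ricci terms carry coefficients $\tau a$, $\tau$, $1$; the ``error'' collects the discrepancies $(1-\tau a)\ric_{\overline M}(\eta,\eta)$ and $(1-\tau)\ric_{\Sigma_1}(\nu,\nu)$, which I would estimate term by term as ambient curvatures with small coefficients $1-\tau a\le1-a^{2}$ and $1-\tau\le1-a^{2}$, hence by condition~(i) each contributes at most $\tfrac{1}{100}$.

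Then I would invoke \cite[Lemma~3.8]{brendle2024generalization} to rewrite the coefficient-one Ricci sum as $C_{3}(e_{1},e_{2},e_{3})$ plus second-fundamental-form terms, and use the normalization $C_{3}\ge3$. It remains to check that the accumulated second-fundamental-form and gradient terms form a nonnegative expression. For the minimal hypersurface $\Sigma_1$ this uses the Brendle--Hirsch--Johne estimate \cite[Lemma~3.11]{brendle2024generalization}; for the non-minimal $\mu$-bubble $\Sigma_2$ it uses a completing-the-square rearrangement. What is left is, for each choice of $e_{3}$, a quadratic form in three scalars --- roughly a trace-type component of $A_{\Sigma_2}$, the normal component $\la\grad_{\Sigma_1}\ln u,\nu\ra$, and the tangential component $\la\grad_{\Sigma_2}\ln u,e_{3}\ra$ --- whose Gram matrix is, up to harmless lower-order terms, the $3\times3$ matrix in condition~(iii), which degenerates as $\tau\to1$, $\alpha\to0$ to the matrix in condition~(ii); positive definiteness then gives the required nonnegativity. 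Combining the pieces, $\kappa$ can be taken to be a fixed positive number independent of $L$, and Theorem~\ref{Shen-Ye-Higher-Dimension} bounds $\diam(\Sigma_2)$ in terms of $M$ and the fixed parameters alone.

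I expect the last step --- matching every residual second-fundamental-form and gradient term against the entries of the $3\times3$ matrices --- to be the main obstacle. Unlike the $n=5$, $m=3$ case, where $\Sigma_2$ is three-dimensional and the cleaner Theorem~\ref{Theorem:generalized-Bonnet-Myers} applies with no curvature pinching and no fractional weight, here the unavoidable loss $\tfrac34\tau^{2}|\grad_{\Sigma_2}\ln f|^{2}$ in the four-dimensional Shen--Ye inequality forces one to retain the gradient-of-weight terms rather than discard them; this is precisely why the $\mu$-bubble weight is $u^{a}$ for a fractional $a<1$, why $h$ is built from the $\alpha$-corrected function $k(t)$, and why $a,\tau,\alpha,\eps$ must be chosen so that all three matrices are positive definite. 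Verifying that this bookkeeping closes --- in particular that the ambient-curvature error is genuinely absorbed by~(i) and that no uncontrolled cross term survives the completing-the-square steps --- is where the argument is most delicate.
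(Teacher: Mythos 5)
Your overall strategy coincides with the paper's: apply Theorem \ref{Shen-Ye-Higher-Dimension} to the four-dimensional $\Sigma_2$ with weight $f=u^a w$, use condition (iv) to reduce to a lower bound on $\ric_{\Sigma_2}-\tau f^{-1}\lap_{\Sigma_2}f+\frac{\tau}{4}\vert\grad_{\Sigma_2}\ln f\vert^2$, cancel the intrinsic $\lap_{\Sigma_2}u$ terms, control the ambient-curvature discrepancies by condition (i), invoke \cite[Lemma 3.8]{brendle2024generalization} to produce $C_3(e_1,e_2,e_3)$, and beat the $h$-terms using $2+\alpha h^2-\|\grad_{\Sigma_1}h\|\ge 0$. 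But the part you yourself flag as the main obstacle is exactly the content of the paper's proof, and where you do name a tool it is the wrong one. You propose to dispose of the $\Sigma_1$ second-fundamental-form terms via \cite[Lemma 3.11]{brendle2024generalization}; in this case, however, the coefficient in front of $\vert A_{\Sigma_1}\vert^2$ is only $a\tau<1$ (and the sum attached to $e_2$ carries coefficient $\tau<1$), so that lemma, which needs the full $\vert A_{\Sigma_1}\vert^2$, does not apply. The paper instead proves nonnegativity of
\[
\mathcal B_1 = a\tau\vert A_{\Sigma_1}\vert^2+\tau\sum_{q=3}^6\left[A^{\Sigma_1}_{22}A^{\Sigma_1}_{qq}-(A^{\Sigma_1}_{2q})^2\right]+\sum_{q=4}^6\left[A^{\Sigma_1}_{33}A^{\Sigma_1}_{qq}-(A^{\Sigma_1}_{3q})^2\right]
\]
by a bespoke computation (Lemma \ref{lemB1}) using minimality of $\Sigma_1$, which closes only because $5\tau+\frac{5}{2}\tau a\ge\frac{29}{4}$; this is precisely where the threshold $a>\frac{\sqrt{390}}{10}-1$ enters, and your outline offers no substitute for it.

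Similarly, your description of the final quadratic form is off target: in the paper (Lemma \ref{lemB2}) the normal derivative $u^{-1}\la\grad_{\Sigma_1}u,\nu\ra$ is eliminated through the first variation identity $H_{\Sigma_2}=h-au^{-1}\la\grad_{\Sigma_1}u,\nu\ra$, producing the term $\frac{\tau}{a}(H_{\Sigma_2}-h)^2$, and the residual form is in the variables $H_{\Sigma_2}$, $h$, and the traceless-$A_{\Sigma_2}$ coordinate $z$ (a Mazet-style decomposition), not in gradient components of $\ln u$; condition (iii) then yields the strict surplus $\mathcal B_2\ge\alpha h^2$, which is what dominates $\tau\|\grad_{\Sigma_1}h\|\le\tau(2+\alpha h^2)$ via (\ref{eqn:new-h-inequality}) --- there is no separate ``completing the square against $\frac{\alpha}{2}H_{\Sigma_2}^2$'' step. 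Since Lemmas \ref{lemB1} and \ref{lemB2} constitute the delicate heart of Proposition \ref{n6m3diameter} and neither is actually carried out in your proposal (while the one concrete mechanism you cite for the first would fail), the argument as written has a genuine gap.
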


\begin{proof}
We are going to apply the Shen-Ye generalized Bonnet-Myers theorem with test function $u^a w$. Fix an orthonormal basis $\{e_1,e_2,e_3,e_4,e_5,e_6\}$ for $\overline M$ so that $e_1 = \eta$ and $e_2 = \nu$. By condition (iv), it suffices to  show that
\[
\ric_{\Sigma_2}(e_3,e_3) - \tau \frac{\lap_{\Sigma_2}(u^a w)}{u^a w} + \frac{\tau}{4} \vert \grad_{\Sigma_2} \ln(u^a w)\vert^2 \ge \kappa > 0
\]
for some $\kappa$ which does not depend on $L$. 
We have 
\begin{align*}
&\ric_{\Sigma_2}(e_3,e_3) - \tau \frac{\lap_{\Sigma_2}(u^a w)}{u^a w} + \frac{\tau}{4} \vert \grad_{\Sigma_2} \ln(u^a w)\vert^2 \\
&\qquad = \ric_{\Sigma_2}(e_3,e_3) - \tau \frac{\div_{\Sigma_2}(u^a \grad_{\Sigma_2}w)}{u^a w} - \tau \frac{\div_{\Sigma_2}(w \grad_{\Sigma_2}u^a)}{u^a w} + \frac{\tau}{4} \left\vert\frac{\grad_{\Sigma_2} u^a}{u^a} + \frac{\grad_{\Sigma_2} w}{w}\right\vert^2.
\end{align*}
Now observe that \begin{align*}
&- \tau \frac{\div_{\Sigma_2}(w \grad_{\Sigma_2}u^a)}{u^a w} + \frac{\tau}{4} \left\vert\frac{\grad_{\Sigma_2} u^a}{u^a} + \frac{\grad_{\Sigma_2} w}{w}\right\vert^2\\
&\qquad\quad  = -\tau \left[\left\la \frac{\grad_{\Sigma_2} u^a}{u^a}, \frac{\grad_{\Sigma_2} w}{w}\right\ra + a(a-1)\frac{\vert \grad_{\Sigma_2} u\vert^2}{u^2} + a \frac{\lap_{\Sigma_2} u}{u} \right] + \frac{\tau}{4} \left\vert\frac{\grad_{\Sigma_2} u^a}{u^a} + \frac{\grad_{\Sigma_2} w}{w}\right\vert^2\\
&\qquad\quad \ge -a\tau  \frac{\lap_{\Sigma_2} u}{u} -\tau \left\la \frac{\grad_{\Sigma_2} u^a}{u^a}, \frac{\grad_{\Sigma_2} w}{w}\right\ra + \frac{\tau}{4} \left\vert\frac{\grad_{\Sigma_2} u^a}{u^a} + \frac{\grad_{\Sigma_2} w}{w}\right\vert^2\\
&\qquad \quad = -a\tau \frac{\lap_{\Sigma_2} u}{u} + \frac{\tau}{4} \left\vert\frac{\grad_{\Sigma_2} u^a}{u^a} - \frac{\grad_{\Sigma_2} w}{w}\right\vert^2 \ge -a\tau  \frac{\lap_{\Sigma_2} u}{u},
\end{align*}
where we used the fact that $a < 1$. Thus we have 
\begin{align*}
&\ric_{\Sigma_2}(e_3,e_3) - \tau \frac{\lap_{\Sigma_2}(u^a w)}{u^a w} + \frac{\tau}{4} \vert \grad_{\Sigma_2} \ln(u^a w)\vert^2 \\
&\qquad \quad \ge \ric_{\Sigma_2}(e_3,e_3) + \tau \vert A_{\Sigma_2}\vert^2 + \tau \ric_{\Sigma_1}(e_2,e_2) + a \tau \vert A_{\Sigma_1}\vert^2 + a\tau \ric_{\overline M}(e_1,e_1) \\
&\qquad \qquad \qquad + \tau \la \grad_{\Sigma_1}h, e_2\ra + a \tau H_{\Sigma_2} u^{-1} \la \grad_{\Sigma_1} u,e_2\ra + a\tau u^{-2} \la \grad_{\Sigma_1} u,e_2\ra^2\\
&\qquad\quad =  \ric_{\Sigma_2}(e_3,e_3) + \ric_{\Sigma_1}(e_2,e_2) + \ric_{\overline M}(e_1,e_1) +  \mathcal R\\
&\qquad\qquad\qquad  + \tau \vert A_{\Sigma_2}\vert^2 + a \tau \vert A_{\Sigma_1}\vert^2 + (\tau-1) \sum_{q=3}^6 \left[A^{\Sigma_1}_{22} A^{\Sigma_1}_{qq} - (A^{\Sigma_1}_{2q})^2 \right]  
\\
&\qquad\qquad\qquad  +  \tau \la \grad_{\Sigma_1} h ,e_2\ra  + a\tau H_{\Sigma_2} u^{-1}\la \grad_{\Sigma_1}u,e_2\ra + a\tau u^{-2}\la \grad_{\Sigma_1} u,e_2\ra^2,
\end{align*}  where $\mathcal R$ denotes a sum of 9 terms of the form $(\tau - 1)R_{\overline M}(X,Y,X,Y)$ or $(a\tau - 1)R_{\overline M}(X,Y,X,Y)$ and therefore satisfies $\vert \mathcal R\vert \le \frac 1 2$ by condition (i) on $a$.

Define the quantity \begin{align*}
\mathcal K := &\ric_{\Sigma_2}(e_3,e_3) + \ric_{\Sigma_1}(e_2,e_2) + \ric_M(e_1,e_1) \\
&\quad  + \tau \vert A_{\Sigma_2}\vert^2 + a \tau \vert A_{\Sigma_1}\vert^2 + (\tau-1) \sum_{q=3}^6 \left[A^{\Sigma_1}_{22} A^{\Sigma_1}_{qq} - (A^{\Sigma_1}_{2q})^2 \right] 
\\
&\quad  +  \tau \la \grad_{\Sigma_1} h ,e_2\ra  + a\tau H_{\Sigma_2} u^{-1}\la \grad_{\Sigma_1}u,e_2\ra + a\tau u^{-2}\la \grad_{\Sigma_1} u,e_2\ra^2. 
\end{align*}
By the bound on $\mathcal R$, to prove the proposition, it suffices to show that $\mathcal K \ge 1$. According to \cite[Lemma 3.8]{brendle2024generalization}, we have 
\begin{align}
\label{eqn:K}
    \mathcal K = C_3(e_1,e_2,e_3) + \mathcal B_1 + \mathcal B_2 + \tau \la \grad_{\Sigma_1}h,e_2\ra,
\end{align}
where 
\[
\mathcal B_1 = a\tau \vert A_{\Sigma_1}\vert^2 + \tau \sum_{q=3}^6 \left[ A^{\Sigma_1}_{22}A^{\Sigma_1}_{qq}-(A^{\Sigma_1}_{2q})^2\right] + \sum_{q=4}^6 \left[A^{\Sigma_1}_{33}A^{\Sigma_1}_{qq}-(A^{\Sigma_1}_{3q})^2\right]
\]
and 
\[
\mathcal B_2 = \tau \vert A_{\Sigma_2}\vert^2 +\sum_{q=4}^6  \left[A^{\Sigma_2}_{33}A^{\Sigma_2}_{qq} - (A^{\Sigma_2}_{pq})^2\right] + a\tau H_{\Sigma_2} u^{-1} \la \grad_{\Sigma_1} u,e_2\ra + a\tau u^{-2}\la \grad_{\Sigma_1} u ,e_2\ra^2.
\]
Next we focus on obtaining lower bounds for $\mathcal B_1$ and $\mathcal B_2$.

\begin{lem}
    \label{lemB1}The quantity $\mathcal B_1$ is non-negative.
\end{lem}

\begin{proof}
Dropping the subscript and superscript $\Sigma_1$'s, we have 
\begin{align*}
\mathcal B_1 = a\tau \vert A\vert^2 + \tau \sum_{q=3}^6 \left[ A_{22}A_{qq}-A_{2q}^2\right] + \sum_{q=4}^6 \left[A_{33}A_{qq}-A_{3q}^2\right].
\end{align*} 
Since $\Sigma_1$ is minimal and $a\tau>a^2>1/2$,  it follows that 
\begin{align*}
\mathcal B_1 &\ge  a\tau (A_{22}^2 + A_{33}^2 + A_{44}^2 + A_{55}^2 + A_{66}^2) + \tau (A_{22}A_{33} + A_{22}A_{44}+A_{22}A_{55}+A_{22}A_{66}) \\
&\qquad  + (A_{33}A_{44} + A_{33}A_{55} + A_{33}A_{66})\\&= (a\tau - \frac 1 2)(A_{22}^2 + A_{33}^2 + A_{44}^2 + A_{55}^2 + A_{66}^2) + \frac 1 2 (A_{22}+A_{33}+A_{44}+A_{55}+A_{66})^2 \\
&\qquad + (\tau-1)(A_{22}A_{33}+A_{22}A_{44}+A_{22}A_{55}+A_{22}A_{66}) -A_{44}A_{55}-A_{44}A_{66}-A_{55}A_{66}\\
&= (a\tau - \frac 1 2)(A_{22}^2 + A_{33}^2 + A_{44}^2 + A_{55}^2 + A_{66}^2)  \\
&\qquad + (\tau-1)(A_{22}A_{33}+A_{22}A_{44}+A_{22}A_{55}+A_{22}A_{66}) -A_{44}A_{55}-A_{44}A_{66}-A_{55}A_{66}.
\end{align*}
Again, since $\Sigma_1$ is minimal, we have 
\[
A_{22}^2 + A_{33}^2 \ge \frac{1}{2}(A_{22}+A_{33})^2 = \frac{1}{2}(A_{44} + A_{55} + A_{66})^2. 
\]
Thus we obtain 
\begin{align*}
\mathcal B_1 &\ge 2(1-\tau) (A_{22}^2+A_{33}^2+A_{44}^2+A_{55}^2+A_{66}^2) + (\tau-1)(A_{22}A_{33}+A_{22}A_{44}+A_{22}A_{55}+A_{22}A_{66}) \\
&\qquad + \frac 3 2 (2\tau - \frac 5 2 + a\tau)(A_{44}^2 + A_{55}^2 + A_{66}^2)  + (2\tau - \frac 7 2 + a\tau)(A_{44}A_{55}+A_{44}A_{66}+A_{55}A_{66})\\
&\ge \frac 3 2 (2\tau - \frac 5 2 + a\tau)(A_{44}^2 + A_{55}^2 + A_{66}^2)  + (2\tau - \frac 7 2 + a\tau)(A_{44}A_{55}+A_{44}A_{66}+A_{55}A_{66}).
\end{align*}
This will be non-negative as long as 
\[
\frac{3}{2}(2\tau - \frac 5 2 + a\tau) \ge \frac 7 2 - 2\tau - a\tau. 
\]
This is equivalent to 
\[
5\tau + \frac 5 2 \tau a \ge \frac{29}{4},
\]
which holds since $\tau > a$ and $a > \frac{\sqrt{390}}{10}-1$. 
\end{proof}

\begin{lem}
    \label{lemB2}The quantity $\mathcal B_2$ satisfies $\mathcal B_2 \ge \alpha h^2$.
\end{lem}

\begin{proof}
Using $H_{\Sigma_2} = h -au^{-1}\la \grad_{\Sigma_1} u,\eta\ra$, and dropping the subscript and superscript $\Sigma_2$'s, we have 
\begin{align*}
\mathcal B_2 - \alpha h^2 = \tau \vert A\vert^2 +\sum_{q=4}^6  \left[A_{33}A_{qq} - (A_{pq})^2\right] + \tau H(h-H) + \frac{\tau}{a} (H-h)^2 - \alpha h^2. 
\end{align*}
Now we analyze this quadratic form as in Mazet \cite{mazet2024stable}. Write 
\[
A = \begin{pmatrix}
\frac{H}{4} + \Phi_{33} & A_{34} & A_{35} & A_{36} \\
A_{43} & \frac{H}{4} + \Phi_{44} & A_{45} & A_{46}\\
A_{53} & A_{54} & \frac{H}{4} + \Phi_{55} & A_{56}\\
A_{63} & A_{64} & A_{65} & \frac H 4 + \Phi_{66}
\end{pmatrix}.
\]
Then, since $\tau \ge \frac 1 2$, we have 
\begin{align*}
&\tau \vert A\vert^2 +\sum_{q=4}^6  \left[A_{33}A_{qq} - (A_{pq})^2\right] + \tau H(h-H) + \frac{\tau}{a} (H-h)^2 - \alpha h^2\\
&\qquad \ge \tau \frac{H^2}{4} + \tau \vert \Phi\vert^2 + A_{33}(H-A_{33}) + \tau H (h-H) + \frac{\tau}{a}(H-h)^2 - \alpha h^2\\
& \qquad = \tau \frac{H^2}{4} + \tau \vert \Phi\vert^2 + (\frac{H}{4} + \Phi_{33})(\frac{3}{4}H - \Phi_{33}) + \tau H(h-H) + \frac{\tau}{a}(H-h)^2 - \alpha h^2\\
&\qquad = \left(\frac 3 {16} - \frac 3 4 \tau + \frac{\tau}{a}\right) H^2 + \left(\frac{\tau}{a}-\alpha\right) h^2 + \tau\vert \Phi\vert^2 - \Phi_{33}^2 + \left(\tau - \frac{2\tau}{a}\right)Hh + \frac{1}{2}H \Phi_{33}.
\end{align*} 
Let $E = \{(x_3,x_4,x_5,x_6) \in \R^4: x_3 + x_4 + x_5 + x_6 = 0\}$.  Then any vector in $E$ can be written in the form 
\[
\frac{1}{\sqrt 2}\begin{pmatrix} 
0 \\ 0 \\ 1 \\ -1 
\end{pmatrix} y_1 +
\frac{1}{\sqrt 6} \begin{pmatrix}
0 \\ 2 \\ -1 \\ -1 
\end{pmatrix}y_2 + 
\frac{1}{\sqrt{12}}\begin{pmatrix}
3 \\ -1 \\ -1 \\ -1
\end{pmatrix} z.
\]
Expressing $\Phi$ in these coordinates, the previous quadratic form is at least 
\[
\left(\frac 3 {16} -\frac 3 4 \tau + \frac{\tau}{a}\right) H^2 + \left(\frac{\tau}{a} - \alpha\right) h^2 + \left(\tau - \frac 3 4\right)z^2 + \left(\tau - \frac{2\tau}{a}\right)Hh +  \frac{\sqrt 3}{4} H z.
\]
Condition (iii) on $a$, $\tau$, and $\alpha$ ensures that this quadratic form in $H$, $h$, and $z$ is positive definite. The lemma follows. 
\end{proof}

Combining (\ref{eqn:K}) with Lemma \ref{lemB1}, Lemma \ref{lemB2},  the assumption on the intermediate curvature, and (\ref{eqn:new-h-inequality}) we now obtain 
\[
\mathcal K \ge 3 + \alpha h^2 - \tau \|\grad_{\Sigma_1}h\| \ge 1.
\]
Proposition \ref{n6m3diameter} follows. 
\end{proof}

Finally, we can finish the proof of Theorem \ref{Theorem:n6m3}. Since $\bd {\Sigma_2} = 0$ and $\diam({\Sigma_2})$ is uniformly bounded and $H_{4}(\overline M,\Z) = 0$, it follows from Proposition \ref{Proposition:uniformly-acyclic} that ${\Sigma_2}$ bounds within an $R$-neighborhood for some constant $R$ that depends only on $\overline M$. Applying this argument to each component of $\bd \Omega$, we see that $\bd \Omega$ bounds within its $R$-neighborhood in $\overline M$. Since $d(\bd \Omega, \sigma) \ge L/4$ and $\bd \Omega$ is linked with $\sigma$ by construction, this is a contradiction provided we select $L > 2R$. This completes the proof.

\section{Mapping Version and Classification} \label{sec: mapping and classification}

In this section we prove a mapping version  and a refinement of Theorem \ref{Theorem:Main-Theorem} into a positive result. We start with the lifting Lemma proven by Chodosh-Li-Liokumovich \cite{chodosh2023classifying}.

\begin{lem}[Chodosh-Li-Liokumovich]\label{lem: lifting}
    Suppose that $X,N$ are closed oriented manifolds and $f: N\to X$ has non-zero degree. Letting $\bar{X}$ denote the universal covering of $X$, there exists a connected cover $\hat{N}\to N$ and a lift $\hat{f}:\hat{N}\to \bar{X}$ such that $\hat{f}$ is proper and $\deg \hat{f}=\deg f$.
\end{lem}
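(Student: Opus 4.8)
The plan is to obtain $\hat N$ and $\hat f$ by pulling back the universal cover. Let $\pi\colon\bar X\to X$ be the universal covering and form the fibre product
\[
P:=N\times_X\bar X=\{(y,\bar x)\in N\times\bar X:\ f(y)=\pi(\bar x)\},
\]
which is a closed subset of $N\times\bar X$ (it is the preimage of the diagonal under $f\times\pi$), together with its projections $\mathrm{pr}_N\colon P\to N$ and $\mathrm{pr}_{\bar X}\colon P\to\bar X$. Since $\pi$ is a covering map, $\mathrm{pr}_N$ is a covering map, so $P$ is a manifold, possibly disconnected. I would take $\hat N$ to be a connected component of $P$, set $q:=\mathrm{pr}_N|_{\hat N}\colon\hat N\to N$ and $\hat f:=\mathrm{pr}_{\bar X}|_{\hat N}\colon\hat N\to\bar X$. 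By construction $\pi\circ\hat f=f\circ q$, so $\hat f$ is a lift of $f$ in the required sense and $q$ is a connected covering of $N$.

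The substantive point is properness of $\hat f$. First, the projection $N\times\bar X\to\bar X$ is proper because $N$ is closed (the preimage of a compact $K\subset\bar X$ is $N\times K$), and $P$ is closed in $N\times\bar X$, so $\mathrm{pr}_{\bar X}\colon P\to\bar X$ is proper. A connected component of a manifold is closed, hence $\hat N$ is closed in $P$, and the restriction of a proper map to a closed subset is proper; therefore $\hat f$ is proper.

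For the degree I would first note that $\deg(\mathrm{pr}_{\bar X}\colon P\to\bar X)=\deg f$: picking $\bar x\in\bar X$ a regular value of $\mathrm{pr}_{\bar X}$ whose image $x=\pi(\bar x)$ is a regular value of $f$, the covering $\mathrm{pr}_N$ identifies $\mathrm{pr}_{\bar X}^{-1}(\bar x)$ with $f^{-1}(x)$, and since $\pi$ and $\mathrm{pr}_N$ are local orientation-preserving diffeomorphisms the local degrees agree. The deck group $\pi_1(X)$ acts on $P$ by $h\cdot(y,\bar x)=(y,h\bar x)$, permuting the components transitively (on fibres this is the translation action of $\pi_1(X)$ on $\pi_1(X)/f_*\pi_1(N)$) and intertwining $\mathrm{pr}_{\bar X}$ with the orientation-preserving deck transformation $h$ of $\bar X$; hence every component of $P$ contributes equally. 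Thus $\deg\hat f=\deg f$ precisely when $P$ is connected, i.e.\ when $f_*\colon\pi_1(N)\to\pi_1(X)$ is surjective; and one may reduce to this case at the outset by replacing $X$ with the connected cover of $X$ corresponding to the subgroup $f_*\pi_1(N)$, which is of finite index because $\deg f\neq 0$ and has the same universal cover $\bar X$, and to which $f$ lifts.

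I expect the properness step to be the crux — more precisely, the observation that properness of $\hat f$ is automatic on any component of the fibre product $P$, since $\mathrm{pr}_{\bar X}$ is already proper on all of $P$ thanks to compactness of $N$; the rest is essentially bookkeeping, namely checking that the chosen component $\hat N$ carries the full degree, which is taken care of by the transitivity of the deck action on components (equivalently, by the reduction to surjective $f_*$).
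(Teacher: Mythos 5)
The paper does not actually prove this lemma --- it is quoted from Chodosh--Li--Liokumovich and used as a black box --- so the relevant comparison is with the standard pullback argument, which is exactly what you run. Your construction of the fibre product $P=N\times_X\bar X$, the observation that $\mathrm{pr}_N$ is a covering map, and in particular your properness argument (compactness of $N$ makes $N\times\bar X\to\bar X$ proper, $P$ is closed in $N\times\bar X$, and a component is closed in $P$) are correct and cleanly organized; the equivariance argument showing that all components of $P$ carry the same degree is also fine.

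The genuine gap is in the final degree bookkeeping. Your own computation shows that a single component carries degree $\deg f/[\pi_1(X):f_*\pi_1(N)]$, and the proposed reduction --- replacing $X$ by the intermediate cover $X'$ corresponding to $f_*\pi_1(N)$ --- does not restore the claimed equality: the lift $f'\colon N\to X'$ satisfies $\deg f=[\pi_1(X):f_*\pi_1(N)]\cdot\deg f'$, so applying your construction to $f'$ produces $\deg\hat f=\deg f'$, not $\deg f$. In fact no choice of connected cover and lift can achieve the literal equality in general: take $N=S^3$, $X=L(3,1)$ and $f$ the universal covering map, so $\deg f=3$; the only connected cover of $N$ is $N$ itself, and every lift $\hat f\colon S^3\to S^3=\bar X$ is a deck transformation, hence of degree $1$. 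So the statement as reproduced here is itself a slight misquotation, and what your argument actually establishes is the correct (and only provable) version: there is a connected cover and a proper lift with $\deg\hat f=\deg f/[\pi_1(X):f_*\pi_1(N)]\neq 0$. That non-vanishing is all that is used later in the paper (the mapping corollary invokes only properness, the Lipschitz property, and $\deg\hat f\neq 0$), so your write-up should state and prove that version rather than presenting the coset reduction as yielding $\deg\hat f=\deg f$.
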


\begin{rem}
    It can be checked that $\hat{f}$ is a Lipschitz map.
\end{rem}

\begin{corollary}[Mapping Version]
Suppose $M^n$ is a closed  manifold and that the universal cover $\overline M$ of $M$ satisfies 
\[
H_n(\overline M,\Z) = H_{n-1}(\overline M,\Z) = \hdots = H_{n-m+1}(\overline M,\Z) = 0. 
\]
Assume further $N^n$ is a closed manifold with non-zero degree map to $M^n$,
then $N$ does not admit a metric of positive $m$-intermediate curvature in any of the following cases: $n\in\{3,4,5\}$ and $m\in \{1,2,\dots,n-1\}$;  $n = 6$ and $m\in\{1,2,\textcolor{teal}{3},4\}$; $n$ is arbitrary and $m = 1$.
\end{corollary}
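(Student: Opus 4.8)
The plan is to derive the mapping version from Theorem~\ref{Theorem:Main-Theorem}, or rather from its proof, together with the lifting Lemma~\ref{lem: lifting}. Suppose for contradiction that $N^n$ carries a metric $g$ with positive $m$-intermediate curvature in one of the listed cases. Applying Lemma~\ref{lem: lifting} to the non-zero degree map $f\colon N\to M$ produces a connected covering $\pi\colon\hat N\to N$ and a proper Lipschitz map $\hat f\colon\hat N\to\overline M$ with $\deg\hat f=\deg f\neq 0$. Pull back $g$ to a complete metric $\pi^*g$ on $\hat N$; since $g$ lives on a closed manifold its $m$-intermediate curvature is bounded below by a positive constant, and the same holds for $\pi^*g$. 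So it suffices to reproduce, inside $\hat N$, the contradiction obtained in the proof of Theorem~\ref{Theorem:Main-Theorem} with the universal cover $\overline M$ replaced by $\hat N$.

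The analytic part of that proof --- the area-minimizing filling $\Sigma_1$, the weighted $\mu$-bubbles $\Sigma_2$ and $\Sigma_3$, the Shen-Ye diameter estimates of Section~\ref{Section:diameter-bound}, the conformal-Ricci Frankel Theorem~\ref{Theorem:ConformalRicciFrankel}, and the slice-and-dice procedure --- uses only completeness of the ambient space and the lower bound on $C_m$, so it transfers verbatim to $\hat N$. Only two ingredients invoked the hypothesis on the universal cover of $M$: the existence of a geodesic line $\hat\sigma$ together with a null-homologous $\hat\Lambda^{n-2}$ linked with $\hat\sigma$ and at distance $\ge L$ (the analogue of Proposition~\ref{linking}), and the uniform acyclicity statement that $k$-cycles with $k\ge n-m+1$ supported in a ball of radius $r$ bound within a ball of radius $R(r)$, with $R$ independent of $L$ (the analogue of Proposition~\ref{Proposition:uniformly-acyclic}). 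These must be re-established on $\hat N$ by means of the map $\hat f$, exactly as in Chodosh-Li-Liokumovich \cite{chodosh2023classifying}.

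For the first ingredient, $\hat N$ is non-compact: otherwise $\hat f(\hat N)$ would be compact, forcing $\deg\hat f=0$ since $\overline M$ is non-compact (as $H_n(\overline M,\Z)=0$); hence $\hat N$ carries a geodesic line, and the linked cycle is produced by pushing forward large spheres by $\hat f$, filling them in $\overline M$ where Proposition~\ref{linking} applies, and pulling the configuration back through $\hat f$ using its non-zero degree. For the second ingredient, one works with $\Q$-coefficients: given a bounded-diameter cycle $\alpha$ in $\hat N$, its push-forward $\hat f_*\alpha$ bounds a controlled chain in $\overline M$ by Proposition~\ref{Proposition:uniformly-acyclic}, and because $\hat f$ is a proper Lipschitz map of non-zero degree $d$ between coverings of the closed manifolds $N$ and $M$, this chain can be pulled back through $\hat f$ to a chain in $\hat N$ bounding $d\cdot\alpha$ inside a region of controlled diameter; since the slicing and dicing contradiction only needs a non-zero multiple of the relevant cycle to bound within a fixed-size neighborhood disjoint from $\hat\sigma$, this suffices. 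With these two inputs in place, the diameter bound on the final slice, its filling in a bounded neighborhood of $\hat N$, and the contradiction with the linking between that filling and $\hat\sigma$ go through as before --- including the delicate $n=6$, $m=3$ case, which requires no new idea.

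The main obstacle is making the second ingredient rigorous: transporting uniform acyclicity from $\overline M$ to $\hat N$. Since $\hat N$ is merely a covering of an arbitrary closed manifold, its own homology may be arbitrarily complicated, so cycles in $\hat N$ cannot be filled directly and the map $\hat f$ must be used throughout; the delicate point is to verify that the pull-back through $\hat f$ of a controlled filling in $\overline M$ stays in a region of $\hat N$ that is simultaneously of diameter bounded independently of $L$ and disjoint from the line $\hat\sigma$. This is where one exploits that $\hat f$ is proper and that $N$ and $M$ are compact, which together force the relevant preimages to have size controlled uniformly in $L$.
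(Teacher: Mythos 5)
Your overall skeleton (lift via Lemma \ref{lem: lifting}, pull back the metric to $\hat N$, note that the minimizing hypersurfaces, $\mu$-bubbles, Shen--Ye diameter bounds and the Frankel theorem only use completeness and the uniform lower bound on $C_m$, and then use $\hat f$ to transfer the topological input) is the same as the paper's, which disposes of the corollary in essentially one sentence by appealing to the Chodosh--Li--Liokumovich scheme. However, your concrete plan for the crucial transfer step has a genuine gap. You propose to re-establish the analogue of Proposition \ref{Proposition:uniformly-acyclic} \emph{inside} $\hat N$ by pushing a bounded cycle $\alpha$ forward, filling $\hat f_*\alpha$ in $\overline M$, and then ``pulling the filling back through $\hat f$'' to a chain in $\hat N$ bounding $d\cdot\alpha$ in a controlled region. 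Pulling back singular chains along a map that is merely proper and Lipschitz is not a defined operation (transverse preimages only make sense for submanifolds and a submersion-type situation, and a homological transfer map satisfies $\hat f_*\hat f^{!}=\deg(\hat f)\cdot\mathrm{id}$, not $\hat f^{!}\hat f_*=\deg(\hat f)\cdot\mathrm{id}$, so it does not return a multiple of $\alpha$). Moreover, your justification that ``properness of $\hat f$ and compactness of $N$ and $M$ force the relevant preimages to have size controlled uniformly in $L$'' is not correct as stated: properness gives compactness of preimages, not a uniform diameter bound; what actually provides coarse control is the equivariance of the lift together with the finite index of $\mathrm{im}\, f_*$ (a consequence of $\deg f\neq 0$), and you neither state nor use this.

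The standard repair --- and what the paper's one-line proof is implicitly invoking --- is to never fill anything in $\hat N$ at all. Keep $\sigma$ and $\Lambda$ in $\overline M$ as produced by Proposition \ref{linking}; take $\hat\Lambda=\hat f^{-1}(\Lambda)$ (after a transverse perturbation), which bounds $\hat f^{-1}(\Sigma)$ in $\hat N$ because $\hat f$ is proper; build the warping functions for the $\mu$-bubbles from $\rho=d_{\overline M}(\hat f(\cdot),\sigma)$ rather than from an intrinsic geodesic line $\hat\sigma\subset\hat N$ (being far from $\hat f^{-1}(\sigma)$ in $\hat N$ does not force the image to be far from $\sigma$, whereas the pulled-back distance function does, and it is Lipschitz since $\hat f$ is); then push the bounded-diameter slices forward by $\hat f$, fill their images in $\overline M$ using Proposition \ref{Proposition:uniformly-acyclic}, and derive the contradiction from an intersection-number computation with $\sigma$ carried out entirely in $\overline M$, where $\deg\hat f\neq 0$ guarantees the relevant intersection number is nonzero while the constructed filling avoids $\sigma$. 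With the bookkeeping kept downstairs in this way, neither the homology of $\hat N$ nor any pull-back of chains is ever needed; as written, your argument does not close this step.
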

\begin{proof}
    Since $\hat{f}$ is proper with non-zero degree and since it is a Lipschitz map, all the diameter estimate arguments are true in the connected cover $\hat{N}$ given by Lemma \ref{lem: lifting}, the result then follows.
\end{proof}

We now summarize the filling estimates for $n=6,m=4$ in previous sections as follows.

\begin{theorem}[Filling Estimates]\label{thm: fillingestimate}
    Suppose $(N^6,g)$ is a closed manifold with positive $4$-intermediate curvature no less than $\frac{3}{2}$. Fix a connected Riemannian cover $\hat{N}$:

         Consider a closed embedded $4$-manifold $\hat{\Lambda}$ such that $[\hat{\Lambda}]=0\in H_4(\hat{N};\mathbb Z)$. Then there exists a $5$-chain $\hat{\Sigma}\subset B_L(\hat{\Lambda})$ and a 
        $4$-dimensional submanifold $\hat{\Gamma}$ such that
        \[
        \partial\hat{\Sigma}=\hat{\Lambda}-\hat{\Gamma}
        \]
        as chains.

        Furthermore, there are $4$-chains $\hat{K}_1,\cdots, \hat{K}_s$ with diameter bounded by $L$ and $3$-cycles $\{\hat{\Upsilon}_j^l\}$ where $j=1,2,\cdots, s$ and $l=1,2,\cdots, n(j)$ such that
        \[
         \hat{\Gamma}=\sum_{j=1}^s\hat{K}_j, \quad \partial\hat{K}_j=\sum_{l=1}^{n(j)}\hat{\Upsilon}_j^l,
       \]
       where $\Upsilon_j^l$ has diameter bounded by $L$, and the above equalities hold as chains.

       Finally, there is an integer $q$ and a function 
       \[
       u:\{(j,l):j=1,2,\cdots, s, l=1,\cdots, n(j)\}\to \{1,2,\cdots, q\}
       \]
       such that for any $t\in\{1,2,\cdots, q\}$,
       \[
       \diam\left(\cup_{(j,l)\in u^{-1}(t)}\hat{\Upsilon}_j^l\right)\leq L
       \]
       and 
       \[
       \sum_{u(j,l)=u^{-1}(r)}\hat{\Upsilon}_j^l=0
       \]
       as chains.
\end{theorem}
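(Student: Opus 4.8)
The plan is to re-run, essentially verbatim, the construction that proves Theorem~\ref{thm: m4n6}, with the distance function $d_{\overline M}(\cdot,\sigma)$ replaced throughout by $d_{\hat N}(\cdot,\hat\Lambda)$, and to stop one step before the end: in Theorem~\ref{thm: m4n6} Proposition~\ref{Proposition:uniformly-acyclic} was finally invoked to fill the small pieces and reach a contradiction with the linking of $\sigma$, whereas here those pieces are precisely what we want to record as output. Note first that the $4$-intermediate curvature bound descends to $\hat N$ since it is a pointwise condition and $\hat N\to N$ is a local isometry.

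First I would build the slices $\Sigma_1$ and $\Sigma_2$. Since $[\hat\Lambda]=0$ in $H_4(\hat N;\Z)$, there is a compact area-minimizing integral current $\Sigma_1^5$ with $\partial\Sigma_1=\hat\Lambda$; as $5\le 7$ it is a smooth minimal hypersurface away from $\hat\Lambda$, and its stability furnishes $u>0$ on $\Sigma_1$ with $\lap_{\Sigma_1}u+(\|A_{\Sigma_1}\|^2+\ric_{\hat N}(\eta,\eta))u\le 0$. If $\Sigma_1\subset B_{L/2}(\hat\Lambda)$ (in particular if $\Sigma_1$ never meets $\{d_{\hat N}(\cdot,\hat\Lambda)=L/2\}$) we are done trivially by taking $\hat\Sigma=\Sigma_1$ and $\hat\Gamma=\emptyset$; otherwise $\Sigma_1$ crosses both levels $\{d_{\hat N}(\cdot,\hat\Lambda)=L/4\}$ and $\{d_{\hat N}(\cdot,\hat\Lambda)=L/2\}$. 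Then, letting $\rho_2$ be a smoothing of $d_{\hat N}(\cdot,\hat\Lambda)|_{\Sigma_1}$ with $\|\grad\rho_2\|\le 2$ and taking the prescribed-mean-curvature weight $h$ exactly as in the proof of Theorem~\ref{thm: m3n5}, arranged so that $\tilde\Sigma_1:=\Sigma_1\cap\{L/4\le\rho_2\le L/2\}$ is the region on which $h$ runs over $(-\infty,\infty)$ and $1+h^2-2\|\grad h\|\ge 0$, Proposition~\ref{prop: mu-bubble regularity} (weight $u$) produces a $\mu$-bubble $\Omega\subset\tilde\Sigma_1$; set $\Sigma_2:=\partial\Omega\cap\inter\tilde\Sigma_1$ (possibly disconnected) and let $w>0$ be its stability weight. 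Choosing $\Omega_0$ to contain the $\{\rho_2=L/4\}$-side of $\tilde\Sigma_1$, the region $W\subset\Sigma_1$ bounded by $\hat\Lambda$ and $\Sigma_2$ satisfies $\partial W=\hat\Lambda-\Sigma_2$ as chains and $W\subset\Sigma_1\cap\{\rho_2\le L/2\}\subset B_L(\hat\Lambda)$ for $L$ large (working componentwise if $\Sigma_1$ is disconnected, as in Theorem~\ref{thm: m4n6}).

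Next I would perform the slice-and-dice procedure on the components of $\Sigma_2$ with weight $uw$, exactly as in Theorem~\ref{thm: m4n6}: Lemma~\ref{lem: diameterslice4} gives stable critical points of $\mathcal A_2$ of diameter $\le 2\pi$ with finite fundamental group (hence $H_2=0$ by Poincar\'e duality and the universal coefficient theorem); Lemma~\ref{lem:surjective homology} cuts $\Sigma_2$ along $\Sigma_{3,1},\dots,\Sigma_{3,k}$ to get $\hat\Sigma_2$ with $H_3(\partial\hat\Sigma_2)\to H_3(\hat\Sigma_2)$ surjective; Proposition~\ref{prop: conformal positivity} together with Corollary~\ref{cor: freeboundarybubbble} (via Theorems~\ref{Theorem:generalized-Bonnet-Myers} and~\ref{Theorem:ConformalRicciFrankel}) supplies free-boundary $\mu$-bubbles with connected boundary and bounded diameter; and Proposition~\ref{prop:dice procedure} dices $\hat\Sigma_2$ into chunks $K_1,\dots,K_s$ with $\diam K_j\le 10\pi$, whose boundary components $\hat\Upsilon_j^1,\dots,\hat\Upsilon_j^{n(j)}$ are closed $3$-manifolds of uniformly bounded diameter (each a whole $\Sigma_{3,i}$, or a diced-out $\Upsilon_j$ capped along a region of some $\Sigma_{3,a}$, the cap existing because $H_2(\Sigma_{3,a};\Z)=0$). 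Since the chunks tile $\Sigma_2$ we have $\sum_j K_j=\Sigma_2$ as $4$-chains; thus, setting $\hat\Sigma:=W$, $\hat\Gamma:=\Sigma_2$, $\hat K_j:=K_j$, we obtain $\partial\hat\Sigma=\hat\Lambda-\hat\Gamma$, $\hat\Gamma=\sum_j\hat K_j$, $\partial\hat K_j=\sum_l\hat\Upsilon_j^l$, with all diameters $\le 10\pi\le L$. Finally, grouping the $\hat\Upsilon_j^l$ by the index $a$ of the slice $\Sigma_{3,a}$ from which they are built defines the function $u$ of the statement; as in the last part of the proof of Theorem~\ref{thm: m4n6}, the two copies of each wall contributed by the two sides of $\Sigma_{3,a}$ cancel, so $\sum_{(j,l):\,u(j,l)=a}\hat\Upsilon_j^l=0$ as chains.

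I expect the only real difficulty to be bookkeeping rather than geometry: one must check that replacing ``far from $\sigma$'' by ``inside $B_L(\hat\Lambda)$'' leaves every step intact --- in particular that the possibly large area minimizer $\Sigma_1$ enters $\hat\Sigma$ only through the portion $W\subset\{\rho_2\le L/2\}$, and that the degenerate case in which $\Sigma_1$ is too small to carry a $\mu$-bubble is absorbed by the trivial choice $\hat\Sigma=\Sigma_1$, $\hat\Gamma=\emptyset$. One should also re-verify the identity $\sum_{(j,l):\,u(j,l)=a}\hat\Upsilon_j^l=0$, which, exactly as in Section~\ref{sec: proof of main theorem}, rests on $H_2(\Sigma_{3,i};\Z)=0$ and on the surjectivity of $H_3(\partial\hat\Sigma_2)\to H_3(\hat\Sigma_2)$.
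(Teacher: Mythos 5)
Your proposal is correct and matches the paper's intended argument: Theorem \ref{thm: fillingestimate} is stated as a summary of the $n=6$, $m=4$ construction, and its proof is precisely to rerun the weighted slicing and slice-and-dice machinery of Section \ref{sec: proof of main theorem} with $d(\cdot,\hat\Lambda)$ in place of $d(\cdot,\sigma)$, stopping just before Proposition \ref{Proposition:uniformly-acyclic} is invoked. Your identifications $\hat\Sigma=W$, $\hat\Gamma=\Sigma_2$, $\hat K_j=K_j$, together with the grouping of the boundary $3$-cycles by the slice $\Sigma_{3,a}$ they meet and the resulting cancellation, are exactly the decomposition used in the final paragraph of the proof of Theorem \ref{thm: m4n6}.
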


\begin{theorem}
    Suppose $N^n$ is a closed manifold which admits positive $m$-intermediate curvature. Further suppose that either 
\begin{enumerate}
        \item  $n=5$, $m=2$, and $\pi_2(N^5)=0$; or
        \item  $n=6$, $m=4$, and $\pi_2(N^6)=\pi_3(N^6)=\pi_4(N^6)=0$. 
    \end{enumerate}
    Then a finite covering of $N$ is homeomorphic to $S^n$ or connected sum of $S^{n-1}\times S^1$. 
 \end{theorem}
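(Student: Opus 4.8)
The plan is to follow the reformulation of a $K(\pi,1)$-type statement as a classification theorem, as in Chodosh--Li--Liokumovich \cite{chodosh2023classifying}, splitting the argument according to whether $\pi_1(N)$ is finite or infinite. Passing to the orientation double cover if necessary (a finite cover of it is still a finite cover of $N$, and the curvature and homotopy hypotheses are inherited), I may assume $N$ is orientable. If $\pi_1(N)$ is finite, then the universal cover $\overline N$ is itself a finite cover of $N$; it is a closed simply connected $n$-manifold carrying a metric of positive $m$-intermediate curvature and satisfying the hypotheses on low homotopy groups. In case (1), $\pi_1(\overline N)=\pi_2(\overline N)=0$ together with Hurewicz and Poincar\'e duality force $\overline N$ to be a homotopy $5$-sphere; in case (2), $\pi_2=\pi_3=\pi_4=0$ make $\overline N$ $4$-connected, hence a homotopy $6$-sphere. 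By the topological Poincar\'e conjecture in dimensions $5$ and $6$, $\overline N\cong S^n$, so $\overline N\to N$ is the desired cover.

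Now assume $\pi_1(N)$ is infinite; the key claim is that $\pi_1(N)$ is virtually free. In case (2) this is where the Filling Estimates (Theorem \ref{thm: fillingestimate}) enter: feeding them into the slice-and-dice bookkeeping of \cite{chodosh2024generalized,chodosh2023classifying}, the uniform diameter bounds on the blocks $\hat K_j$ and on the cycles $\hat\Upsilon_j^l$, together with the fact (Corollary \ref{cor: freeboundarybubbble}) that the dicing pieces have \emph{connected} boundary, show that every null-homologous codimension-$2$ cycle in $\overline N$ bounds a chain assembled from uniformly-bounded-diameter blocks whose adjacency pattern is tree-like; equivalently, $\overline N$ admits a proper cobounded Lipschitz map to a tree, i.e.\ has macroscopic dimension one. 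In case (1) the same conclusion is cheaper: the Shen--Ye diameter bound for stable minimal hypersurfaces in manifolds with positive $2$-intermediate curvature (Theorem \ref{Theorem:Shen-Ye-BiRicci}) already yields a uniform diameter bound for area-minimizing hypersurfaces in $\overline N$, and one runs the corresponding, one-dimension-lower, dicing argument. A closed manifold with finitely presented fundamental group whose universal cover has macroscopic dimension one has virtually free fundamental group -- a consequence of Stallings' ends theorem and Dunwoody's accessibility theorem, exactly as in \cite{chodosh2023classifying}. Hence there is a finite cover $\hat N\to N$ with $\pi_1(\hat N)\cong F_k$, free of rank $k\ge 1$.

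It remains to identify $\hat N$. Lifting the hypotheses, $\hat N$ is a closed $n$-manifold with $\pi_1(\hat N)=F_k$ and, in case (2), $\pi_2=\pi_3=\pi_4=0$; in case (1) one additionally needs $\pi_3(\hat N)=0$, which follows, as in \cite{chodosh2023classifying}, from a further minimal-hypersurface descent exploiting the positive $2$-intermediate curvature of $\hat N$ (positive $2$-intermediate curvature and $\pi_2=0$ forcing $H_3(\overline{\hat N})=0$, hence $\pi_3=0$ by Hurewicz). In all cases $\overline{\hat N}$ is $(n-2)$-connected, so by Poincar\'e--Lefschetz duality on this noncompact manifold and a homology computation comparing with the corresponding cover of $\#_k(S^{n-1}\times S^1)$, one sees that $\hat N$ is homotopy equivalent to $\#_k(S^{n-1}\times S^1)$. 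Finally, $F_k$ satisfies the Farrell--Jones conjecture and has $\mathrm{Wh}(F_k)=\widetilde K_0(\Z F_k)=0$, so the topological structure set of $\#_k(S^{n-1}\times S^1)$ is a single point; therefore $\hat N$ is homeomorphic to $\#_k(S^{n-1}\times S^1)$ (the degenerate value $k=0$ would give $S^n$ but does not occur, since $\pi_1(N)$ is infinite). Combining with the finite-$\pi_1$ case completes the proof.

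The main obstacle, and the step carrying all of the geometry, is the virtually-free claim: extracting from the Filling Estimates a genuine tree-like decomposition of $\overline N$ with uniform control requires careful bookkeeping of how the bounded-diameter blocks are glued along their connected boundaries, and it is precisely here that the vanishing of $\pi_2,\pi_3,\pi_4$ is indispensable, as it removes the potential low-dimensional ``aspherical debris'' produced in the slicing and dicing. A secondary technical point, needed only in case (1), is the vanishing of $\pi_3$ on the finite free cover; the remaining topological input (the topological Poincar\'e conjecture, and surgery rigidity of $\#_k(S^{n-1}\times S^1)$) is standard.
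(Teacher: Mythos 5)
Your overall skeleton (show $\pi_1(N)$ is virtually free, then identify the topology of a suitable finite cover) is the same as the paper's, and in case (2) your virtually-free step is essentially the paper's: Theorem \ref{thm: fillingestimate} together with Proposition \ref{Proposition:uniformly-acyclic} gives the uniform filling of null-homologous codimension-two cycles in the universal cover (this is where $\pi_2=\pi_3=\pi_4=0$ enters, via Hurewicz, to get $H_4=0$), and then the level-set diameter bound and virtual freeness are quoted from Chodosh--Li--Liokumovich \cite{chodosh2023classifying}. In case (1) the paper does not run any dicing argument at all: it simply cites Ma \cite{ma2024urysohn} for virtual freeness. Your alternative (fill null-homologous $3$-cycles by area-minimizing hypersurfaces and use the Shen--Ye bound, Theorem \ref{Theorem:Shen-Ye-BiRicci}) is a plausible substitute, but you would still need to address existence of the minimizer in the noncompact cover and check that the \cite{chodosh2023classifying} machinery only requires fillings of null-homologous cycles; as written this part is a sketch, not the paper's route.

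The genuine gaps are in your identification step. First, in case (1) you need $\overline{\hat N}$ to be $(n-2)$-connected, i.e.\ $\pi_3(\hat N)=0$, which is \emph{not} a hypothesis, and your claim that ``positive $2$-intermediate curvature and $\pi_2=0$ force $H_3(\overline{\hat N})=0$'' by a further minimal-hypersurface descent is unsupported: no such statement appears in this paper or in \cite{chodosh2023classifying} (there the analogous vanishing of $\pi_3$ is an assumption, not a conclusion), and it is not clear how to prove it directly. The paper avoids this entirely by invoking Gadgil--Seshadri \cite{gadgil2009topology}, whose Theorem 1.3 in dimension $5$ needs only free fundamental group and $\pi_2=0$; the vanishing of $\pi_3$ is a consequence of the final classification, not an input. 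Second, your surgery-theoretic endgame is not a valid deduction as stated: Farrell--Jones for $F_k$ together with $\mathrm{Wh}(F_k)=\widetilde K_0(\Z F_k)=0$ does not by itself show that the topological structure set of the \emph{non-aspherical} manifold $\#_k(S^{n-1}\times S^1)$ is a point; topological rigidity of such connected sums requires additional input (Cappell-type splitting and vanishing of the relevant UNil terms, or a Kreck--L\"uck-type analysis), and even the prior claim that $\hat N$ is homotopy equivalent to $\#_k(S^{n-1}\times S^1)$ needs an argument. This is exactly the topological work the paper outsources to \cite{gadgil2009topology}, so without that citation (or a complete replacement for it) your proof of the final homeomorphism statement is incomplete.
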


\begin{proof} 
     The idea is to prove the fundamental group of $N$ is virtually free, and then with the aid of Theorem 1.3 in Gadgil-Seshadri \cite{gadgil2009topology}, we obtain the desired result.

    \textbf{Case 1: $n=5, m=2$}. 

    By Ma \cite[Theorem 1.6]{ma2024urysohn}, $\pi_1(N^5)$ is virtually free. Thus there exists a finite connected cover $\hat{N}$ of $N$ such that $\pi_1(\hat{N})$ is free. Since $\pi_2(N)=0$, by Gadgil- Seshadri \cite[Theorem 1.3]{gadgil2009topology}, the proof then follows.

    \textbf{Case 2: $n=6,m=4$}. 
    Since $\pi_2(N)=\pi_3(N)=\pi_4(N)=0$, by the Hurewicz Theorem, the universal covering $\tilde{N}$ of $N$ has trivial $4$th homology group. Then as a direct Corollary of Theorem \ref{thm: fillingestimate} and Proposition \ref{Proposition:uniformly-acyclic}, there exists an $L=L(N,g)>0$ such that for a closed embedded $4$-submanifold in $\tilde{N}$ is null-homologous in its $L$-neighborhood. By Chodosh-Li-Liokumovich \cite[Proposition 8]{chodosh2023classifying}, for any point $p\in \tilde{N}$, each connected component of a level set of $d(p,\cdot)$ has diameter bounded by $20 L$. By Chosdosh-Li-Liokumovich \cite[Corollary 14]{chodosh2023classifying}, $\pi_1(N)$ is virtually free. Since $\pi_2(N)=\pi_3(N)=0$, by Gadgil- Seshadri \cite[Theorem 1.3]{gadgil2009topology}, the proof then follows.
\end{proof} 

    With Lemma \ref{lem: lifting}, we have the following mapping version classification.

\begin{corollary}
    Suppose $N^n$ is a closed manifold admits positive $m$-intermediate curvature, assume further there exists a non-zero degree map $f: N\to X$, where $X$ is a closed manifold satisfying 
    \begin{enumerate}
        \item when $n=5$, $m=2$, $\pi_2(X^5)=0$;
        \item when $n=6$, $m=4$, $\pi_2(X^6)=\pi_3(X^6)=\pi_4(X^6)=0$.
    \end{enumerate}
    Then a finite covering of $N$ is homeomorphic to $S^n$ or connected sum of $S^{n-1}\times S^1$.
\end{corollary}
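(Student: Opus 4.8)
The plan is to reduce this to the classification theorem for $N$ proved immediately above, using the lifting Lemma~\ref{lem: lifting}. Given the non-zero degree map $f\colon N\to X$, I would apply Lemma~\ref{lem: lifting} to obtain a connected cover $\hat N\to N$ and a \emph{proper} Lipschitz lift $\hat f\colon \hat N\to\bar X$ with $\deg\hat f=\deg f\neq 0$, where $\bar X$ denotes the universal cover of $X$. Pulling back the metric, $\hat N$ still has positive $m$-intermediate curvature. Since $\bar X$ is simply connected, the Hurewicz theorem converts the homotopy hypotheses on $X$ into homology vanishing for $\bar X$: one gets $H_2(\bar X;\Z)=0$ when $n=5$, $m=2$, and $H_2(\bar X;\Z)=H_3(\bar X;\Z)=H_4(\bar X;\Z)=0$ when $n=6$, $m=4$. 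In particular $\bar X$ is (uniformly) acyclic through degree $n-2$, which is precisely the topological input under which the weighted-slicing, Shen--Ye diameter, and slice-and-dice estimates of Sections~\ref{sec: Frankel}--\ref{sec: proof of main theorem} were run.

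Next I would repeat the proof of the previous theorem with $\hat N$ in place of $N$ and the proper Lipschitz lift $\hat f\colon\hat N\to\bar X$ playing the role the universal cover of $N$ played there. Because $\hat f$ is proper, Lipschitz, and of non-zero degree, the Filling Estimates of Theorem~\ref{thm: fillingestimate} together with Proposition~\ref{Proposition:uniformly-acyclic} (applied to $\bar X$) furnish a constant $L=L(N,g)$ so that in $\hat N$ every relevant null-homologous closed embedded submanifold bounds within its $L$-neighborhood; this is the mapping-version transfer already used in the earlier Mapping Version corollary and carried out by Chodosh--Li--Liokumovich \cite{chodosh2023classifying}. Then \cite[Proposition~8]{chodosh2023classifying} bounds the diameter of the components of distance level sets in $\hat N$ by $20L$, and \cite[Corollary~14]{chodosh2023classifying} — together with Ma \cite[Theorem~1.6]{ma2024urysohn} in the case $n=5$, $m=2$ — shows that $\pi_1(N)$ is virtually free. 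Passing to a finite cover $N'$ of $N$ with $\pi_1(N')$ free and using the relevant low-homotopy vanishing, Gadgil--Seshadri \cite[Theorem~1.3]{gadgil2009topology} then identifies $N'$ with $S^n$ or a connected sum of copies of $S^{n-1}\times S^1$, exactly as before.

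The main obstacle, and the only genuinely new point beyond the non-mapping theorem, is the transfer in the second paragraph: the diameter and filling estimates were established for the universal cover of an $m$-acyclic manifold, whereas here only the \emph{target} $\bar X$ of the map $\hat f$ is acyclic, and $\hat f$ is not a covering map, so one cannot simply lift fillings along it. The fix is to exploit that $\hat f$ is proper (so the relevant cycles in $\hat N$ are null-homologous and the sublevel-set geometry is controlled), Lipschitz (so small-diameter cycles push to small-diameter cycles in $\bar X$), and of non-zero degree — precisely the package provided by Lemma~\ref{lem: lifting} and the techniques of \cite{chodosh2023classifying}. As a secondary point, one should also verify that the low-dimensional homotopy information needed for the Gadgil--Seshadri step in dimensions $5$ and $6$ is available in the mapping setting; once that and the estimate transfer are in hand, every remaining step is formally identical to the proof of the previous theorem.
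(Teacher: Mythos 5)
Your outline does follow the route the paper intends (the paper offers no argument beyond invoking Lemma \ref{lem: lifting} together with the preceding classification theorem), but the end--game contains a genuine gap which you defer as a ``secondary point to verify'': the Gadgil--Seshadri step requires the low homotopy groups of the manifold being classified to vanish, and for $N$ this information is simply not available. The hypotheses place $\pi_2(X)=0$ (resp.\ $\pi_2(X)=\pi_3(X)=\pi_4(X)=0$) on the \emph{target} only, and homotopy vanishing does not transfer backwards along a non-zero degree map, so your final step ``passing to a finite cover $N'$ of $N$ with $\pi_1(N')$ free and using the relevant low-homotopy vanishing'' cannot be completed. Relatedly, in the case $n=6$, $m=4$ the transfer argument does not yield that $\pi_1(N)$ is virtually free: the acyclicity input (vanishing of $H_4$ of the relevant simply connected space, via Hurewicz) lives in $\bar X$, not in the universal cover of $N$, so Proposition \ref{Proposition:uniformly-acyclic} is applied in $\bar X$, the level-set diameter bounds of \cite[Proposition 8]{chodosh2023classifying} are obtained for $\bar X$, and \cite[Corollary 14]{chodosh2023classifying} gives virtual freeness of $\pi_1(X)$ (equivalently of the finite-index image $f_*\pi_1(N)$), not of $\pi_1(N)$. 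For $n=5$, $m=2$, Ma's theorem does give $\pi_1(N)$ virtually free from the curvature alone, but the missing hypothesis $\pi_2(N)=0$ still blocks Gadgil--Seshadri for $N$.

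What the combination of Lemma \ref{lem: lifting}, the filling estimates of Theorem \ref{thm: fillingestimate}, and \cite{chodosh2023classifying} actually proves is a statement about $X$: $\pi_1(X)$ is virtually free, and since $\pi_2(X)$ (and $\pi_3(X)$, $\pi_4(X)$ in case (2)) vanish by hypothesis, \cite[Theorem 1.3]{gadgil2009topology} applies to $X$ and classifies a finite cover of $X$. The conclusion as literally stated about $N$ is not reachable by this method and in fact appears to fail: $\CP^2\times S^1$ with a product metric has $2$-intermediate curvature bounded below by a positive constant, and collapsing $\CP^1$ gives a degree-one map to $S^4\times S^1$, which has $\pi_2=0$; yet every connected finite cover of $\CP^2\times S^1$ is again $\CP^2\times S^1$, which has $\pi_2\neq 0$ and hence is not homeomorphic to $S^5$ or to a connected sum of copies of $S^4\times S^1$. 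So either the conclusion must be read as concerning a finite cover of $X$, or homotopy hypotheses must be imposed on $N$ itself; your proof should be restructured accordingly (run the level-set and filling arguments in $\bar X$ through the proper Lipschitz lift $\hat f$, conclude virtual freeness of $\pi_1(X)$, and apply Gadgil--Seshadri to $X$), after which the remaining steps do go through as you describe.
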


\bibliographystyle{plain}
\bibliography{reference.bib}
\end{document}